\newtheorem{Satz}{Satz}[section]
\newtheorem{Lemma}[Satz]{Lemma}
\newtheorem{Corollary}[Satz]{Corollary}
\theoremstyle{definition}
\newtheorem{Definition}{Definition}
\newtheorem{Remark}{Remark}
\newtheorem{Theorem}[Satz]{Theorem}
\newtheorem{Example}[Satz]{Example}
\newtheorem{proposition}[Satz]{Proposition}
\newtheorem{Fact}[Satz]{Fact}
\newtheorem{Conjecture}[Satz]{Conjecture}
\DeclareMathOperator{\Hom}{Hom}
\DeclareMathOperator{\End}{End}
\DeclareMathOperator{\Lie}{Lie}
\DeclareMathOperator{\Spec}{Spec}
\DeclareMathOperator{\Spa}{Spa}
\DeclareMathOperator{\Spd}{Spd}
\DeclareMathOperator{\Spf}{Spf}
\DeclareMathOperator{\Nilp}{Nilp}
\DeclareMathOperator{\GL}{GL}
\DeclareMathOperator{\Isom}{Isom}
\DeclareMathOperator{\Displ}{Displ}
\DeclareMathOperator{\Acris}{\mathbb{A}_{\cris}}
\DeclareMathOperator{\Ainf}{\mathbb{A}_{\infintesimal}}
\DeclareMathOperator{\Rad}{Rad}
\DeclareMathOperator{\MaxSpec}{MaxSpec}
\DeclareMathOperator{\Fil}{Fil}
\DeclareMathOperator{\Ad}{Ad}
\DeclareMathOperator{\Rep}{Rep}
\DeclareMathOperator{\Bun}{Bun}
\DeclareMathOperator{\G}{\mathcal{G}}
\DeclareMathOperator{\Proj}{Proj}
\DeclareMathOperator{\Z}{\mathbb{Z}}
\DeclareMathOperator{\colim}{colim}
\DeclareMathOperator{\pr}{pr}
\DeclareMathOperator{\Mat}{Mat}
\DeclareMathOperator{\Frac}{Frac}
\DeclareMathOperator{\Perf}{Perf}
\DeclareMathOperator{\Vect}{Vect}
\DeclareMathOperator{\Affd}{Affd}
\DeclareMathOperator{\SmAffd}{SmAffd}
\DeclareMathOperator{\Sh}{Sh}
\DeclareMathOperator{\Ann}{Ann}
\DeclareMathOperator{\Set}{Set}
\DeclareMathOperator{\BP}{BP}
\DeclareMathOperator{\Bcris}{\mathbb{B}^{+}_{\cris}}
\DeclareMathOperator{\BKF}{BKF}
\DeclareMathOperator{\Win}{Win}
\DeclareMathOperator{\aff}{aff}
\DeclareMathOperator{\et}{\acute{e}t}
\DeclareMathOperator{\gr}{gr}
\DeclareMathOperator{\fpqc}{fpqc}
\DeclareMathOperator{\op}{op}
\DeclareMathOperator{\Grp}{Grp}
\DeclareMathOperator{\nilp}{nilp}
\DeclareMathOperator{\id}{id}
\DeclareMathOperator{\triv}{triv}
\DeclareMathOperator{\adic}{adic}
\DeclareMathOperator{\cris}{cris}
\DeclareMathOperator{\banal}{banal}
\DeclareMathOperator{\infintesimal}{inf}
\DeclareSymbolFontAlphabet{\mathbb}{AMSb} 
\DeclareSymbolFontAlphabet{\mathbbl}{bbold} 
\newcommand{\Prism}{{\mathlarger{\mathbbl{\Delta}}}}
\begin{document}
\pagenumbering{gobble}

\begin{titlepage}

\begin{center}

\title{$\G$-$\mu$-Displays and local Shtuka}
\author{Sebastian Bartling}
\maketitle
\end{center}

Abstract: Two approaches to the construction of integral models of local Shimura-varieties are compared: that of Bültel-Pappas using $\G$-$\mu$-displays and that of Scholze using local mixed-characteristic shtuka. As an application, the representability of the diamond of the generic fiber of the Bültel-Pappas moduli problem is verified and a local representability conjecture of Rapoport-Pappas is proven. The methods apply to all unramified local Shimura-data under a certain condition on slopes and if $p\neq 2;$ thus in particular also to exceptional groups.

\tableofcontents
\end{titlepage}
\newpage

\pagenumbering{arabic}
\setcounter{page}{1}
\section{Introduction}
The theory of displays was introduced by Zink in \cite{ZinkDisplay} to give a Dieudonné-type classification for formal $p$-divisible groups over general $p$-adic rings. This was motivated by an old paper of Norman \cite{NormanAlgorithm} and Zink succeeded in proving that formal $p$-divisible groups are classified by nilpotent displays over a very large class of rings, for example those $p$-adic rings $R,$ such that $R/pR$ is a finitely generated $\mathbb{F}_{p}$-algebra. Later Lau proved in \cite{LauInventiones} that this classification result in fact extends to all $p$-adic rings.
\\
To give a display is to give a filtered, finitely generated projective module over the $p$-typical Witt vectors of the $p$-adic ring, one is working over, together with certain Frobenius-linear operators. This data is required to satisfy certain axioms. Instead of recalling this precisely, I refer the reader to Zink's definition \cite[Definition 1]{ZinkDisplay} and give the following slogans:
\begin{enumerate}
\item[(a)] a display is (locally on the ring) a Frobenius-twisted conjugacy class of matrices,
\item[(b)] a display is a mixed-characteristic analog of Drinfeld$^{\prime}$s shtukas with one leg.
\end{enumerate}
Let me make this more precise: concerning (a), consider the group $\GL_{n}$ over $\mathbb{Z}_{p}$ and let $L^{+}\GL_{n}$ be the Greenberg-transform with points $L^{+}\GL_{n}(R)=\GL_{n}(W(R)).$ Next, let $d\leq n$ be an integer and consider the subgroup scheme $\GL_{n}(\mathcal{W}(.))_{n,d}\subseteq L^{+}\GL_{n}$ of matrices of the form
$$
\begin{bmatrix}
A & B \\
J & C
\end{bmatrix}
$$
where $J$ is a block matrix with entries in $I(R)=\text{Im}(V\colon W(R)\rightarrow W(R))$ and $A$ has size $d\times d$ and $C$ has size $n-d\times n-d.$
Define the Frobenius-linear group homomorphism
$$
\Phi_{n,d}\colon \GL_{n}(\mathcal{W}(R))_{n,d}\rightarrow L^{+}\GL_{n}(R),
$$
by the formula
$$
\Phi_{n,d}\left(\begin{bmatrix}
A & B \\
J & C
\end{bmatrix}\right)=\begin{bmatrix}
F(A) & pF(B) \\
V^{-1}(J) & F(C)
\end{bmatrix}.
$$
Then the precise formulation of slogan (a) is the following statement again due to Zink.
\begin{Fact}(Zink)\label{Displays sind lokal Konjugationsklassen von Matrizen}
Let $R$ be a $p$-adic ring. 
Then the groupoid of displays (locally of height $n$ and dimension $d$) over $R$ is equivalent to the groupoid of $\GL_{n}(\mathcal{W}(.))_{n,d}$-torsors $Q$ together with a sheaf-morphism
$$
\alpha\colon Q\rightarrow L^{+}\GL_{n},
$$
such that
$$
\alpha(q\cdot h)=h^{-1}\alpha(q)\Phi_{n,d}(h),
$$
for all $q\in Q$ and $h\in \GL_{n}(\mathcal{W}(.))_{n,d}.$
\end{Fact}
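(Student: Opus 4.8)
The plan is to recognise the statement as a torsor reformulation of Zink's normal form, by producing two mutually quasi-inverse functors; both essential surjectivity and local triviality of the torsors will be fed by Zink's theorem that displays admit normal decompositions locally. Write $G:=\GL_{n}(\mathcal{W}(.))_{n,d}$, viewed as an fpqc sheaf of groups on $R$-algebras, and let $\mathcal{M}_{\mathrm{std}}=(W(.)^{n},\Fil_{\mathrm{std}})$ with $\Fil_{\mathrm{std}}=W(.)^{d}\oplus I(.)^{n-d}$ be the standard filtered Witt module. The preliminary point is the identification $\underline{\Aut}(\mathcal{M}_{\mathrm{std}})=G$: a matrix $\bigl[\begin{smallmatrix}A&B\\ J&C\end{smallmatrix}\bigr]\in L^{+}\GL_{n}(R)$ carries $\Fil_{\mathrm{std}}$ into itself exactly when $J$ has entries in $I(R)$ (no condition on $B$), and such matrices form a subgroup since reduction modulo $I(R)$ makes them block upper triangular with invertible diagonal blocks, so that the inverse again has lower-left block in $I(R)$; hence "$J\in I(R)$" already forces $h(\Fil_{\mathrm{std}})=\Fil_{\mathrm{std}}$.

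Given a display $\mathcal{P}=(P,\Fil,F,\dot F)$ over $R$, locally of height $n$ and dimension $d$, put $Q:=\underline{\Isom}\bigl(\mathcal{M}_{\mathrm{std}},(P,\Fil)\bigr)$, the fpqc sheaf of isomorphisms $W(.)^{n}\xrightarrow{\sim}P$ taking $\Fil_{\mathrm{std}}$ onto $\Fil$. By the previous paragraph $Q$ is a pseudo-torsor under $G$, and it is a torsor because it is locally non-empty: this is Zink's result that $P$ is fpqc-locally free and that $\Fil$ then admits a normal decomposition $P=L\oplus T$ with $\Fil=L\oplus I(R)T$ (so $L$ is locally free of rank $d$, matching $\Fil_{\mathrm{std}}$). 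For a local section $\beta\in Q(R')$, transport $(F,\dot F)$ along the induced basis of $P$ and record the structure matrix $U_{\beta}$ of the resulting normal-form display; set $\alpha(\beta):=U_{\beta}\in\GL_{n}(W(R'))=L^{+}\GL_{n}(R')$. That $U_{\beta}$ is invertible, not merely a matrix over $W(R')$, is exactly Zink's axiom that the linearisation of $(\dot F|_{L},F|_{T})$ is an isomorphism; this is why $\alpha$ takes values in $L^{+}\GL_{n}$ and not in $L^{+}\Mat_{n}$.

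The step I expect to carry the real work is the twisted equivariance $\alpha(\beta\cdot h)=h^{-1}\alpha(\beta)\Phi_{n,d}(h)$ for $h=\bigl[\begin{smallmatrix}A&B\\ J&C\end{smallmatrix}\bigr]\in G(R')$, which is simultaneously where the precise formula for $\Phi_{n,d}$ gets forced. Expanding $U_{\beta\cdot h}$ column by column in the old basis, the first $d$ columns (spanning $L'$) are handled by the $F$-semilinearity of $\dot F$ on $L$ together with the identity $\dot F(V(w)x)=wF(x)$, which reproduces the blocks $F(A)$ and $V^{-1}(J)$; the last $n-d$ columns (spanning $T'$) are handled by the semilinearity of $F$ together with the identity $F|_{L}=p\,\dot F|_{L}$ (a consequence of $F(l)=\dot F(V(1)l)$), which reproduces $pF(B)$ and $F(C)$. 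Thus the change-of-basis matrix is exactly $\Phi_{n,d}(h)$, and re-expressing the result in the new basis introduces the outer $h^{-1}$. One then checks that $\alpha$ descends to a well-defined morphism of sheaves, independent of the trivialising cover; this is formal.

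Conversely, to a pair $(Q,\alpha)$ I would associate the module $P:=Q\times^{G}W(.)^{n}$ with filtration $\Fil:=Q\times^{G}\Fil_{\mathrm{std}}$, and define $F$ and $\dot F$ by gluing, over a cover trivialising $Q$, the normal-form operators attached to the matrices $\alpha(q)$; the equivariance just proven is precisely the cocycle condition legitimising this gluing, and the display axioms — projectivity of $\Fil/I(R)P$ and of $P/\Fil$, the relation $\dot F(V(w)x)=wF(x)$, and bijectivity of the linearisation of $(\dot F|_{L},F|_{T})$ — follow by fpqc descent from $\mathcal{M}_{\mathrm{std}}$ together with $\alpha$ being valued in $L^{+}\GL_{n}$. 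Finally one verifies that the two functors are quasi-inverse on objects and on morphisms, which is routine: a display is recovered from its own sheaf of normal-form bases with its own structure map, and the associated module of $(Q,\alpha)$ has $Q$ as its sheaf of normal-form bases with structure map $\alpha$. Beyond the equivariance computation and the appeal to Zink's local normal decompositions, no step is a genuine obstacle.
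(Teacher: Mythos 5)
Your proof is correct. The paper states this Fact without proof, attributing it to Zink; your argument is the natural one, and its ingredients --- identifying $\GL_n(\mathcal{W}(.))_{n,d}$ as the automorphism sheaf of the standard filtered module $(W(.)^n,\, W(.)^d\oplus I(.)^{n-d})$, invoking Zink's local normal decompositions to see that the $\underline{\Isom}$-sheaf is a genuine torsor, and the column-by-column equivariance computation that forces the blocks $F(A), pF(B), V^{-1}(J), F(C)$ of $\Phi_{n,d}$ --- are exactly what the paper abstracts to general reductive $\G$ in Section~\ref{Section Gmu windows} (the window group $\G(\mathcal{W})_{\mu}$, the decomposition $U^{+}(I)\times P^{-}(S)$ of Lemma~\ref{Zerlegungslemma fuer die Window-gruppe}, and the divided Frobenius $\Phi_{\G,\mu,\mathcal{W}}$).
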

Note that once one adds the Zink-nilpotency condition on displays, this proposition together with the statement that displays indeed classify formal $p$-divisible groups, gives a very explicit description of formal $p$-divisible groups (of fixed height and dimension) purely in terms of group theory!
\\
The meaning of slogan (b) remains a bit vague over general base rings (e.g. what should play here really the role of the leg?). If one instead considers only perfect rings in characteristic $p,$ then one can explain this slogan by the following easy
\begin{Fact}
Let $R$ be a perfect $\mathbb{F}_{p}$-algebra.
\\
Then the category of displays over $R$ embeds fully faithfully into the category of finitely generated projective $W(R)$-modules $P$ together with a $F$-linear morphism
$$
\varphi_{P}\colon P\rightarrow P,
$$
such that $\varphi_{P}[1/p]\colon P\otimes_{W(R)} W(R)[1/p]\rightarrow P\otimes_{W(R)} W(R)[1/p]$ is a $F$-linear isomorphism.
\end{Fact}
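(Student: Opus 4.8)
The plan is to show that the functor sending a display $(P,Q,F,\dot F)$ over $R$ to the pair $(P,\varphi_{P})$ with $\varphi_{P}:=F$ is well defined and fully faithful. I would use throughout the following elementary facts about $W(R)$ for $R$ a perfect $\mathbb{F}_{p}$-algebra: $W(R)$ -- hence every finite projective $W(R)$-module -- is $p$-torsion free, the Frobenius $F$ of $W(R)$ is bijective, $I(R)=V(W(R))=pW(R)$, and $V(1)=p$ (apply $F$: $F(V(1))=p=F(p)$ and $F$ is injective). The display identity $\dot F(V(1)x)=F(x)$ then gives $F|_{Q}=p\,\dot F$, since for $x\in Q$ one has $F(x)=\dot F(V(1)x)=\dot F(px)=p\,\dot F(x)$, the last step because $\dot F$ is $F$-linear and $F(p)=p$. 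I would also use Zink's structure theory: Zariski-locally on $\Spec R$ a display admits a normal decomposition $P=L\oplus T$, $Q=L\oplus pT$ with $L,T$ finite projective, for which the $F$-linearization $\Psi^{\sharp}\colon W(R)\otimes_{F,W(R)}P\to P$ of the $F$-linear map $\Psi:=(\dot F|_{L})\oplus(F|_{T})\colon P\to P$ is an isomorphism.

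First I would check the functor lands in the asserted target category. Since $L\subseteq Q$ one has $F|_{L}=p\,\dot F|_{L}=p\,\Psi|_{L}$, while $F|_{T}=\Psi|_{T}$; thus on $W(R)\otimes_{F,W(R)}P=(W(R)\otimes_{F}L)\oplus(W(R)\otimes_{F}T)$ the linearization $F^{\sharp}$ is $\Psi^{\sharp}$ composed with the operator that is multiplication by $p$ on the first summand and the identity on the second. This operator is invertible after inverting $p$ and $\Psi^{\sharp}$ is an isomorphism, so $\varphi_{P}[1/p]=F[1/p]$ is an isomorphism; being an isomorphism is local, so this holds globally. Hence $(P,\varphi_{P})$ is an object of the target category, and functoriality is clear. (In passing, $\varphi_{P}[1/p]$ invertible together with $p$-torsion freeness shows $F\colon P\to P$ is injective.)

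For full faithfulness, faithfulness is immediate since the underlying $W(R)$-linear map is unchanged. Fullness rests on the intrinsic description
\[
Q=\{\,x\in P : F(x)\in pP\,\}.
\]
The inclusion $\subseteq$ is $F|_{Q}=p\dot F$. For $\supseteq$, write $x=l+t$ in a local normal decomposition; then $F(x)=p\,\Psi(l)+\Psi(t)=\Psi^{\sharp}\bigl(p(1\otimes l)+(1\otimes t)\bigr)$, so $F(x)\in pP$ forces $p(1\otimes l)+(1\otimes t)\in p\cdot(W(R)\otimes_{F}P)$ (as $\Psi^{\sharp}$ is an isomorphism commuting with multiplication by $p$), hence its $T$-component $1\otimes t$ lies in $p\cdot(W(R)\otimes_{F}T)$, hence $t\in pT$ (as $F$ is bijective), hence $x\in L\oplus pT=Q$; this equality of submodules, checked locally, holds globally. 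Granting it, let $\beta\colon P\to P'$ be $W(R)$-linear with $\beta\circ F=F'\circ\beta$. For $x\in Q$ we get $F'(\beta x)=\beta(F(x))\in\beta(pP)\subseteq pP'$, so $\beta x\in Q'$; and $p\,\dot F'(\beta x)=F'(\beta x)=\beta(F(x))=\beta(p\,\dot F(x))=p\,\beta(\dot F(x))$, so $\dot F'(\beta x)=\beta(\dot F(x))$ by $p$-torsion freeness of $P'$. Hence $\beta$ underlies a (necessarily unique) morphism of displays, which is exactly what fullness requires.

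The main obstacle is the intrinsic characterization $Q=\{x\in P:F(x)\in pP\}$: it is what forces one to pass through a normal decomposition and to use crucially that the $F$-linearization coming from the generation axiom $\dot F(Q)=P$ is an isomorphism, and it is the reason no filtration datum $Q$ beyond $(P,F)$ is needed to reconstruct morphisms. Once Zink's structure theory is granted, all remaining steps are the formal manipulations above, exploiting $V(1)=p$, $F|_{Q}=p\dot F$, and the $p$-torsion freeness of $W(R)$.
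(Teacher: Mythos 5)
The paper states this as an ``easy'' Fact in the introduction and supplies no proof, so there is nothing in the text to compare against; you are producing the argument from scratch. Your proof is correct, and the route is the natural module-theoretic one: the identity $V(1)=p$ (which forces $F|_{Q}=p\,\dot F$), the normal decomposition $P=L\oplus T$, $Q=L\oplus pT$ with linearization $\Psi^{\sharp}$ bijective, and the resulting intrinsic description $Q=\{x\in P: F(x)\in pP\}$ are precisely the content that makes the filtration datum $Q$ recoverable from $(P,F)$ alone, which is exactly what fullness requires; faithfulness and well-definedness (that $\varphi_{P}[1/p]$ is an isomorphism) are as you argue. The one step I would spell out more explicitly is the implication ``$1\otimes t\in p(W(R)\otimes_{F}T)\Rightarrow t\in pT$'': for $R$ perfect the Witt vector Frobenius $F$ is a ring automorphism with $F(p)=p$, so $F(pW(R))=pW(R)$, and hence after trivializing $T$ locally one has $F(t_{i})\in pW(R)\Leftrightarrow t_{i}\in pW(R)$. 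This is the sharpest use of perfectness in the whole argument (it is also hidden in your preliminary facts $I(R)=pW(R)$ and $V(1)=p$), and it is the reason the same functor fails to be full over non-perfect bases.
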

The datum of a pair $(P,\varphi_{P})$ has obvious similiarity with a (local) shtuka with only one leg, but this interpretation works only if $R$ is a perfect ring in characteristic $p.$
\\
Regarding the mixed-characteristic case, Scholze has used perfectoid geometry to introduce $p$-adic shtuka in his Berkeley lectures \cite{Berkeleylectures}, see \cite[Def. 11.4.1.]{Berkeleylectures}. A theorem of Fargues then asserts that in case one is working over a geometric perfectoid point, $p$-divisible groups are indeed classified by local mixed-characteristic shtukas with one leg and a minuscule bound on the singularity at that leg, see \cite[Thm. 14.1.1.]{Berkeleylectures}. If one restricts to formal $p$-divisible groups, one can thus use Scholze$^{\prime}$s mixed-characteristic local shtukas to give a precise meaning to slogan (b) over perfectoid base rings.
\\
One of the aims of this article is to extend this picture to formal $p$-divisible groups carrying additional structures and beyond.
\\
The wish to study mixed-characteristic shtuka might have its origins in trying to construct their moduli in the hope of achieving geometric realizations of local Langlands correspondences.  Propagated by an article of Rapoport-Viehmann \cite{RapoViehmann} and deeply inspired by Deligne$^{\prime}$s group theoretical reformulation (and extension) of the theory of Shimura-varieties \cite{DeligneBourbakiShimura}, an ongoing project right now is therefore to rewrite the theory of Rapoport-Zink spaces and their generic fibers in terms of group theory. In particular, this means to extend the theory beyond the cases of (P)EL-type considered by Rapoport-Zink \cite{RZ}. The part of this program concerning the generic fiber was first achieved by Scholze in his Berkeley lectures \cite{Berkeleylectures}, where he constructed general local Shimura-varieties as rigid analytic spaces. His methods of construcing these spaces used crucially his theory of diamonds, the relative Fargues-Fontaine curve and foundational results of Kedlaya-Liu (\cite{KedlayaLiuFoundations}); it therefore only makes sense on the generic fiber.
\\
An open problem is therefore to investigate the existence of integral models for local Shimura-varieties, beyond the PEL-cases considered by Rapoport-Zink; or more generally beyond the abelian case (where recently a lot of progress has been made by Rapoport-Pappas \cite{RapoPappasShtuka}, \cite{RapoPappasIntegralModels}): these 'exotic' cases are interesting because these spaces are not in an obvious way linked to $p$-divisible groups anymore.\footnote{Compare however with the work of Bültel \cite{bueltel2013pel}, where he is nevertheless still able in specific situations to make a link with $p$-divisible groups in the case of global Shimura varieties - I thank Oliver Bültel for making me aware of this!} Although Scholze proposes an integral model in \cite[Section 25]{Berkeleylectures}, he is only able to make sense of this on the level of $v$-sheaves, which is still strong enough to characterize a conjectural flat and normal formal scheme, whose associated $v$-sheaf should be the moduli problem introduced by Scholze in \cite[Def. 25.1.1]{Berkeleylectures}. In the following, let me concentrate on the case when the level structure is maximal hyperspecial.
\\
This is where the work of Bültel-Pappas \cite{BueltelPappas} ties in: their approach is different than that of Scholze in that it works more classically in the world of formal schemes; and more importantly: not just for integral perfectoid rings. To explain their ideas, one first simply notes that in case one is considering Rapoport-Zink spaces, where one is deforming a formal $p$-divisible group, one can use the classification result of Zink and Lau, to reformulate the Rapoport-Zink moduli problem of $p$-divisible groups completely in terms of nilpotent displays. Therefore, to generalize Rapoport-Zink spaces, one may try to generalize displays. But then one can use the perspective on displays given by fact \ref{Displays sind lokal Konjugationsklassen von Matrizen}, to see that to generalize displays one just has to generalize the group $\GL_{n}(\mathcal{W}(.))_{n,d}$ and the morphism $\Phi_{n,d}.$ This is exactly what Bültel \cite{bueltel2013pel} and later Bültel-Pappas were able to do for unramified reductive groups $\G$ over $\mathbb{Z}_{p}$ and a minuscule cocharacter $\mu.$\footnote{Although their theory is definable in greater generality than that of a reductive group scheme, one cannot expect it to give rise to the right objects. Note for example that Lau in \cite{LauHigherFrames} is also able to define $\G$-$\mu$-displays for non-minuscule cocharacter and general smooth $\mathbb{Z}_{p}$-groups. In cases of ramification one should use the local model to set up the theory of $\G$-$\mu$-displays and in that case one can e.g. compare with the recent work of Pappas in that direction, \cite{PappasIntegralDisplay}.} After having generalized displays to unramified groups, one can introduce moduli problems of these generalized displays. In the case of $\GL_{n}$ and a minusucle cocharacter they give back the raw Rapoport-Zink spaces, where one is deforming a formal $p$-divisible group. Then Bültel-Pappas are able to show that in the Hodge-type case, on locally noetherian test-objects, these moduli problems have indeed the expected shape of a (formally smooth) formal scheme, locally formally of finite type, see \cite[Thm. 5.1.3.]{BueltelPappas}. Nevertheless, they leave open the problem to investigate the relationship between their approach and that of Scholze. This is what is done in this article.
\subsection{Statement of the results}
I will now state more precisely the main results. To do this, one has to introduce a bit of set-up: fix a prime number $p$ and choose an algebraic closure $\overline{\mathbb{Q}}_{p}$ of $\mathbb{Q}_{p},$ which induces an algebraic closure $k=\overline{\mathbb{F}}_{p}.$ Denote by $\breve{\mathbb{Q}}_{p}=W(k)[1/p]$ the maximal unramified extension of $\mathbb{Q}_{p}.$ 
\\
Let $G$ be a connected reductive group over $\mathbb{Q}_{p},$ $\lbrace \mu \rbrace$ a $G(\overline{\mathbb{Q}}_{p})$-conjugacy class of minuscule cocharacters of $G_{\overline{\mathbb{Q}}_{p}}$ and $[b]\in B(G,\mu^{-1}).$ A triple $(G,\lbrace \mu \rbrace, [b])$ is called a local Shimura datum \cite[Def. 24.1.1]{Berkeleylectures}. Let $E=E(G,\lbrace \mu \rbrace)$ be the local reflex field, a finite extension of $\mathbb{Q}_{p}$. If $K\subset G(\mathbb{Q}_{p})$ is some compact open subgroup, Scholze has associated in \cite[Def. 24.1.3.]{Berkeleylectures} to $(G,\lbrace \mu \rbrace, [b], K)$ a uniquely determined smooth and partially proper rigid analytic variety $\Sh_{K}$ over $\Spa(\breve{E}).$ This is the local Shimura variety with level structure $K.$
\\
To make the link with the work of Bültel-Pappas, let me assume that $G$ is unramified and choose a reductive model $\G$ over $\mathbb{Z}_{p}.$ Then one can consider the maximal hyperspecial level structure $K=\G(\mathbb{Z}_{p})$ and the local Shimura variety is uniquely characterized by the fact that the associated diamond is the solution of a moduli problem of local shtuka (\cite[Def. 23.1.1]{Berkeleylectures}). The rigid analytic variety $\Sh_{K}$ is conjectured (c.f. \cite[Conjecture 5.2.]{PappasICM}) to admit a uniquely determined integral model.
\\
The moduli problem of $\G$-$\mu$-displays should give rise to the desired integral model if one adds a further assumption on the slopes of $[b]$. To define this moduli problem, let 
$$
\mu\colon \mathbb{G}_{m,\mathcal{O}_{E}}\rightarrow \G_{\mathcal{O}_{\breve{E}}}
$$
be an integral extension of a representative of the previously considered conjugacy class of geometric cocharacters. Its $\G(\mathcal{O}_{E})$-conjugacy class is well-defined. Assume that $[b]\in B(G,\mu)$ admits a representative $b=u_{0}\mu(p),$ where $u_{0}\in \G(W(k)).$ This element $u_{0}$ gives rise to a $\G$-$\mu$-display $\mathcal{P}_{0}$ over $k.$
\\
Bültel-Pappas introduce (\cite[section 4.2]{BueltelPappas}) a functor
$$
\mathcal{M}^{\text{BP}}\colon \Nilp_{W(k)}^{\text{op}}\rightarrow \Set
$$
classifying $\G$-$\mu$-displays, which modulo $p$ are quasi-isogenous to the base change of $\mathcal{P}_{0}.$ Assume in the following that $-1$ is not a slope of $\Ad(b).$ By an unpublished result of Oliver Bültel, this functor admits no non-trivial automorphism and is therefore a sheaf for the fpqc topology on $\Nilp_{W(k)}.$
\\
Making use of a construction of Scholze-Weinstein \cite{ScholzeWeinstein}, one can define a generic fiber of the functor $\mathcal{M}^{\text{BP}}$ without knowing its representability. This is a sheaf for the analytic topology 
$$
\mathcal{M}^{\text{BP}}_{\eta}\colon \text{CAffd}^{\text{op}}_{\Spa(\breve{E},\mathcal{O}_{\breve{E}})}\rightarrow \Set,
$$
where $\text{CAffd}_{\Spa(\breve{E},\mathcal{O}_{\breve{E}})}$ is the category of complete affinoid Huber-pairs (see subsection \ref{Subsection Recollections on the Bueltel-Pappas moduli problem}). Let $$(\mathcal{M}^{\BP}_{\eta})^{\lozenge}\rightarrow \Spd(\breve{E})$$ be the diamantine version of the previous sheaf (see subsection \ref{subsection Representability of the Diamond}). Then one of the main results of this article is the following
\begin{Theorem}{(Proposition \ref{Vergleich der Diamanten})}
\\
Assume that $p\geq 3.$
There is an isomorphism of analytic sheaves on $\text{AffdPerf}^{\text{op}}_{k}$
$$
\psi\colon (\mathcal{M}^{\BP}_{\eta})^{\lozenge}\simeq (\Sh_{K})^{\lozenge},
$$
over $\Spd(\breve{E}).$
\end{Theorem}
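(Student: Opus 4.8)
The plan is to factor $\psi$ through the moduli of $\G$-shtukas. Recall from \cite[Def. 23.1.1]{Berkeleylectures} that $(\Sh_{K})^{\lozenge}$ represents the functor sending an affinoid perfectoid $(R,R^{+})$ over $(\breve{E},\mathcal{O}_{\breve{E}})$ to the set of $\G$-shtukas over $\Spa(R,R^{+})$ with one leg bounded by $\mu$, equipped with an isomorphism of the underlying $\G$-torsor near the boundary $\mathcal{Y}_{[r,\infty)}$ with the one attached to $b$. So it is enough to construct a natural transformation $\psi$ from the Bültel--Pappas generic fibre to this shtuka functor and to prove that it is bijective on $(C,C^{+})$-points for every algebraically closed affinoid field $(C,C^{+})$ over $\breve{E}$; since both functors are $v$-sheaves --- for $(\Sh_{K})^{\lozenge}$ because it is the diamond of a rigid space, for $(\mathcal{M}^{\BP}_{\eta})^{\lozenge}$ by the quoted theorem of Bültel, which applies since $-1$ is not a slope of $\Ad(b)$ --- such a $\psi$ is automatically an isomorphism over $\Spd(\breve{E})$.

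\textbf{Construction of $\psi$.}
Unwinding the Scholze--Weinstein generic fibre \cite{ScholzeWeinstein}, an $(R,R^{+})$-point of $\mathcal{M}^{\BP}_{\eta}$ is represented by a $\G$-$\mu$-display $\mathcal{P}$ over an open bounded $\mathcal{O}_{\breve{E}}$-subalgebra $R_{0}\subseteq R^{+}$, together with a quasi-isogeny $\mathcal{P}_{0}\dashrightarrow \mathcal{P}$ over $R_{0}/p$. As $R$ is perfectoid I would base-change $\mathcal{P}$ along the natural ring map $W(R_{0})\to \Ainf(R^{+})$ (Witt vectors of the ring of integers in the tilt), obtaining a $\G$-torsor with Frobenius structure over $\Ainf$, i.e. a $\G$-Breuil--Kisin--Fargues module; restricting this datum to $\mathcal{Y}_{[0,\infty)}(R)$ and removing the closed locus cut out by the untilt yields a one-legged $\G$-shtuka over $\Spa(R,R^{+})$. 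The crucial point is that the $\mu$-condition imposed on the display forces this leg to be bounded by $\mu$; I would verify this first over geometric points, where it amounts to the compatibility between the display group $\G(\mathcal{W}(.))_{\mu}\subset L^{+}\G$ and the Schubert cell $\mathrm{Gr}_{\G,\le\mu}$ in the $B_{\mathrm{dR}}^{+}$-affine Grassmannian, and then propagate it to arbitrary bases using flatness of the Schubert cell. This is the step where the hypothesis $p\neq 2$ enters, exactly as in the display-theoretic input of \cite{BueltelPappas}. Finally, the quasi-isogeny over $R_{0}/p$ base-changes to a framing of the resulting $\G$-bundle near $\infty$ against the isocrystal of $b$; the hypothesis on $\Ad(b)$ makes this framing rigid, so the construction does not depend on the auxiliary ring of definition $R_{0}$ and descends along the colimit defining the generic fibre. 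This produces $\psi$, functorially in $(R,R^{+})$.

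\textbf{Bijectivity on geometric points.}
Let $(C,C^{+})$ be an algebraically closed affinoid field over $\breve{E}$. Over $\Spa(C,C^{+})$ the ring $C^{+}$ (or a suitable ring of definition inside it) already serves as a ring of definition, so a point of $\mathcal{M}^{\BP}_{\eta}(C,C^{+})$ is nothing but a $\G$-$\mu$-display over $C^{+}$ together with its quasi-isogeny to $\mathcal{P}_{0}$ modulo $p$. By the $\G$-equivariant form of Fargues' theorem \cite[Thm. 14.1.1]{Berkeleylectures}, together with the classification of $\G$-$\mu$-displays over such rings by $\G$-Breuil--Kisin--Fargues modules, both this set and the set of minuscule one-legged $\G$-shtukas over $\Spa(C,C^{+})$ framed by $b$ are canonically identified with the same set of $\G$-BKF-modules over $\Ainf(C^{+})$ satisfying the $\mu$-condition and the boundary condition determined by $b$; one checks directly that $\psi$ realises this identification. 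Hence $\psi$ is bijective on $(C,C^{+})$-points, and therefore an isomorphism of $v$-sheaves over $\Spd(\breve{E})$.

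\textbf{Main obstacle.}
The hard part will be the construction in the second step: making the passage ``$\G$-$\mu$-display $\leadsto$ $\G$-BKF-module $\leadsto$ $\G$-shtuka'' genuinely functorial in the base ring, matching the linear-algebraic $\mu$-boundedness built into the display group with the affine-Grassmannian Schubert-cell condition, and transporting the mod-$p$ quasi-isogeny to the correct boundary framing on the Fargues--Fontaine curve so that the whole construction is independent of the chosen ring of definition. Once this comparison functor is available, bijectivity on geometric points --- and hence the theorem --- follows formally from Fargues' classification and the known equivalence between $\G$-$\mu$-displays and $\G$-Breuil--Kisin--Fargues modules over perfectoid valuation rings.
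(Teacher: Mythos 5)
Your strategy of reducing to geometric points has a genuine gap, and it is precisely the gap the paper is organised around avoiding. You claim that $(\mathcal{M}^{\BP}_{\eta})^{\lozenge}$ is a $v$-sheaf ``by the quoted theorem of Bültel'' and then propose to conclude by checking bijectivity on $(C,C^{+})$-points. Neither half of this works. The Bültel result quoted in the introduction only gives that $\mathcal{M}^{\BP}$ is an fpqc sheaf on $\Nilp_{W(k)}$; the fact that $(\mathcal{M}^{\BP}_{\eta})^{\lozenge}$ is a $v$-sheaf is actually recorded in the paper as a \emph{consequence} of Proposition~\ref{Vergleich der Diamanten}, not as an input to it. A priori one only knows it is an analytic sheaf, which is what the statement asserts and what the construction of the generic fibre yields. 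And even granting both sides were known $v$-sheaves, bijectivity on rank-one geometric points does not imply an isomorphism of $v$-sheaves: the standard trick requires testing on ``products of points'' $\prod_{i}\mathcal{O}_{C_{i}}$, and the introduction explicitly flags that this trick fails here because the adjoint nilpotency condition does not interact well with such products. That failure is the reason the comparison is only established on the analytic site in the generic fibre, rather than integrally on $v$-sheaves.

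The content you defer to as a future technicality is also not where the real difficulty lies. The construction of $\psi$ via ``display $\leadsto$ BKF $\leadsto$ shtuka'' is essentially packaged by the earlier sections (Corollary~\ref{Korollar Vergleich Displays und BKF}, Lemma~\ref{Lemma Vergleich der Quasi-isogenie kristalline Qis, p torsionfreier Fall}), and matching the $\mu$-boundedness of the display group with the Schubert condition is handled by the type-$\mu$ condition in Definition~\ref{Definition BKF}. The hard step in the paper is surjectivity: given an analytic $\G^{\adic}$-torsor on $\Spa(W(R^{+}),W(R^{+}))-V(p,[\varpi])$ coming from glueing the shtuka with the trivial torsor near $\infty$, one must extend it to a $\G$-torsor over all of $\Spec(W(R^{+}))$ \emph{after passing to a rational covering of} $\Spa(R,R^{+})$. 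This is Lemma~\ref{Lemma Bouthier Cesnavicius}, proved by combining Kedlaya's extension theorem over $W(\kappa(x)^{+})$, Anschütz's extension over the closed point, invariance of $\G$-torsors under henselian pairs, and Bouthier–Česnavičius results on Gabber–Ramero triples; none of this is addressed in your sketch, and a pointwise argument cannot produce the needed analytic covering. Finally, the restriction $p\geq 3$ does not enter where you say it does (compatibility of the display group with the Schubert cell): it enters through the descent from $\Acris$ to $\Ainf$ in Section~\ref{Section Descent from Acris to Ainf}, which underpins Corollary~\ref{Korollar Vergleich Displays und BKF}.
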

Furthermore, a conjecture of Rapoport-Pappas \cite[Conjecture 3.3.4]{RapoPappasShtuka} is verified: let $\mathcal{M}^{\text{int}}_{\text{Scholze}}$ be the $v$-sheaf introduced by Scholze as in \cite[Def. 25.1.1.]{Berkeleylectures}, which is associated to the data $(\G,\lbrace \mu \rbrace,[b])$ and the flag-variety as the local model. Using the main results of the PhD-thesis of Gleason \cite{GleasonPhD}, for a point $x\in X_{\mu}(b)(k)$ in the affine Deligne-Lustzig variety (\cite[Def. 3.3.1]{RapoPappasShtuka}), one may consider the formal completion
$$
\widehat{\mathcal{M}^{\text{int}}_{\text{Scholze}}}_{/_{x}},
$$
(c.f. \cite{RapoPappasShtuka} (3.3.4.)). Then one shows the following
\begin{Theorem}{(Proposition \ref{Formale Modelle fuer die Tubes})}
\\
Assume that $p\geq 3.$
The $v$-sheaf $
\widehat{\mathcal{M}^{\text{int}}_{\text{Scholze}}}_{/_{x}}
$ is representable by the formal spectrum over $\Spf(\mathcal{O}_{\breve{E}})$ of a regular complete noetherian local ring.
\end{Theorem}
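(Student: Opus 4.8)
The plan is to derive the statement by combining the main comparison of the paper with the deformation theory of $\G$-$\mu$-displays and Gleason's formalism of formal neighbourhoods of $v$-sheaves \cite{GleasonPhD}. In outline: I would first identify, in a formal neighbourhood of the $k$-point $x$, the $v$-sheaf $\mathcal{M}^{\text{int}}_{\text{Scholze}}$ with the $v$-sheaf attached to the Bültel-Pappas moduli problem $\mathcal{M}^{\BP}$; then compute the formal completion of $\mathcal{M}^{\BP}$ at the corresponding point by a crystalline deformation argument, obtaining the formal spectrum of a regular complete Noetherian local ring; and finally transport representability back across a fully faithful embedding of suitable formal schemes into $v$-sheaves.

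\emph{Step 1 (comparison near $x$).} The first task is to upgrade the generic-fibre isomorphism $\psi$ to an identification, in a formal neighbourhood of $x\in X_{\mu}(b)(k)$, of $\mathcal{M}^{\text{int}}_{\text{Scholze}}$ with $(\mathcal{M}^{\BP})^{\lozenge}$ over $\Spf(\mathcal{O}_{\breve{E}})$. This should follow from the integral form of the comparison set up earlier in the paper --- a functor from $\G$-$\mu$-displays to local mixed-characteristic shtuka which over integral perfectoid rings is an equivalence onto shtuka bounded by $\mu$, matching the $\G$-$\mu$-display condition with the minuscule boundedness defining $\mathcal{M}^{\text{int}}_{\text{Scholze}}$ through the flag-variety local model --- together with the fact, due to Gleason, that a $v$-sheaf of this type is pinned down near $x$ by its analytic locus and its reduction. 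Using that $X_{\mu}(b)(k)$ is identified with the set of $k$-points of the reduced special fibre of $\mathcal{M}^{\BP}$, one then obtains $\widehat{\mathcal{M}^{\text{int}}_{\text{Scholze}}}_{/x}\simeq(\widehat{\mathcal{M}^{\BP}}_{/x_0})^{\lozenge}$, where $x_0$ is the point of $\mathcal{M}^{\BP}$ corresponding to $x$.

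\emph{Step 2 (the Bültel-Pappas side).} Next I would show $\widehat{\mathcal{M}^{\BP}}_{/x_0}=\Spf R_x$ for a regular complete Noetherian local $\mathcal{O}_{\breve{E}}$-algebra $R_x$. By the Grothendieck-Messing-type deformation theory of $\G$-$\mu$-displays of \cite{BueltelPappas} (this is one place $p\geq 3$ enters), the functor of deformations of the $\G$-$\mu$-display $\mathcal{P}_x$ attached to $x$ together with the quasi-isogeny to $\mathcal{P}_0$ over the special fibre is pro-represented by a complete Noetherian local ring; since $\G$ is smooth and $\mu$ is minuscule the Hodge filtration lifts freely, so the deformation problem is unobstructed and $R_x\cong\mathcal{O}_{\breve{E}}[[t_1,\dots,t_d]]$ with $d$ the dimension of the local Shimura variety, in particular regular. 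The hypothesis that $-1$ is not a slope of $\Ad(b)$ is used, exactly as for the fpqc-sheaf property of $\mathcal{M}^{\BP}$, to ensure that adding the quasi-isogeny introduces no extra automorphisms, so that this deformation functor genuinely computes $\widehat{\mathcal{M}^{\BP}}_{/x_0}$.

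\emph{Step 3 (transport) and main difficulty.} Finally, Gleason \cite{GleasonPhD} provides a fully faithful functor from a suitable category of adic Noetherian formal schemes over $\Spf(\mathcal{O}_{\breve{E}})$ into $v$-sheaves which is compatible with formal completions; since by Step 1 the $v$-sheaf $\widehat{\mathcal{M}^{\text{int}}_{\text{Scholze}}}_{/x}$ is isomorphic to the image of $\Spf R_x$, and this is the formal spectrum of a regular complete Noetherian local ring by Step 2, full faithfulness gives the asserted representability. I expect Step 1 to be the crux: upgrading the comparison from the generic fibre to an isomorphism of $v$-sheaves near the non-analytic locus requires controlling $\mathcal{M}^{\text{int}}_{\text{Scholze}}$ there, for which Gleason's kimberlite and specialization formalism is essential, and it requires reconciling the intrinsically display-theoretic boundedness condition with Scholze's local-model boundedness; by comparison Steps 2 and 3 are fairly standard deformation theory and formal-scheme bookkeeping.
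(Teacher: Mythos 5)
Your decomposition has the right ingredients, and Step 2 is close to what the paper actually does: the paper cites the deformation theory in \cite[section 3.5.9]{BueltelPappas} to produce a pro-representing regular ring $A_{\text{univ}}$, and observes that because $W_{\mathbb{Q}}(A_{\text{univ}}/\mathfrak{m}^n)=W_{\mathbb{Q}}(k)$ the quasi-isogeny lifts for free, so the naive deformation functor of $\mathcal{P}_x$ is already the right one. You do not make this last observation explicit, but it is a minor omission.

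The genuine gap is in Step 1 and the logic that links it to Step 3. You ask for an identification $\widehat{\mathcal{M}^{\text{int}}_{\text{Scholze}}}_{/x}\simeq(\widehat{\mathcal{M}^{\BP}}_{/x_0})^{\lozenge}$ as a preliminary to computing $\widehat{\mathcal{M}^{\BP}}_{/x_0}$, but $\mathcal{M}^{\BP}$ is not known to be representable, so $\widehat{\mathcal{M}^{\BP}}_{/x_0}$ only becomes a meaningful $v$-sheaf \emph{after} you have $\Spf(A_{\text{univ}})$ from Step 2; the order is backwards. More importantly, you flag Step 1 as ``the crux'' and appeal vaguely to Gleason's specialization formalism without a mechanism for upgrading the generic-fibre comparison across the non-analytic locus. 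The paper avoids exactly this difficulty: rather than proving an isomorphism of $v$-sheaves near $x$ and then transporting representability via a fully faithful embedding (your Step 3), it constructs a specific morphism $(\Spf A_{\text{univ}})_v\to\mathcal{M}^{\text{int}}_{\text{Scholze}}$ out of the deformation object and the quasi-isogeny lifting lemma (Lemma \ref{Uebersetzung der Qis allgemeiner Fall}), shows this factors through $\widehat{\mathcal{M}^{\text{int}}_{\text{Scholze}}}_{/x}$, verifies that the induced map is qcqs using Gleason's quasi-separatedness result \cite[Prop. 2.25]{GleasonPhD}, and then reduces the isomorphism statement to a check on geometric points, where the comparison results of Section \ref{Section Connection to local mixed-characteristic shtukas} apply directly in characteristic $0$ and the characteristic $p$ case is immediate. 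Without this ``qcqs plus geometric points'' reduction (Scholze's standard trick for $v$-sheaves), your Step 1 remains unproved, and your Step 3 fully-faithfulness argument is then working with an isomorphism you have not established. Also note that $p\geq 3$ enters primarily through the $\Acris$-to-$\Ainf$ descent needed to build the morphism of $v$-sheaves, rather than through the Grothendieck--Messing deformation theory as you suggest.
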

\subsection{Techniques of the proof}
\subsubsection{Link between adjoint nilpotent displays and local shtuka}
To prove the previous statements, one has to relate adjoint nilpotent $\G$-$\mu$-displays to Scholze's local shtuka. One actually compares adjoint nilpotent $\G$-$\mu$-displays to Breuil-Kisin-Fargues modules with extra structure (see Def. \ref{Definition BKF}) and then the passage to shtuka is evident.
Here the main statement is the following
\begin{proposition}(Corollary \ref{Korollar Vergleich Displays und BKF})
\\
Assume $p\geq 3.$ Let $R$ be an integral perfectoid $\mathcal{O}_{\breve{E}}$-algebra.
Then there is a fully faithful functor from the category of adjoint nilpotent $\G$-$\mu^{\sigma}$-displays to the category of $\G$-Breuil Kisin-Fargues modules of type $\mu.$
\end{proposition}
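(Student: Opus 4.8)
The plan is to reduce the comparison to the case of $\GL_{n}$, where classical Dieudonné theory applies, and then to transport the reduction of structure group by a Tannakian argument. First, fix a faithful representation $\rho\colon\G\hookrightarrow\GL_{n,\mathbb{Z}_{p}}$ exhibiting $\G$ as the stabilizer of a finite set of tensors (this exists for every reductive $\G$ over $\mathbb{Z}_{p}$, including the exceptional cases; by possibly twisting, or by passing to Lau's higher displays for the representations on which $\rho\circ\mu$ has weights outside $\{0,1\}$, one arranges that each realization is an honest display). Associating to a $\G$-$\mu^{\sigma}$-display its realization along every $\mathbb{Z}_{p}$-representation of $\G$ produces an exact tensor functor from $\Rep_{\mathbb{Z}_{p}}(\G)$ to displays over $R$, compatible with tensor products and duals; the adjoint-nilpotency hypothesis that $-1$ is not a slope of $\Ad(b)$ is exactly what forces each of these $\GL_{n}$-displays to satisfy Zink's nilpotence condition, uniformly in the representation.

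Second, I would build the $\GL_{n}$-comparison: a fully faithful functor from nilpotent displays over $R$ to Breuil--Kisin--Fargues modules of the relevant type over $\Ainf(R)$. Since $R$ is integral perfectoid there is no ring homomorphism $W(R)\to\Ainf(R)$, so this step is genuinely Dieudonné-theoretic. One routes through the semiperfect ring $R/p$, which bridges the two Witt rings via $W(R)\to W(R/p)\leftarrow W(R^{\flat})=\Ainf(R)$ and the map $\Ainf(R)\to\Acris(R/p)$: a nilpotent display over $R$ is the display of a formal $p$-divisible group by the Zink--Lau classification, and Lau's Dieudonné theory over semiperfect and perfectoid rings attaches to the latter a BKF module over $\Ainf(R)$. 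The leg of this module sits at $\varphi(\xi)$, which is precisely the reason the source category is of $\mu^{\sigma}$-displays while the target is ``of type $\mu$'', and minusculeness of $\mu$ matches the bound on the relative position of $\varphi$. Each stage of this chain is fully faithful, and the hypothesis $p\geq 3$ enters here, both because the Zink--Lau and semiperfect Dieudonné theories require it and to avoid the $2$-torsion pathologies of Witt vectors and of divided powers.

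Third, I would assemble the $\G$-structure. Carrying out the second step over all of $\Rep_{\mathbb{Z}_{p}}(\G)$ yields an exact tensor functor $\Rep_{\mathbb{Z}_{p}}(\G)\to\{\text{BKF modules over }\Ainf(R)\}$; by the Tannakian description of $\G$-torsors over $\Ainf(R)$ (which is available for the coherent, perfectoid-related ring $\Ainf(R)$, as in the Tannakian input used elsewhere in this article) this functor corresponds to a $\G$-Breuil--Kisin--Fargues module, and computing the relative position of $\varphi$ on one faithful representation shows it is of type $\mu$. Faithfulness of the resulting functor on $\G$-objects is inherited from the second step; for fullness, a morphism of $\G$-BKF modules is in particular a morphism of the underlying $\GL_{n}$-BKF modules, hence comes from a unique morphism of the underlying displays, and this morphism is automatically $\G$-equivariant because it is compatible with the realizations along all representations.

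The step I expect to be the main obstacle is the interaction of the second and third steps: one must not only construct the $\GL_{n}$-comparison but identify its essential image as exactly the BKF modules of type $\mu$, and do this functorially in the representation. This requires controlling the map $\Ainf(R)\to\Acris(R/p)$ --- which is not flat --- well enough to descend modules back from $\Acris(R/p)$ to $\Ainf(R)$, using adjoint nilpotency to force the windows in play to be $V$-nilpotent so that the comparison is an equivalence and not merely faithful, and it requires fitting the representations of $\G$ with higher-weight $\rho\circ\mu$ into Lau's higher-frame formalism. A secondary, more technical difficulty is making the Tannakian reconstruction of $\G$-torsors precise over the mildly ill-behaved ring $\Ainf(R)$.
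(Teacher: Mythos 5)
Your proposal follows a genuinely different route from the paper, and unfortunately one that has a real gap at a crucial point.

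You propose to reduce to $\GL_{n}$ via a faithful representation, invoke the Zink--Lau classification of formal $p$-divisible groups by \emph{Zink-nilpotent} displays, and then reassemble the $\G$-structure by a Tannakian argument. The gap is in the assertion that ``the adjoint-nilpotency hypothesis\ldots is exactly what forces each of these $\GL_{n}$-displays to satisfy Zink's nilpotence condition.'' This is false, and the paper explicitly warns against it: adjoint nilpotency in the $\GL_{n}$-case is equivalent to the existence of radical ideals $I_{\mathrm{nil}}, I_{\mathrm{uni}}\subseteq R$ with $I_{\mathrm{nil}}\cap I_{\mathrm{uni}}=\sqrt{pR}$ such that $\mathcal{P}_{R/I_{\mathrm{nil}}}$ and the dual $(\mathcal{P}_{R/I_{\mathrm{uni}}})^{t}$ are Zink-nilpotent; in slope terms, the slopes lie strictly between $-1$ and $1$ (an \'etale-connected-type decomposition), not the one-sided Zink condition. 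Consequently the realizations of an adjoint nilpotent $\G$-$\mu^{\sigma}$-display along representations of $\G$ are in general \emph{not} Zink-nilpotent, the classification by formal $p$-divisible groups does not apply, and the chain $W(R)\to W(R/p)\leftarrow\Ainf(R)$ you intend to route through cannot be made Dieudonn\'e-theoretically. A secondary difficulty you already flag but cannot fix along this path: for representations where $\rho\circ\mu$ has weights outside $\{0,1\}$ there is no $p$-divisible group interpretation at all (Lau's higher displays do not classify $p$-divisible groups), so the ``Dieudonn\'e-theoretic'' step is unavailable precisely where it is needed to build an exact tensor functor on all of $\Rep_{\mathbb{Z}_{p}}(\G)$.

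The paper avoids all of this by never reducing to $\GL_{n}$ and never passing through $p$-divisible groups. It constructs a chain of frame morphisms $\mathcal{F}_{\mathrm{inf}}(R)\to\mathcal{F}_{\mathrm{cris}}(R)\to\mathcal{W}(R)$, defines $\G$-$\mu$-windows as quotient groupoids $[\G(S)/_{\Phi}\G(\mathcal{F})_{\mu}]$ directly for the group $\G$, and then applies Lau's unique lifting lemma (Proposition~\ref{Unique lifting lemma}) at two stages: once to prove the crystalline equivalence (Proposition~\ref{Kristalline Aequivalenz}, where the adjoint nilpotency is used to get pointwise nilpotence of the $\varphi$-linear endomorphism on $\Lie(U^{+})\otimes K$, or alternatively one exploits the fact that $\dot\varphi$ is already topologically nilpotent on $\ker(\Acris(R/p)\to W(R/p))$ regardless of nilpotency), and once for the descent from $\Acris$ to $\Ainf$ (Proposition~\ref{Deszent von Acris nach Ainf}, a group-theoretic adaptation of Cais--Lau which is where $p\geq 3$ is used). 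The torsor-theoretic language makes the Tannakian reassembly step you worry about unnecessary: one glues banal objects directly along \'etale descent for the sheaf $\mathcal{R}$, with no need to reconstruct $\G$-torsors over $\Ainf(R)$ from fiber functors. If you want to salvage your approach you would first need to prove a crystalline equivalence for displays under the \emph{adjoint} nilpotency condition (not Zink's) in the $\GL_{n}$-case, which is essentially equivalent to proving the paper's Lemma~\ref{Kristalline aequivalenz modulo p} anyway.
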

In the case where $R=\mathcal{O}_{C}$ is the ring of integers of a complete and algebraically closed extension of $\breve{E},$ one can describe the essential image in the style of Scholze-Weinstein as modifications of trivial $G$-bundles on the Fargues-Fontaine curve.
\begin{proposition}{(Corollary \ref{Scholze-Weinstein G-displays})}
\\
Assume $p\geq 3$ and let $\mathcal{O}_{C}$ be as before. Consider the pointed adic Fargues-Fontaine curve $(X^{\text{ad}}_{C^{\flat},\mathbb{Q}_{p}},\infty)$ associated to the pair $(C^{\flat},\mathbb{Q}_{p})$ and the given untilt $C$ of $C^{\flat}.$
\\
The following categories are equivalent:
\begin{enumerate}
\item[(a):] Adjoint nilpotent $\G$-$\mu^{\sigma}$-displays over $\mathcal{O}_{C},$
\item[(b):] tuples $(\mathcal{E}_{1},\mathcal{E},\iota,\mathcal{T}),$ where $\mathcal{E}_{1}$ and $\mathcal{E}$ are $G^{\text{adic}}$-torsors on $X^{\text{ad}}_{C^{\flat},\mathbb{Q}_{p}},$ $\mathcal{E}_{1}$ is trivial, $\iota$ is a $\mu$-bounded meromorphic modification of $\mathcal{E}_{1}$ by $\mathcal{E}$ along $\infty,$
$\mathcal{T}$ is a $\G$-torsor on $\Spec(\mathbb{Z}_{p}),$ such that if $\mathcal{V}$ is the $G$-torsor on $\Spec(\mathbb{Q}_{p})$ corresponding to $\mathcal{E}_{1}$ 
one requires that
$$
\mathcal{T}\times_{\Spec(\mathbb{Z}_{p})}\Spec(\mathbb{Q}_{p})\cong \mathcal{V}.
$$
One furthermore requires that the vector bundle $\Ad(\mathcal{E})$ has all HN-slopes $<1;$ here $\Ad(\mathcal{E})$ is the vector bundle on $X^{\text{ad}}_{C^{\flat},\mathbb{Q}_{p}}$ obtained by pushing out along the adjoint representation.
\end{enumerate}
\end{proposition}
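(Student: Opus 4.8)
The plan is to deduce the equivalence from the fully faithful functor of Corollary~\ref{Korollar Vergleich Displays und BKF}, which realizes the category~(a) as a full subcategory of the category of $\G$-Breuil--Kisin--Fargues modules of type $\mu$ over $\mathcal{O}_{C}$ (Definition~\ref{Definition BKF}). Two things then remain. First, one must identify this latter category with the category of tuples $(\mathcal{E}_{1},\mathcal{E},\iota,\mathcal{T})$ exactly as in~(b) but \emph{dropping} the condition on the Harder--Narasimhan slopes of $\Ad(\mathcal{E})$. Second, one must show that the essential image of the functor of Corollary~\ref{Korollar Vergleich Displays und BKF}, transported along that identification, is precisely the subcategory of tuples for which $\Ad(\mathcal{E})$ has all HN-slopes $<1$.

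For the first point I would unwind the by-now-standard dictionary relating $\G$-structures over $\mathbb{A}_{\inf}(\mathcal{O}_{C})$ to modifications of $G$-bundles on the Fargues--Fontaine curve. A $\G$-BKF module $\mathcal{Q}$ restricts to a $\G$-torsor with Frobenius structure over the analytic locus $Y$ of $\Spec\mathbb{A}_{\inf}(\mathcal{O}_{C})$; since $\G_{\mathbb{Q}_{p}}=G$, descent along the Frobenius $\varphi$ produces the $G^{\mathrm{adic}}$-torsor $\mathcal{E}$ on $X^{\mathrm{ad}}_{C^{\flat},\mathbb{Q}_{p}}$. Completing at the kernel of $\theta$, i.e.\ along the point $\infty$ attached to the chosen untilt $C$, gives a $B_{dR}^{+}$-lattice and hence the meromorphic modification $\iota$, with ``type $\mu$'' becoming ``$\mu$-bounded''. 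The \'etale realization of $\mathcal{Q}$ --- which is defined here because, by Fargues's theorem, type-$\mu$ modules over $\mathcal{O}_{C}$ for minuscule $\mu$ are of $p$-divisible group type, and because $p\geq 3$ --- is a $\G(\mathbb{Z}_{p})$-torsor $\mathcal{T}$; setting $\mathcal{V}=\mathcal{T}\times_{\Spec\mathbb{Z}_{p}}\Spec\mathbb{Q}_{p}$ and $\mathcal{E}_{1}=\mathcal{V}\times_{\Spec\mathbb{Q}_{p}}X^{\mathrm{ad}}_{C^{\flat},\mathbb{Q}_{p}}$ recovers the geometrically trivial bundle modified by $\iota$, the de Rham comparison $\mathcal{Q}\otimes_{\mathbb{A}_{\inf}}B_{dR}\cong\mathcal{T}\otimes_{\mathbb{Z}_{p}}B_{dR}$ being what matches the two lattices at $\infty$. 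That $\mathcal{Q}\mapsto(\mathcal{E}_{1},\mathcal{E},\iota,\mathcal{T})$ is an equivalence onto the category of \emph{all} such tuples is Fargues's theorem in the Scholze--Weinstein formulation for $\GL_{n}$, together with Tannakian descent for the reductive group $\G$ and the extension of the Beauville--Laszlo gluing to $\G$-torsors over $\mathbb{A}_{\inf}$.

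The heart of the matter is the second point, a slope computation. A $\GL_{n}$-display over $\overline{\mathbb{F}}_{p}$ is nilpotent in Zink's sense exactly when its Frobenius-isocrystal has all slopes $<1$: its $V^{\sharp}$ is, up to twist, $pF^{-1}$, and topological nilpotence of $pF^{-1}$ modulo $p$ means $1-\lambda_{i}>0$ for every slope $\lambda_{i}$. Applying this to the $\GL(\mathfrak{g})$-display obtained by pushing out along $\Ad$, a $\G$-$\mu^{\sigma}$-display over $\mathcal{O}_{C}$ is adjoint nilpotent precisely when the isocrystal $(\mathfrak{g},\Ad(b))$ of its special fibre over $\overline{\mathbb{F}}_{p}$ has all slopes $<1$. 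On the other side, for $\mathcal{Q}$ of type $\mu$ over $\mathcal{O}_{C}$ the Harder--Narasimhan polygon of $\mathcal{E}$ equals the Newton polygon of the isocrystal $\mathcal{Q}\otimes_{\mathbb{A}_{\inf}}W(\overline{\mathbb{F}}_{p})[1/p]$ of the special fibre; hence $\Ad(\mathcal{E})$ has all HN-slopes $<1$ if and only if $(\mathfrak{g},\Ad(b))$ has all slopes $<1$. Thus the tuples in~(b) satisfying the slope condition correspond, under the dictionary above, to the $\G$-BKF modules of type $\mu$ with adjoint-nilpotent special fibre, and it remains to see that these are exactly the ones in the image of Corollary~\ref{Korollar Vergleich Displays und BKF}, i.e.\ that that functor is essentially surjective onto this subcategory. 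I would obtain this by deformation theory: such a $\mathcal{Q}$ lifts the $\G$-$\mu^{\sigma}$-display over $\overline{\mathbb{F}}_{p}$ attached to its special fibre, and in the adjoint-nilpotent range the functor from $\G$-$\mu^{\sigma}$-displays to $\G$-BKF modules is formally \'etale, so the lift exists and is unique. Finally, by the symmetry of the slopes of $\Ad$ together with the bound that $\Ad(b)$ has slopes in $[-1,1]$ for $[b]\in B(G,\mu)$ with $\mu$ minuscule, ``all HN-slopes of $\Ad(\mathcal{E})$ are $<1$'' is equivalent to the running hypothesis that $-1$ is not a slope of $\Ad(b)$, so the displays occurring here are precisely those to which the earlier results of the article apply.

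The main obstacles I anticipate are three: controlling Zink's nilpotence condition over $\mathcal{O}_{C}$, whose reduction $\mathcal{O}_{C}/p$ is neither reduced nor Noetherian, and verifying it is detected on the special fibre; pinning down the comparison ``HN polygon of the $\mathbb{A}_{\inf}$-associated bundle $=$ Newton polygon of the special fibre'' through the $\mu$-versus-$\mu^{\sigma}$ and slope-sign conventions; and above all the essential-surjectivity step, the construction of a $\G$-$\mu^{\sigma}$-display out of an adjoint-nilpotent $\G$-BKF module, which is the only part that is genuinely more than bookkeeping once the comparison results recalled in the excerpt are taken as given.
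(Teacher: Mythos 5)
Your overall architecture matches the paper's: pass from (a) to $\G$-BKF modules of type $\mu$ via Corollary~\ref{Korollar Vergleich Displays und BKF}, identify those with the tuples of (b) sans slope condition (the Theorem preceding Corollary~\ref{Scholze-Weinstein G-displays}), show $\mathcal E\simeq\mathcal E_b$ by comparing the $\Bcris$-realizations of the $\mathbb A_{\inf}$-module and its special fibre, and then translate the adjoint-nilpotency constraint into the HN-slope inequality. However, there is a genuine gap in your slope step. You assert that adjoint nilpotency is equivalent to Zink-nilpotence of the $\GL(\mathfrak g)$-display obtained by pushing out along $\Ad$; that is not what the adjoint-nilpotency condition is. The defining operator is $(\id_{\Lie U^{+}}\otimes\varphi)\circ\pi\circ\Ad(g)$ acting on $\Lie(U^{+})$ only, where $\pi$ kills $\Lie(P^{+})$ --- this is \emph{not} the Frobenius of the adjoint pushout, and the paper's remark following the definition of adjoint nilpotency explicitly records that even for $\GL_n$ the two conditions differ (one is ``$\mathcal P$ mod $I_{\nilp}$ and $\mathcal P^{t}$ mod $I_{\text{uni}}$ are Zink-nilpotent'', not ``$\Ad(\mathcal P)$ is Zink-nilpotent''). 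The correct slope characterization over $\overline{\mathbb F}_p$, as the paper records in the corollary to Lemma~\ref{Windows ueber Ainf und BKF lokales Statement}, is that all slopes of $\Ad(b)$ are $>-1$. You end up with ``$<1$'', which is numerically equivalent only because the slopes of $\Ad(b)$ come in pairs $\{\lambda,-\lambda\}$; you arrive at the right answer, but through a characterization you haven't established and which is in fact false as a statement about Zink-nilpotence of $\Ad$.

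A smaller point: your essential-surjectivity step via ``formally \'etale'' deformation theory is both vague and unnecessary. The essential surjectivity in the paper's argument is automatic from the chain of equivalences already proved --- crystalline equivalence (Prop.~\ref{Kristalline Aequivalenz}) composed with the $\mathbb A_{\cris}$-to-$\mathbb A_{\inf}$ descent (Prop.~\ref{Deszent von Acris nach Ainf}) and the modification-theorem --- over $\mathcal O_C$ everything is banal, and adjoint nilpotency is a condition that lives on the residue field, so once you've shown $\mathcal E\simeq\mathcal E_b$ and translated the slope condition, the essential image is what you claim without any lifting argument. Invoking deformation theory over the rank-one valuation ring $\mathcal O_C$ (which is not a complete local Noetherian ring) would require justification you haven't supplied, and it is not needed.
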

I will now explain what goes into the proof of these statements. Here I follow the strategy used by Eike Lau in \cite{LauPerfektoid} where he considers the $\GL_{n}$ case.
\\
Let $R$ be an integral perfectoid $\mathcal{O}_{\breve{E}}$-algebra. The bridge between the rings $\Ainf(R)$ and $W(R)$ is built using the crystalline period ring $\Acris(R):$ the ring $\Ainf(R)$ embeds into $\Acris(R)$ and using the Cartier morphism and Witt functoriality one constructs the rather funny morphism
$$
\chi\colon \Acris(R)\rightarrow W(\Acris(R))\rightarrow W(R).
$$
The whole set-up admits an enhencement to frame morphisms and one can introduce the appropiate notion of $\G$-$\mu$-window categories. Objects in this category are torsors under certain groups and in case these torsors are trivial one calls these windows banal (a term that was introduced in \cite{BueltelPappas} for displays). Then one shows the following (the 'crystalline equivalence'):
\begin{proposition}{(Proposition \ref{Kristalline Aequivalenz})}
\\
Let $R$ be an integral perfectoid $\mathcal{O}_{\breve{E}}$-algebra.
\begin{enumerate}
\item[(a):] The morphism $\chi$ induces an equivalence
$$
\chi_{\bullet}\colon \G\text{-}\mu\text{-}\text{Win}(\mathcal{F}_{\cris}(R))_{\nilp, \banal}\rightarrow \G\text{-}\mu\text{-}\text{Displ}(R)_{\nilp, \banal}.
$$
\item[(b):] Assume furthermore that $R$ is $p$-torsion free, then
$$
\chi_{\bullet}\colon  \G\text{-}\mu\text{-}\text{Win}(\mathcal{F}_{\cris}(R))_{\nilp}\rightarrow  \G\text{-}\mu\text{-}\text{Displ}(R)_{\nilp}
$$
is an equivalence.
\end{enumerate}
\end{proposition}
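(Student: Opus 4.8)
The strategy is to transport Lau's proof of the corresponding statement for $\GL_{n}$ in \cite{LauPerfektoid} to the $\G$-$\mu$-equivariant setting. One first proves the equivalence for banal objects, where it becomes an assertion about twisted-conjugacy classes of group elements, and then bootstraps to arbitrary $\G$-$\mu$-windows and displays — at the cost, in part (b), of assuming $R$ $p$-torsion free — by descent.

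First I would use the factorization of $\chi$ as the composite
$$
\Acris(R)\;\xrightarrow{\;u\;}\;W(\Acris(R))\;\xrightarrow{\;W(\theta)\;}\;W(R),
$$
where $u$ is the Cartier morphism of the $\delta$-ring $\Acris(R)$ (characterized on ghost components by $w_{n}\circ u=\varphi^{n}$, so that $F\circ u=u\circ\varphi$) and $\theta\colon\Acris(R)\twoheadrightarrow R$ is the canonical surjection. With the frame structures set up as above this realizes $\chi$ as a composite of frame morphisms, and the claim reduces to two more tractable equivalences: that passing from the $\delta$-ring $\Acris(R)$ to its Witt vectors does not change the $\G$-$\mu$-window category, and that $\G$-$\mu$-displays form a crystal along the surjection $\Acris(R)\twoheadrightarrow R$, whose kernel $(\xi)$ is topologically nilpotent and carries divided powers. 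The second of these is the point where the nilpotence condition on the objects and the hypothesis $p\geq 3$ are used, in order to have the requisite rigidity of nilpotent $\G$-$\mu$-displays available.

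Both equivalences I would prove first for banal objects. A banal $\G$-$\mu$-window over a frame $\mathcal{F}$ is classified, up to isomorphism, by a $\G$-valued point $g$ over the underlying ring, with isomorphisms acting through the twisted conjugation $g\mapsto h^{-1}g\,\Phi_{\mu}(h)$ by the $\mu$-divided loop group $\G_{\mu}(\mathcal{F})$ — this is the $\G$-analogue of Fact~\ref{Displays sind lokal Konjugationsklassen von Matrizen}, with $\G_{\mu}(\mathcal{F})$ and $\Phi_{\mu}$ playing the roles of $\GL_{n}(\mathcal{W}(\cdot))_{n,d}$ and $\Phi_{n,d}$. Essential surjectivity and full faithfulness of $\chi_{\bullet}$ then amount to solving equations of the shape $\gamma^{-1}g\,\Phi_{\mu}(\gamma)=g'$ over the target ring together with the corresponding lifting problems along the kernel of $\chi$; these are solved by successive approximation, the successive corrections lying in increasing powers of that kernel, with convergence guaranteed by $p$-adic completeness, smoothness of $\G$, and topological nilpotence of the relevant Frobenius-twisted operator. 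I expect this to be the main obstacle: one must control the groups $\G_{\mu}(\mathcal{F}_{\cris}(R))$ and $\G_{\mu}(\mathcal{W}(R))$ and the interaction of $\chi$ with their divided Frobenii precisely enough to run this approximation $\G$-equivariantly, in place of the matrix computations that suffice for $\GL_{n}$.

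Finally, I would deduce part (b) from (a) by descent. Both $\G$-$\mu$-window categories in question form stacks for a suitable faithfully flat topology; every object becomes banal after a cover in this topology; and, when $R$ is $p$-torsion free, the formation of $\Acris(R)$ — hence of the frame $\mathcal{F}_{\cris}(R)$ and of the functor $\chi_{\bullet}$ — is compatible with the base changes occurring in the descent, so the banal equivalence of (a) glues to the equivalence asserted in (b). The banal hypothesis cannot be dropped in (a) in general precisely because this base-change compatibility of $\Acris$ may fail once $R$ has $p$-torsion.
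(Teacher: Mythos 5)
Your general plan — prove the banal case by a lifting/successive-approximation argument, then bootstrap to part (b) by étale descent for $\mathcal{F}_{\cris}(\mathcal{R})$, which works because $R$ being $p$-torsion free makes $R/p$ quasi-regular semiperfect — is the right shape, and the descent step matches the paper. But the core of your sketch proposes a different, and as stated problematic, route.

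You factor $\chi$ as $\Acris(R) \xrightarrow{u} W(\Acris(R)) \xrightarrow{W(\theta)} W(R)$ and claim the proof reduces to two equivalences: one for the Cartier step and one saying ``$\G$-$\mu$-displays form a crystal along $\Acris(R)\twoheadrightarrow R$.'' This is essentially Zink's original strategy (\cite{ZinkWindow}), which the paper explicitly rules out in the present setting: Zink constructs a quasi-inverse by evaluating the crystal of a nilpotent display at the pd-thickening $\Acris(R)\to R$, but ``as one has to work here with torsors, this route is not available.'' There is no Tannakian/torsor-valued crystal lying around to evaluate, so the second of your ``two tractable equivalences'' is not a reduction but is itself the hard content of the proposition. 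There is also a frame mismatch in the first step: $\mathcal{F}_{\cris}(R)$ and $\mathcal{W}(R)$ both have residue ring $R$, whereas the natural Witt frame on $W(\Acris(R))$ would have residue ring $\Acris(R)$; without specifying the intermediate frame structure the claim ``does not change the $\G$-$\mu$-window category'' has no precise meaning. What the paper actually does is reduce modulo $p$ — i.e.\ work with the strict frame morphism $\chi/p\colon\mathcal{F}_{\cris}(R/p)\to\mathcal{W}(R/p)$ — and apply Lau's unique lifting lemma (Proposition~\ref{Unique lifting lemma}) to the kernel $K=\ker(\Acris(R/p)\to W(R/p))$, using either adjoint nilpotency or the observation that $\dot\varphi$ acts topologically nilpotently on $K$; the passage from the mod-$p$ statement to the integral statement is a deformation argument identifying lifts on both sides with lifts of the Hodge filtration in $(\G/P^-)(R)$, not a crystal evaluation.

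Two smaller but real errors: the hypothesis $p\geq 3$ plays no role in this proposition (it is needed only for Proposition~\ref{Deszent von Acris nach Ainf}, the descent from $\Ainf$ to $\Acris$), so attributing it to the ``crystal along $\Acris(R)\to R$'' step is wrong; and the kernel of $\Acris(R)\to R$ is $\Fil(\Acris(R))$, the $p$-adically completed pd-ideal generated by the divided powers of $\xi$, not $(\xi)$, and $\xi$ itself is not topologically nilpotent in $\Acris(R)$.
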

\begin{Remark}
The restriction in (b) to $p$-torsion free integral perfectoid rings is maybe not necessary, but I was only able to verify descent for $\Acris(.)$ in this set-up c.f. Lemma \ref{Descent fuer Acris}.
\end{Remark}
Here the key ingredient is Lau's unique lifting lemma, which I reproduce in the language used in this article in Proposition \ref{Unique lifting lemma}.
\\
It remains to link $\G$-$\mu$-windows over $\Ainf(R)$ to those over $\Acris(R).$ This is the step where I have to assume $p\geq 3$ unfortunately. Here I give a group theoretic adaptation of the arguments of Cais-Lau \cite{CaisLau} and show the following
\begin{proposition}{(Proposition \ref{Deszent von Acris nach Ainf})}
\\
Assume that $p\geq 3$ and let $R$ be an integral perfectoid $\mathcal{O}_{\breve{E}}$-algebra. Then the inclusion $\Ainf(R)\hookrightarrow \Acris(R)$ induces an equivalence
$$ \G\text{-}\mu\text{-}\text{Win}(\mathcal{F}_{\infintesimal}(R))_{\banal}\rightarrow  \G\text{-}\mu\text{-}\text{Win}(\mathcal{F}_{\cris}(R))_{\banal}.$$
\end{proposition}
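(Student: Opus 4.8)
The plan is to reduce the assertion to a group-theoretic successive-approximation argument, following the strategy of Cais--Lau \cite{CaisLau} (who treat the case $\G=\GL_{n}$) and replacing their explicit linear algebra by deformation theory for the smooth affine group scheme $\G$. First I would unravel the definitions: a banal $\G$-$\mu$-window over a frame $\mathcal{F}$ with top ring $S$ is, after trivialising the underlying torsor, the datum of a single element $U\in\G(S)$, and an isomorphism $(U)\to(U')$ of banal windows is an element $h$ of the associated group $G_{\mu}(S)$ with $U'=h\inv U\,\sigma_{\mu}(h)$, where $\sigma_{\mu}$ is the $\sigma$-semilinear map built from $\mathcal{F}$ and $\mu$ (the analogue of $\Phi_{n,d}$ in Fact~\ref{Displays sind lokal Konjugationsklassen von Matrizen}). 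All of this is functorial in the frame, so the functor in the statement is the base change along $\mathcal{F}_{\infintesimal}(R)\to\mathcal{F}_{\cris}(R)$: it sends $U$ and $h$ to their images under $\Ainf(R)\hookrightarrow\Acris(R)$. Since this ring map is injective and $\G$, $G_{\mu}$ are affine, the functor is automatically faithful, and it remains to establish fullness and essential surjectivity.

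The technical heart is a contraction property of the frame morphism: one exhibits $\Acris(R)$ as an inverse limit $\varprojlim_{n}\Acris(R)/I_{n}$ along a decreasing chain of ideals $I_{n}$ such that $\Acris(R)/I_{0}$ is controlled by a quotient of $\Ainf(R)$ and each transition $\Acris(R)/I_{n+1}\twoheadrightarrow\Acris(R)/I_{n}$ is a PD-nilpotent thickening whose kernel $M_{n}$ is a module over $\Ainf(R)$ on which the (divided) Frobenius acts topologically nilpotently. Concretely this rests on the interaction of $\phi$ with the divided powers $\gamma_{n}(\xi)$ in $\Acris(R)$ — for instance $\phi(\xi)\in p\,\Acris(R)$, since $\phi(\xi)\equiv\xi^{p}=p!\,\gamma_{p}(\xi)\bmod p\,\Ainf(R)$, together with $v_{p}(n!)<n$ — and this bookkeeping, carried out over $\GL_{n}$, is exactly the technical core of \cite{CaisLau}. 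This is the step at which the hypothesis $p\geq 3$ is needed.

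With this in hand, essential surjectivity is proved by successive approximation: starting from $U\in\G(\Acris(R))$, one constructs inductively $h_{n}\in G_{\mu}(\Acris(R))$ with $h_{n}\inv U\,\sigma_{\mu}(h_{n})$ congruent modulo $I_{n}$ to an element of $\G(\Ainf(R))$. The inductive step is a lifting problem along the square-zero thickening $\Acris(R)/I_{n+1}\twoheadrightarrow\Acris(R)/I_{n}$ with kernel $M_{n}$: smoothness of $\G$ produces the required lift, and the remaining ambiguity is killed by solving, on $\Lie(\G)\otimes M_{n}$, an equation $(\id-\ell_{n})(x)=(\text{given element})$ in which $\ell_{n}$ is the linearisation at the current approximation $W$ of the twisted conjugation $g\mapsto g\inv W\,\sigma_{\mu}(g)$; as $\ell_{n}$ is $\phi$-semilinear up to an inner automorphism of $\G$ and $\phi$ is topologically nilpotent on $M_{n}$, the operator $\id-\ell_{n}$ is invertible with inverse $\sum_{k\geq 0}\ell_{n}^{\,k}$. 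Passing to the limit $h=\varprojlim h_{n}$, which exists by completeness, gives $h\inv U\,\sigma_{\mu}(h)\in\G(\Ainf(R))$. Fullness follows from the same induction applied to a morphism: given $U,U'\in\G(\Ainf(R))$ and $h\in G_{\mu}(\Acris(R))$ with $U'=h\inv U\,\sigma_{\mu}(h)$, one approximates $h$ by elements of $G_{\mu}(\Ainf(R))$, the discrepancy at each stage lying in $M_{n}$ and — because $U$ and $U'$ are already defined over $\Ainf(R)$ — being forced to satisfy $(\id-\ell_{n})(x)=0$, hence to vanish; in the limit $h\in G_{\mu}(\Ainf(R))$.

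The hard part will be the non-abelian, semilinear upgrade of the Cais--Lau approximation: one must verify that on each graded piece $M_{n}$ the linearised twisted-conjugation operator is invertible — which comes down to the contraction estimate above — and that the higher-order terms, genuinely present because $\sigma_{\mu}$ is not additive and $\G$ need not be commutative, do not obstruct the convergence of the successive corrections. Here, besides the estimate on $\phi$ applied to the divided-power filtration of $\Acris(R)$, I expect to use that $\mu$ is minuscule, so that only the weights $-1,0,1$ occur in $\sigma_{\mu}$ and the relevant denominators stay under control; this, together with the finer divided-power bookkeeping in $\Acris(R)$, is where the restriction $p\geq 3$ ultimately becomes essential.
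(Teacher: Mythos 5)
Your proposal has the right flavor — a Cais--Lau style successive approximation, the observation that $\varphi(\xi)\in p\,\Acris(R)$, and the relevance of divided-power $p$-adic estimates are all correct — but the structural organisation has a genuine gap.

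You describe the argument as a lifting problem along a decreasing filtration $I_{n}$ of $\Acris(R)$, with a deformation-theoretic step on each graded piece $M_{n}$. This is precisely the shape of the unique lifting lemma (Proposition~\ref{Unique lifting lemma}); but that machinery is designed for \emph{surjective} frame morphisms whose kernel is $\dot{\varphi}$-stable, whereas $\lambda\colon\mathcal{F}_{\infintesimal}(R)\to\mathcal{F}_{\cris}(R)$ is an \emph{injection}, with no kernel on which to apply topological nilpotency. The proposal filters $\Acris(R)$ alone and asks, at each stage, that the approximated structure matrix be ``congruent modulo $I_{n}$ to an element of $\G(\Ainf(R))$''; but $\Ainf(R)\to\Acris(R)/I_{n}$ is neither injective nor surjective in general, and no mechanism is given to control its image. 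Without a way to anchor the successive approximation in $\G(\Ainf(R))$, the induction does not close.

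The paper's resolution is a triangle rather than a straight line. One first reduces the whole problem to the truncated frames $\mathcal{F}_{\infintesimal,n}(R)$ and $\mathcal{F}_{\cris,n}(R)$ (mod $p^{n}$); a separate deformation argument (Lemma~\ref{Aequivalenz der Fasern unter Reduktion modulo p}, comparing the fibers of the mod-$p^{n}$ reduction functors on both sides) shows that ``$\lambda_{n}$ crystalline $\Rightarrow$ $\lambda_{n+1}$ crystalline.'' For the base case $n=1$ the key device is the auxiliary bottom frame $\underline{A_{0}}$ built on $A_{0}=\Ainf(R)/(p,\xi^{p})$. Because $\gamma_{i}(\xi)\in\Ainf(R)$ for $i<p$, one has $A_{0}\simeq\Acris(R)/(p\Acris(R)+\Fil^{p}\Acris(R))$, so there are \emph{surjective} frame morphisms $\pi\colon\mathcal{F}_{\infintesimal,1}(R)\to\underline{A_{0}}$ and $\chi\colon\mathcal{F}_{\cris,1}(R)\to\underline{A_{0}}$; both are shown to be crystalline by the unique lifting lemma applied to their kernels, and this forces $\lambda_{1}$ to be an equivalence by composing. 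Your proposal identifies neither this triangle nor the compatible filtration on the $\Ainf$ side that a direct approach would require.

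Finally, the pinpointed reason for $p\geq 3$ is not the weight bound $|\text{wt}|\leq 1$ from minusculeness, but the $p$-adic estimate: $\dot{\varphi}_{\cris}(\gamma_{n}(\xi))$ has $p$-adic valuation $n-1-\nu_{p}(n!)$, and since $\nu_{p}(n!)\leq\frac{n-1}{p-1}$ this is strictly positive for all $n\geq p$ exactly when $p\geq 3$, so that $\dot{\varphi}_{\cris}$ vanishes on $\Fil^{p}(\Acris(R)/p)$; likewise on $\xi^{p}A_{0}$ one has $\dot{\varphi}(\xi^{p}a)=\xi^{p(p-1)}a^{p}$, which is topologically nilpotent only when $p(p-1)>p$, i.e.\ $p\geq 3$. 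These are the exact places where the hypothesis is spent.
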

\subsubsection{Link between the moduli problems}
Having wrestled with the theory of frames and windows, one can finally turn to the comparison of the moduli problems. To get going, one has to compare the two notions of quasi-isogeny that are used. Unfortunately, this is not a direct consequence of the work done before: in the case of $\GL_{n}$ this translation of the quasi-isogeny is automatic because there one does not only work with groupoids and one can use a good notion of an isogeny. This is one of the most technical parts of this article and I refer to section \ref{Section Translation of the Quasi-isogeny} for the full details; let me just say here that one uses the Tannakian perspective on $\G$-$\mu$-displays introduced by Daniels in \cite{Daniels1} and the key fact that the Frobenius acts $p$-adically very nilpotently on
$$
\ker(\Acris(R/p)\rightarrow W(R/p)).
$$
At this point, one can construct a morphism of $v$-sheaves over $\Spd(\mathcal{O}_{\breve{E}})$
$$
\mathcal{M}^{\text{BP}}_{v}\rightarrow \mathcal{M}^{\text{int}}_{\text{Scholze}},
$$
see for example the proof of Proposition \ref{Formale Modelle fuer die Tubes} and one can verify that this morphism is a bijection on geometric points. Scholze has pioneered a trick which allows one to deduce an isomorphism of $v$-sheaves in this situation: one considers arbitrary products $\prod_{i\in I}\mathcal{O}_{C_{i}},$ where $C_{i}$ are complete algebraically closed fields over $\mathcal{O}_{\breve{E}}$ with perfectoid pseudo uniformizers $\varpi_{i}\in C_{i}.$ Let $R^{+}=\prod_{i\in I}\mathcal{O}_{C_{i}},$ $\varpi=(\varpi_{i})_{i}\in R^{+}$ and $R=R^{+}[1/\varpi].$ Then one has to show that a $\Spa(R,R^{+})$-valued point of $\mathcal{M}^{\text{Scholze}}_{\text{int}}$ factors over $\mathcal{M}^{\text{BP}}_{v}\rightarrow \mathcal{M}^{\text{Scholze}}_{\text{int}}.$ Since this product of point construction does not interact well with the adjoint nilpotency condition, one has to find a different way.
\\
Here I succeeded in the generic fiber to show Proposition \ref{Vergleich der Diamanten}. One technical ingredient is the following statement, which might also be helpful in other contexts:
\begin{Lemma}{(see Lemma \ref{Lemma Bouthier Cesnavicius} and Remark \ref{Input Bhatt zum Ausdehnen})}
\\
Let $(R,R^{+})$ be an affinoid perfectoid pair over $k.$ Let $\mathcal{N}$ be a $\G$-torsor over $$\Spec(W(R^{+}))-V(p,[\varpi]).$$
There exists a covering for the analytic topology of $\Spa(R,R^{+}),$ such that $\mathcal{N}$ extends to the whole spectrum after pullback to this covering.
\end{Lemma}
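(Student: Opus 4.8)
The plan is to reduce the assertion to a $\GL_{n}$-statement about extending vector bundles and then to produce such an extension concretely after an analytic localisation. Set $A=W(R^{+})$, let $Z=V(p,[\varpi])\subseteq\Spec(A)$ and $U=\Spec(A)\setminus Z$. Since $R^{+}$ is perfect and $\varpi$ is a nonzerodivisor in $R^{+}$, the pair $(p,[\varpi])$ is a regular sequence in $A$, so $\operatorname{depth}_{Z}(A)\geq 2$ and the Hartogs property $H^{0}(U,\mathcal F)=H^{0}(\Spec(A),\mathcal F)$ holds for every quasi-coherent sheaf $\mathcal F$ on $\Spec(A)$. Choosing a closed embedding $\G\hookrightarrow\GL_{n}$ over $\Z_{p}$ with $\GL_{n}/\G$ affine (possible since $\G$ is reductive), it then suffices, by the mechanism of Lemma~\ref{Lemma Bouthier Cesnavicius}, to extend the rank-$n$ vector bundle $\mathcal V:=\mathcal N\times^{\G}\GL_{n}$ from $U$ to a vector bundle $\overline{\mathcal V}$ on $\Spec(A)$: granting this, the $\G$-reduction defining $\mathcal N$ is a section over $U$ of the scheme $\overline{\mathcal V}\times^{\GL_{n}}(\GL_{n}/\G)$, which is affine over $\Spec(A)$, and this section extends over $\Spec(A)$ by the Hartogs property, giving a $\G$-torsor on $\Spec(A)$ restricting to $\mathcal N$. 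So one is reduced to extending a vector bundle on $U$, after pullback along an analytic cover of $\Spa(R,R^{+})$.

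Next I would put $\mathcal V$ into a tractable shape over the two basic opens $\Spec(A[1/p])$ and $\Spec(A[1/[\varpi]])$ covering $U$. The $[\varpi]$-adic side is easier: the $p$-adic completion of $A[1/[\varpi]]$ is $W(R)$, and $W(R)/p=R$; a rank-$n$ vector bundle over the perfectoid ring $R$ is trivial after an analytic cover of $\Spa(R,R^{+})$, and the triviality propagates across the $p$-complete ring $W(R)$ by the $p$-adic Nakayama lemma, so after such a cover $\mathcal V|_{\Spec(W(R))}$ is trivial and, by Beauville--Laszlo along $p$, $\mathcal V|_{\Spec(A[1/[\varpi]])}$ is brought to standard form. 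The $p$-inverted side $\Spec(A[1/p])$ is treated analogously, but now the needed input on the complementary $[\varpi]$-adic side is not perfectoid geometry but Bhatt's algebraisation and extension results, which is precisely the role of Remark~\ref{Input Bhatt zum Ausdehnen}; this is also the step that forces the passage to an analytic cover, so that the two pieces become comparable.

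With both restrictions in standard form after a common analytic refinement, $\mathcal V$ on $U=\Spec(A[1/p])\cup\Spec(A[1/[\varpi]])$ is described by a clutching datum $g\in\GL_{n}\bigl(A[1/(p[\varpi])]\bigr)$. Since $A$ is $(p,[\varpi])$-adically complete and $(p,[\varpi])$ is a regular sequence, formal glueing along $Z$ (the two-variable Beauville--Laszlo, again using Bhatt's algebraisation to identify formal and honest vector bundles over the $(p,[\varpi])$-complete $A$) produces a vector bundle $\overline{\mathcal V}$ on $\Spec(A)$ restricting to $\mathcal V$ on $U$. Re-running the reduction of the first paragraph then extends $\mathcal N$ itself over $\Spec(A)$, which is the assertion.

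The main obstacle is exactly the failure of Noetherianity of $A=W(R^{+})$: for the open immersion $j\colon U\hookrightarrow\Spec(A)$ the sheaf $j_{*}\mathcal V$ is a priori only a reflexive quasi-coherent sheaf, with no reason to be finitely generated, so one cannot extend the bundle over all of $\Spa(R,R^{+})$ at once and must first shrink analytically in order to build the extension by glueing trivial pieces. Coordinating the two basic opens is where the real work lies: the $[\varpi]$-adic side reduces cleanly to the perfectoid theory on $\Spa(R,R^{+})$, whereas controlling $\mathcal V$ over $\Spec(A[1/p])$ --- where $p$ has been inverted --- is subtler and genuinely requires Bhatt's extension input.
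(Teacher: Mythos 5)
Your proposal takes a genuinely different route from the paper — a global gluing argument over the two affine opens $\Spec(A[1/p])$ and $\Spec(A[1/[\varpi]])$ of $U$, rather than the paper's stalk-wise spreading-out argument — but the global route as written has a gap that the paper's approach is specifically designed to avoid.

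The reduction to $\GL_{n}$ in your first paragraph is fine: with $\GL_{n}/\G$ affine and $(p,[\varpi])$ a regular sequence, the Hartogs property on $\Spec(A)$ does extend the $\G$-reduction once the vector bundle $\mathcal{V}$ has been extended (the paper runs the same kind of argument, via an affine $\underline{\Isom}$-scheme, in the injectivity part of Proposition \ref{Vergleich der Diamanten}). The problems start with the vector bundle extension itself. On the $[\varpi]$-inverted side you only trivialize the restriction of $\mathcal{V}$ to $\Spec(W(R))$; Beauville--Laszlo along $p$ then exhibits $\mathcal V|_{\Spec(A[1/[\varpi]])}$ as glued from this trivial piece and an uncontrolled bundle on $\Spec(A[1/(p[\varpi])])$, which is not yet a trivialization of $\mathcal V|_{\Spec(A[1/[\varpi]])}$. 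On the $p$-inverted side no argument is given at all; the $[\varpi]$-adic completion of $W(R^{+})[1/p]$ is not a tractable ring (it is not any $A_{\inf}$ of a perfectoid ring, nor obviously related to one), so the suggested analogy with the $[\varpi]$-side does not go through. And the closing step is vacuous: $A=W(R^{+})$ is already $(p,[\varpi])$-adically complete, so the two-variable Beauville--Laszlo / Bhatt algebraisation that you invoke compares $\Vect(\Spec(A))$ with itself and produces no new vector bundles. Even if both restrictions were honestly trivial, a clutching datum $g\in\GL_{n}(A[1/(p[\varpi])])$ would yield a vector bundle on $U$ but would extend over $\Spec(A)$ only if $g$ factors through the obvious double coset, which is exactly the non-Noetherian failure you yourself flag as the obstacle but then do not resolve.

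The missing ingredient is Kedlaya's extension theorem (\cite[Thm.~2.7]{KedlayaAinf}, together with Anschütz \cite[Prop.~8.5]{JohannesExtending} for the passage to $\G$-torsors): for a perfectoid valuation ring $\kappa(x)^{+}$ of rank one, any $\G$-torsor on $\Spec(W(\kappa(x)^{+}))-V(p,[\varpi])$ extends over the puncture. This is the source of the extension, and nothing in your proposal replaces it. The paper's argument is stalk-wise: fix $x\in\Spa(R,R^{+})$, extend the bundle over $\Spec(W(\kappa(x)^{+}))$ by Kedlaya--Anschütz, and then spread this extension to a rational neighborhood of $x$ using either the colimit formulas of Lemma \ref{Lemma Bouthier Cesnavicius} (via Bouthier--Česnavičius' henselian and Gabber--Ramero results) or Bhatt's Beauville--Laszlo applied to the map $\Spec(W(\kappa(x)^{+}))\to\Spec(A)$ with $A=\colim_{x\in U}W(\mathcal O^{+}_{S}(U))$ (Remark \ref{Input Bhatt zum Ausdehnen}). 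Note that in the paper's application of Bhatt the source ring $A$ is the filtered colimit and hence is henselian along $(p,[\varpi])$ but \emph{not} complete — its completion is $W(\kappa(x)^{+})$, which is precisely why the comparison has content. Your write-up collapses this distinction by taking $A=W(R^{+})$ throughout, which is already complete, and this is why the gluing step loses its force.
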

\subsection{Some loose ends}
Finally, let me briefly explain why I got interested in the problem of showing that the generic fiber of the Bültel-Pappas moduli problem of adjoint nilpotent $\G$-$\mu$-displays is representable by the local Shimura variety: the methods of this article allow to construct a continous and specializing morphism
$$
\text{sp}\colon |\Sh_{K}|\rightarrow |X_{\mu}(b)|,
$$
here $X_{\mu}(b)$  the affine Deligne-Lustzig variety associated to an unramified local Shimura-datum $$(G,\lbrace \mu \rbrace, [b])$$ over $\mathbb{Q}_{p},$ with chosen reductive model $\G$ of $G$ and maximal hyperspecial levelstructure $K=\G(\mathbb{Z}_{p});$ this morphism was also constructed in great generality and studied in depth in the PhD-thesis of Ian Gleason \cite{GleasonPhD}. The idea was to show that the following locally ringed space is in fact a formal scheme locally formally of finite type: $(|X_{\mu}(b)|,\text{sp}_{*}\mathcal{O}^{\circ}_{\Sh_{K}}).$ To glue the underlying reduced scheme one would for example have to show that there is an isomorphism
$$
((\text{sp}_{*}\mathcal{O}^{\circ}_{\Sh_{K}})/(\text{sp}_{*}\mathcal{O}^{\circ \circ}_{\Sh_{K}}))^{\text{perf}}\simeq \mathcal{O}_{X_{\mu}(b)}.
$$
To attack this, it would be helpful to be able to construct an adjoint nilpotent $\G$-$\mu$-display over the ring of power bounded functions over $\text{sp}^{-1}(U),$ where $U$ is open, affine in $X_{\mu}(b)$ To actually get going, a moduli description in terms of $\G$-$\mu$-displays of the local Shimura variety might be helpful. Let me nevertheless admit, that the most difficult thing seems to be to verify that the ring $\mathcal{O}^{\circ}_{\Sh_{K}}(\text{sp}^{-1}(U))$ is $I=\mathcal{O}^{\circ \circ}_{\Sh_{K}}(\text{sp}^{-1}(U))$-adic and formally of finite type; this seems difficult because the specialization morphism is not quasi-compact and I don't know how to control the geometric shape of $\text{sp}^{-1}(U)$ (it should at least be a smooth Stein-space in the sense of rigid analytic geometry). Needless to say, once one knows that the locally ringed space  $(|X_{\mu}(b)|,\text{sp}_{*}\mathcal{O}^{\circ}_{\Sh_{K}})$ is a formal scheme, formally smooth and locally of finite type, one could finally attack the representability conjecture put forward by Bültel-Pappas, see Conjecture \ref{Darstellbarkeitsvermutung BP}.
\subsection{Overview}
Let me briefly explain what is actually done in the sections.
In section \ref{section Frames}, I recall the theory of frames and introduce the main examples that will be relevant later on. Here I also verify descent for the relevant frames and will introduce an h-frame structure in the sense of Lau on the crystalline frame associated to a quasi-regular semiperfect ring. Afterwards, as was already said, in section \ref{Section Gmu windows}, I will introduce the notion of $\G$-$\mu$ windows for the frames that are necessary. Furthermore, I will recall the adjoint nilpotency condition discovered by Bültel-Pappas in \cite{BueltelPappas} and reproduce Lau's unique lifting lemma in the language I am using here, see Prop \ref{Unique lifting lemma}: this will be used repeatedly in the sequel. In section \ref{Section crystalline equivalence}, I will show the crystalline equivalence and in section \ref{Section Descent from Acris to Ainf}, I will explain why $\G$-$\mu$-windows for the frame corresponding to $\Ainf(R)$ are equivalent to $\G$-$\mu$-windows for the frame corresponding to $\Acris(R),$ as long as $p\geq 3.$ This will be put to use in section \ref{Section Connection to local mixed-characteristic shtukas}, when I compare $\G$-$\mu$-displays over integral perfectoid rings with Scholze's mixed characteristic shtukas. The next section \ref{Section Translation of the Quasi-isogeny}, is concerned with the translation between the notion of quasi-isogeny as used by Bültel-Pappas and with that used by Scholze in his moduli problem of local mixed characteristic shtukas. Finally, in the last section \ref{Applications to the Bueltel Pappas}, everything will be put together to first show the representability of diamond of the generic fiber of the Bültel-Pappas moduli problem, then I explain a conjecture on absolute prismatic crystals which should pave the way towards showing that the generic fiber of the Bültel-Pappas moduli problem itself is representable by the local Shimura variety; at least inside the category of complete and smooth affinoid adic spaces over $\Spa(\breve{E}).$ Finally, the existence of integral models of the tubes in the local Shimura variety is verified.

\subsection{Acknowledgments}
It is my pleasure to thank many mathematicians that have helped me a lot throughout working on this project and without whose help this article would not exist. First of all, my advisor Laurent Fargues, who suggested to supervise me when I came to Paris with some vague ideas concerning this project in my head. I thank Eike Lau for a very helpful conversation concerning section \ref{Section Descent from Acris to Ainf}; he suggested to think in the direction of Lemma \ref{Aequivalenz der Fasern unter Reduktion modulo p}. Furthermore, I thank Matthew Morrow for very helpful conversations concerning Lemma \ref{Lemma Bouthier Cesnavicius}: he helped me to correct my previous proof attempt, told me to look into \cite{BouthierCesnavicius} and gave some remarks on a write-up. I thank Oliver Bültel for his interest, comments on an earlier draft and finding a mistake. George Pappas for his interest, comments on an earlier version, a remark that is used in proposition \ref{Formale Modelle fuer die Tubes} and for his work as a thesis referee. I also profited a lot from exchanges and stimulating discussions with Johannes Anschütz, Patrick Daniels, Haoyang Guo, Arthur-César Le Bras, Tobias Kreutz, João Lourenço, Peter Scholze and Benoît Stroh.
\\
During the work on this article the author received financial support from the ERC Advanced Grant 742608 GeoLocLang.
\section{Notations and Conventions}
I will fix throughout a prime $p$. All rings will be assumed to be commutative and to have a 1 element. If $R$ is some ring, I denote by $\Nilp_{R}$ the category of $R$-algebras $S$, such that $p$ is nilpotent in $S$. When talking about the ring of Witt vectors, I always mean the ring of $p$-typical Witt vectors.
A ring $R$ is called $p$-adic, when it is complete and separated in the $p$-adic topology. A surjection of $p$-adic rings $S\rightarrow R$ is called a pd-extension, if $\mathfrak{a}=\ker(S\rightarrow R)$ is equipped with a divided power structure $\lbrace \gamma_{n} \rbrace,$ that I require to be compatible with the canonical pd-structure on $p\mathbb{Z}_{p}$. I define
$
\gamma_{n}(x)=[x]^{(n)}.
$
Groups will always act from the right. If $\G=\Spec(R)$ is some smooth-affine group scheme over $\mathbb{Z}_{p},$ then $\G^{\text{adic}}=\Spa(R,R^{+}),$ where $R^{+}$ is the integral closure of $\mathbb{Z}_{p}$ in $R;$ this definition is made in a way such that if $S$ is some adic space over $\mathbb{Z}_{p},$ then $\G^{\text{adic}}(S)=\G(\Gamma(S,\mathcal{O}_{S})).$ 

\newpage
\section{Frames}\label{section Frames}
As the terminology already suggests, one first has to introduce the concept of frames, to prepare the study of $\G$-windows with $\mu$-structure. A frame is an axiomatization of the structures on the ring of Witt vectors, that are used in developing the theory of displays, as done by Zink \cite{ZinkDisplay}. This section might be a bit dry but the reason it is fruitful to think abstractly about the concept of a frame is that Zink himself (\cite{ZinkWindow}), Lau (\cite{LauPerfektoid}) and Anschütz-Le Bras (\cite{JohannesArthur}) found many situations, where they naturally make an appearance. For this article frame structures arising from constructions in $p$-adic Hodge-theory, that have been discussed by Lau in \cite{LauPerfektoid}, will be particularly relevant.
\subsection{The category of frames}
First of all, the notion of frames that is being used in this article will be quickly introduced and it will be explained what a morphism between them is. The reference for this is \cite[Section 2]{LauFrames}.
\begin{Definition}
A frame $\mathcal{F}$ consists of a $5$-tuple $(S,R,I,\varphi,\dot{\varphi}),$ where $S$ and $R$ are rings, $I\subset S$ is an ideal, such that $R=S/I$, $\varphi\colon S\rightarrow S$ is a ring-endomorphism and $\dot{\varphi}\colon I \rightarrow S$ is a $\varphi$-linear map. This data is required to fulfill the following properties:
\begin{enumerate}
\item[(a)] $\varphi(x)\equiv x^{p} \text{ mod }pS$,
\item[(b)] $\dot{\varphi}(I)$ generates $S$ as an $S$-module,
\item[(c)] $pS + I\subseteq \Rad(S)$.
\end{enumerate}
\end{Definition}
\begin{Remark}
From (b) it follows that there exists a unique element $\zeta_{\mathcal{F}}\in S,$ such that
$$\zeta_{\mathcal{F}}\dot{\varphi}(i)=\varphi(i),$$
for all $i\in I.$ I will call this element the \textit{frame-constant of}$\mathcal{F}.$ \footnote{This terminology is not found in the literature, but it sounds reasonable to me.}
\end{Remark}
Next, let me quickly introduce morphisms of frames.
\begin{Definition}
Let $\mathcal{F}$ and $\mathcal{F}^{\prime}$ be frames.
\\
A $u$-frame morphism is the data of a pair $\lambda$ and $u,$ of a ring-homomorphism
$$
\lambda\colon \mathcal{F}\rightarrow \mathcal{F}^{\prime}
$$
and a unit $u\in (S^{\prime})^{\times},$ such that
\begin{enumerate}
\item[(a):] $\lambda(I)\subseteq I^{\prime},$
\item[(b):] $\varphi^{\prime}\circ \lambda=\lambda\circ \varphi,$
\item[(c):] $(\dot{\varphi}^{\prime}\circ\lambda)(i)=u\cdot (\lambda\circ \dot{\varphi})(i),$ for all $i\in I.$
\end{enumerate}
\end{Definition}
\begin{Remark}
\begin{enumerate}
\item[(i):]
The unit $u\in (S^{\prime})^{\times}$ is uniquely determined, again thanks to Axiom (b) in the definition of a frame.
\item[(ii):] For the Frame-constants $\zeta_{\mathcal{F}}$ and $\zeta_{\mathcal{F}^{\prime}}$ one gets the relation
$$u \zeta_{\mathcal{F}^{\prime}}=\lambda(\zeta_{\mathcal{F}}).$$
\item[(iii):] In case $u=1,$ I will speak of a strict frame-morphism rather than a $1$-morphism.
\end{enumerate}
\end{Remark}
\subsection{Main examples of frames}
\subsubsection{The Witt frame}
Let $R$ be a $p$-adic ring. Let $W(R)$ the ring of Witt vectors and denote by $$I(R)=\ker(w_{0}\colon W(R)\rightarrow R)$$ the image of Verschiebung. Then Zink proved that $W(R)$ is $p$-adic and $I(R)$-adic (and both topologies coincide, if $R\in \Nilp_{\mathbb{Z}_{p}}$), see \cite[Prop. 3]{ZinkDisplay}.  It follows that
$$\mathcal{W}(R)=(W(R),I(R),R,F,V^{-1})$$
is a frame with frame constant equal to $p$. It is functorial in homomorphisms of $p$-adic rings.

\subsubsection{The frame $\mathcal{F}_{\infintesimal}$}
Let me recall the definition of integral perfectoid rings, as in \cite{BMS1}.
\begin{Definition}
An integral perfectoid ring $R$ is a topological ring $R,$ such that
\begin{enumerate}
\item[(a):] $R/p$ is semiperfect,
\item[(b):] $R$ is $p$-adic,
\item[(c):] $\ker(\theta_{R}\colon W(R^{\flat})\rightarrow R)$ is a principal ideal,
\item[(d):] there exists an element $\varpi\in R,$ such that
$$
\varpi^{p}=pu,
$$
where $u\in R^{\times}.$
\end{enumerate}
Here $R^{\flat}=\lim_{\text{Frob}}R/p$ is the inverse limit perfection and $\theta_{R}$ is Fontaine$^{\prime}$s map, which exists for any $p$-adic ring.
\end{Definition}
Traditionally, one writes $\Ainf(R)=W(R^{\flat})$ and checks that any generator $\xi\in\Ainf(R)$ of $\ker(\theta)$ is automatically a non-zero divisor; in fact generators are exactly the so-called distinguished elements: if one writes $\xi=(\xi_{0},\xi_{1},...),$ then $\xi_{0}$ has to be topologically nilpotent and $\xi_{1}$ a unit. Furthermore, $\Ainf(R)$ is $(p,\xi)$-adically complete. I denote by $\varphi$ the Witt vector Frobenius on $\Ainf(R).$
\\
Fix a generator $\xi$ of $\ker(\theta).$ Consider the $\varphi$-linear map
$$
\dot{\varphi}\colon \ker(\theta)=(\xi)\rightarrow \Ainf(R)
$$
given by $\dot{\varphi}(\xi x)=\varphi(x).$ Note that it is well-defined, because $\xi$ is a non-zero divisor, but that it \textit{depends on the choice of}$\xi$.
It follows that
$$
\mathcal{F}_{\infintesimal}(R)=(\Ainf(R),R,\ker(\theta)=(\xi),\varphi,\dot{\varphi})
$$
is a frame with frame-constant $\varphi(\xi),$ which depends on the choice of $\xi.$ It is functorial for homomorphisms of integral perfectoid rings.

\subsubsection{The frame $\mathcal{F}_{\text{cris}}$}
Consider again an integral perfectoid ring $R.$ Recall the ring $\Acris(R),$ which is the universal $p$-complete pd-thickening of the semi-perfect ring $R/p.$ One can construct it as the $p$-adic completion of the pd-hull (over $(\mathbb{Z}_{p},p\mathbb{Z}_{p})$ by the conventions) of $\Ainf(R)$ with respect to $\ker(\theta);$ by continuity one still has the surjection
$$
\theta\colon \Acris(R)\rightarrow R
$$
(whose kernel is now way bigger).
In the following, one considers the ideal $$\Fil(\Acris(R))=\ker(\theta\colon \Acris(R)\rightarrow R),$$ that turns out to be a pd-ideal. A crucial observation made by Lau is that $\Acris(R)$ is $p$-torsion free, see \cite[Prop. 8.11.]{LauPerfektoid} -  Bhatt-Morrow-Scholze give in \cite[Thm. 8.14 (1)]{BMS2} a different proof of a more general fact (i.e. that $\Acris(S)$ is $p$-torsion free for any quasi-regular semiperfect ring $S$). Since on $\Fil(\Acris(R))$ the Frobenius $\varphi$ becomes divisible by $p,$ it follows that 
$$\dot{\varphi}=\frac{\varphi}{p}\colon \Fil(\Acris(R)) \rightarrow \Acris(R)$$ is a well-defined $\varphi$-linear map.
\\
Then
$$\mathcal{F}_{\cris}(R):=(\Acris(R), \Fil(\Acris(R)),R,\varphi,\dot{\varphi})$$
is a frame with frame constant equal to $p$. It is functorial in homomorphisms of integral perfectoid rings.
\begin{Remark}
A $\mathbb{F}_{p}$-algebra is integral perfectoid if and only if it is perfect. Then, for integral perfectoid $\mathbb{F}_{p}$-algebras $R,$ it follows that that $$\mathcal{F}_{\cris}(R)=\mathcal{F}_{\infintesimal}(R)=\mathcal{W}(R).$$
\end{Remark}
\subsubsection{The frame $\mathcal{F}_{\Prism}$}
The frame-structure discussed here was first considered by Anschütz-Le Bras in \cite[Example 4.1.17]{JohannesArthur}, when they classify $p$-divisible groups over quasi-syntomic rings using absolute filtered prismatic $F$-crystals \cite[Thm. 4.6.10]{JohannesArthur}.
\\
To introduce these frames, let me first introduce the notion of a quasi-regular semi-perfectoid ring; this concept was introduced by Bhatt-Morrow-Scholze in \cite[Def. 4.19 and Rem. 4.21]{BMS2}.
\begin{Definition}
\begin{enumerate}
\item[(a):] A ring $S$ is called quasi-syntomic, if it has bounded $p^{\infty}$-torsion, is $p$-adic, and the contangent complex $L_{S/\mathbb{Z}_{p}}$ has $p$-complete Tor-amplitude in $[-1,0],$ i.e. for all $S/p$-modules $N,$ it is true that
$$
(L_{S/\mathbb{Z}_{p}}\otimes_{S}^{L}S/p)\otimes_{S}^{L}N\in D^{[-1,0]}(R/p).
$$
A morphism of rings with bounded $p^{\infty}$-torsion $S\rightarrow S^{\prime}$ is called quasi-syntomic, if it is $p$-completely flat and the contangent complex $L_{S^{\prime}/S}\in D(S^{\prime})$ has $p$-complete Tor-amplitude in $[-1,0].$ It is called a quasi-syntomic cover if in addition $S\rightarrow S^{\prime}$ is $p$-completely faithfully flat. 
\item[(b):]  A quasi-syntomic ring $S$ is called quasi-regular semi-perfectoid if it is quasi-syntomic and admits a surjection $R\rightarrow S,$ where $R$ is integral perfectoid.
\item[(c):] Let $S$ be a quasi-syntomic ring, then the big quasi-syntomic site is the opposite category of the category that has objects given by $S$-algebras and morphisms are just $S$-algebra homomorphisms and the coverings are generated by quasi-syntomic covers. This site is denoted by $S_{\text{QSYN}}.$ The small quasi-syntomic site is the opposite of the category of quasi-syntomic $S$-algebras with covers generated by quasi-syntomic covers. This site is denoted by $S_{\text{qsyn}}.$
\end{enumerate}
\end{Definition}
\begin{Remark}
One has to argue that the category defined in $(c)$ forms indeed a site, this is done in \cite[Lemma 4.16.]{BMS2}. Recall furthermore the important fact, that locally for the quasi-syntomic topology, a quasi-syntomic ring will be quasi-regular semiperfectoid, see \cite[Lemma 4.27.]{BMS2}; this will be put to use later.
\end{Remark}
Let $S$ be quasi-regular semi-perfectoid, admitting a surjection $A/I\rightarrow S,$ where $(A,I)$ is a perfect prism. Then $I$ is a principal ideal, generated by a regular element; let $d$ be a generator. It is known that the absolute prismatic site $(S)_{\Prism}$ admits an initial object $(\Prism_{S},I\Prism_{S}),$\footnote{As the notation suggests the ring $\Prism_{S}=\Prism_{S/R}$ is in fact independent of $R.$} which is a prism over $(A,I).$ In particular, it is classically $(p,I)$-adically complete and $d$-torsion free. Define the first step of the Nygaard-filtration to be
$$
\mathcal{N}^{\geq 1}(\Prism_{S})=\lbrace x\in \Prism_{S}\colon \varphi(x)\in I \rbrace.
$$
Then one has that $\Prism_{S}/\mathcal{N}^{\geq 1}(\Prism_{S})\simeq S$ by \cite{bhatt2022prisms}, Thm. 12.2. As $\Prism_{S}$ is $d$-torsion free, the divided Frobenius
$$
\dot{\varphi}\colon \mathcal{N}^{\geq 1}(\Prism_{S})\rightarrow \Prism_{S},
$$ 
given by $\dot{\varphi}(i)=\varphi(i)/d$ is well-defined. In total, one can give the following 
\begin{Definition}\label{Prismatischer Frame}
Let $S$ be a quasi-regular semiperfectoid ring, which is a quotient of $A/I,$ where $(A,I)$ is a perfect prism. Then the prismatic frame associated to $S$ is
$$
\mathcal{F}_{\Prism}(S)=(\Prism_{S},\mathcal{N}^{\geq 1}(\Prism_{S}),S,\varphi,\dot{\varphi})
$$
\end{Definition}
\begin{Remark}
This is a frame with frame-constant (contrary to the $\mathcal{F}_{\infintesimal}$-frame) given by $d.$
\end{Remark}
\begin{Remark}
In this remark I will discuss the relation between the frame $\mathcal{F}_{\Prism}$ and the previously introduced frames.
\begin{enumerate}
\item[(a):] If $S=R$ is integral perfectoid, then $\Prism_{S}=\Ainf(R).$ Then $\mathcal{F}_{\Prism}(R)$ differs from the frame $\mathcal{F}_{\infintesimal}(R)$ as follows:
$$
\mathcal{F}_{\Prism}(R)=(\Ainf(R),\varphi^{-1}((\xi)),R, \varphi,\dot{\varphi}),
$$
where $\dot{\varphi}:=\varphi/\xi;$ this frame has frame constant $\xi,$ and
$$
\mathcal{F}_{\infintesimal}(R)=(\Ainf(R),(\xi),R,\varphi,\dot{\varphi}),
$$ 
where $\dot{\varphi}(\xi\cdot x)=\varphi(x).$ It has frame constant $\varphi(\xi).$ Compare with the discussion in \cite[Example 4.1.17]{JohannesArthur}.
\item[(b):] Let $S$ be quasi-regular semi-perfect, i.e. $S$ is a $\mathbb{F}_{p}$-algebra and the quotient of an integral perfectoid ring $R.$ Then one has an isomorphism $$\Prism_{S}\simeq \Acris(S),$$ which on $\Ainf(R)$ induces the Frobenius (see \cite[ Lemma 3.4.3.]{JohannesArthur}) - this identification will also be later used when verifying descent for $\Acris(.)$). In this case also $\mathcal{F}_{\Prism}(S)$ is a Frobenius-twist of $\mathcal{F}_{\cris}(S).$
\end{enumerate}
\end{Remark}
\subsubsection{Some important frame-morphisms}
\begin{enumerate}\label{Die zwei Framemorphismsen}
\item[(i):] Let $R$ be an integral perfectoid ring.
Using the Cartier-morphism\footnote{Recall that for a $p$-torsionfree ring $A,$ which carries a Frobnius lift $\varphi\colon A\rightarrow A,$ the Cartier-morphism is the homomorphism $\delta\colon A\rightarrow W(A)$ determined by $w_{n}(\delta(a))=\varphi^{n}(a),$ for all $n\geq 0$ and $a\in A.$ Here $w_{n}(.)$ are the Witt polynomials. one has $\delta(\varphi(a))=F(\delta(a))$ for all $a\in A.$ This is for example explained in \cite[Lemma 2.38]{ZinkVorlesung}.} $\delta\colon \Acris(R)\rightarrow W(\Acris(R)),$ one can construct a strict frame-morphism
$$
\chi\colon \mathcal{F}_{\cris}(R)\rightarrow \mathcal{W}(R),
$$
see Lemma \ref{Framemorphismus von Acris nach W(R)}.
\item[(ii):] Let $R$ be an integral perfectoid ring. Then the natural inclusion $\Ainf(R)\rightarrow \Acris(R)$ induces a $u=\frac{\varphi(\xi)}{p}$-frame morphism (for a choice of a generator $\xi$)
$$\lambda\colon \mathcal{F}_{\infintesimal}(R)\rightarrow \mathcal{F}_{\cris}(R).\footnote{One has to check that $u$ is indeed a unit. For completeness, I recall the argument: write $\xi=(a_{0},a_{1},...)$ in Teichmueller-coordinates. Then $u\equiv \frac{[a_{0}]}{p}+[a_{1}]^{p}$ modulo $p\Acris(R),$ thus $u\equiv [a_{1}]^{p}$ modulo $\text{Fil}(\Acris(R))+p\Acris(R)$. But $\text{Fil}(\Acris(R))+p\Acris(R)$ is in the radical of $\Acris(R).$ As $a_{1}$ is a unit.}$$
\end{enumerate}
In the following it will be shown that $\chi$ induces an equivalence between $\G$-$\mu$-windows over the respective frames, once one adds a nilpotency condition discovered by Bültel-Pappas (Prop. \ref{Kristalline Aequivalenz}) and that $\lambda$ induces an equivalence of $\G$-$\mu$-windows if one assumes $p\geq 3$ (Prop. \ref{Deszent von Acris nach Ainf}).
\subsection{Descent for frames}
As all the previously introduced examples of frames satisfied some functoriality, one can investigate the question, whether there are topologies for which one will get sheaf-properties. To prevent confusion, let me fix the following vocabulary: One says that a contravariant functor from some category equipped with a site-structure to the category of frames is a sheaf, if each entry in the datum of a frame gives rise to a sheafy version of the respective structure.
\subsubsection{Descent for the Witt frame:}
Let $R$ be a ring, in which $p$ is nilpotent. Then it follows for example from Zink$^{\prime}$s Witt descent (\cite{ZinkDisplay}, Lemma 30), that the functor
$$ (R\rightarrow R^{\prime})\mapsto W(R^{\prime})$$
is a sheaf for the fpqc-topology on the category of affine $R$-schemes.
From this one deduces the following
\begin{Lemma}
Let $R\in \Nilp.$
Then the functor
$$\mathcal{W}()\colon \Spec(R)_{\fpqc}^{\aff, \op} \rightarrow \Set,$$
$$(R\rightarrow R^{\prime}) \mapsto \mathcal{W}(R^{\prime})$$
is a sheaf of frames.
\end{Lemma}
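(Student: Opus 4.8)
The plan is to reduce everything to Zink's Witt descent, recalled just above, together with classical faithfully flat descent. By the convention fixed for ``sheaf of frames'', it suffices to check that each of the five data constituting the Witt frame --- the ring $W(-)$, the base ring, the ideal $I(-)$, the Frobenius $F$ and the divided Verschiebung $V^{-1}$ --- gives rise to a sheaf, respectively a morphism of sheaves, on $\Spec(R)_{\fpqc}^{\aff}$. First one notes that $\mathcal{W}(R')$ is indeed a frame for every $R'\in\Nilp_{R}$ (such an $R'$ being trivially $p$-adic, since the $p$-adic topology on it is discrete) and that the assignment $R'\mapsto\mathcal{W}(R')$ is functorial in $R\to R'$; hence $\mathcal{W}(-)$ is a well-defined presheaf of frames and only the sheaf property is at stake.

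The functor $R'\mapsto W(R')$ is a sheaf for the fpqc topology on affine $R$-schemes: this is exactly Zink's Witt descent (\cite{ZinkDisplay}, Lemma 30), as recalled above. Next, the functor $R'\mapsto R'$ sending an $R$-algebra to its underlying ring is a sheaf for the fpqc topology --- this is faithfully flat descent for the structure sheaf: for a faithfully flat ring map $R'\to R''$ the sequence $R'\to R''\rightrightarrows R''\otimes_{R'}R''$ is an equalizer, and one passes to arbitrary fpqc covers in the usual way. Consequently $R'\mapsto I(R')=\ker(w_{0}\colon W(R')\to R')$ is a sheaf as well, being the kernel of the morphism of sheaves of abelian groups $w_{0}\colon W(-)\to\mathcal{O}$, and the kernel presheaf of a morphism of sheaves is automatically a sheaf.

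Finally, the Frobenius $F\colon W(-)\to W(-)$ and the divided Verschiebung $V^{-1}\colon I(-)\to W(-)$ are compatible with all transition maps by Witt functoriality, hence they are morphisms of the sheaves just constructed (in particular they are determined by their values on any fpqc cover). Assembling the five pieces --- which amounts to saying that in the diagram $\mathcal{W}(R')\to\mathcal{W}(R'')\rightrightarrows\mathcal{W}(R''\otimes_{R'}R'')$ every component is an equalizer --- yields the claim.

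I do not expect a serious obstacle here: the single nontrivial input is Zink's Witt descent, which is already available, and everything else is formal. The only points requiring a moment's care are that the base-ring functor $R'\mapsto R'$ is itself a sheaf (classical faithfully flat descent), so that the ideal $I(-)$, defined as a kernel, inherits the sheaf property, and that the structural maps $F$ and $V^{-1}$ are genuine morphisms of sheaves rather than merely a compatible family of maps of sets.
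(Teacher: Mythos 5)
Your proof is correct and takes the same route the paper has in mind: the paper simply invokes Zink's Witt descent for $W(-)$ and then asserts that the lemma "follows," leaving implicit precisely the formal points you spell out, namely that $R'\mapsto R'$ is an fpqc sheaf by faithfully flat descent, that $I(-)=\ker(w_0)$ is a sheaf as the kernel of a morphism of sheaves, and that $F$ and $V^{-1}$ are morphisms of sheaves by Witt functoriality. Your write-up is a faithful elaboration of the paper's one-line deduction.
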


\subsubsection{Descent for the frame $\mathcal{F}_{\infintesimal}$:}
Let $R$ be an integral perfectoid ring. Then I want to explain how to get a sheafy version of the frame $\mathcal{F}_{\infintesimal}(R)$ for the affine étale topology on $\Spec(R/p)$ (i.e. the formal affine $p$-adic étale topology of $\Spf(R)$). This construction rests on observations due to Lau made in \cite{LauPerfektoid}.
\\
In the following fix the integral perfectoid ring $R$ from above and also fix a generator $\xi\in \Ainf(R)$ of the kernel of $\theta\colon \Ainf(R)\rightarrow R.$ One first observes that then for any integral perfectoid $R$-algebra $R^{\prime},$ one has that
$$ \ker(\theta^{\prime}\colon \Ainf(R^{\prime})\rightarrow R^{\prime})=\xi\Ainf(R^{\prime}),$$
as the property of being a distinguished element passes through ring-homomorphisms.
Then Lau proves the following
\begin{Lemma}\label{Eindeutige integral perfektoide algebra}\cite[Lemma 8.10.]{LauPerfektoid}
Let $R$ be an integral perfectoid ring. Let $B:=R/p$ and $B\rightarrow B^{\prime}$ be an étale ring-homomorphism. Then there exists a unique integral perfectoid $R$-algebra $R^{\prime},$ such that (i) $R^{\prime}/p=B^{\prime}$ and (ii) all $R/p^{n}\rightarrow R^{\prime}/p^{n}$ are étale, for $n\geq 1$.
\end{Lemma}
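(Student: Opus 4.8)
The plan is to build $R'$ by successively deforming the \'etale algebra $B'$ up the $p$-adic tower $\{R/p^{n}\}_{n\geq 1}$ and then to check that the resulting $p$-adic ring is integral perfectoid; the only genuinely delicate axiom will be the principality of $\ker(\theta_{R'})$.

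First I would produce $R'$ together with properties (i) and (ii). For each $n$ the surjection $R/p^{n}\twoheadrightarrow R/p=B$ is a nilpotent thickening, so by topological invariance of the \'etale site the \'etale $B$-algebra $B'$ lifts uniquely, up to unique isomorphism, to an \'etale $R/p^{n}$-algebra $R'_{n}$ with $R'_{n}/pR'_{n}=B'$; uniqueness gives canonical identifications $R'_{n+1}/p^{n}R'_{n+1}\cong R'_{n}$, and I set $R'=\varprojlim_{n}R'_{n}$. Using flatness of $R'_{n}$ over $R/p^{n}$ one argues in the standard way that $R'$ is $p$-adically complete and separated with $R'/p^{n}R'=R'_{n}$, which gives (i) $R'/pR'=B'$ and (ii) \'etaleness of $R/p^{n}\to R'/p^{n}$. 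Uniqueness of $R'$ is then formal: any $p$-adic $R$-algebra $R''$ with properties (i), (ii) has each $R''/p^{n}$ an \'etale $R/p^{n}$-algebra lifting $B'$, hence canonically $\cong R'_{n}$, so $R''\cong\varprojlim R''/p^{n}\cong R'$. Among the integral perfectoid axioms for $R'$, (b) holds by construction, (d) follows by pushing forward an element $\varpi\in R$ with $\varpi^{p}=pu$ ($u\in R^{\times}$, whose image stays a unit), and (a) --- semiperfectness of $B'=R'/pR'$ --- holds because $B$ is semiperfect and the relative Frobenius of the \'etale map $B\to B'$ is an isomorphism, so surjectivity of Frobenius on $B$ propagates to $B'$.

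It remains to verify axiom (c), that $\ker(\theta_{R'})$ is principal; this is the heart of the argument. I would show $\ker(\theta_{R'})=\xi\,\Ainf(R')$, where $\xi$ is the fixed generator of $\ker(\theta_{R})$, $\Ainf(R')=W((R')^{\flat})$, and $(R')^{\flat}=\varprojlim_{\Phi}R'/p$. The key point is that $(R')^{\flat}$ is perfect and (formally) \'etale over $R^{\flat}$, so that, via the standard equivalences relating \'etale algebras over $B=R/p$, over $R^{\flat}$, and over $\Ainf(R)=W(R^{\flat})$ (all instances of topological invariance along nilpotent thickenings), the algebra $B'$ corresponds simultaneously to $R'$ --- since $\Ainf(R)/\xi=R$ and $R'$ is the $p$-adic \'etale lift of $B'$ --- and to the $(p,\xi)$-completely \'etale lift $A'$ of $B'$ over $\Ainf(R)$. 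Since $A'$ is $p$-completely flat over the $p$-torsion-free ring $W(R^{\flat})$, it is itself $p$-torsion-free and $p$-complete with $A'/p=(R')^{\flat}$ perfect, whence $A'\cong W((R')^{\flat})=\Ainf(R')$; reducing the description of $A'$ modulo $\xi$ then identifies $\Ainf(R')/\xi\,\Ainf(R')$ with $R'$, compatibly with the $\theta$ maps. Hence $\theta_{R'}$ factors as $\Ainf(R')\twoheadrightarrow\Ainf(R')/\xi\,\Ainf(R')\cong R'$, so $\ker(\theta_{R'})=\xi\,\Ainf(R')$; and $\xi$ remains a non-zero-divisor there, since in Teichm\"uller coordinates $\xi=(a_{0},a_{1},\dots)$ with $a_{0}$ topologically nilpotent and $a_{1}$ a unit, and both properties persist under $R^{\flat}\to(R')^{\flat}$, so $\xi$ stays distinguished.

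I expect the main obstacle to be precisely the identification $\Ainf(R')/\xi\,\Ainf(R')\cong R'$ in the last step: it requires that forming the unique $p$-adic \'etale lift commute with tilting and with completed base change along $\Ainf(R)\to\Ainf(R')$, and making this precise means tracking $p$-adic versus $(p,\xi)$-adic completions carefully throughout and using that $W(-)$ carries \'etale maps of perfect $\mathbb{F}_{p}$-algebras to flat maps. If one is willing to cite the general fact that a $p$-completely \'etale algebra over an integral perfectoid ring is again integral perfectoid, then axiom (c) is immediate once $R'$ has been constructed, and only the existence/uniqueness statement and the routine axioms (a), (b), (d) remain to be spelled out.
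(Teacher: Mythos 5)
Your proposal is correct in outline but takes a genuinely different route than the paper's (Lau's). The paper gives the explicit formula $R' := W((B')^{\mathrm{perf}})/\xi$ with $(B')^{\mathrm{perf}} = \varprojlim_{\mathrm{Frob}} B'$, so that $\Ainf(R')$ and hence principality of $\ker(\theta_{R'})$ is built into the construction from the start; properties (i) and (ii) are then checked directly from this formula. You instead build $R'$ deformation-theoretically by lifting $B'$ along the nilpotent thickenings $R/p^{n}\twoheadrightarrow B$ and taking the limit, which makes (i), (ii), uniqueness, and axioms (a), (b), (d) transparent, but pushes all the work into axiom (c), where you then have to recover Lau's formula $a$ $posteriori$ by identifying $\Ainf(R') = W((B')^{\mathrm{perf}})$ with the $(p,\xi)$-completely étale lift of $B'$ over $\Ainf(R)$ and matching it up modulo $\xi$ with $R'$. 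Both arguments ultimately rest on the same chain of equivalences between the étale sites of $B$, $R^{\flat}$, and $\Ainf(R)$, so the content is the same; the explicit formula just front-loads it. One imprecision in your write-up: those étale-site equivalences are not all ``topological invariance along nilpotent thickenings.'' The kernel of $R^{\flat}\to R/p$ is not nilpotent (nor is $\ker(\theta)$); what you actually use is a combination of one genuine nilpotent thickening ($R/\varpi\to R/p$) with invariance of the étale site under $I$-adic completion (or Henselianness) for the pairs $(R^{\flat},\xi_{0})$ and $(\Ainf(R),(p,\xi))$. You should also make explicit, when you reduce $A'$ modulo $\xi$, why $A'/\xi$ is $p$-adically complete: this needs $\xi$ to be a non-zero-divisor in $A'$ (which holds by flatness over $\Ainf(R)$), together with a bounded-torsion argument to upgrade derived to classical completeness. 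Finally, as you note yourself, if one is happy to cite that $(p,\xi)$-completely étale algebras over integral perfectoid rings remain integral perfectoid, your construction of $R'$ together with Lau's formula for $\Ainf(R')$ makes the whole thing short — which is essentially what the paper does.
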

Here the integral perfectoid $R$-algebra is constructed as $R^{\prime}:=W((B^{\prime})^{\text{perf}})/\xi,$ where $$(B^{\prime})^{\flat}:=(B^{\prime})^{\text{perf}}=\lim_{\text{Frob}}B^{\prime}$$ is the inverse-limit perfection. Using this, one can construct the following pre-sheaves:
\\
Let $\Spec(R/p)^{\text{aff}}_{\text{ét}}$ be the category of affine schemes, that are étale over $\Spec(R/p)$. This category carries the structure of a site by declaring jointly surjective morphisms as covers. Then consider
$$\mathcal{R}\colon \Spec(R/p)^{\text{aff}, \op}_{\text{ét}}\rightarrow \Set,$$
$$\mathcal{R}(B^{\prime})=R^{\prime},$$
where $R^{\prime}$ is the uniquely determined integral perfectoid $R$-algebra from Lemma \ref{Eindeutige integral perfektoide algebra} above. Furthermore, consider 
$$\mathcal{A}_{inf}\colon \Spec(R/p)^{\text{aff}, \op}_{\text{ét}}\rightarrow \Set,$$
$$\mathcal{A}_{inf}(B^{\prime})=\Ainf(\mathcal{R}(B^{\prime})).$$ This gives two ring pre-sheaves. Lau proves then the following:
\begin{Lemma}\cite[Lemma 10.9]{LauPerfektoid}\label{Lau perfektoide Garbe}
 The pre-sheaves $\mathcal{R}$ and $\mathcal{A}_{inf}$ are sheaves.
\end{Lemma}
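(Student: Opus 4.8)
The plan is to reduce both assertions to ordinary étale descent for the structure sheaf — using deformation theory along the $p$-adic filtration for $\mathcal{R}$, and the Frobenius-limit description of the tilt together with limit-preservation of the Witt-vector functor for $\mathcal{A}_{inf}$. All the substantive input is already packaged into Lemma~\ref{Eindeutige integral perfektoide algebra}; what remains is bookkeeping with limits of sheaves.

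First I would treat $\mathcal{R}$. For each $n\geq 1$ the closed immersion $\Spec(R/p)\hookrightarrow\Spec(R/p^{n})$ has nilpotent defining ideal, so (topological invariance of the étale site) it induces an equivalence of small affine étale sites $\Spec(R/p^{n})^{\text{aff}}_{\text{ét}}\simeq\Spec(R/p)^{\text{aff}}_{\text{ét}}$. By the uniqueness clause (ii) of Lemma~\ref{Eindeutige integral perfektoide algebra}, the étale $R/p^{n}$-algebra corresponding under this equivalence to an object $B'$ is exactly $R'/p^{n}=\mathcal{R}(B')/p^{n}$. Hence $\mathcal{R}_{n}\colon B'\mapsto R'/p^{n}$ is the transport along this equivalence of the structure sheaf of $\Spec(R/p^{n})$, and is therefore a sheaf by (fpqc, a fortiori étale) descent for quasi-coherent modules. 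Since $R'$ is integral perfectoid, in particular $p$-adically complete and separated, one has $\mathcal{R}=\varprojlim_{n}\mathcal{R}_{n}$ as presheaves, the transition maps being reduction mod $p^{n}$; an inverse limit of sheaves, formed objectwise, is a sheaf, so $\mathcal{R}$ is a sheaf.

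Then for $\mathcal{A}_{inf}$: if $R'$ is integral perfectoid with $R'/p=B'$, then $(R')^{\flat}=\varprojlim_{x\mapsto x^{p}}R'/p=\varprojlim_{x\mapsto x^{p}}B'$, so the tilt presheaf $\mathcal{R}^{\flat}\colon B'\mapsto(B')^{\flat}$ is the inverse limit, along the absolute Frobenius endomorphism, of the structure sheaf of $\Spec(R/p)$ on its affine étale site; as that structure sheaf is a sheaf and Frobenius a morphism of sheaves, $\mathcal{R}^{\flat}$ is a sheaf. Now $\mathcal{A}_{inf}(B')=\Ainf(R')=W((R')^{\flat})=W((B')^{\flat})$, and the $p$-typical Witt-vector functor $W$ preserves all limits — on underlying sets it is $A\mapsto\prod_{n\geq 0}A$ functorially, with ring operations given by the Witt polynomials, and the forgetful functor creates limits — so it sends the sheaf $\mathcal{R}^{\flat}$ to a sheaf, namely $\mathcal{A}_{inf}$. (Alternatively $\mathcal{A}_{inf}=\varprojlim_{m}W_{m}\circ\mathcal{R}^{\flat}$, each $W_{m}$ being likewise limit-preserving, and one concludes level by level.) Thus $\mathcal{A}_{inf}$ is a sheaf.

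The step I expect to need the most care — the one I would verify in detail — is the identification of $\mathcal{R}_{n}$ with an honest structure sheaf: one must know that the étale $R/p^{n}$-algebra produced by topological invariance is precisely the mod-$p^{n}$ reduction of Lau's unique integral perfectoid lift $R'$ (this is the content of condition (ii) in Lemma~\ref{Eindeutige integral perfektoide algebra}), and that $R'=\varprojlim_{n}R'/p^{n}$. Once this is granted, the remainder is formal. (This reproduces the argument of \cite[Lemma~10.9]{LauPerfektoid}.)
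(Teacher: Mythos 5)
Your argument is correct, and it reconstructs what I believe is essentially Lau's own proof of \cite[Lemma 10.9]{LauPerfektoid}; note that the paper at hand does not prove this lemma itself but simply cites Lau, so there is no intrinsic argument in the text to compare against. Both halves of your proof hold up: for $\mathcal{R}$, the identification of $\mathcal{R}_n$ with the transport of the structure sheaf of $\Spec(R/p^n)$ hinges precisely on clause (ii) of Lemma~\ref{Eindeutige integral perfektoide algebra} together with topological invariance of the small étale site, and then $\mathcal{R}=\varprojlim_n\mathcal{R}_n$ because integral perfectoid rings are $p$-adically complete; for $\mathcal{A}_{inf}$, the observation that $(R')^\flat=\varprojlim_{\mathrm{Frob}}B'$ depends only on $B'$, plus limit-preservation of $W$ (underlying sets are $A\mapsto A^{\mathbb{N}}$ and the forgetful functor creates limits), is exactly what is needed.

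One remark worth adding: the paper gives, in the remark immediately following the lemma, an alternative route via the prismatic structure sheaf — namely, that $\Prism_{\bullet}$ is a sheaf for the $p$-completely étale topology by \cite[Cor.~3.12]{bhatt2022prisms}, and one restricts to the perfect prismatic site, where $\Prism_{R'}=\Ainf(R')$. This is a more conceptual but also heavier-machinery proof; your argument is more elementary (using only topological invariance of the étale site, the defining property of the unique perfectoid lift, and formal facts about limits of sheaves and the Witt functor), which is a virtue. Both establish the same statement, and your version does not appeal to any prismatic input.
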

\begin{Remark}
This can also be explained from the prismatic perspective, using that the prismatic structure (pre)-sheaf is in fact a sheaf for the $p$-completely étale topology by \cite[Cor. 3.12.]{bhatt2022prisms} and then restricting to the perfect prismatic site.
\end{Remark}
From this one immediately deduces that 
$$\mathcal{F}_{\infintesimal}(\mathcal{R})\colon \Spec(R/p)^{\text{aff}, \op}_{\text{ét}}\rightarrow \Set$$ is a sheaf of frames.
\subsubsection{Descent for the frame $\mathcal{F}_{\text{cris}}$}
Consider still an integral perfectoid ring $R.$ Note that for an étale $R/p$-algebra $B^{\prime},$ one has that $B^{\prime}$ is still semi-perfect, as in fact $B^{\prime}=\mathcal{R}(B^{\prime})/p.$ In the next statement I will use prismatic techniques and therefore one really will want to know that $B$ is quasi-syntomic, for which I will have to suppose that $R$ is in fact $p$-torsion free. Recall that $\Acris(B^{\prime})=\Acris(\mathcal{R}(B^{\prime})).$ Then one observes the following
\begin{Lemma}\label{Descent fuer Acris}
Let $R$ be a $p$-torsion free integral perfectoid ring.
Then the presheaf
$$
\mathcal{A}_{\cris}(\mathcal{R})\colon \Spec(R/p)^{\aff, \op}_{\et}\rightarrow \Set
$$
is a sheaf.
\end{Lemma}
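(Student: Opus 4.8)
The plan is to deduce the statement from quasi-syntomic descent for absolute prismatic cohomology, using the identification of $\Acris$ of a quasi-regular semiperfect ring with its prismatic ring. First I would record that every object of the site is quasi-regular semiperfect. Since $R$ is $p$-torsion free integral perfectoid, $B:=R/p$ is semiperfect and $p$-adic by the definition of integral perfectoid, and the transitivity triangle for $\mathbb{Z}_{p}\to R\to R/p$ — together with the fact that $L_{(R/p)/R}\simeq (p)/(p^{2})[1]$ is a shift of a free $R/p$-module (as $p$ is a non-zero divisor in $R$) and that $L_{R/\mathbb{Z}_{p}}$ has $p$-complete Tor-amplitude in $[-1,0]$ — shows that $L_{(R/p)/\mathbb{Z}_{p}}$ has $p$-complete Tor-amplitude in $[-1,0]$; hence $B$ is quasi-regular semiperfect. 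If $B'$ is étale over $B$, then $B'=\mathcal{R}(B')/p$ with $\mathcal{R}(B')$ integral perfectoid (Lemma \ref{Eindeutige integral perfektoide algebra}), so $B'$ is semiperfect and a quotient of an integral perfectoid ring, and it is quasi-syntomic over $\mathbb{Z}_{p}$ as the composite of $\mathbb{Z}_{p}\to B$ with the étale (hence quasi-syntomic) morphism $B\to B'$; thus $B'$ is quasi-regular semiperfect. By Remark (b) after Definition \ref{Prismatischer Frame} there is then a functorial isomorphism $\Acris(B')\simeq \Prism_{B'}$, and since $\Acris(\mathcal{R}(B'))=\Acris(B')$ this identifies the presheaf $\mathcal{A}_{\cris}(\mathcal{R})$ with $B'\mapsto \Prism_{B'}$, the value of absolute prismatic cohomology (which is concentrated in degree $0$ on quasi-regular semiperfect rings).

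Next I would reduce to a single cover. Let $B'$ be an object of the site and $\{B'\to B'_{i}\}$ a covering family; as $\Spec(B')$ is quasi-compact we pass to a finite subfamily and replace it by the single surjective étale morphism $B'\to B'':=\prod_{i}B'_{i}$, whose source still lies in the site. Form the Čech nerve $(B'')^{\bullet}$ of $B'\to B''$; since $B'\to B''$ is flat, $(B'')^{n}$ is the ordinary $(n{+}1)$-fold tensor power over $B'$, is affine étale over $B'$, hence again an object of $\Spec(R/p)^{\aff}_{\et}$ and quasi-regular semiperfect by the previous paragraph (the sheaf property of $\mathcal{R}$, Lemma \ref{Lau perfektoide Garbe}, additionally matches $\mathcal{R}((B'')^{n})$ with the fibre products of the associated integral perfectoid rings). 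By functoriality of $\Acris(\mathcal{R}(-))=\Prism_{(-)}$ on these rings, applying $\mathcal{A}_{\cris}(\mathcal{R})$ to $(B'')^{\bullet}$ yields exactly the cosimplicial ring $\Prism_{(B'')^{\bullet}}$.

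The key input is then that the absolute prismatic structure presheaf is a sheaf for the $p$-completely étale topology, \cite[Cor. 3.12.]{bhatt2022prisms} (equivalently, absolute prismatic cohomology satisfies quasi-syntomic descent; cf. \cite{BMS2}). A surjective étale morphism of $\mathbb{F}_{p}$-algebras is a $p$-completely faithfully flat morphism with vanishing cotangent complex, hence such a cover, so descent applies to $B'\to B''$ and gives an equivalence $\Prism_{B'}\simeq \mathrm{Tot}(\Prism_{(B'')^{\bullet}})$ in the derived category. Since every $\Prism_{(B'')^{n}}$ is discrete, taking $H^{0}$ of the totalisation yields $\Prism_{B'}=\ker\bigl(\Prism_{B''}\rightrightarrows \Prism_{B''\otimes_{B'}B''}\bigr)$, i.e. the equalizer presentation — precisely the sheaf condition for $\mathcal{A}_{\cris}(\mathcal{R})$ relative to $\{B'\to B'_{i}\}$.

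I expect the only real obstacle — and the reason $p$-torsion-freeness of $R$ is imposed — to be the verification that $B$ and all the Čech terms are genuinely quasi-regular semiperfect: this is what makes the comparison $\Acris\simeq\Prism$ applicable termwise and, more importantly, what forces $\mathrm{Tot}(\Prism_{(B'')^{\bullet}})$ to be concentrated in degree $0$, so that the abstract descent equivalence collapses to the naive equalizer needed for a sheaf. A secondary, purely bookkeeping point is the compatibility of the Čech nerve formed in $\Spec(R/p)^{\aff}_{\et}$ with the one used implicitly in the prismatic descent statement, which is accounted for by flatness of étale morphisms and the sheaf property of $\mathcal{R}$ (Lemma \ref{Lau perfektoide Garbe}).
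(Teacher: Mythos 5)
Your proof is correct and follows essentially the same route as the paper's first (prismatic) argument: identify $\Acris(B')\simeq \Prism_{B'}$ on quasi-regular semiperfect algebras and invoke quasi-syntomic descent for the absolute prismatic structure sheaf from \cite[Cor.\ 3.12]{bhatt2022prisms}, together with the observation that étale covers of $R/p$ are quasi-syntomic covers in $B_{\mathrm{qrsp}}$. The only real difference is how you check that each \v{C}ech term $B'$ is quasi-regular semiperfect: you simply note that $B'$ is quasi-syntomic over $\mathbb{Z}_p$ because $B$ is quasi-syntomic and $B\to B'$ is étale (hence a quasi-syntomic morphism), whereas the paper detours through Elkik's lifting to show $p$ is regular in $\mathcal{R}(B')$ and then reruns the cotangent-complex computation --- your shortcut is slightly cleaner.
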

\begin{proof}
First, let me give a prismatic proof: write as before $B=R/p;$ first observe that $B$ is a quasi-regular semiperfect ring. By definition, one has to see that $B$ admits a surjection from a perfectoid ring (trivial) and that $B$ is itself quasi-syntomic. This follows from the assumption that $p$ was regular in $R.$ \footnote{Here quickly the argument: To see that $B$ is indeed quasi-syntomic, look at the morphisms $\mathbb{Z}_{p}\rightarrow R \rightarrow R/p,$ which gives the following triangle for cotangent complexes:
$$
\xymatrix{
L_{R/\mathbb{Z}_{p}}\otimes^{L}_{R}R/p \ar[r] & L_{R/p/\mathbb{Z}_{p}} \ar[r] & L_{(R/p)/R} \ar[r]^{+1} &.
}
$$
Since $R$ is perfectoid, the term $L_{R/\mathbb{Z}_{p}}\otimes^{L}_{R}R/p=(\ker(\theta))/(\ker(\theta))^{2}[1]\otimes_{R}^{\mathbb{L}}R/p$ has $p$-completed Tor-amplitude in $[-1,0]$ and since $p$ is regular in $R,$ it is true that $L_{(R/p)/R}\simeq (pR)/(pR)^{2}[1],$ so that it follows that also $L_{R/p/\mathbb{Z}_{p}}$ has $p$-completed Tor-amplitude in $[-1,0].$
}
Now consider the small quasi-syntomic site of $B,$ denoted by $B_{\text{qsyn}}.$ It has a basis for the topology given by objects $B\rightarrow B^{\prime},$ such that $B^{\prime}$ is quasi-regular semiperfect. Then let $B_{\text{qrsp}}$ be the subcategory of $B_{\text{qsyn}},$ with objects $B\rightarrow B^{\prime},$ where $B^{\prime}$ is quasi-regular semiperfect; it is still endowed with the quasi-syntomic topology and forms a site by \cite[Lemma 4.26.]{BMS2} Then the association 
$$
\Prism_{\bullet}\colon B_{\text{qrsp}}^{\op}\rightarrow \Set,
$$
given by $(B\rightarrow B^{\prime})\mapsto \Prism_{B^{\prime}}$ is a sheaf by \cite[Cor. 3.12]{bhatt2022prisms}. But recall  from \cite[Lemma 3.4.3]{JohannesArthur}, that one has a canonical, $\varphi$-equivariant identification
$$
\Prism_{B^{\prime}}\simeq \Acris(B^{\prime}).
$$
The point being here that on the one hand, $\Acris(B^{\prime})$ is a $\delta$-ring, mapping down to $B^{\prime},$ which gives a map in one direction and on the other hand that $\ker(\Prism_{B^{\prime}}\rightarrow B^{\prime})$ admits divided powers.
This now implies the statement one is after: if $B\rightarrow B^{\prime}$ is an étale cover, then $B^{\prime}$ is again quasi-regular semiperfect. Namely, the result of Lau that I recalled before, Lemma \ref{Eindeutige integral perfektoide algebra}, one may write $B^{\prime}=R^{\prime}/p,$ where $R^{\prime}$ is an integral perfectoid ring. It thus suffices to see that $B^{\prime}$ is quasi-syntomic, for which one has to convince oneself of the fact that $p$ is regular in $R^{\prime}.$ But $\mathbb{Z}_{p}\rightarrow R$ is flat and $R\rightarrow R^{\prime}$ is $p$-completely étale. By Elkik, there exists an étale $R$-algebra $R^{\prime}_{0},$ such that $R^{\prime}$ is the $p$-adic completion of $R^{\prime}_{0}.$ It follows that $p$ is regular in $R^{\prime}_{0}$ and therefore also in $R^{\prime}.$ Since a $p$-completely étale covering is in particular a quasi-syntomic covering, it follows that one may consider $B_{\text{ét}}$ as a sub-site of $B_{\text{qrsp}}$ and now the lemma follows.
\\
A different proof can be given by observing first, that it suffices to show that $(B\rightarrow B^{\prime})\mapsto \Acris(B^{\prime})/p$ is a sheaf, because all the values $\Acris(B^{\prime})$ are by construction $p$-adic and and an inverse limit of sheaves is still a sheaf. Then one may use the following heavy input: by \cite[Prop. 8.12]{BMS2}, there is a natural (for morphisms of quasi-regular semiperfect rings) isomorphism
$$
\Acris(B^{\prime})/p\simeq L\Omega_{B^{\prime}/\mathbb{F}_{p}};
$$
here one uses again that $B^{\prime}$ will be quasi-regular semiperfect to verify that the hypothesis of the statement of Bhatt-Morrow-Scholze are verified and $L\Omega_{./\mathbb{F}_{p}}$ denotes the derived de Rham complex of an $\mathbb{F}_{p}$-algebra. By comparing conjugate filtrations and using that $\mathbb{L}_{B^{\prime}/B}\simeq 0$ for any étale $B\rightarrow B^{\prime},$ it follows that
$$
L\Omega_{B^{\prime}/\mathbb{F}_{p}}\simeq L\Omega_{B/\mathbb{F}_{p}}\otimes_{B}B^{\prime}.
$$
By usual faithfully flat descent for quasi-coherent sheaves (as $L\Omega_{B/\mathbb{F}_{p}}$ is concentrated in degree $0$), it is then deduced that $\Acris(.)/p$ is a sheaf, as desired.
\end{proof}
\begin{Remark}
This lemma of course begs for a more down to earth proof and should probably be true for integral perfectoids with $p$-torsion. Here I fall however on the following problem: it is not clear to me that the PD-hull commutes with $I$-complete flat basechange. In fact, the formula one would want to verify is the following: let $B=R/p\rightarrow R^{\prime}/p$ be a $p$-completely faithfully flat étale map, then $B^{\flat}\rightarrow (B^{\prime})^{\flat}$ is $\xi_{0}$-completely faithfully flat étale and one would like to have that
$$
\Acris(B^{\prime})/p^{n}\simeq (\Acris(B)\otimes_{W(B^{\flat})}W(B^{\prime \flat}))/p^{n}(\Acris(B)\otimes_{W(B^{\flat})}W(B^{\prime \flat})),
$$
which would follow from the universal properties of $\Acris(B)$ resp. $\Acris(B^{\prime}),$ if one would know that PD-Hulls extend along $(p,\xi_{0})$-completely flat morphisms. This however is unfortunately not clear to me, so that I had to stick to the more abstract proof given above.
\end{Remark}
In total, one may deduce the following: let $R$ be a $p$-torsion free integral perfectoid ring. Then the pre-sheaf of frames
$$
\mathcal{F}_{\cris}(\mathcal{R}(.))\colon \Spec(R/p)^{\text{aff}, \op}_{\text{ét}}\rightarrow \text{Frames}
$$
is indeed a sheaf of frames. Indeed, this follows from the previous descent statement for $\Acris(.)$ and the known descent for $\mathcal{R}(.),$ by making use of the fact that a kernel of morphisms of sheaves is again a sheaf; to take care of the $\text{Fil}_{\cris}(.)$ component of the gadgets that make up a frame.
\subsubsection{Descent for the frame $\mathcal{F}_{\Prism}$}
Let $R$ be a quasi-syntomic ring and consider the quasi-syntomic site $R_{\text{qsyn}}$ of $R.$ Since $R$ is quasi-syntomic, this site admits as a basis quasi-regular semiperfectoid $R$-algebras $A.$ Denote by $R_{\text{qrsp}}$ the category of quasi-regular semiperfectoid $R$-algebras, equipped with the quasi-syntomic topology. Then one can consider the following pre-sheaf of frames
$$
\mathcal{F}_{\Prism}\colon R_{\text{qrsp}}^{\op}\rightarrow \text{Frames},
$$
given by sending a quasi-regular semiperfectoid $R$-algebra $A$ towards the frame $$\mathcal{F}_{\Prism}(A)=(\Prism_{A},\mathcal{N}^{\geq 1}(\Prism_{A}),A,\varphi,\dot{\varphi}),$$ as in Def. \ref{Prismatischer Frame}. By \cite{bhatt2022prisms}, Cor. 3.12. the association
$$
\Prism_{\bullet}\colon R_{\text{qrsp}}^{\op}\rightarrow \Set,
$$
sending a quasi-regular semiperfectoid $R$-algebra $A$ towards $\Prism_{A}$
defines a sheaf for the quasi-syntomic topology. Furthermore, the association $\mathcal{O}_{\text{qsyn}}\colon R_{\text{qrsp}}^{\op}\rightarrow \Set,$ given by sending a quasi-regular semiperfectoid $R$-algebra $A$ towards $A$ itself defines a quasi-syntomic sheaf. It follows that in total $\mathcal{F}_{\Prism}$ is a sheaf of frames for the quasi-syntomic topology.
\subsection{h-frame structure on prismatic frame for quasi-regular semiperfects}\label{hframe auf prismatischem Frame}
Here I will record an h-frame structure on the frame $\mathcal{F}_{\Prism}(A),$ where $A$ is as before a quasi-regular semiperfect ring.
\\
First, let me recall the definition of an h-frame, which is due to Lau:
\begin{Definition}{\cite[Def. 2.0.1.]{LauHigherFrames}}
\\
A triple $(\bigoplus_{n\geq 0}S_{n},\sigma, \lbrace t_{n} \rbrace_{n\geq 0}),$ where $\bigoplus_{n\geq 0}S_{n}$ is an $\mathbb{Z}_{n\geq 0}$-graded ring, $\sigma\colon S_{\geq 0}\rightarrow S_{0}$ is a ring homomorphism and $t_{n}\colon S_{n+1}\rightarrow S_{n}$ are maps of sets, is called an h-frame if
\begin{enumerate}
\item[(a):] For all $n\geq 0,$ the map $t_{n}\colon S_{n+1}\rightarrow S_{n}$ are $S_{\geq 0}$-module homomorphisms,
\item[(b):] $\sigma_{0}\colon S_{0}\rightarrow S_{0}$ is a Frobenius lift and $\sigma_{n}(t_{n}(a))=p\sigma_{n+1}(a),$ for all $a\in S_{n+1},$
\item[(c):] $p\in \Rad(S_{0}).$
\end{enumerate}
One can consult Lau's article \cite[Examples 2.1.1-2.1.14]{LauHigherFrames} for several examples from nature; for example he explains a natural h-frame structure on the Witt frame \cite[Example 2.1.3]{LauHigherFrames}.
\\
Now fix a quasi-regular semiperfect ring $A,$ let $(\Prism_{A},(p))$ be the associated prism. Recall that the Nygaard-filtration was defined as follows (\cite[Def. 12.1]{bhatt2022prisms}):
$$
\mathcal{N}^{\geq n}(\Prism_{A})=\lbrace x\in \Prism_{A}\colon \varphi(x)\in p^{n}\Prism_{A} \rbrace.
$$
\end{Definition}
This is a decreasing and multiplicative filtration indexed by the natural numbers. 
After this preparation, one can construct the desired h-frame structure: let
$$
\bigoplus_{n\geq 0}S_{n}:=\bigoplus_{n\geq 0}\mathcal{N}^{\geq n}(\Prism_{A}),
$$
which is a $\mathbb{Z}_{\geq 0}$-graded ring. Then let $\sigma_{n}\colon \mathcal{N}^{\geq n}\rightarrow \Prism_{A}$ be the $p^{n}$-divided Frobenius. Next, define
$$
t_{n}\colon \mathcal{N}^{\geq n+1}(\Prism_{A}) \rightarrow \mathcal{N}^{\geq n}(\Prism_{A})
$$
just to be the inclusion. Then the conditions (a)-(c) are satisfied. This is an h-frame associated to a $p$-torsion free ring with a multiplicative filtration with Frobenius-divisibility as in \cite[Example 2.1.1]{LauHigherFrames}. 
\begin{Remark}
Note that one is here not taking the filtration of $\Acris(A)$ by the divided powers of $\Fil(\Acris(A))$ (which would correspond to the Hodge-Filtration under the comparison with the derived de Rham cohomology). The worry with this filtration is that the Frobenius-divisibility fails. Note further that $\mathcal{N}^{\geq 1}(\Prism_{A})$ coincides with $\Fil(\Acris(A)),$ so that the above h-frame extends the 1-frame structure that was previously put on $\Acris(A).$
\end{Remark}
The reason I discussed this h-frame structure is that it will be later useful to have a Tannakian perspective on the notion of a crystalline $G$-quasi-isogeny, see section \ref{Section Translation of the Quasi-isogeny} and more precisely Lemma \ref{Tannakische Interpretation einer Quasi-Isogenie}.
\section{$\G$-$\mu$-windows}\label{Section Gmu windows}
The aim of this section is to quickly introduce $\G$-$\mu$-windows for the frames discussed in the previous section. To set up this theory one has two at least possibilities: either follow Lau's elegant approach in \cite{LauHigherFrames} using $h$-frame structures on the frames just discussed, or copy pase the definition given by Bültel-Pappas. In total, I sticked to the latter possibility since later I have to consider a frame for which I was not able to find a $h$-frame structure, c.f. Remark \ref{Erklaerung weshalb Problem mit h-frame Struktur}. To get going, in the first subsection the reader finds a recollection on the inputs from group theory that will be relevant. Then the category of $\G$-$\mu$-windows for the different frames is introduced and lastly I will recall the adjoint nilpotency condition as introduced by Bültel-Pappas, to be able to restate Lau's unique lifting lemma in the language used here.
\subsection{Reminder on some group-theory}
Let $\G\rightarrow \Spec(\mathbb{Z}_{p})$ be a smooth affine group scheme. Furthermore, let $k_{0}$ be a finite extension of $\mathbb{F}_{p}$ and
$$\mu\colon \mathbb{G}_{m,W(k_{0})}\rightarrow \G_{W(k_{0})}$$
be a cocharacter. Following Lau \cite{LauHigherFrames}, the data $(\G,\mu)$ will be refered to as a window datum. 
\\
To this set-up one can associate the following weight subgroups
\begin{enumerate}
\item[(i):] The closed subgroups $P^{\pm}\hookrightarrow \G_{W(k_{0})},$ that have points in $W(k_{0})$-algebras $R$ given as
$$ P^{\pm}(R)=\lbrace g\in \G_{W(k_{0})}(R)\colon \lim_{t^{\pm}\rightarrow 0} \mu(t)^{-1}g\mu(t)\text{ exists}\rbrace.$$ Here for example the expression $^{\prime} \lim_{t\rightarrow 0} \mu(t)^{-1}g\mu(t)\text{ exists} ^{\prime}$ means that the orbit map associated to $g:$
$$\mathbb{G}_{m,R}\rightarrow \G_{R},$$
$$ t\mapsto \mu(t)^{-1}g\mu(t) $$
extends (necessarily uniquely) to a morphism
$$\mathbb{A}_{R}^{1}\rightarrow \G_{R}.\footnote{Note the difference in  sign to \cite{CGP}, because I consistently consider right-actions.}$$
\item[(ii):] The closed normal subgroups $U^{\pm}\hookrightarrow P^{\pm},$ that have points in $W(k_{0})$-algebras $R$ given as
$$U^{\pm}(R)=\lbrace g\in \G_{W(k_{0})}(R)\colon \lim_{t^{\pm}\rightarrow 0} \mu(t)^{-1}g\mu(t)\text{ exists and equals the unit-section} \rbrace.$$
\end{enumerate}
Let me recall a couple of well-known facts about these groups, that will be used in the following.
\begin{Fact}\label{Zusammenfassung zu Gewichtsgruppen}
\item[(i):] The subgroups $P^{\pm}$ and $U^{\pm}$ are smooth over $W(k_{0}).$ Furthermore, if $\G$ has connected fibers, then $P^{\pm}$ and $U^{\pm}$ also have connected fibers and $U^{\pm}$ have unipotent fibers. In case $\G$ is a reductive group scheme, $P^{\pm}$ are parabolic subgroup schemes. See \cite[Lemma 2.1.4, Lemma 2.1.5]{CGP}.
\item[(ii):] Multiplication in $\G_{W(k_{0})}$ induces an open immersion
$$U^{+} \times_{W(k_{0})} P^{-} \rightarrow \G_{W(k_{0})}.$$
See \cite[Proposition 2.1.8. (3)]{CGP}.
\item[(iii):] Using the adjoint representation of $\G_{W(k_{0})},$ one gets a $\mathbb{G}_{m,W(k_{0})}$ representation
$$\Ad(\mu^{-1})\colon \mathbb{G}_{m,W(k_{0})} \rightarrow \GL(\Lie(\G)_{W(k_{0})}),$$
which in turn gives the weight decomposition
$$\Lie(\G)_{W(k_{0})}=\bigoplus_{n\in \mathbb{Z}} (\Lie(\G)_{W(k_{0})})_{n}.$$
Under this decomposition of the Lie-algebra, one has
$$
 \Lie(P^{+})= \bigoplus_{n\geq 0} (\Lie(\G)_{W(k_{0})})_{n} \text{ resp. } \Lie(P^{-})= \bigoplus_{n\leq 0} (\Lie(\G)_{W(k_{0})})_{n},
$$
and
$$
\Lie(U^{+})= \bigoplus_{n > 0} (\Lie(\G)_{W(k_{0})})_{n} \text{ resp. } \Lie(U^{-})= \bigoplus_{n < 0} (\Lie(\G)_{W(k_{0})})_{n}.
$$
See \cite[Proposition 2.1.8.(1)]{CGP}.
\item[(iv):] There exists $\mathbb{G}_{m,W(k_{0})}$-equivariant morphisms of schemes
$$ \log^{\pm}\colon U^{\pm}\rightarrow V(\Lie(U^{\pm})),$$
that induce the identity on Lie-algebras. They are necessarily isomorphisms of schemes. See \cite[Lemma 6.1.1]{LauHigherFrames}.
\\
Assume in addition that $\Lie(U^{+})=(\Lie(\G)_{W(k_{0})})_{1},$ then the morphism
$$ \log^{+}\colon U^{+}\rightarrow V(\Lie(U^{+}))$$
is uniquely characterized by the requirement to be $\mathbb{G}_{m}$-equivariant and to induce the identity on the Lie-algebras. Furthermore, it is a group scheme isomorphism. See \cite[Lemma 6.3.2]{LauHigherFrames}.
\item[(v):] Assume that $\G$ is a reductive group scheme, then the condition $\Lie(U^{+})=(\Lie(\G)_{W(k_{0})})_{1}$ is equivalent to $\mu$ being a minuscule cocharacter. This follows from the fact that the set of roots is preserved under $\eta\mapsto -\eta.$
\end{Fact}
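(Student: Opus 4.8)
The statement packages together four citations and one genuine claim, so I would organize the proof accordingly. Parts (i), (ii) and (iii) are \cite[Lemma 2.1.4, Lemma 2.1.5, Proposition 2.1.8]{CGP} and part (iv) is \cite[Lemma 6.1.1, Lemma 6.3.2]{LauHigherFrames}; here there is nothing to prove beyond invoking these references, the only point worth a word being that the convention of this article --- $\G$ acting on the right, so that the orbit map uses $\mu(t)^{-1}g\mu(t)$ rather than $\mu(t)g\mu(t)^{-1}$ --- merely interchanges the roles of $P^{+}$ and $P^{-}$ (hence of $U^{+}$ and $U^{-}$, and of the strictly positive and strictly negative weight spaces) relative to \cite{CGP}, which does not affect the content. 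So the actual work is part (v).

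For (v) the plan is first to reduce to a split situation. The Lie algebra $\Lie(\G)_{W(k_{0})}$, its weight decomposition under $\Ad(\mu^{-1})$, the subgroup scheme $U^{+}$, and the numerical condition defining minusculeness are all compatible with faithfully flat base change $W(k_{0})\to W(k_{0})'$; since a reductive group scheme becomes split \'etale locally on the base (SGA 3 / Demazure--Grothendieck) and over such a cover the cocharacter $\mu$ factors through a split maximal torus $T$ (the image of $\mathbb{G}_{m}$ is a, possibly trivial, subtorus, and a subtorus is contained in a maximal torus fppf locally, which is all that is needed), I may assume $\G$ split reductive, $T\subseteq\G_{W(k_{0})}$ a split maximal torus, and $\mu$ factoring through $T$. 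Writing $\Phi$ for the root system and $\Lie(\G)_{W(k_{0})}=\Lie(T)\oplus\bigoplus_{\alpha\in\Phi}\mathfrak{g}_{\alpha}$ for the root space decomposition, $\Ad(\mu^{-1})$ acts trivially on $\Lie(T)$ and by $t\mapsto t^{-\langle\alpha,\mu\rangle}$ on $\mathfrak{g}_{\alpha}$, so that $(\Lie(\G)_{W(k_{0})})_{n}=\bigoplus_{\alpha\in\Phi,\ \langle\alpha,\mu\rangle=-n}\mathfrak{g}_{\alpha}$ for every $n\neq 0$; one checks this is consistent with the computations of $\Lie(P^{\pm})$ and $\Lie(U^{\pm})$ recorded in (iii).

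Then I would conclude as follows. By (iii), $\Lie(U^{+})=\bigoplus_{n>0}(\Lie(\G)_{W(k_{0})})_{n}$, so the equality $\Lie(U^{+})=(\Lie(\G)_{W(k_{0})})_{1}$ holds if and only if $(\Lie(\G)_{W(k_{0})})_{n}=0$ for all $n\geq 2$, equivalently if and only if there is no root $\alpha\in\Phi$ with $\langle\alpha,\mu\rangle\leq -2$. Since $\Phi=-\Phi$, the latter condition is equivalent to there being no root with $|\langle\alpha,\mu\rangle|\geq 2$, i.e. to $\langle\alpha,\mu\rangle\in\{-1,0,1\}$ for every $\alpha\in\Phi$, which is precisely the definition of $\mu$ being a minuscule cocharacter. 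This establishes (v).

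I do not anticipate a genuine obstacle: the crux is the reflection $\Phi=-\Phi$ already flagged in the statement, which upgrades the one-sided vanishing condition on $\Lie(U^{+})$ to the symmetric condition defining minusculeness. The only points needing a modicum of care are the descent to a split maximal torus over the non-field base $W(k_{0})$ --- harmless, since the assertion may be verified after an fppf cover --- and the consistent translation of the sign conventions between this article, \cite{CGP} and \cite{LauHigherFrames}.
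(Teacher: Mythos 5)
Your proposal is correct and fills in precisely the details that the paper only gestures at: the reduction to a split maximal torus, the identification $(\Lie(\G))_{n}=\bigoplus_{\langle\alpha,\mu\rangle=-n}\mathfrak{g}_{\alpha}$ for $n\neq 0$, and then the observation that $\Phi=-\Phi$ turns the one-sided vanishing $(\Lie(\G))_{n}=0$ for $n\geq 2$ into the two-sided bound $\langle\alpha,\mu\rangle\in\{-1,0,1\}$. This is exactly the argument the paper has in mind when it says the claim ``follows from the fact that the set of roots is preserved under $\eta\mapsto -\eta$.''
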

Following Lau, one defines the following concept:
\begin{Definition} Let $(\G,\mu)$ be a window datum, then
\begin{enumerate}
\item[(i):] it is called 1-bound, if
$
\Lie(U^{+})=(\Lie(\G)_{W(k_{0})})_{1}
$,
\item[(ii):] it is called reductive, if $\G$ is a reductive group scheme over $\mathbb{Z}_{p}.$
\end{enumerate}
\end{Definition}
For the main applications towards moduli of $\G$-$\mu$-displays and their relations to local Shimura varieties, reductive and 1-bound display-data (which then automatically implies that $\mu$ is minuscule by the above) will be most relevant, since the concept of $\G$-$\mu$-windows one is using here is otherwise not the right one as was already pointed out in the introduction.
\subsection{Quotient groupoid of banal $\G$-$\mu$-windows}
The statements in this subsection are basically easy modifications of the results of Bültel-Pappas \cite[section 3.1.]{BueltelPappas}.
\\
Let $(\G,\mu)$ be a window datum, in particular one has fixed a finite extension $k_{0}$ of $\mathbb{F}_{p}.$
\begin{Definition}
Let $\mathcal{F}=(S,I,R,\varphi,\dot{\varphi})$ be a frame. One says that $\mathcal{F}$ is a frame over $W(k_{0}),$ if $S$ is a $W(k_{0})$-algebra and $\varphi$ extends the Witt vector Frobenius on $W(k_{0}).$
\end{Definition}
\begin{Example}
Let $R\in \Nilp_{W(k_{0})}$ (or a $p$-adic $W(k_{0})$-algebra). Using the Cartier-morphism
$$\bigtriangleup\colon W(k_{0})\rightarrow W(W(k_{0})),$$
which is characterized by the formula
$$
W_{n}(\bigtriangleup(x))=F^{n}(x),
$$
one sees that the Witt frame $\mathcal{W}(R)$ is a frame over $W(k_{0}).$ A similiar remark applies to the relative Witt frame.
\\
Furthermore, if $R$ is an integral perfectoid $W(k_{0})$-algebra, then both $\mathcal{F}_{\infintesimal}(R)$ and $\mathcal{F}_{\cris}(R)$ are frames over $W(k_{0}).$
\end{Example}
\begin{Definition}
Let $\mathcal{F}=(S,I,R,\varphi,\dot{\varphi})$ be a frame over $W(k_{0}).$ The window group of the window datum $(\G,\mu)$ is the group
$$\G(\mathcal{F})_{\mu}=\lbrace g\in \G_{W(k_{0})}(S)\colon g (\text{ mod }I)\in P^{-}(R) \rbrace.$$
\end{Definition}
Let me describe the structure of the window group.
\begin{Lemma}\label{Zerlegungslemma fuer die Window-gruppe}
Multiplication in $\G_{W(k_{0})}(S)$ induces a bijection
$$m\colon U^{+}(I) \times P^{-}(S) \rightarrow \G(\mathcal{F})_{\mu}.$$
\end{Lemma}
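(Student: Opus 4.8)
The plan is to identify $\G(\mathcal{F})_{\mu}$ with the $S$-points of the open cell attached to $\mu$, and then to cut out the $U^{+}(I)$-part by reduction modulo $I$. By Fact \ref{Zusammenfassung zu Gewichtsgruppen}(ii), multiplication induces an open immersion $U^{+}\times_{W(\k0)}P^{-}\hookrightarrow \G_{W(\k0)}$; write $\Omega\subseteq \G_{W(\k0)}$ for its image, so that multiplication gives an isomorphism of $W(\k0)$-schemes $m\colon U^{+}\times_{W(\k0)}P^{-}\xrightarrow{\sim}\Omega$. Since the unit section $e\in U^{+}$ yields $\{e\}\times P^{-}\subseteq U^{+}\times_{W(\k0)}P^{-}$, the subgroup scheme $P^{-}$ sits inside $\Omega$ as a closed subscheme; in particular $P^{-}(A)\subseteq\Omega(A)$ for every ring $A$, and $\Omega(A)=U^{+}(A)\times P^{-}(A)$.

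The first main step is to show that every $g\in\G(\mathcal{F})_{\mu}$ already lies in $\Omega(S)$. View $g$ as a morphism of $W(\k0)$-schemes $\Spec(S)\to\G_{W(\k0)}$ and set $V:=g^{-1}(\Omega)$, an open subscheme of $\Spec(S)$. The hypothesis $g\bmod I\in P^{-}(R)\subseteq\Omega(R)$ says exactly that the composite $\Spec(S/I)\hookrightarrow\Spec(S)\xrightarrow{g}\G_{W(\k0)}$ factors through $\Omega$, i.e. $V(I)\subseteq V$ as subsets of $\Spec(S)$. Hence the closed set $\Spec(S)\setminus V$ meets no point of $V(I)$; but the frame axiom $pS+I\subseteq\Rad(S)$ forces $I\subseteq\Rad(S)$, so every maximal ideal of $S$ lies in $V(I)$, hence in $V$. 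Since a non-empty closed subset of $\Spec(S)$ always contains a maximal ideal, $\Spec(S)\setminus V$ must be empty, i.e. $g$ factors through $\Omega$ and $g\in\Omega(S)$.

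Consequently $g$ corresponds, via $m$, to a unique pair $(u,q)\in U^{+}(S)\times P^{-}(S)$ with $g=u\cdot q$. Reducing modulo $I$ and using the analogous decomposition $\Omega(R)=U^{+}(R)\times P^{-}(R)$ together with $g\bmod I\in P^{-}(R)$ (i.e. $g\bmod I=e\cdot(g\bmod I)$), uniqueness gives $u\bmod I=e$, so $u$ lies in $\ker(U^{+}(S)\to U^{+}(R))$, which is the subgroup $U^{+}(I)$ — via $\log^{+}$ of Fact \ref{Zusammenfassung zu Gewichtsgruppen}(iv) this kernel is identified with $\Lie(U^{+})\otimes_{W(\k0)}I$, using that $\Lie(U^{+})$ is finite free over $W(\k0)$. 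Conversely, for $u\in U^{+}(I)$ and $q\in P^{-}(S)$ the product $uq$ reduces to $q\bmod I\in P^{-}(R)$, so $uq\in\G(\mathcal{F})_{\mu}$; thus $m$ has image exactly $\G(\mathcal{F})_{\mu}$, and it is injective because an open immersion is a monomorphism, whence $U^{+}(S)\times P^{-}(S)\to\G(S)$ is injective. This yields the asserted bijection.

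The only genuinely non-formal ingredient is the radical condition in the definition of a frame: it is precisely what upgrades "$g$ lands in the open cell after reduction mod $I$" to "$g$ lands in the open cell over all of $S$". I expect this to be the crux of the argument; the remainder is bookkeeping with the Bruhat-type decomposition of Fact \ref{Zusammenfassung zu Gewichtsgruppen} and flat base change of $\Lie(U^{+})$.
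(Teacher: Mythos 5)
Your proof is correct and follows essentially the same route as the paper: both get injectivity from the open immersion being a monomorphism, and both deduce surjectivity from $I\subseteq\Rad(S)$ by promoting the mod-$I$ factorization through the open cell $\Omega=U^{+}\cdot P^{-}$ to a factorization over all of $\Spec(S)$. The only difference is presentational — you argue that the closed complement $\Spec(S)\setminus g^{-1}(\Omega)$ is empty because it avoids every closed point, whereas the paper specializes an arbitrary point to a closed point and uses that $\Omega$ is open (stable under generalization), and you make explicit the final step (reduce modulo $I$, use uniqueness of the cell decomposition to force the $U^{+}$-component into $U^{+}(I)$) that the paper leaves implicit in its commutative square.
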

\begin{proof}
First, note that the above map is an injection. Indeed, by Fact \ref{Zusammenfassung zu Gewichtsgruppen} (ii) multiplication is an open immersion and thus a monomorphism.
\\
It remains to see the surjectivity. Take a point $g\in \G(\mathcal{F})_{\mu}$ corresponding to a morphism $$g\colon \Spec(S) \rightarrow \G_{W(k_{0})}(S),$$ that modulo $I$ factors through $P^{-}.$ By the axioms of a frame, $I\subseteq \Rad(S),$ and therefore there is a bijection of maximal prime ideals $\MaxSpec(R) \longleftrightarrow \MaxSpec(S).$ Now consider the following commutative diagram
$$
\xymatrix{
\Spec(S)\ar[r]^{g} & \G_{W(k_{0})} \\
\Spec(R)\ar[u]^{f^{\sharp}}\ar[r] & U^{+}\times_{W(k_{0})}P^{-},\ar[u]^{m}
}
$$
where the lower horizontal map is given by $(e,g\circ f^{\sharp}),$ where $e\in U^{+}(R)$ is the unit section. If $x_{0}\in \Spec(S)$ is a closed point, there is a unique closed point $x_{0}^{\prime}\in \Spec(R),$ such that $f^{\sharp}(x_{0}^{\prime})=x_{0}.$ It follows that $g(x_{0})=g(f^{\sharp}(x_{0}^{\prime}))\in V,$ where $V\subseteq \G_{W(k_{0})}$ is the open image of the mutliplication $m.$ Here one has used again Fact \ref{Zusammenfassung zu Gewichtsgruppen} (ii) above. Now let $x\in\Spec(S)$ be an arbitrary point. One finds a closed point $x_{0}$ lying in the closure of $x.$ It follows that $g(x_{0})\in g(\overline{\lbrace x \rbrace})\subseteq \overline{\lbrace g(x) \rbrace},$ as $g$ is continous. As already observed, $g(x_{0})\in V$ and $V$ is open, thus closed under generalization, so that $g(x)\in V.$ This concludes the verification of the surjectivity.
\end{proof}
The next aim is to construct the divided Frobenius
$$\Phi_{\G,\mu,\mathcal{F}}\colon \G(\mathcal{F})_{\mu}\rightarrow \G_{W(k_{0})}(S)$$
for window data $(\G,\mu),$ such that $\Lie(U^{+})=(\Lie(\G)_{W(k_{0})})_{1}.$
In general it might only be a map of sets. I will explain, why it is a group homomorphism for all frames that will be relevant.
\\
Let $h\in \G(\mathcal{F})_{\mu}$ and using Lemma \ref{Zerlegungslemma fuer die Window-gruppe} above, one writes it uniquely as a product $h=u\cdot p,$ where $u\in U^{+}(I)$ and $p\in P^{-}(S).$ By Fact \ref{Zusammenfassung zu Gewichtsgruppen} (iv), one can define
$$ \Phi_{\G,\mu,\mathcal{F}}\colon U^{+}(I) \rightarrow U^{+}(S)$$ to be the composition, as in the following commutative diagram
$$
\xymatrix{
U^{+}(I) \ar[d]^{\log^{+}} \ar[r]^{\Phi_{\G,\mu,\mathcal{F}}} & U^{+}(S) \\
\Lie(U^{+}) \otimes I \ar[r]^{\id\otimes \dot{\varphi}} & \Lie(U^{+}) \otimes S \ar[u]^{(\text{log}^{+})^{-1}}.
}
$$
Next, note that the very description of the functor of points of $P^{-}$ makes it possible to give the following definition: set $\Phi_{\G,\mu,\mathcal{F}}\colon P^{-}(S) \rightarrow P^{-}(S)$ to be $$\Phi_{\G,\mu,\mathcal{F}}=\mu(\zeta_{\mathcal{F}})\varphi(h)\mu(\zeta_{\mathcal{F}})^{-1}.$$ Recall that $\zeta_{\mathcal{F}}\in S$ was the frame-constant, i.e. the unique element such that $\varphi(i)=\zeta_{\mathcal{F}}\dot{\varphi(i)}.$
\\
The next statment will explain, why $\Phi_{\G,\mu,\mathcal{F}}$ is a group homomorphism for example for the frames $\mathcal{F}_{\infintesimal}(R)$ and $\mathcal{F}_{\cris}(R).$
\begin{Lemma}
Let $\mathcal{F}=(S,I,R,\varphi,\dot{\varphi})$ be a frame over $W(k_{0}),$ such that $S$ is $\zeta_{\mathcal{F}}$-torsion free. Then the divided Frobenius
$$\Phi_{\G,\mu,\mathcal{F}}\colon \G(\mathcal{F})_{\mu}\rightarrow \G_{W(k_{0})}(S)
$$
is a group homomorphism.
\end{Lemma}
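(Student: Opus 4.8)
The idea is to realise $\Phi_{\G,\mu,\mathcal{F}}$ as the restriction to $\G(\mathcal{F})_{\mu}$ of a single map which is manifestly a group homomorphism, namely the frame Frobenius followed by conjugation by $\mu(\zeta_{\mathcal{F}})$. The point of the hypothesis is that $\mu(\zeta_{\mathcal{F}})$ is in general \emph{not} an element of $\G_{W(k_{0})}(S)$ --- it only becomes one after inverting the frame constant, and the $\zeta_{\mathcal{F}}$-torsion freeness of $S$ is exactly what lets us do this without losing information.

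First I would record that, $S$ being $\zeta_{\mathcal{F}}$-torsion free, the localisation map $S\hookrightarrow S[\zeta_{\mathcal{F}}^{-1}]$ is injective, hence so is $\G_{W(k_{0})}(S)\hookrightarrow\G_{W(k_{0})}(S[\zeta_{\mathcal{F}}^{-1}])$ since $\G$ is affine. In $\G_{W(k_{0})}(S[\zeta_{\mathcal{F}}^{-1}])$ the element $\mu(\zeta_{\mathcal{F}})$ makes sense, so we obtain a group homomorphism $\Psi:=\mathrm{int}(\mu(\zeta_{\mathcal{F}}))\circ\varphi\colon\G_{W(k_{0})}(S)\to\G_{W(k_{0})}(S[\zeta_{\mathcal{F}}^{-1}])$, $g\mapsto\mu(\zeta_{\mathcal{F}})\varphi(g)\mu(\zeta_{\mathcal{F}})^{-1}$, being the composite of the frame Frobenius $\varphi$ (a group endomorphism of $\G_{W(k_{0})}(S)$) with an inner automorphism of $\G_{W(k_{0})}(S[\zeta_{\mathcal{F}}^{-1}])$. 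I claim $\Phi_{\G,\mu,\mathcal{F}}$ agrees with $\Psi$ on $\G(\mathcal{F})_{\mu}$; since $\G(\mathcal{F})_{\mu}$ is a subgroup of $\G_{W(k_{0})}(S)$ (the preimage of $P^{-}(R)$ under $\G_{W(k_{0})}(S)\to\G_{W(k_{0})}(R)$) and $\Phi_{\G,\mu,\mathcal{F}}$ takes values in $\G_{W(k_{0})}(S)$, the lemma follows at once.

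By Lemma \ref{Zerlegungslemma fuer die Window-gruppe} write $h\in\G(\mathcal{F})_{\mu}$ uniquely as $h=u q$ with $u\in U^{+}(I)$ and $q\in P^{-}(S)$, so that by construction $\Phi_{\G,\mu,\mathcal{F}}(h)=\Phi_{\G,\mu,\mathcal{F}}(u)\Phi_{\G,\mu,\mathcal{F}}(q)$. On $P^{-}(S)$ the identity $\Phi_{\G,\mu,\mathcal{F}}(q)=\Psi(q)$ is precisely the definition of $\Phi_{\G,\mu,\mathcal{F}}$ on $P^{-}(S)$. For the $U^{+}(I)$-factor I would invoke the standing $1$-bound hypothesis together with Fact \ref{Zusammenfassung zu Gewichtsgruppen}(iv): there $\log^{+}\colon U^{+}\xrightarrow{\sim}V(\Lie(U^{+}))$ is an isomorphism of $W(k_{0})$-group schemes, $\mathbb{G}_{m}$-equivariant for the weight-one action on $\Lie(U^{+})$ and compatible with the Frobenius in the sense that $\log^{+}(\varphi(u))=(\id\otimes\varphi)(\log^{+}u)$. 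Applying the defining frame relation $\varphi(i)=\zeta_{\mathcal{F}}\dot{\varphi}(i)$ coordinatewise in the finite free $W(k_{0})$-module $\Lie(U^{+})$ gives $(\id\otimes\varphi)(\log^{+}u)=\zeta_{\mathcal{F}}\cdot(\id\otimes\dot{\varphi})(\log^{+}u)$ inside $\Lie(U^{+})\otimes_{W(k_{0})}S$; combined with the weight-one equivariance of $\log^{+}$, which turns conjugation by $\mu(\zeta_{\mathcal{F}})$ into multiplication by $\zeta_{\mathcal{F}}^{-1}$, this yields $\log^{+}(\Psi(u))=\zeta_{\mathcal{F}}^{-1}\cdot(\id\otimes\varphi)(\log^{+}u)=(\id\otimes\dot{\varphi})(\log^{+}u)=\log^{+}(\Phi_{\G,\mu,\mathcal{F}}(u))$ inside $\Lie(U^{+})\otimes_{W(k_{0})}S[\zeta_{\mathcal{F}}^{-1}]$, the last equality being the definition of $\Phi_{\G,\mu,\mathcal{F}}$ on $U^{+}(I)$. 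Since $\Lie(U^{+})\otimes_{W(k_{0})}S$ is $\zeta_{\mathcal{F}}$-torsion free, $\log^{+}$ is injective on $S[\zeta_{\mathcal{F}}^{-1}]$-points, so $\Phi_{\G,\mu,\mathcal{F}}(u)=\Psi(u)$. Hence $\Phi_{\G,\mu,\mathcal{F}}(h)=\Psi(u)\Psi(q)=\Psi(uq)=\Psi(h)$ because $\Psi$ is multiplicative.

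The main point requiring care is the bookkeeping with the $W(k_{0})$-semilinearity of $\varphi$: strictly speaking $\varphi$ carries $U^{+}$ and $P^{-}$ to their $\sigma$-twists, so both the equality $\Phi_{\G,\mu,\mathcal{F}}=\Psi$ on each factor and the assertion that $\Psi$ actually has values in $\G_{W(k_{0})}(S)$ there must be reconciled with the precise conventions; the sign in the weight computation is in any case pinned down by compatibility with the $P^{-}$-formula and with the $\GL_{n}$-picture of the introduction, where the block with entries in $I$ is acted on by $V^{-1}=\dot{\varphi}$. Apart from that, the argument is entirely formal: after inverting the frame constant the divided Frobenius is literally Frobenius composed with conjugation by $\mu(\zeta_{\mathcal{F}})$, and $\zeta_{\mathcal{F}}$-torsion freeness is precisely what allows one to transport this identification back to $\G_{W(k_{0})}(S)$.
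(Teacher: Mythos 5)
Your argument is the same as the paper's: observe that after inverting $\zeta_{\mathcal{F}}$ the divided Frobenius becomes $h\mapsto\mu(\zeta_{\mathcal{F}})\varphi(h)\mu(\zeta_{\mathcal{F}})^{-1}$ (checked factorwise via $\log^{+}$ on $U^{+}(I)$ and directly on $P^{-}(S)$), which is manifestly a homomorphism, and then transport this back along the injection $\G_{W(k_0)}(S)\hookrightarrow\G_{W(k_0)}(S[\zeta_{\mathcal{F}}^{-1}])$ supplied by $\zeta_{\mathcal{F}}$-torsion freeness. You spell out the $\log^{+}$ bookkeeping in more detail and flag the $\sigma$-semilinearity subtlety which the paper leaves implicit, but the route is identical.
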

\begin{proof}
Note that inside $S[1/\zeta_{\mathcal{F}}]$ the relation $\dot{\varphi}(i)=\frac{\varphi(i)}{\zeta_{\mathcal{
F}}}$ is true for all $i\in I.$ As $\log^{+}$ is $\mathbb{G}_{m,W(k_{0})}$-equivariant and $\Ad(\mu^{-1})((\zeta_{\mathcal{F}})^{-1})$ acts by \textit{division} by $\zeta_{\mathcal{F}}$ on $\Lie(U)^{+},$ one deduces that
$$\Phi_{\G,\mu,\mathcal{F}}(u)=\mu(\zeta_{\mathcal{F}})\varphi(u)\mu(\zeta_{\mathcal{F}})^{-1}\in \G_{W(k_{0})}(S[1/\zeta_{\mathcal{F}}])
,$$ for all $u\in U^{+}(I).$
Thus, one has in total for all $h\in \G(\mathcal{F})_{\mu}$
\begin{equation}\label{Beschreibung geteilter Frobenius nach invertieren}
\Phi_{\G,\mu,\mathcal{F}}(h)=\mu(\zeta_{\mathcal{F}})\varphi(h)\mu(\zeta_{\mathcal{F}})^{-1}\in \G_{W(k_{0})}(S[1/\zeta_{\mathcal{F}}]).
\end{equation}
This is clearly a group homomorphism and since $S$ is $\zeta_{\mathcal{F}}$-torsion free the map into the localization is injective. This concludes the easy verification.
\end{proof}
\begin{Remark}\label{Erklaerung weshalb Problem mit h-frame Struktur}
Note that the ring of Witt vectors $W(R)$ for a non-reduced ring in characteristic $p$  is  certainly not $p$-torsion free. Thus the above argument does not apply. Still the divided Frobenius for the Witt frame is a group homomorphism. In fact, one can prove this either using the argument given by Bültel-Pappas in \cite[Prop. 3.1.2.]{BueltelPappas}, or using the h-frame structure on the Witt frame introduced by Lau \cite[Example 2.1.3]{LauHigherFrames}. In general, one can say that the divided Frobenius $\Phi_{\mathcal{F}}$ is a group homomorphism either in case one can put a h-frame structure on $\mathcal{F}$ or in case $S$ is $\zeta_{\mathcal{F}}$-torsion free. One can in fact construct h-frame structures on $\mathcal{F}_{\infintesimal}(R)$ and $\mathcal{F}_{\cris}(R),$ which gives another reason, why the divided Frobenius is a group homomorphism in these cases. I decided to stick to the construction of $\G$-$\mu$-windows given here, as one will later need an auxillary frame, for which I was not able to give a h-frame structure.\footnote{This is the frame $\underline{A_{0}}$ from Lemma \ref{Framemorphismus von Acris mod p zu A0}.}
\end{Remark}
\begin{Definition}
Let $\mathcal{F}$ be a frame over $W(k_{0}),$ such that the divided Frobenius $\Phi_{\mathcal{F}}$ is a group homomorphism. Then the groupoid of banal \footnote{This 'banal' refers to the triviality of torsors. This usage was introduced in \cite{BueltelPappas}.} $\G$-$\mu$-windows over $\mathcal{F}$ is the quotient groupoid
$$
\G-\mu-\text{Win}(\mathcal{F})_{\banal}=[\G_{W(k_{0})}(S)/_{\Phi_{\G,\mu,\mathcal{F}}} \G(\mathcal{F})_{\mu}],
$$ 
for the action $g\ast h=h^{-1}g\Phi_{\G,\mu,\mathcal{F}}(h)$ for all $g\in \G_{W(k_{0})}(S)$ and $h\in \G(\mathcal{F})_{\mu}.$
\end{Definition}
\begin{Remark}
One has two kinds of functorialities for these objects: Let $$\mathcal{F}\in \lbrace \mathcal{W}(R),\mathcal{W}(S/R),\mathcal{F}_{\infintesimal}(R),\mathcal{F}_{\cris}(R) \rbrace.$$
\begin{enumerate}
\item[(i)]\textit{Functoriality in minuscule window-data:}
\\
A morphism of minuscule window-data $f\colon (\G_{1},\mu_{1})\rightarrow (\G_{2},\mu_{2})$ is a $\mathbb{Z}_{p}$-group scheme homomorphism $f\colon \G_{1}\rightarrow \G_{2},$ such that $f_{W(k_{0})}\circ \mu_{1}=\mu_{2}.$ It induces a functor
$$f\colon \G_{1}-\mu_{1}-\text{Win}(\mathcal{F})_{\banal}\rightarrow \G_{2}-\mu_{2}-\text{Win}(\mathcal{F})_{\banal}.$$
\item[(ii):]\textit{Functoriality in frame morphisms:}
\\
Consider a $u$-frame homomorphism
$\lambda\colon \mathcal{F}\rightarrow \mathcal{F}^{\prime},$ where $\mathcal{F},\mathcal{F}^{\prime}$ are frames, such that banal $\G$-$\mu$-windows are defined. Then one gets a base-change morphism
$$
\lambda_{\bullet}\colon \G-\mu-\text{Win}(\mathcal{F})_{\banal} \rightarrow \G-\mu-\text{Win}(\mathcal{F}^{\prime})_{\banal},
$$
induced by the map $\G(S)\rightarrow \G(S^{\prime}),$ $g\mapsto \lambda(g)\mu(u),$
in general only in case the frame-constant $\zeta_{\mathcal{F}^{\prime}}$ does not lead to torsion in $S^{\prime},$ or if $u=1.$ Let me remark here already that in Lemma \ref{chi ist kristalline!} one is faced with an $u$-frame morphism, where the above hypothesis does not apply. Nevertheless, one can check that this morphism induces a base-change functor, because in that example the frame-constants of both the source and the target are $0.$ 
\end{enumerate}
\end{Remark}
\subsection{Construction of $\G$-$\mu$-windows for some frames}
\subsubsection{$\G$-$\mu$-displays}
Here I will briefly recall the construction of $\G$-$\mu$-displays of Bültel-Pappas (\cite[Def. 3.2.1]{BueltelPappas}).
\\
Recall the following result due to Greenberg:
\begin{proposition}
Let $R$ be a ring and $X\rightarrow W_{n}(R)$ be an affine scheme of finite type (resp. finite presentation).
\\
Then the fpqc-sheaf
$$F_{n}(X)\colon R-\text{algebras}\rightarrow \Set,$$
$$F_{n}(X)(R^{\prime})=X_{W_{n}(R)}(W_{n}(R^{\prime}))$$
is representable by an affine scheme over $R$ of finite type (resp. of finite presentation).
\end{proposition}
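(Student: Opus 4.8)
The plan is to reduce to the case of affine space over $W_{n}(R)$ and then push the defining equations of $X$ through the universal Witt-vector arithmetic. Recall first that $W_{n}$ is an affine ring scheme over $\mathbb{Z}$ whose underlying scheme is $\mathbb{A}^{n}_{\mathbb{Z}}=\Spec\mathbb{Z}[x_{0},\dots ,x_{n-1}]$: for every ring $A$ one has $W_{n}(A)=A^{n}$ as a set, functorially in $A$, with the ring operations computed by the universal Witt addition and multiplication polynomials $S_{0},\dots ,S_{n-1},\,P_{0},\dots ,P_{n-1}\in\mathbb{Z}[x_{\bullet},y_{\bullet}]$. Consequently the functor on $R$-algebras $R'\mapsto W_{n}(R')$ — where $W_{n}(R')$ carries the $W_{n}(R)$-algebra structure obtained by applying $W_{n}$ to $R\to R'$ — is represented by $\mathbb{A}^{n}_{R}$, and more generally $F_{n}(\mathbb{A}^{m}_{W_{n}(R)})(R')=W_{n}(R')^{m}=(R')^{nm}$, so $F_{n}(\mathbb{A}^{m}_{W_{n}(R)})$ is represented by $\mathbb{A}^{nm}_{R}=\Spec R[t_{i,j}:1\le i\le m,\ 0\le j\le n-1]$, where $(t_{i,0},\dots ,t_{i,n-1})$ are the Witt components of the $i$-th coordinate.

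Now write the given affine $W_{n}(R)$-scheme as $X=\Spec W_{n}(R)[T_{1},\dots ,T_{m}]/(f_{\alpha})_{\alpha\in I}$, a closed subscheme of $\mathbb{A}^{m}_{W_{n}(R)}$. A point of $F_{n}(X)(R')$ is a tuple $t=(t_{1},\dots ,t_{m})\in W_{n}(R')^{m}$ with $f_{\alpha}(t)=0$ in $W_{n}(R')$ for all $\alpha$. Expanding $f_{\alpha}(t)$ by means of $S_{\bullet}$ and $P_{\bullet}$ shows that its $j$-th Witt component is a polynomial $g_{\alpha,j}\in R[t_{i,j'}]$ whose coefficients are built from the Witt components — which are elements of $R$ — of the coefficients of $f_{\alpha}$; this is the one point that must be checked, and it works precisely because $S_{\bullet},P_{\bullet}$ are defined over $\mathbb{Z}$ and an element of $W_{n}(R)$ \emph{is} the tuple of its components in $R^{n}$. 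Hence $F_{n}(X)$ is represented by the closed subscheme
\[
\Spec\, R[t_{i,j}]\big/\big(g_{\alpha,j}\big)_{\alpha\in I,\ 0\le j\le n-1}\ \subseteq\ \mathbb{A}^{nm}_{R},
\]
which is affine over $R$.

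For the qualifiers: any affine $X$ of finite type over $W_{n}(R)$ can be presented as above with $m<\infty$, so its representing $R$-algebra is a quotient of a polynomial ring in $nm<\infty$ variables and $F_{n}(X)$ is of finite type over $R$; if in addition $X$ is of finite presentation, then $I$ may be taken finite, only finitely many $g_{\alpha,j}$ occur, and $F_{n}(X)$ is of finite presentation over $R$. Finally, a representable functor is automatically a sheaf for the fpqc topology, which is the asserted sheaf property.

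The only genuinely substantive step is the middle one — verifying that substituting into a polynomial with coefficients in $W_{n}(R)$ and reading off Witt components produces polynomial identities with coefficients in $R$ rather than merely in $W_{n}(R)$; everything else is bookkeeping. One can alternatively phrase the whole argument as the assertion that the functor $W_{n}\colon R\text{-}\mathrm{Alg}\to W_{n}(R)\text{-}\mathrm{Alg}$ admits a left adjoint $L$, so that $F_{n}(\Spec B)=\Spec L(B)$, but the explicit description above is what is actually needed later.
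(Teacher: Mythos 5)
The paper records this proposition as Greenberg's result and supplies no proof, so there is no argument of its own to compare against. Your proof is the standard one and is correct: exhibit $W_{n}$ as an affine ring scheme over $\mathbb{Z}$ with underlying scheme $\mathbb{A}^{n}_{\mathbb{Z}}$, conclude that $F_{n}(\mathbb{A}^{m}_{W_{n}(R)})\cong\mathbb{A}^{nm}_{R}$, and then cut out $F_{n}(X)$ inside $\mathbb{A}^{nm}_{R}$ by the Witt components $g_{\alpha,j}$ of the defining relations $f_{\alpha}$. The one substantive point you flag is indeed the crux: $g_{\alpha,j}$ lies in $R[t_{i,j'}]$ because the Witt addition and multiplication laws are given by polynomials with $\mathbb{Z}$-coefficients, and the coefficients of $f_{\alpha}$ (elements of $W_{n}(R)$) contribute only their Witt components, which sit in $R$. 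The transfer of finite type (finiteness of $m$) and finite presentation (finiteness of $I$, hence finitely many $g_{\alpha,j}$) is handled correctly, and the closing remark that representable functors are fpqc sheaves is the usual consequence of faithfully flat descent. The alternative packaging you mention (that $W_{n}\colon R\text{-Alg}\to W_{n}(R)\text{-Alg}$ has a left adjoint $L$ with $F_{n}(\Spec B)\cong\Spec L(B)$) is an equivalent formulation; the adjoint's existence is proved by exactly the explicit computation you carry out, so there is no genuine shortcut there, only a cleaner statement of what has been constructed.
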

In the situation of the proposition, one calls the $R$-scheme $F_{n}(X)$ the Greenberg-Transformation of X. It commutes with fiber-products. Thus, in case $X$ is a group scheme, one deduces that $F_{n}(X)$ is a $R$-group scheme.
\\
Let $(\G,\mu)$ be a window datum. Using the Cartier-morphism, one can consider the $W_{n}(W(k_{0}))$-group scheme $\G_{W(k_{0})}\times_{W(k_{0})} \Spec(W_{n}(W(k_{0}))).$ Then
$$L^{+}(\G_{W(k_{0})})=\lim_{n\geq 1} F_{n}(\G_{W(k_{0})}\times_{W(k_{0})} \Spec(W_{n}(W(k_{0}))))$$
is an affine $W(k_{0})$-group scheme with points in $W(k_{0})$-algebras $R$ given as
$$
L^{+}(\G_{W(k_{0})})(R)=\G_{W(k_{0})}(W(R)).
$$ One has to be careful, because it will not be of finite type or finite presentation anymore. By \cite[Prop. 2.2.1.,(d)]{BueltelPappas} it is flat and formally smooth over $W(k_{0}).$ One can consider the window group for the Witt frame as the sections of a closed subgroup scheme
$$\G(\mathcal{W})_{\mu}\hookrightarrow L^{+}(\G_{W(k_{0})}).$$ In case $(\G,\mu)$ is a minuscule window datum, one checks, see \cite[Prop. 3.1.2.]{BueltelPappas}, that the divided Frobenius extends to a group scheme homomorphism
$$
\Phi_{\G,\mu,\mathcal{W}}\colon \G(\mathcal{W})_{\mu} \rightarrow L^{+}(\G_{W(k_{0})}).
$$
Hence the following definition makes sense.
\begin{Definition}
Let $R\in \Nilp_{W(k_{0})}$ (or a $p$-adic $W(k_{0})$-algebra) and $(\G,\mu)$ a minuscule window datum. 
\\
A $\G$-$\mu$-displays over $R$ is a pair $(Q,\alpha),$ where $Q$ is a fpqc $\G(\mathcal{W})_{\mu}$-torsor on $\Spec(R)$ and 
$$\alpha\colon Q \rightarrow L^{+}(\G_{W(k_{0})})_{R}$$
is a map of fpqc-sheaves on $\Spec(R),$ such that 
$$\alpha(q\cdot h)=h^{-1}\alpha(q)\Phi_{\G,\mu,\mathcal{W}}(h)$$
for all $h\in \G(\mathcal{W})_{\mu}$ and $q\in Q.$
\end{Definition}
\begin{Remark}\label{Remark zur Hodge-Filtration}
Let $R$ be a $W(k_{0})$-algebra, such that $p$ is nilpotent.
In case $\G$ is assumed to be a reductive group scheme, then the datum of a $\G(\mathcal{W})_{\mu}$-torsor over $\Spec(R)$ is equivalent to the datum of a $\G$-torsor $\mathcal{E}$ on $\Spec(W(R))$ and a section of the proper morphism $\bar{\mathcal{E}}/P^{-}\rightarrow \Spec(R)$. Here one denotes by $\bar{\mathcal{E}}=P\times_{\Spec(W(R))} \Spec(R).$ See \cite[section 3.2.4.]{BueltelPappas}. In the $\GL_{n}$-case this just means that a display has an underlying finite projective module over the Witt vectors together with a filtration. Thus, I call the above section the \textit{Hodge-filtration}.
\end{Remark}
The next statement appears as \cite[Prop. 3.2.11.]{BueltelPappas} under a noetherian hypothesis.
\begin{Lemma}\label{Displays ueber adischen Ringen}
 Fix a minuscule and reductive window datum $(\G,\mu).$
Let $A$ be a $W(k_{0})$-algebra, that is $I$-adically complete and separated for an ideal $I,$ that contains a power of $p.$ Then there exists a natural equivalence
$$\G-\mu-\Displ(\Spec(A)) \simeq 2-\lim_{n\geq 1} \G-\mu-\Displ(\Spec(A/I^{n})).$$
\end{Lemma}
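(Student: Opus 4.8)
The plan is to split the statement into three continuity assertions and recombine them. As $(\G,\mu)$ is minuscule and reductive, Remark~\ref{Remark zur Hodge-Filtration} presents a $\G$-$\mu$-display over a ring $R$ with $p$ nilpotent as a triple: a $\G$-torsor $\mathcal{E}$ on $\Spec(W(R))$, a section of the proper smooth morphism $\bar{\mathcal{E}}/P^{-}\to\Spec(R)$ (the Hodge filtration), and the equivariant map $\alpha\colon Q\to L^{+}(\G_{W(k_{0})})_{R}$. The ring-theoretic input is
$$W(A)=\lim_{n}W(A/I^{n}),$$
which holds because each truncated Witt functor $W_{m}$ preserves inverse limits of rings, so $W_{m}(A)=\lim_{n}W_{m}(A/I^{n})$ and hence $W(A)=\lim_{m}W_{m}(A)=\lim_{n}W(A/I^{n})$; the transition maps are surjective, and since $I^{n}/I^{n+1}$ is square-zero each layer $\ker\!\big(W(A/I^{n+1})\to W(A/I^{n})\big)$ is a nil, in particular radical, ideal. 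Applying the affine group schemes $\G$ and $P^{-}$ one obtains likewise $\G(W(A))=\lim_{n}\G(W(A/I^{n}))$ and, intersecting with the Hodge condition, $\G(\mathcal{W})_{\mu}(A)=\lim_{n}\G(\mathcal{W})_{\mu}(A/I^{n})$.

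I would then glue the torsor and the Hodge filtration. Since $\G$ is smooth affine, over the affine schemes $\Spec(W(A/I^{n}))$ with the nilpotent transition ideals above, $\G$-torsors lift uniquely along the tower --- the obstruction to, and the set of, lifts over a square-zero layer lie in $H^{2}$ resp.\ $H^{1}$ of an affine scheme with coefficients in $\Lie(\G)$ twisted by that layer, hence vanish --- and one algebraizes the compatible tower over the complete ring $W(A)$; thus $\G$-torsors on $\Spec(W(A))$ are equivalent to the $2$-limit of $\G$-torsors on the $\Spec(W(A/I^{n}))$. For the Hodge filtration, $\bar{\mathcal{E}}/P^{-}\to\Spec(A)$ is smooth and proper (a partial flag bundle): formal smoothness along the square-zero thickenings $A/I^{n+1}\to A/I^{n}$ makes the transition maps on sections surjective, separatedness makes the map from sections over $A$ into the inverse limit injective (the closed locus on $\Spec(A)$ where two lifts agree contains every $\Spec(A/I^{n})$, hence is all of $\Spec(A)$ since $\bigcap_{n}I^{n}=0$), and properness over the complete ring $A$ algebraizes a compatible tower of sections.

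Finally, for $\alpha$: after trivializing the torsor over a faithfully flat cover $\Spec(A')\to\Spec(A)$ one encodes $\alpha$, and its reduction modulo $I^{n}$, by a single element of $L^{+}(\G_{W(k_{0})})(A')=\G(W(A'))$ subject to the cocycle and $\Phi_{\G,\mu,\mathcal{W}}$-equivariance identities; as the covers can be chosen compatibly across the tower (smooth, in particular fppf, covers lift along the nilpotent thickenings) and $\G(W(-))$ carries $A=\lim_{n}A/I^{n}$ to $\lim_{n}\G(W(A/I^{n}))$, this datum glues. Recombining the three equivalences through Remark~\ref{Remark zur Hodge-Filtration}, applied over $A$ and at each finite level, gives the asserted equivalence, naturally in $A$. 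The step I expect to be the main obstacle is the algebraization: promoting a compatible tower of $\G$-torsors over $\Spec(W(A/I^{n}))$, resp.\ of sections of the flag bundle over $\Spec(A/I^{n})$, to an honest object over the complete but possibly non-noetherian rings $W(A)$ and $A$ --- this is presumably where the noetherian hypothesis enters Bültel-Pappas's argument, via formal GAGA, so dispensing with it forces one to argue directly from completeness. A more robust alternative, bypassing algebraization, is to establish the equivalence first for banal displays, where the torsor is trivial and the claim collapses to the elementary identities $\G(W(A))=\lim_{n}\G(W(A/I^{n}))$ and $\G(\mathcal{W})_{\mu}(A)=\lim_{n}\G(\mathcal{W})_{\mu}(A/I^{n})$, and then to descend to the general case using that $\G$-$\mu$-displays form an fpqc stack.
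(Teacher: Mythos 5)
You have correctly reconstructed the paper's decomposition: by Remark~\ref{Remark zur Hodge-Filtration}, a $\G$-$\mu$-display is an $L^{+}(\G_{W(k_{0})})$-torsor together with a Hodge filtration (a section of $\mathcal{E}/\G(\mathcal{W})_{\mu}\to\Spec(A)$, equivalently of $\bar{\mathcal{E}}/P^{-}\to\Spec(A)$), and the ring-level identities $W(A)=\lim_{n}W(A/I^{n})$, $\G(W(A))=\lim_{n}\G(W(A/I^{n}))$, $\G(\mathcal{W})_{\mu}(A)=\lim_{n}\G(\mathcal{W})_{\mu}(A/I^{n})$ are the right inputs.

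However, your main argument has a genuine gap at precisely the step you flag yourself: ``algebraizing'' a compatible tower of torsors over $\Spec(W(A/I^{n}))$, respectively of sections of the flag bundle over $\Spec(A/I^{n})$, to an honest object over the complete but possibly non-noetherian rings $W(A)$ and $A$. Deformation-theoretic vanishing on affines gives unobstructed, essentially unique lifting across each square-zero layer, and smoothness/properness control the transition maps, but this only produces a pro-object; turning it into an actual torsor or section is precisely what Grothendieck's existence theorem would do, and that theorem requires noetherianness. Since you leave this unresolved, the proof as written is incomplete. The paper closes exactly this gap with two citations: the $L^{+}\G$-torsor $\mathcal{E}$ over $\Spec(A)$ is produced by the argument of \cite[Prop.~3.2.11.]{BueltelPappas}, which (as the paper's footnote points out) does not actually invoke their standing noetherian hypothesis at that step; and the section of $\mathcal{E}/\G(\mathcal{W})_{\mu}\to\Spec(A)$ is obtained from Bhatt's Tannakian result \cite[Remark~4.6.]{BhattTannaka}, which directly yields
$$(\mathcal{E}/\G(\mathcal{W})_{\mu})(\Spec(A))=\lim_{n\geq 1}\,(\mathcal{E}/\G(\mathcal{W})_{\mu})(\Spec(A/I^{n})),$$
bypassing formal GAGA entirely. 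This is the content you were missing.

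Your fallback --- establish the statement for banal displays via the elementary limit identities and then descend --- is a reasonable direction, but it also needs more care than your one sentence suggests: a compatible tower of torsors $Q_{n}$ over $\Spec(A/I^{n})$ does not come with a distinguished compatible tower of trivializing covers. One would have to choose an étale cover of $\Spec(A/I)$ trivializing $Q_{1}$, lift it uniquely along the nilpotent thickenings (using that $(A,I)$ is a henselian pair since $A$ is $I$-adic), argue by formal smoothness that each $Q_{n}$ becomes banal over the lifted cover, apply the banal limit identities over the tower of covers and its \v{C}ech nerve, and interchange limits. Plausible, but not what you wrote, and the route via Bhatt is cleaner.
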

\begin{proof}
Given the arguments in the proof of \cite[Prop. 3.2.11.]{BueltelPappas}, one only has to explain, how to construct from a compatible system of $\G(\mathcal{W})_{\mu}$-torsors $Q_{n}$ over $\Spec(A/I^{n})$ for all $n\geq 1$ a $\G(\mathcal{W})_{\mu}$-torsors over $\Spec(A).$ By Remark \ref{Remark zur Hodge-Filtration} above, one has to show that a compatible system of $L^{+}(\G_{W(k_{0})})$-torsors $\mathcal{E}_{n}$ over $\Spec(A/I^{n})$ together with a compatible system of sections of $\mathcal{E}_{n}/\G(\mathcal{W})_{\mu}\rightarrow \Spec(A/I^{n})$ gives a $L^{+}(\G_{W(k_{0})})$-torsor $\mathcal{E}$ over $\Spec(A)$ together with a section of $\mathcal{E}/\G(\mathcal{W})_{\mu}\rightarrow \Spec(A).$ The $L^{+}(\G_{W(k_{0})})$-torsor $\mathcal{E}$ is constructed in the proof of the cited proposition \footnote{Where they do not make use of the noetherianess assumption!} and for the existence of the section one can use \cite[Remark 4.6.]{BhattTannaka}, which avoids the use of Grothendieck$^{\prime}$s algebraization theorem. In fact, the theorem of Bhatt gives that
$$(\mathcal{E}/\G(\mathcal{W})_{\mu})(\Spec(A))=\lim_{n\geq 1} (\mathcal{E}/\G(\mathcal{W})_{\mu})(\Spec(A/I^{n})).$$
But one has that $(\mathcal{E}/\G(\mathcal{W})_{\mu})\times_{\Spec(A)} \Spec(A/I^{n})\cong (\mathcal{E}_{n}/\G(\mathcal{W})_{\mu})$ and by assumption, there are sections $s_{n}\in (\mathcal{E}_{n}/\G(\mathcal{W})_{\mu})(\Spec(A/I^{n}).$
\end{proof}
\subsubsection{$\G$-$\mu$-windows for the frame $\mathcal{F}_{\infintesimal}$}
Let $(\mathcal{G},\mu)$ be a minuscule window datum. Fix an integral perfectoid $W(k_{0})$-algebra $R.$
\begin{Lemma}\label{Garbeneigenschaft Window Gruppe Inf-Frame}
\begin{enumerate}
\item[(a):] The pre-sheaf
$$\G(\mathcal{A}_{\infintesimal}(\mathcal{R}))\colon \Spec(R/p)^{\aff, \op}_{\et}\rightarrow \Grp$$ is a sheaf. The same is true for $\G(\mathcal{R}).$
\item[(b):] The pre-sheaf
$$\G(\mathcal{F}_{\infintesimal}(\mathcal{R}))_{\mu}\colon \Spec(R/p)^{\aff, \op}_{\et}\rightarrow \Grp$$ is a sheaf.
\end{enumerate}
\end{Lemma}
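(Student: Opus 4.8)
The plan is to reduce everything to the one nontrivial input already available, namely Lemma~\ref{Lau perfektoide Garbe}, which asserts that $\mathcal{R}$ and $\mathcal{A}_{\infintesimal}$ are sheaves of rings on $\Spec(R/p)^{\aff,\op}_{\et}$. The mechanism is the elementary fact that, for any affine scheme $X$ over $W(k_{0})$, the functor $S\mapsto X(S)$ on $W(k_{0})$-algebras commutes with arbitrary limits, and therefore carries a sheaf of $W(k_{0})$-algebras to a sheaf. Applying this with $X$ equal to $\G_{W(k_{0})}$ and to $P^{-}$, and then taking an appropriate fibre product, will give both parts at once; I expect no real obstacle, only a couple of bookkeeping points to check.

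First I would treat (a). Since $\G$ is a smooth affine group scheme over $\mathbb{Z}_{p}$, the base change $\G_{W(k_{0})}=\Spec(B)$ with $B$ a finitely presented $W(k_{0})$-algebra; fix a presentation $B\cong W(k_{0})[x_{1},\dots,x_{n}]/(f_{1},\dots,f_{m})$. Then for every $W(k_{0})$-algebra $S$ the set $\G_{W(k_{0})}(S)$ is the equalizer of the two maps $S^{n}\rightrightarrows S^{m}$ given by $(f_{1},\dots,f_{m})$ and by $0$. Finite products of the sheaf $\mathcal{A}_{\infintesimal}(\mathcal{R})$ are sheaves, the polynomials $f_{j}$ induce morphisms of sheaves of sets $\mathcal{A}_{\infintesimal}(\mathcal{R})^{n}\to\mathcal{A}_{\infintesimal}(\mathcal{R})^{m}$, and the equalizer of two morphisms of sheaves is a sheaf; hence $B'\mapsto\G_{W(k_{0})}\bigl(\Ainf(\mathcal{R}(B'))\bigr)=\G(\mathcal{A}_{\infintesimal}(\mathcal{R}))(B')$ is a sheaf of sets, and since its group structure is defined sectionwise it is a sheaf of groups. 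Running the identical argument with $\mathcal{R}$ in place of $\mathcal{A}_{\infintesimal}(\mathcal{R})$ gives the claim for $\G(\mathcal{R})$.

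For (b) the key observation is that $\G(\mathcal{F}_{\infintesimal}(\mathcal{R}))_{\mu}$ is a fibre product of the sheaves produced in (a). Having fixed a generator $\xi$ of $\ker\theta$, the equalities $\ker(\theta'\colon\Ainf(\mathcal{R}(B'))\to\mathcal{R}(B'))=\xi\,\Ainf(\mathcal{R}(B'))$ hold compatibly in $B'$ (distinguished elements are preserved by ring homomorphisms, as recalled just before Lemma~\ref{Eindeutige integral perfektoide algebra}), so reduction modulo $\ker\theta$ defines a morphism of sheaves of rings $\mathcal{A}_{\infintesimal}(\mathcal{R})\to\mathcal{R}$ and hence a morphism of sheaves of groups $r\colon\G(\mathcal{A}_{\infintesimal}(\mathcal{R}))\to\G(\mathcal{R})$. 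By the very definition of the window group one has, sectionwise,
\[
\G(\mathcal{F}_{\infintesimal}(\mathcal{R}))_{\mu}\;=\;\G(\mathcal{A}_{\infintesimal}(\mathcal{R}))\times_{\G(\mathcal{R})}P^{-}(\mathcal{R}),
\]
the fibre product being taken along $r$ and along the map $P^{-}(\mathcal{R})\to\G(\mathcal{R})$ induced by the closed immersion $P^{-}\hookrightarrow\G_{W(k_{0})}$ (a closed subgroup scheme by Fact~\ref{Zusammenfassung zu Gewichtsgruppen}(i)). As a closed subscheme of the affine scheme $\G_{W(k_{0})}$, $P^{-}$ is itself affine, so the argument of (a) applied to $P^{-}$ shows that $P^{-}(\mathcal{R})$ is a sheaf of groups; and a fibre product of sheaves is a sheaf, which is (b).

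The only places that need a word of care are exactly the two I flagged: that $\ker\theta$ is generated by the \emph{same} $\xi$ after each étale base change — so that the reduction maps form a genuine morphism of sheaves and not merely of presheaves — and that $P^{-}$ is affine, which is automatic as a closed subscheme of $\G_{W(k_{0})}$. Beyond that the proof is purely the formal principle that affine schemes, viewed as functors on rings, turn sheaves of rings into sheaves and commute with finite limits, so I would not expect any serious difficulty.
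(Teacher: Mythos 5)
Your proof is correct and follows essentially the same route as the paper's: for (a) you write $\G$ as an equalizer of maps between affine spaces and use that equalizers of sheaves are sheaves, and for (b) you exhibit the window group as the fibre product $\G(\mathcal{A}_{\infintesimal}(\mathcal{R}))\times_{\G(\mathcal{R})}P^{-}(\mathcal{R})$ and use that fibre products of sheaves are sheaves. The only minor difference is that you make explicit the (easy) observation that $P^{-}$ is affine so that $P^{-}(\mathcal{R})$ is a sheaf by the same argument as in (a), a point the paper leaves implicit.
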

\begin{proof}
For (a), one just observes that as $\G/\mathbb{Z}_{p}$ is of finite type, one can consider $\G$ as the equalizer of maps 
$$
\xymatrix{
\mathbb{A}^{n}_{\mathbb{Z}_{p}} \ar@<1ex>[r] \ar@<-1ex>[r] & \mathbb{A}^{m}_{\mathbb{Z}_{p}}
}
$$
in the category of étale sheaves. Obviously, $(\mathcal{A}_{inf})^{k}(B^{\prime})=(\Ainf(R^{\prime}))^{k}$ for $k\geq 1$ are sheaves and therefore the claim follows from the fact that equalizer of étale sheaves are again étale sheaves.\footnote{The forgetful functor from sheaves to presheaf is a right adjoint, thus preserverse limits.} The same argument shows that also $\G(\mathcal{R})$ is a sheaf. For (b), one first notes that Fontaine$^{\prime}$s map induces a morphism of sheaves
$$\theta\colon \G(\mathcal{A}_{inf}(\mathcal{R})) \rightarrow \G_{W(k_{0})}(\mathcal{R}).$$
Consider the monomorphism of sheaves $\iota\colon P^{-}(\mathcal{R})\hookrightarrow \G_{W(k_{0})}(\mathcal{R})$ induced by the closed immersion $$P^{-}\subset \G_{W(k_{0})}$$ of group schemes. Then one can write the presheaf $\G(\mathcal{F}_{inf}(\mathcal{R}))_{\mu}$ as the fiber-product
$$\G(\mathcal{A}_{inf}(\mathcal{R})) \times_{\theta,\G_{W(k_{0})}(\mathcal{R}),\iota} P^{-}(\mathcal{R})$$ and the Lemma is proven, as a fiber product of étale sheaves is again an étale sheaf (same argument as in the footnote just made in this proof).
\end{proof}
As the frame-constant $\varphi(\xi)=\zeta_{\mathcal{F}_{inf}(\mathcal{R}(B^{\prime}))}$ for varying étale $B=R/p$-algebras $B^{\prime}$ does not lead to torsion in $\Ainf(\mathcal{R}(B^{\prime})),$ one has a group homomorphism
$$\Phi_{inf}\colon \G(\mathcal{F}_{inf}(\mathcal{R}))_{\mu} \rightarrow \G(\mathcal{A}_{inf}(\mathcal{R})).$$
Therefore one can make the following
\begin{Definition}\label{Definition von Windows fuer Ainf}
Let $R$ be an integral perfectoid $W(k_{0})$-algebra. A $\G$-$\mu$-window for the frame $\mathcal{F}_{inf}$ over $R$ is a tuple $(Q,\alpha),$
where
\begin{enumerate}
\item[(a):] $Q$ is a $\G(\mathcal{F}_{inf}(\mathcal{R}))_{\mu}$-torsor on $\Spec(R/p)_{\text{ét}}^{\text{aff}}$,
\item[(b):] $\alpha\colon Q\rightarrow \G(\mathcal{A}_{\infintesimal}(\mathcal{R}))$ is a morphism of sheaves, such that
$$\alpha(q\cdot h)=h^{-1}\alpha(q)\Phi_{\infintesimal}(h).$$
\end{enumerate}
\end{Definition}
In the following I will denote by $\G$-$\mu$-$\text{Win}(\mathcal{F}_{\infintesimal}(\mathcal{R}))$ the corresponding stack of $\G$-$\mu$-windows over $R.$ In other words, this is just the quotient stack
$$[\G(\mathcal{A}_{\infintesimal}(\mathcal{R}))/_{\Phi_{\infintesimal}}\G(\mathcal{F}_{\infintesimal}(\mathcal{R}))_{\mu}]$$ over $\Spec(R/p)_{\text{ét}}^{\text{aff}}$.
\begin{Remark}
One similiarly sees, that one can construct $\G$-$\mu$-windows for prisms giving rise to frames.
\end{Remark}
For later use, let me record a description of banal $\G$-$\mu^{\sigma}$-windows for $\mathcal{F}_{\infintesimal}(R),$ whose proof is of course very similiar to \cite[Prop. 3.2.15.]{BueltelPappas}.
\begin{Lemma}\label{Windows ueber Ainf und BKF lokales Statement}
Let $R$ be an integral perfectoid $W(k_{0})$-algebra as before. There exists an equivalence of groupoids
$$[\G(\Ainf(R))/_{\Phi_{\infintesimal},\G,\mu^{\sigma}}\G(\mathcal{F}_{\infintesimal}(R))_{\mu}] \simeq [\G(\Ainf(R))\mu^{\sigma}(\varphi(\xi))\G(\Ainf(R))/_{\varphi}\G(\Ainf(R))].$$
\end{Lemma}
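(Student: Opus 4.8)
The plan is to exhibit the equivalence by hand: both sides are quotient groupoids for a group acting on a set, so it suffices to write down a functor and check that it is essentially surjective and fully faithful. I would run the argument of \cite[Prop. 3.2.15]{BueltelPappas}, replacing the Witt frame $\mathcal{W}(R)$ by $\mathcal{F}_{\infintesimal}(R)$ and the element $p$ by the frame-constant $\varphi(\xi)$. Concretely, the functor sends an object $U\in\G(\Ainf(R))$ of the left-hand groupoid to $U\mu^{\sigma}(\varphi(\xi))$, which lies in the double coset $\G(\Ainf(R))\mu^{\sigma}(\varphi(\xi))\G(\Ainf(R))\subseteq\G(\Ainf(R)[1/\varphi(\xi)])$, and sends a morphism $h\in\G(\mathcal{F}_{\infintesimal}(R))_{\mu}$ to its image under the inclusion $\G(\mathcal{F}_{\infintesimal}(R))_{\mu}\hookrightarrow\G(\Ainf(R))$. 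To see this is well-defined as a functor of groupoids I would use the description $\Phi_{\infintesimal}(h)=\mu^{\sigma}(\varphi(\xi))\varphi(h)\mu^{\sigma}(\varphi(\xi))^{-1}$ of the divided Frobenius inside $\G(\Ainf(R)[1/\varphi(\xi)])$ from \eqref{Beschreibung geteilter Frobenius nach invertieren} (legitimate since $\varphi(\xi)$ is a non-zero divisor in $\Ainf(R)$), which yields the identity
$$
\bigl(h^{-1}U\,\Phi_{\infintesimal}(h)\bigr)\mu^{\sigma}(\varphi(\xi))=h^{-1}\bigl(U\mu^{\sigma}(\varphi(\xi))\bigr)\varphi(h)
$$
so that the functor intertwines the action $g\ast h=h^{-1}g\,\Phi_{\infintesimal}(h)$ on the source with $\varphi$-conjugation on the target.

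For essential surjectivity I would use that $R^{\flat}=\lim_{\text{Frob}}R/p$ is a perfect $\mathbb{F}_{p}$-algebra, so that $\varphi$ is a ring automorphism of $\Ainf(R)=W(R^{\flat})$ and hence bijective on $\G(\Ainf(R))$: given $X=g_{1}\mu^{\sigma}(\varphi(\xi))g_{2}$ with $g_{1},g_{2}\in\G(\Ainf(R))$, one $\varphi$-conjugates by $h:=\varphi^{-1}(g_{2}^{-1})$ and computes $h^{-1}X\varphi(h)=\varphi^{-1}(g_{2})\,g_{1}\,\mu^{\sigma}(\varphi(\xi))$, which is in the image of the functor on objects. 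Thus every object on the right is isomorphic to one in the image.

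The step I expect to be least formal is full faithfulness. Injectivity on $\Hom$-sets is immediate from $\G(\mathcal{F}_{\infintesimal}(R))_{\mu}\hookrightarrow\G(\Ainf(R))$; for surjectivity I would take $h\in\G(\Ainf(R))$ with $h^{-1}(U\mu^{\sigma}(\varphi(\xi)))\varphi(h)=U'\mu^{\sigma}(\varphi(\xi))$ for $U,U'\in\G(\Ainf(R))$, rearrange this to $\mu^{\sigma}(\varphi(\xi))\varphi(h)\mu^{\sigma}(\varphi(\xi))^{-1}=U^{-1}hU'\in\G(\Ainf(R))$, and then argue, via the weight decomposition of $\G_{W(k_{0})}$ under $\mu^{\sigma}$-conjugation (Fact \ref{Zusammenfassung zu Gewichtsgruppen}), that integrality of this conjugate forces $\varphi(\xi)$ to divide the coordinates of $\varphi(h)$ cutting out $P^{-}_{\mu^{\sigma}}\subseteq\G_{W(k_{0})}$; equivalently $\varphi(h)\bmod\varphi(\xi)\in P^{-}_{\mu^{\sigma}}(\Ainf(R)/\varphi(\xi))$. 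The remaining subtlety is to pass from this back to the window group: since $\varphi$ is an automorphism of $\Ainf(R)$ carrying $(\xi)$ to $(\varphi(\xi))$ and inducing the Frobenius on $W(k_{0})$, the induced semilinear isomorphism $\Ainf(R)/\xi\xrightarrow{\sim}\Ainf(R)/\varphi(\xi)$ identifies $P^{-}_{\mu}$ with $P^{-}_{\mu^{\sigma}}$ — which is exactly why the statement keeps $\G(\mathcal{F}_{\infintesimal}(R))_{\mu}$ on the window side while using $\mu^{\sigma}$ on the double-coset side — so that $h\bmod\xi\in P^{-}_{\mu}(R)$, i.e. $h\in\G(\mathcal{F}_{\infintesimal}(R))_{\mu}$. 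Beyond this essentially formal bookkeeping, the only genuine input is the perfectness of $R^{\flat}$ used in essential surjectivity, which is what makes the argument work over an integral perfectoid ring rather than over an arbitrary $p$-adic base.
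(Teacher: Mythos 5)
Your proof is correct and follows essentially the same route as the paper's: the functor $g\mapsto g\mu^{\sigma}(\varphi(\xi))$, the observation that $\varphi$ is bijective on $\Ainf(R)$ (perfectness of $R^{\flat}$) to produce $h=\varphi^{-1}(g_{2}^{-1})$ for essential surjectivity, and the characterization of $\G(\mathcal{F}_{\infintesimal}(R))_{\mu}$ as $\mu(\xi)^{-1}\G(\Ainf(R))\mu(\xi)\cap\G(\Ainf(R))$ for full faithfulness. The paper organizes this by first recording that intersection identity as a displayed equation (its equation \ref{Beschreibung der Windowgruppe fuer Ainf}, proved in a footnote) and then factoring the equivalence through an intermediate quotient groupoid; you simply compress both steps into one functor and unwind the footnote's argument into the explicit weight-decomposition/divisibility statement — the mathematical content is identical.
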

\begin{proof}
Note at first that $\mu$ being minuscule, implies that\footnote{In fact, one inclusion is obvious and the other one follows from the description of the window group: multiplication induces an isomorphism $U^{+}((\xi))\times P^{-}(\Ainf(R))\rightarrow \G(\mathcal{F}_{\infintesimal}(R))_{\mu}.$ Let $u\in U^{+}((\xi)),p\in P^{-}(\Ainf(R)),$ then $\mu(\xi)^{-1}u\mu(\xi)$ is the unit section and $\mu(\xi)p\mu(\xi)^{-1}\in \G(\Ainf).$ It follows that $u\cdot p\in \G(\Ainf)\cap \mu(\xi)^{-1}\G(\Ainf(R))\mu(\xi).$}
\begin{equation}\label{Beschreibung der Windowgruppe fuer Ainf}
\G(\mathcal{F}_{\infintesimal}(R))_{\mu}=\mu(\xi)^{-1}\G(\Ainf(R))\mu(\xi)\cap \G(\Ainf(R)).
\end{equation}
One deduces the equivalence
$$
[\G(\Ainf(R))/_{\Phi_{\infintesimal,\G,\mu^{\sigma}}}\G(\mathcal{F}_{\infintesimal}(R))_{\mu}]\rightarrow [\G(\Ainf(R))\varphi(\mu(\xi))/_{\varphi}\mu(\xi)^{-1}\G(\Ainf(R))\mu(\xi)\cap \G(\Ainf(R))],
$$
given by sending $g\mapsto g\varphi(\mu(\xi)).$ But using (\ref{Beschreibung der Windowgruppe fuer Ainf}) above, one deduces that the inclusion induces an equivalence between 
$$ 
[\G(\Ainf(R))\varphi(\mu(\xi))/_{\varphi}\mu(\xi)^{-1}\G(\Ainf(R))\mu(\xi)\cap \G(\Ainf(R))]$$ and  $$[\G(\Ainf(R))\mu^{\sigma}(\varphi(\xi))\G(\Ainf(R))/_{\varphi}\G(\Ainf(R))].
$$
Indeed, to show fully faithfullness, assume that
$$g_{1}\varphi(\mu(\xi))=h^{-1}g_{2}\varphi(\mu(\xi)) \varphi(h).$$
This implies that
$$\varphi(h)=\varphi(\mu(\xi))^{-1}g_{2}^{-1}hg_{1}\varphi(\mu(\xi)).$$
Then (as $\varphi$ is an automorphism of $\Ainf$) one deduces by (\ref{Beschreibung der Windowgruppe fuer Ainf}) above that $$h\in \mu(\xi)\G(\Ainf(R))\mu(\xi)^{-1}\cap \G(\Ainf(R)).$$
\\
It is essentially surjective: take $g_{1}\varphi(\mu(\xi))g_{2}.$ Then just write $g_{2}=\varphi(h)^{-1},$ for $h\in\G(\Ainf(R)).$ It follows that
$$h^{-1}g_{1}\varphi(\mu(\xi))g_{2}\varphi(h)\in \G(\Ainf(R))\varphi(\mu(\xi)).$$
\end{proof}
\begin{Remark}
Let $R$ be a perfect $\mathbb{F}_{p}$-algebra. Then $\G$-$\mu$-windows for $\mathcal{F}_{inf}(R)$ are the same as $\G$-$\mu$-displays over $R.$ Thus the previous lemma recovers and extends \cite[Prop. 3.2.15.]{BueltelPappas}.
\end{Remark}
\subsubsection{$\G$-$\mu$-windows for the frame $\mathcal{F}_{\text{cris}}$} The construction for these windows works exactly the same way as for the frame $\mathcal{F}_{\infintesimal}.$ 
\\
Fix a $p$-torsion free integral perfectoid $W(k_{0})$-algebra $R.$ Then $\G(\mathcal{A}_{\cris}(\mathcal{R}))$ and $\G(\mathcal{F}_{\text{cris}}(\mathcal{R}))_{\mu}$ are still sheaves for $\Spec(R/p)^{\text{aff}, \text{op}}_{\text{ét}}$ and as all rings $\Acris(\mathcal{R})$ are $p$-torsion free, one has a group homomorphism
$$
\Phi_{\text{cris}}\colon \G(\mathcal{F}_{\text{cris}}(\mathcal{R}))_{\mu} \rightarrow \G(\mathcal{A}_{\text{cris}}(\mathcal{R})).
$$
Thus, one can copy-paste the previous Definition \ref{Definition von Windows fuer Ainf}.
\subsection{Adjoint nilpotency condition}
In this section, I briefly want to recall the adjoint nilpotency condition for $\G$-$\mu$-displays, as formulated by Bültel-Pappas in \cite[Def. 3.4.2]{BueltelPappas}. As in the theory of classical Zink-displays, this nilpotency condition is needed to develop the deformation theory - but one should be careful that under the equivalence of stacks between classical Zink-displays and $(\GL_{n},\mu_{n,d})$-displays, the nilpotency conditions do not coincide.
\\
Let $k$ be an algebraically closed field of characteristic $p,$ $W=W(k)$ and $L=W(k)[1/p]$ the fraction field. Let $\sigma$ be the Frobenius-automorphism of $L.$ Furthermore, let $(\G,\mu)$ be a minuscule and reductive window datum and denote by $G=\G_{\mathbb{Q}_{p}}$ the generic fiber. If $b\in G(L),$ Kottwitz \cite{KottwitzIsocrystal1} associates a morphism of $L$-groups
$$
\nu_{b}\colon \mathbb{D}_{L}\rightarrow G_{L},
$$
characterized by the property, that for any $(V,\rho)\in \text{Rep}_{\mathbb{Q}_{p}}(G),$ the $\mathbb{Q}$-graduation on $V_{L}$ given by $\rho\circ \nu_{b}$ coincides with the decomposition in isotopic components of the isocristal $(V_{L},\rho(b)\circ (1\otimes \sigma)).$ Thus one can say that $\rho(b)$ has a slope at $\lambda\in \mathbb{Q}.$ A trivial observation is that this requirement is independent of the choice of a representative $b\in [b]\in B(G).$ 
By Lemma \ref{Windows ueber Ainf und BKF lokales Statement}, one deduces
\begin{Corollary}
Let $(Q,\alpha)$ a $\G$-$\mu$-display over $k.$ After trivialization, one may represent it by a section $g\in \G(W).$ Then the requirement that $\Ad(g\mu^{\sigma}(p))$ has all slopes greater than $-1$ is independent of the choice of the trivialization.
\end{Corollary}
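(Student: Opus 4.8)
The plan is to reduce the statement to the standard fact that the Newton point of an element of $G(L)$ — and with it the slope multiset of any of its $\Ad$‑twists — depends only on its $\sigma$‑conjugacy class.

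First I would unwind the effect of a change of trivialization. Since $k$ is a perfect $\mathbb{F}_{p}$‑algebra we have $\Ainf(k)=W$ and may take $\xi=p$, so that $\varphi$ restricts to $\sigma$ and $\varphi(\xi)=p$. Applying Lemma \ref{Windows ueber Ainf und BKF lokales Statement} with $R=k$ identifies the groupoid of banal $\G$‑$\mu^{\sigma}$‑windows over $\mathcal{F}_{\infintesimal}(k)$ — equivalently, of $\G$‑$\mu$‑displays over $k$ equipped with a trivialization — with the quotient groupoid $[\,\G(W)\mu^{\sigma}(p)\G(W)\,/_{\varphi}\,\G(W)\,]$, where $h\in\G(W)$ acts on the double coset by $b\mapsto h^{-1}b\varphi(h)$. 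Concretely, a trivialization of $(Q,\alpha)$ produces an element $b=g\mu^{\sigma}(p)\in\G(W)\mu^{\sigma}(p)\G(W)\subseteq G(L)$, and any two such elements $b,b'$ coming from two trivializations are related by $b'=h^{-1}b\,\sigma(h)$ for some $h\in\G(W)\subseteq G(L)$. In particular $b$ and $b'$ define the same class $[b]\in B(G)$.

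It therefore suffices to observe — as is essentially already remarked just before the statement — that the slope decomposition of $\Ad(b)$ is an invariant of $[b]$. Indeed, by the characterization of the Newton cocharacter $\nu_{b}\colon\mathbb{D}_{L}\to G_{L}$ recalled above, the slopes of $\Ad(b)$ (counted with multiplicity) are exactly the weights of the $\mathbb{Q}$‑graduation of $\Lie(G)_{L}$ induced by $\Ad\circ\nu_{b}$. Under $b\mapsto b'=h^{-1}b\,\sigma(h)$ the Newton cocharacter transforms by conjugation, $\nu_{b'}=h^{-1}\nu_{b}h$, whence $\Ad\circ\nu_{b'}=\Ad(h)^{-1}\,(\Ad\circ\nu_{b})\,\Ad(h)$ is $\GL(\Lie(G)_{L})$‑conjugate to $\Ad\circ\nu_{b}$; and conjugate cocharacters of a general linear group induce graduations with the same multiset of weights. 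Consequently the condition that every slope of $\Ad(g\mu^{\sigma}(p))$ be $>-1$ depends only on $[b]\in B(G)$, hence not on the chosen trivialization.

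The one point that needs genuine care is the bookkeeping in the first step: correctly translating a change of trivialization of the pair $(Q,\alpha)$ into the $\varphi$‑conjugation action in Lemma \ref{Windows ueber Ainf und BKF lokales Statement}, and checking that the window group $\G(\mathcal{F}_{\infintesimal}(k))_{\mu}$ is absorbed into all of $\G(W)$ so that the residual ambiguity is precisely $\sigma$‑conjugacy by $\G(W)$ (this uses $\Ainf(k)=W$, $\xi=p$ and the description $\G(\mathcal{F}_{\infintesimal}(R))_{\mu}=\mu(\xi)^{-1}\G(\Ainf(R))\mu(\xi)\cap\G(\Ainf(R))$ established in the proof of that lemma). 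Everything downstream is formal and uses only the standard functoriality of the Newton point under $\sigma$‑conjugation, already part of the Kottwitz package invoked in the text.
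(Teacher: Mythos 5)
Your proof is correct and follows the same route the paper (implicitly) intends: the Corollary is stated immediately after the remark that the slope condition on $\rho(b)$ depends only on the class $[b]\in B(G)$, and the reference to Lemma \ref{Windows ueber Ainf und BKF lokales Landestatement} (specialized to $R=k$, where $\Ainf(k)=W$, $\xi=p$, $\varphi=\sigma$) is exactly the mechanism that turns a change of trivialization into $\sigma$-conjugation by $\G(W)$, which is what you carry out. Your elaboration of the bookkeeping — identifying the residual ambiguity with $\sigma$-conjugation via the double-coset description and the identity $\G(\mathcal{F}_{\infintesimal}(R))_{\mu}=\mu(\xi)^{-1}\G(\Ainf(R))\mu(\xi)\cap\G(\Ainf(R))$ — is precisely the content the paper compresses into "By Lemma \ref{Windows ueber Ainf und BKF lokales Statement}, one deduces."
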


Thus, the next definition makes sense.
\begin{Definition}
Let $R\in \Nilp_{W(k_{0})}$ and $(Q,\alpha)$ a $\G$-$\mu$-display over $\Spec(R).$ Then $(Q,\alpha)$ is called adjoint nilpotent, if for all geometric points
$$
\bar{x}_{k}\colon \Spec(\overline{\kappa(x)})\rightarrow \Spec(R),
$$
the pull-back $(\bar{x}_{k})^{*}(Q,\alpha)$ fulfills the requirement of the last corollary.
\end{Definition}
Recall the following translation of the adjoint nilpotency condition in the case of a trivialized $\G$-$\mu$-display given in \cite[3.4.4.]{BueltelPappas}.
\begin{proposition}
Let $R$ be $k_{0}$-algebra and $(Q,\alpha)$ a trivialized $\G$-$\mu$-display over $R,$ given by a section $g\in L^{+}(\G)(R).$ Then $(Q,\alpha)$ is adjoint nilpotent, if and only if the endomorphism
$$
\Ad(w_{0}(g))\circ (\text{Frob}_{R}\otimes \text{id}) \circ (\text{id}_{R}\otimes \pi)\colon \Lie(U^{+})_{R}\rightarrow \Lie(U^{+})_{R}
$$
is nilpotent. Here I denote by $\pi\colon \Lie(\G)\rightarrow \Lie(U^{+})$ the projection onto $\Lie(U^{+}),$ killing $\Lie(P^{+}).$
\end{proposition}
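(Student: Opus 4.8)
The plan is to reduce the equivalence to a statement about isocrystals over an algebraically closed field. Since adjoint nilpotency is imposed fibrewise, I would first observe that both sides of the asserted equivalence are detected on geometric points $\bar{x}\colon \Spec(\overline{\kappa(x)})\rightarrow \Spec(R)$: for the left-hand side this is the definition, while for the right-hand side one uses that the operator $\Ad(w_{0}(g))\circ(\mathrm{Frob}_{R}\otimes\mathrm{id})\circ(\mathrm{id}_{R}\otimes\pi)$ is an automorphism composed with a $\mathrm{Frob}_{R}$-semilinear map and a projector, so that its nilpotency is equivalent to its nilpotency at every geometric point; this last fact about Frobenius-semilinear endomorphisms of finite projective modules over rings in $\Nilp_{W(k_{0})}$ is standard in the display literature and can, if desired, be re-derived from the matrix description $\psi^{n}(e_{j})=\sum_{i}(A\,A^{[p]}\cdots A^{[p^{n-1}]})_{ij}\,e_{i}$ after reducing to the reduced ring and its generic points. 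Thus one may assume $R=k$ is an algebraically closed field of characteristic $p$.

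Over such a $k$ the trivialized display is an element $g\in\G(W(k))$, and the recipe recalled just before the corollary attaches to it the element $b=g\mu^{\sigma}(p)\in\G(L)$ together with the $F$-isocrystal $(\Lie(\G)_{L},\ \Ad(b)\circ(1\otimes\sigma))$; adjoint nilpotency means precisely that all of its slopes are $>-1$. Here one invokes the elementary "positive slope $=$ nilpotent mod $p$" lemma from Dieudonné--Manin theory: if $(N,\Phi)$ is an isocrystal over $L$ and $N_{0}\subseteq N$ is a $W(k)$-lattice stable under $p\Phi$, then all slopes of $(N,\Phi)$ are $>-1$ if and only if the induced $\sigma$-semilinear endomorphism of $N_{0}/pN_{0}$ is nilpotent. (Indeed the slopes of $(N,\Phi)$ are $>-1$ iff the slopes of $(N,p\Phi)$ are $>0$ iff $(N,p\Phi)$ has no slope-zero part iff $p\Phi$ is topologically nilpotent on a lattice, and one checks that this condition is independent of the chosen $p\Phi$-stable lattice.)

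It remains to compute this reduction on a convenient lattice, and it is here that the weight grading enters. Write $\Lambda=\Lie(\G)_{W(k)}=\Lambda_{-1}\oplus\Lambda_{0}\oplus\Lambda_{1}$ for the decomposition into $\Ad(\mu^{-1})$-weights, only $-1,0,1$ occurring because $\mu$ is minuscule, so that $\Lambda_{1}=\Lie(U^{+})_{W(k)}$ and $\Lambda_{-1}\oplus\Lambda_{0}=\Lie(P^{-})_{W(k)}$. Since $\mu(p)$ acts on $\Lambda_{n}$ by $p^{-n}$, the operator $p\,\Ad(\mu(p))$ carries $\Lambda$ into itself and reduces modulo $p$ to exactly the projector $\pi$ of the statement. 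Using the semilinearity identity $\Ad(\mu^{\sigma}(p))\circ(1\otimes\sigma)=(1\otimes\sigma)\circ\Ad(\mu(p))$ one rewrites $p\Phi=\Ad(g)\circ(1\otimes\sigma)\circ\bigl(p\,\Ad(\mu(p))\bigr)$, so that $p\Phi$ stabilises $\Lambda$ and
$$\overline{p\Phi}=\Ad(w_{0}(g))\circ(\mathrm{Frob}_{k}\otimes\mathrm{id})\circ\pi\colon\ \Lie(\G)_{k}\longrightarrow\Lie(\G)_{k}.$$
This operator factors through $\Lie(U^{+})_{k}$; for a factorisation $i\circ s$ the composites $i\circ s$ and $s\circ i$ are simultaneously nilpotent, and a cyclic permutation of the three factors does not affect nilpotency, so $\overline{p\Phi}$ is nilpotent on $\Lie(\G)_{k}$ if and only if the endomorphism of $\Lie(U^{+})_{k}$ appearing in the proposition is nilpotent. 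Combining this with the isocrystal lemma and the reduction to geometric points yields the claim.

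The only genuinely delicate ingredient is the passage, over a possibly non-reduced $R$, between nilpotency over $R$ and nilpotency at all geometric points; the rest is bookkeeping with the weight grading together with classical Dieudonné--Manin theory. If one prefers not to quote the Frobenius-semilinear nilpotency fact, one can argue instead that $\mathrm{Frob}_{R}$ raises powers of elements of the nilradical fast enough that nilpotency over $R_{\mathrm{red}}$ already forces nilpotency over $R$, and over $R_{\mathrm{red}}$ one tests at generic points, hence at geometric points.
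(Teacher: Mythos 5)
The paper does not give a proof of this proposition; it is recalled from Bültel--Pappas \cite[3.4.4.]{BueltelPappas} without argument, so there is no internal proof to compare with. Your argument is, I believe, substantively the Bültel--Pappas one: reduce to geometric points (both sides are fibrewise conditions), identify adjoint nilpotency with a slope bound on the isocrystal $(\Lie(\G)_{L},\Ad(b)\circ(1\otimes\sigma))$ for $b=g\mu^{\sigma}(p)$, observe that the weight-graded lattice $\Lambda=\Lambda_{-1}\oplus\Lambda_{0}\oplus\Lambda_{1}$ is $p\Phi$-stable with $p\,\Ad(\mu(p))$ reducing modulo $p$ to the projector $\pi$, and pass from nilpotency on $\Lie(\G)_{k}$ to nilpotency on $\Lie(U^{+})_{k}$ by a cyclic shift of the factors. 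The semilinearity identity $\Ad(\mu^{\sigma}(p))\circ(1\otimes\sigma)=(1\otimes\sigma)\circ\Ad(\mu(p))$ and the weight computation are correct, and the cyclic-shift step works for semilinear factors as well.

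Two small corrections. In the chain ``slopes of $p\Phi$ all $>0$ iff $p\Phi$ has no slope-zero part'' the second ``iff'' is only valid once one knows the slopes are $\geq 0$, which is precisely the $\Lambda$-stability you establish further on; make this dependence explicit, since otherwise a negative slope also has no slope-zero part. More substantially, the alternative route you sketch at the end --- that $\text{Frob}_{R}$ raises powers of the nilradical fast enough for nilpotency over $R_{\mathrm{red}}$ to imply nilpotency over $R$ --- is only sound when $\Nil(R)$ is a nilpotent ideal (e.g.\ $R$ Noetherian), and the proposition imposes no such hypothesis. Over a general $k_{0}$-algebra the correct mechanism is quasi-compactness of $\Spec(R)$: the closed loci on which the $n$-fold twisted product $A\,A^{[p]}\cdots A^{[p^{n-1}]}$ reduces to zero form an increasing family in $n$ whose union is all of $\Spec(R)$, so one of them already equals $\Spec(R)$; that is, some such product has all entries in $\Nil(R)$, which is the interpretation of ``nilpotent'' that makes the statement hold over arbitrary $R$ and which your primary citation of the standard Frobenius-semilinear fact is implicitly invoking.
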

\begin{Remark}
Let me briefly comment on the relation between this nilpotency condition and Zink$^{\prime}$s nilpotency condition in the case of the linear group. Let $R\in \Nilp$ and $\mathcal{P}$ be a classical Zink-display over $R.$ Then $\mathcal{P}$ is adjoint nilpotent if and only if there are radical ideals $I_{nil}\subseteq R$ and $I_{uni}\subseteq R,$ with $I_{nil}\cap I_{uni}=\sqrt{pR},$ such that $\mathcal{P}_{R/I_{nil}}$ and $(\mathcal{P}/I_{uni})^{t}$ are Zink-nilpotent. Here $()^{t}$ means the dual display. This is a reproduction of \cite[Remark 3.4.5]{BueltelPappas}.
\end{Remark}
This is the motiviation to give the following
\begin{Definition}\label{Definition adjoint nilpotent fuer kristalline Windows}
Let $R$ be a $p$-torsion free integral perfectoid $W(k_{0})$-algebra. Let $$\mathcal{F}\in \lbrace \mathcal{F}_{\cris}(R),\mathcal{F}_{\cris}(R/pR) \rbrace.$$ Then a $\G$-$\mu$-window $\mathcal{P}=(Q,\alpha)$ over $\mathcal{F}$ is said to fulfill the adjoint nilpotency condition, if étale-locally on $\Spec(R/pR)$, one can represent $\mathcal{P}$ by a structure matrix $g\in\G(\Acris(R)),$ such that the endomorphism
$$
\psi_{g}\colon \Lie(U^{+})_{\Acris(R)}\rightarrow \Lie(U^{+})_{\Acris(R)},
$$
where $\psi_{g}=(\id_{Lie(U^{+})}\otimes\varphi)\circ \pr_{2} \circ \Ad(g),$ is nilpotent modulo $\text{Fil}(\Acris(R))+p\Acris(R).$
\end{Definition}
The same definition works for not necessarily $p$-torsion free integral perfectoid $W(k_{0})$-algebras for banal $\G$-$\mu$-windows for the frames $\mathcal{F}_{\cris}(R)$ resp. $\mathcal{F}_{\cris}(R/pR)$ (remember that the worries with the $p$-torsion always came from the necessity of having descent statements as in Lemma \ref{Descent fuer Acris}).
\begin{Remark}
Let me briefly point out why this condition is independent of the choice of a structure matrix $g\in \G(\Acris(R)).$ In fact, let me reformulate the condition that the above $\varphi$-linear endomorphism $\psi_{g}=(\id_{\Lie(U^{+})}\otimes\varphi)\circ \pr_{2} \circ \Ad(g)$ is nilpotent modulo $\text{Fil}(\Acris(R))+p\Acris(R)$ as follows: first, let $\overline{g}\in \G(R/pR)$ be the reduction of $g$ modulo $\text{Fil}(\Acris(R))+p\Acris(R).$ The condition that $\psi_{\overline{g}}$ is nilpotent is equivalent to the condition that its base change towards $R/\sqrt{pR}$ is nilpotent; since $R$ is integral perfectoid, this is a perfect ring in characteristic $p.$\footnote{Indeed, if $\varpi\in R$ is some perfectoid pseudo-uniformizer, then one has that $\sqrt{pR}=\bigcup_{n} \varpi^{1/p^{n}}.$} Then one has the following commutative diagram
$$
\xymatrix{
\Acris(R) \ar[r] \ar[d] & \Acris(R/\sqrt{pR})=W(R/\sqrt{pR}) \ar[d] \\
R/pR \ar[r] & R/\sqrt{pR}.
}
$$
Denoting by $g_{\text{red}}\in \G(W(R/\sqrt{pR}))$ the image of $g$ under $\Acris(R)\rightarrow W(R/\sqrt{pR}),$ the condition that $\psi_{\overline{g}}$ is nilpotent is therefore equivalent to the condition that $\psi_{g_{\text{red}}}$ is nilpotent modulo $p.$ By \cite[Lemma 3.4.4]{BueltelPappas}, this last condition just depends on the $\G$-$\mu$-display that $g_{\text{red}}$ represents, which concludes the verification.
\end{Remark}
\subsection{Lau's unique lifting lemma}
Here I want to reproduce the unique lifting lemma, as proven by Lau in \cite[Proposition 7.1.5.]{LauHigherFrames}. Since I am using a slightly different language than him and since this a central techniqual tool, I decided to give a detailed explanation of the proof.
\begin{Definition}{(Lau)} 
Let $\lambda\colon \mathcal{F}\rightarrow \mathcal{F}^{\prime}$ be a $u$-frame morphism, such that $\G$-$\mu$-windows over $\mathcal{F}$ resp. $\mathcal{F}^{\prime}$ are defined. 
Then one calls $\lambda$ nil-crystalline, if the base-change functor $\lambda_{\bullet}$ induces an equivalence of the corresponding adjoint nilpotent $\G$-$\mu$-window categories. It is called crystalline, if it is an equivalence even without putting the adjoint nilpotency condition.
\end{Definition}
The term 'crystalline' was introduced by Lau in \cite[Def. 3.1.]{LauFrames}.
\\
Now I want to explain the key criterium (the 'unique lifting lemma') for when the base change along a frame-morphism is fully faithful; the question of essential surjectivity then comes down to the question whether one can lift a section $g^{\prime}\in \G(S^{\prime})$ towards $g\in \G(S),$ which in the applications will be ok since then $S^{\prime}\rightarrow S$ is surjective and $S^{\prime}$ is henselian along $\ker(S^{\prime}\rightarrow S).$
\\
Let $\mathcal{F}$ and $\mathcal{F}^{\prime}$ two frames over $W(k_{0})$.
One makes the following assumptions:
\begin{enumerate}
\item[(a):] There exists a strict frame morphism
$$\lambda\colon \mathcal{F}\rightarrow \mathcal{F}^{\prime},$$
given by a surjective ring homomorphism $\lambda\colon S\rightarrow S^{\prime},$ that induces an isomorphism $R\simeq R^{\prime},$
\item[(b):] let $K=\ker(\lambda),$ an ideal that by assumption (a) lies in $I.$ Then one requires that $K$ is $p$-adically complete and separated,
\item[(c):] finally, one requires that $\dot{\varphi}(K)\subseteq K.$
\end{enumerate}
\begin{proposition}{(Lau)}\label{Unique lifting lemma}
Let $\mathcal{F}$ and $\mathcal{F}^{\prime}$ two frames over $W(k_{0})$ together with a strict morphism $$\lambda\colon \mathcal{F}\rightarrow \mathcal{F}^{\prime},$$ fulfilling the above hypotheses $(a)$,$(b)$ and $(c)$. Furthermore let $g_{1},g_{2}\in \G_{W(k)}(S).$ Now assume either
\begin{enumerate}
\item[$\bullet$]  that the endomorphisms of $\Lie(U^{+})\otimes_{W(k)} S$ given by $$(\id_{\Lie(\G)}\otimes \varphi)\circ \pi \circ \Ad(g_{i})$$ are nilpotent, where $i=1,2$ (recall that here $\pi$ was the projection from $\Lie(\G)$ towards $\Lie(U^{+})$),
\item[$\bullet$] or that the action of $\dot{\varphi}$ on $K$ is topologically nilpotent.
\end{enumerate}
Then, if $\lambda(g_{1})=\lambda(g_{2}),$ there exists a unique $h\in \G_{W(k)}(\mathcal{F})_{\mu},$ such that
\begin{equation}
g_{2}=h^{-1}g_{1}\Phi_{\mathcal{F}}(h).
\end{equation}
\end{proposition}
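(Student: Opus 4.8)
The plan is to run a contracting successive-approximation argument inside the congruence subgroup $\G(S,K):=\ker\!\big(\G_{W(k)}(S)\to\G_{W(k)}(S/K)\big)$, where $K=\ker(\lambda)$. Since $K\subseteq I$, any element that is $\equiv 1$ modulo $K$ reduces to the unit of $P^{-}(R)$, so $\G(S,K)$ is a normal subgroup of $\G_{W(k)}(S)$ contained in $\G_{W(k)}(\mathcal{F})_{\mu}$; this is where the $h$ we produce will live, which also takes care of the requirement $h\in\G_{W(k)}(\mathcal{F})_{\mu}$. Because $\lambda$ is strict it intertwines $\Phi_{\mathcal{F}}$ with $\Phi_{\mathcal{F}^{\prime}}$ and sends $\zeta_{\mathcal{F}}$ to $\zeta_{\mathcal{F}^{\prime}}$; in particular $\Phi_{\mathcal{F}^{\prime}}(1)=1$, and applying $\lambda$ together with $\lambda(g_{1})=\lambda(g_{2})$ shows that $F(h):=g_{1}\Phi_{\mathcal{F}}(h)g_{2}^{-1}$ maps $\G(S,K)$ into itself. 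The equation $g_{2}=h^{-1}g_{1}\Phi_{\mathcal{F}}(h)$ is equivalent to $h=F(h)$. Using that $\Phi_{\mathcal{F}}$ is a homomorphism on the window group one gets $F(h)F(h^{\prime})^{-1}=g_{1}\Phi_{\mathcal{F}}(h(h^{\prime})^{-1})g_{1}^{-1}$ for $h,h^{\prime}\in\G(S,K)$, so the whole problem reduces to showing that $c\mapsto g_{1}\Phi_{\mathcal{F}}(c)g_{1}^{-1}$ is topologically contracting on $\G(S,K)$ for a complete, separated filtration by normal subgroups. The computational input needed is the estimate $\Phi_{\mathcal{F}}\big(\G(S,J)\big)\subseteq\G\big(S,S\dot{\varphi}(J)\big)$ for every ideal $J\subseteq K$: writing $h=u\cdot p$ with $u\in U^{+}(J)$, $p\in P^{-}(S)$, $p\equiv 1\ (J)$ via Lemma \ref{Zerlegungslemma fuer die Window-gruppe} reduced modulo $J$, the $\log^{+}/\dot{\varphi}$ formula carries $u$ into $U^{+}(S\dot{\varphi}(J))$, while $\varphi(p)\equiv 1$ modulo $S\dot{\varphi}(J)$ since $\varphi(j)=\zeta_{\mathcal{F}}\dot{\varphi}(j)$ for $j\in J\subseteq I$, and conjugation by $\mu(\zeta_{\mathcal{F}})\in\G(S)$ preserves the normal subgroup $\G(S,S\dot{\varphi}(J))$. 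One also records that $\zeta_{\mathcal{F}}\in\Rad(S)$, obtained by multiplying a relation $1=\sum_{k}a_{k}\dot{\varphi}(i_{k})$ by $\zeta_{\mathcal{F}}$.

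In the case where $\dot{\varphi}$ is topologically nilpotent on $K$ the filtration is built directly from $\dot{\varphi}$: with $D(J)=S\dot{\varphi}(J)$, put $K_{n}$ equal to the $p$-adic closure of $\sum_{a+b=n}p^{a}D^{b}(K)$. Then $D(K_{n})\subseteq K_{n+1}$ and $pK_{n}\subseteq K_{n+1}$; and since $D^{b}(K)\subseteq S\dot{\varphi}^{b}(K)$, topological nilpotence together with $p$-adic separatedness of $K$ gives $\bigcap_{n}K_{n}=0$ and identifies the $\{K_{n}\}$-topology on $K$ with its $p$-adic topology, so $K$ is complete. By the core estimate, $c\mapsto g_{1}\Phi_{\mathcal{F}}(c)g_{1}^{-1}$ sends $\G(S,K_{n})$ into $\G(S,K_{n+1})$, so the sequence $h_{0}=1$, $h_{m+1}=F(h_{m})$ has increments in $\G(S,K_{m})$, converges, and its limit $h\in\G(S,K)$ is a fixed point; contraction gives uniqueness of the fixed point in $\G(S,K)$.

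The genuinely hard case is the one with the nilpotency hypothesis on $g_{1},g_{2}$, where $\dot{\varphi}$ itself need not contract at all (for instance $\dot{\varphi}=V^{-1}$ on a Witt frame), so the filtration above is useless. The remedy is to linearise $c\mapsto g_{1}\Phi_{\mathcal{F}}(c)g_{1}^{-1}$ near the identity on $\Lie(\G)\otimes K=(\Lie(U^{+})\oplus\Lie(P^{-}))\otimes K$, a direct sum because the datum is $1$-bound. The linearisation is $\Ad(g_{1})\circ\widetilde{\Phi}$ with $\widetilde{\Phi}=\id\otimes\dot{\varphi}$ on $\Lie(U^{+})$ and $\widetilde{\Phi}=\zeta_{\mathcal{F}}^{\,1-n}(\id\otimes\dot{\varphi})$ on the weight-$n$ summand of $\Lie(P^{-})$, $n\leq 0$; so every passage through $\Lie(P^{-})$ acquires a factor $\zeta_{\mathcal{F}}\in\Rad(S)$, while the $\Lie(U^{+})$-to-$\Lie(U^{+})$ part, read modulo $I$, is exactly the operator $\Ad(w_{0}(g_{i}))\circ(\text{Frob}\otimes\id)\circ\pi$ of \cite[3.4.4.]{BueltelPappas}, which is nilpotent by hypothesis; the two hypotheses on $g_{1}$ and $g_{2}$ agree here because $g_{1}\equiv g_{2}\ (K)\subseteq I$. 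Hence, if the $M$-th power of that operator vanishes, any $M+1$ consecutive iteration steps must visit $\Lie(P^{-})$ and therefore gain a $\zeta_{\mathcal{F}}$ or a $p$; interleaving this blockwise $\zeta_{\mathcal{F}}$/$p$-divisibility with the single-step bound $S\dot{\varphi}(K_{n})\subseteq K_{n+1}$ produces a complete separated filtration of $K$ (using $\zeta_{\mathcal{F}}\in\Rad(S)$, $K\subseteq I$, and $p$-adic completeness of $K$) on which the iteration contracts, and the successive approximation terminates as before. Making this step rigorous — lifting the nilpotence of the modulo-$I$ operator to the operator on $\Lie(\G)\otimes K/K_{n}$, and checking separatedness and completeness of the mixed $(\zeta_{\mathcal{F}},p,\dot{\varphi})$-filtration — is the technical heart of the argument and is precisely what the nilpotency hypotheses are tailored to; this is the step I expect to be the main obstacle.

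Finally, for uniqueness within all of $\G_{W(k)}(\mathcal{F})_{\mu}$ (not only in $\G(S,K)$): a second solution $h^{\prime}$ yields $c:=h(h^{\prime})^{-1}$ satisfying $c=g_{1}\Phi_{\mathcal{F}}(c)g_{1}^{-1}$; reducing $cg_{1}=g_{1}\Phi_{\mathcal{F}}(c)$ modulo $I$ and invoking the nilpotency (or the topological nilpotence of $\dot{\varphi}$) exactly as in the contraction forces $\bar{c}=1$, hence $c\in\G(S,K)$, whence contraction forces $c=1$ and $h=h^{\prime}$.
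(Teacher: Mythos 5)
Your basic strategy --- reduce to bijectivity of $z\mapsto\mathcal{U}_{g_1}(z)^{-1}z$ on the congruence subgroup (where $\mathcal{U}_{g_1}(z)=g_1\Phi_{\mathcal{F}}(z)g_1^{-1}$), then argue by successive approximation --- is the same as the paper's, but you are missing the one device that closes exactly the step you flag as the main obstacle. The paper uses hypothesis (b) (that $K$ is $p$-adically complete and separated) together with smoothness of $\G$ to identify $\G(K)\cong\lim_i\G(K/p^iK)$, and reduces the whole problem to \emph{pointwise nilpotence} of $\mathcal{U}_{g_1}$ on each $\G(K/p^iK)$; by the finite filtration this reduces further to pointwise nilpotence on the \emph{square-zero} graded pieces $\G(K_i)$ with $K_i:=p^iK/p^{i+1}K$. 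Square-zero-ness is decisive for two reasons. First, $\G(K_i)\cong\Lie(\G)\otimes K_i$ so the computation is genuinely linear and no power-series bookkeeping is needed. Second, and crucially, $\Phi_{\mathcal{F}}$ is \emph{identically zero} on $P^{-}(K_i)$: on the weight-$m$ component ($m\leq 0$) it is $p^{-m}\varphi$, and $\varphi|_K=p\,\dot{\varphi}|_K$ supplies the extra factor of $p$ that kills $p^iK/p^{i+1}K$. Consequently $\mathcal{U}_{g_1}$ on $\G(K_i)$ factors through $U^{+}(K_i)$ and is exactly $\lambda\otimes\dot{\varphi}$, where $\lambda=(\id\otimes\varphi)\circ\pi\circ\Ad(g_1)$ is the $\varphi$-linear endomorphism of $\Lie(U^{+})\otimes_{W(k)}S$ appearing in the hypothesis --- so either nilpotence of $\lambda$ or pointwise topological nilpotence of $\dot{\varphi}$ gives pointwise nilpotence of $\mathcal{U}_{g_1}$ on $\G(K_i)$ for free.

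Two concrete symptoms in your write-up of the fact that you never pass to $K_i$. First, in the nilpotent case you downgrade the hypothesis to a statement about the B\"ultel--Pappas operator \emph{modulo $I$}, which is strictly weaker than what the proposition actually assumes (nilpotence of the $\varphi$-linear operator on $\Lie(U^{+})\otimes_{W(k)}S$ itself); you then try to lift that weaker statement back, and this is precisely where your argument stalls. Use the hypothesis as stated, and after passing to $K_i$ there is nothing to lift. Second, in the topologically nilpotent case your filtration $\{K_n\}$ is not visibly separated or cofinal with the $p$-adic filtration: topological nilpotence of $\dot{\varphi}$ on $K$ is a pointwise hypothesis, so $S\dot{\varphi}^b(K)$ need not sit inside $p^{N(b)}K$ with $N(b)\to\infty$ uniformly, and neither the claimed vanishing of $\bigcap_n K_n$ nor completeness of $K$ in the $K_n$-topology follows from what you wrote. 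Reducing modulo each $p^i$ sidesteps this entirely: all that is needed is pointwise nilpotence of $\mathcal{U}_{g_1}$ on $\G(K/p^iK)$, and the $p$-adic completeness of $K$ in hypothesis (b) then does the gluing.
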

\begin{proof}
One first notes that the analog of \cite[Lemma 7.1.4.]{LauHigherFrames} is of cours true in this set-up and proved word by word the same way. Let me state it explictly for the reader's convenience
\begin{Lemma}\label{Vorbereitung unique lifting lemma}
\begin{enumerate}
\item[(a):] Let $\G(K)=\ker(\G(S)\rightarrow \G(S^{\prime}))$ and $\G(K)_{\mu}=\ker(\G(\mathcal{F})_{\mu}\rightarrow \G(\mathcal{F}^{\prime})_{\mu}).$ Then there is a natural identification $$\G(K)\cong\G(K)_{\mu},$$
\item[(b):] $\lambda$ induces a surjective group homomorphism
$$\G(S)\rightarrow \G(S^{\prime}),$$
\item[(c):] $\lambda$ induces a surjective group homomorhism, $$\G(\mathcal{F})_{\mu}\rightarrow \G(\mathcal{F^{\prime}})_{\mu}.$$
\end{enumerate}
\end{Lemma}
Coming back to the proof of Prop. \ref{Unique lifting lemma}, note that this lemma implies that one has to show that the action by $\Phi$-conjugation is simply transitive on the fibers, i.e. one has to show that for all $g\in\G(S)$ and all $h\in\G(K),$ there exists a unique $z\in\G(K)_{\mu},$ such that
\begin{equation}\label{Gleichung fuer transitive Operation}
z^{-1}g\Phi_{\mathcal{F}}(z)=hg.
\end{equation}
Now one reformulates (\ref{Gleichung fuer transitive Operation}) in a different way, which is more tractable to proving: one has an identification $\G(K)\simeq \G(K)_{\mu}$ by part (a) of the previous lemma. Thus for all $g\in \G(S),$ one can consider the following group homomorphism
$$ \mathcal{U}_{g}\colon \G(K) \rightarrow \G(K)$$
$$ z\mapsto g(\Phi_{\mathcal{F}}(z))g^{-1}.$$
Then (\ref{Gleichung fuer transitive Operation}) is equivalent to
\begin{equation}
z^{-1}\mathcal{U}_{g}(z)=h.
\end{equation}
Thus one has to show the following
\\
\textbf{Claim:} The map $$\G(K)\rightarrow \G(K),$$ $$z\mapsto \mathcal{U}_{g}(z)^{-1}z$$
is bijective.
\\
Note that the assumptions that $K$ is $p$-adic and $\G$ is smooth, together imply that
$$\G(K)\cong\lim_{i\geq 1}\G(K/p^{i}K).$$
Furthermore, consider
$$\G(p^{i}K):=\ker(\G(K)\rightarrow \G(K/p^{i}K)).$$
I claim that $\mathcal{U}_{g}$ preserves $\G(p^{i}K),$ then $\mathcal{U}_{g}$ will operate on $\G(K/p^{i}K)\simeq \G(K)/\G(p^{i}K)\footnote{As $\G$ is smooth, the map $\G(K)\rightarrow \G(K/p^{i}K)$ is surjective.}$ and I furthermore claim that it does so pointwise nilpotently. Then the above \textbf{Claim} follows.
In fact, for $z\in\G(K)$ the product $\Pi:=\cdot\cdot\cdot\mathcal{U}^{2}_{g}(z)\mathcal{U}_{g}(z)z$ then makes sense and it satisfies $\mathcal{U}_{g}(\Pi)^{-1}\Pi=z.$ The injectivity follows, because in case $\mathcal{U}_{g}(z)^{-1}z=\mathcal{U}_{g}(z^{\prime})^{-1}z^{\prime},$ the product $z^{\prime}z^{-1}$ is fixed by $\mathcal{U}_{g}$. But $\mathcal{U}_{g}$ is pointwise nilpotent and thus has as a unique fixpoint the unit section. Therefore, to finish the proof, one has to show the two above properties.
\\
For this, one first shows that $\Phi_{\mathcal{F}}$ respects the decomposition $\G(K)=U^{+}(K)\times P^{-}(K)$ coming from the (omitted) proof of the previous lemma.
\\
On $U^{+}(K)\cong\Lie(U^{+})\otimes_{W(k)} K,$ one has by definition $\Phi_{\mathcal{F}}=\id \otimes \dot{\varphi}$ and because by assumption $\dot{\varphi}(K)\subseteq K$ this is ok.
\\
Take a section $g\in P^{-}(K)$. This corresponds to a morphism of $W(k)$-algebras
$$g^{\sharp}\colon A/(A_{>0})\rightarrow S,$$ such that $\lambda(g^{\sharp}(f))=e^{\sharp}_{S^{\prime}}(f),$ where $e_{S^{\prime}}\in P^{-}(S^{\prime})$ the unit section. In terms of the weight-decomposition 
$$A=\bigoplus_{n\in \mathbb{Z}} A_{n},$$
one knows that $e^{\sharp}_{S^{\prime}}(f_{n})=0,$ for all $n\neq 0.$\footnote{This follows for example because $P^{+}$ and $P^{-}$ are both subgroup schemes of $\G$.} Thus, writing $f=\sum_{n\geq 0} f_{-n}\in A/(A_{>0}),$ the assumption that $g\in P^{-}(K)$ implies $g^{\sharp}(f_{-n})\in K$ for $n\geq 1$ and that $\lambda(g^{\sharp}(f_{0}))=e_{S^{\prime}}^{\sharp}(f_{0}).$ Recall that one defined
$$\Phi_{\mathcal{F}}\colon P^{-}(S)\rightarrow P^{-}(S)$$ by $g\mapsto \mu(p)\varphi(g)\mu(p)^{-1}$. This means that
$$(\Phi(g))^{\sharp}(f_{-n})=p^{n}\varphi(g^{\sharp}(f_{-n})),$$
for $n\geq 0.$ Thus, it follows for $n\geq 1,$ that $(\Phi(g))^{\sharp}(f_{-n})\in K,$ because $\varphi$ also lets $K$ stable \footnote{One has $\lambda(\varphi(x))=\varphi^{\prime}(\lambda(x))=0,$ for $x\in K.$}. Furthermore, the following identities hold: $$\lambda((\Phi(g))^{\sharp}(f_{-0}))=\lambda(\varphi(g^{\sharp}(f_{0}))=\varphi^{\prime}(\lambda(g^{\sharp}(f_{0})))=\varphi^{\prime}(e^{\sharp}_{S^{\prime}}(f_{0}))=e^{\sharp}_{S^{\prime}}(f_{0}).$$ Here one uses that $\varphi^{\prime}$ induces a group endomorphism on $P^{-}(S^{\prime})$ and thus sends the unit section to the unit section. In total, it follows that
$$
\lambda((\Phi(g))^{\sharp}(\sum_{n\geq 1}f_{-n})))=\lambda(\sum_{n\geq 1} p^{n}\varphi(g^{\sharp}(f_{-n})))=\lambda(\varphi((f_{-0})))=e^{\sharp}_{S^{\prime}}(f).
$$
Note that it also follows that $\Phi$ stabilzes $U^{+}(p^{i}K)$ and also $P^{-}(p^{i}K)$. One deduces that also $\mathcal{U}_{g}$ stabilizes $U^{+}(p^{i}K)$ and $P^{-}(p^{i}K)$ and the first claim above is proven. Now let me turn to the second:
\\
By induction it is sufficient to show that $\mathcal{U}_{g}$ acts pointwise nilpotently on all $$\G(p^{i}K/p^{i+1}K)\cong \G(p^{i}K)/\G(p^{i+1}K)$$ to conclude that $\mathcal{U}_{g}$ is pointwise nilpotent on all $\G(K/p^{i}K),$ for all $i\geq 0.$
\\
Now one shows the claimed pointwise nilpotency of $\mathcal{U}_{g}$ on $\G(p^{i}K/p^{i+1}K).$
For this, one first claims that
$$\Phi_{\mathcal{F}}\colon P^{-}(p^{i}K/p^{i+1}K)\rightarrow P^{-}(p^{i}K/p^{i+1}K)$$ is the zero map. In fact, observe that $P^{-}(p^{i}K/p^{i+1}K)\simeq \Lie(P^{-})\otimes p^{i}K/p^{i+1}K.$ Under this identification $\Phi_{\mathcal{F}}$ corresponds to $\cdot p^{-m}\varphi$ on weight $m$-components, where $m\leq 0.$ As $\varphi=p\dot{\varphi}$ on $K,$ the claim is ok.
\\
Therefore, one has the following factorization of the endormorphism $\mathcal{U}_{g}:$
$$\xymatrix{
\G(K_{i}) \ar[r]^{pr_{2}} & U^{+}(K_{i}) \ar[r]^{\id\otimes \dot{\varphi}} & U^{+}(K_{i}) \ar[r]^{z\mapsto gzg^{-1}} & \G(K_{i}),
}$$
here I wrote $K_{i}:=p^{i}K/p^{i+1}K$ to not exceed the margin. This is pointwise nilpotent, if the permutation
$$
\xymatrix{
U^{+}(K_{i}) \ar[r]^{z\mapsto gzg^{-1}} & \G(K_{i}) \ar[r]^{pr_{2}} & U^{+}(K_{i}) \ar[r]^{\id\otimes \dot{\varphi}} & U^{+}(K_{i})
}
$$
is pointwise nilpotent.
But the morphism of pointed sets $U^{+}(K_{i}) \rightarrow U^{+}(K_{i}),$ defined by $z\mapsto pr_{2}(zgz^{-1})$ is described by some power series, whose higher degree terms are annihaled by $\id\otimes \dot{\varphi},$ because one has $\dot{\varphi}(ab)=p\dot{\varphi}(a)\dot{\varphi}(b),$ for all $a,b\in K.$ Therefore,
$$\mathcal{U}_{g}\colon \Lie(U^{+})_{S}\otimes_{S} K_{i} \rightarrow \Lie(U^{+})_{S}\otimes_{S} K_{i}$$
is given by $\lambda \otimes \dot{\varphi},$ where $\lambda$ is the following $\varphi$-linear endomorphism of $\Lie(U^{+})\otimes_{W(k)} S$:
$$(1\otimes \varphi)\circ \pi \circ \Ad(g).$$
Now using either the assumption that this $\varphi$-linear endomorphism is nilpotent or the assumption that $\dot{\varphi}$ acts topologically nilpotent on $K,$ one may conclude the proof of the statement.
\end{proof}
\section{Crystalline equivalence}\label{Section crystalline equivalence}
Fix as usual a 1-bound window datum $(\G,\mu)$.
Let $R$ be a $p$-torsion free integral perfectoid $W(k)$-algebra. In this section I will first construct a strict framemorphism
$$
\chi\colon \mathcal{F}_{\text{cris}}(R)\rightarrow \mathcal{W}(R)
$$
and then proceed to show the following statement
\begin{proposition}(Crystalline equivalence)\label{Kristalline Aequivalenz}
\\
Let $R$ be a $p$-torsion free integral perfectoid $W(k)$-algebra. Then the base change functor
$$
\chi_{\bullet}\colon \G\text{-}\mu\text{-}\Win(
\mathcal{F}_{\text{cris}}(R/p))_{\nilp}\rightarrow \G\text{-}\mu\text{-}\Displ(R/p)_{\nilp}
$$
is an equivalence (i.e. $\chi$ is nil-crystalline).
\end{proposition}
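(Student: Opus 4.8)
The plan is to reduce to banal objects by descent, dispatch essential surjectivity by a lifting argument, and obtain full faithfulness from Lau's unique lifting lemma (Proposition \ref{Unique lifting lemma}) applied to a suitable factorization of $\chi$. First I would note that both sides are étale stacks on $\Spec(R/p)$: a $\G$-$\mu$-display is an fpqc torsor under the flat, formally smooth group $\G(\mathcal{W})_\mu$, hence étale-locally trivial, while $\G$-$\mu$-windows for $\mathcal{F}_{\cris}(R/p)$ form an étale stack by construction — here the hypothesis that $R$ is $p$-torsion free integral perfectoid enters, via the descent Lemma \ref{Descent fuer Acris}. The adjoint nilpotency conditions are étale-local on both sides and correspond under $\chi$ (after pushforward to $W(R/\sqrt{pR})$, cf. the Remark after Definition \ref{Definition adjoint nilpotent fuer kristalline Windows}). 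So it suffices to show that $\chi$ induces an equivalence of quotient groupoids $[\G(\Acris(R/p))/_{\Phi}\G(\mathcal{F}_{\cris}(R/p))_\mu]_{\nilp}\to[\G(W(R/p))/_{\Phi}\G(\mathcal{W}(R/p))_\mu]_{\nilp}$, compatibly with étale localization.

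\emph{Essential surjectivity.} The ring map $\chi\colon\Acris(R/p)\to W(R/p)$ is surjective: on $\Ainf(R)=W(R^\flat)$ it is $W$ of the projection $R^\flat\to R/p$, which is surjective since $R/p$ is semiperfect. Its kernel is contained in $\Fil(\Acris(R/p))\subseteq\Rad(\Acris(R/p))$ (one has $p\in\Rad$ and $\Fil/p$ is nil by the divided powers), so $\Acris(R/p)$ is henselian along $\ker\chi$; by smoothness of $\G$ any $g\in\G(W(R/p))$ lifts to $\tilde g\in\G(\Acris(R/p))$, and the banal window $\tilde g$ maps to the banal display $g$. The lift $\tilde g$ is adjoint nilpotent as a window exactly when $g$ is adjoint nilpotent as a display, by the compatibility already noted.

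\emph{Full faithfulness.} Here I would invoke the unique lifting lemma, but \emph{not} for $\chi$ itself, since $\dot\varphi=\varphi/p$ does not stabilize $\ker\chi$ — indeed $\dot\varphi(\xi)=\varphi(\xi)/p$ is a unit. Instead I would factor $\chi$ through a cofinal tower of strict frame morphisms $\mathcal{F}_{\cris}(R/p)\to\mathcal{F}_n\to\cdots$, with the $\mathcal{F}_n$ the pd-thickenings of $R/p$ interpolating between $\Acris(R/p)$ and $W(R/p)$, chosen so that on each transition kernel $K_n$ the hypotheses (a)–(c) of Proposition \ref{Unique lifting lemma} hold. On each finite stage the first bullet of that lemma applies to nilpotent objects — the adjoint nilpotency condition is precisely the nilpotence of $(\id_{\Lie(\G)}\otimes\varphi)\circ\pi\circ\Ad(g_i)$ on $\Lie(U^+)\otimes\Acris(R/p)$ demanded there — so the base-change functor is fully faithful at each stage, and one passes to the limit using $\G$-smoothness, as in the proof of Lemma \ref{Displays ueber adischen Ringen}. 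Combined with essential surjectivity and étale descent this gives the equivalence of stacks.

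The step I expect to be the main obstacle is the factorization used for full faithfulness: one has to pin down the correct interpolating system of pd-thickenings (or a single auxiliary frame) between $\mathcal{F}_{\cris}(R/p)$ and $\mathcal{W}(R/p)$ on whose transition kernels $\dot\varphi$ is well-behaved, and then control the passage to the limit. This is exactly where Lau's analysis of $\Acris$ — its divided power filtration and the $p$-adic divisibility of Frobenius — is needed, and it is also the reason the adjoint nilpotency hypothesis is indispensable.
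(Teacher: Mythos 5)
Your reduction to banal objects and your essential-surjectivity step are both correct and match the paper's approach: both sides are étale stacks (with the $p$-torsion-freeness entering via Lemma~\ref{Descent fuer Acris}), and a structure matrix lifts across $\chi$ because $\Acris(R/p)$ is henselian along $\ker\chi$ (the kernel is nil modulo $p$ and $\Acris$ is $p$-adic) and $\G$ is smooth.

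The full-faithfulness step, however, contains a genuine error that undermines the rest of your plan. You assert that $\dot\varphi$ does \emph{not} stabilize $\ker\chi$, citing that $\dot\varphi(\xi)=\varphi(\xi)/p$ is a unit. But $\xi$ does not lie in $\ker\chi$: the kernel of $\chi\colon\Acris(R/p)\to W(R/p)$ is $K=\{a\colon\varphi^n(a)\in p^n\mathfrak a\ \forall n\geq 0\}$ with $\mathfrak a=\Fil(\Acris)+p\Acris$, and while $\varphi^0(\xi)=\xi\in\mathfrak a$, one has $\varphi(\xi)=pu$ with $u$ a unit, so $\varphi(\xi)\notin p\mathfrak a$ and hence $\xi\notin K$. (You appear to have conflated the frame ideal $I=\Fil+p\Acris$ with the much smaller kernel $K$ of the frame morphism; hypothesis (c) of Proposition~\ref{Unique lifting lemma} concerns $K$, not $I$.) In fact $\dot\varphi(K)\subseteq K$: for $a\in K$ one has $\varphi^n(\dot\varphi(a))=\varphi^{n+1}(a)/p\in p^n\mathfrak a$, so $\dot\varphi(a)\in K$. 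Better still, $K$ is generated by the divided powers $[x]^{(n)}$, $n\geq1$, $x\in\ker(R^\flat\to R/p)$, and $\dot\varphi$ acts topologically nilpotently on $K$ — so \emph{either} bullet of the unique lifting lemma applies directly to $\chi$ itself. This is precisely what the paper does (Lemma~\ref{Kristalline aequivalenz modulo p} and the remark following it): one computes $K$ explicitly, checks $\dot\varphi(K)\subseteq K$, and then applies the unique lifting lemma directly, with a small modification of the nilpotency estimate coming from the adjoint nilpotency hypothesis (or, alternatively, using topological nilpotence of $\dot\varphi$ on $K$, in which case the nilpotency hypothesis is not even needed). Your proposed interpolating tower of pd-thickenings is therefore built on a false premise and is unnecessary; and since you leave the construction of that tower — and the passage to the limit — as an acknowledged open step, the proposal does not in fact supply a proof of full faithfulness.
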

Recall that Zink proves in \cite{ZinkWindow} the $\GL_{n}$-case of the crystalline equivalence (in fact, he proves a more general statement). He writes down a quasi-inverse of the base-change functor $\chi_{\bullet},$ by evaluating the crystal of a nilpotent display at the (topological) pd-thickening $\Acris(R)\rightarrow R$ to get the finite projective module over $\Acris(R)$ and then gets the Frobenius-structure by applying functoriality of the crystal to the Frobenius-isogeny of the display. As one has to work here with torsors, this route is not available. Instead, the idea is to first prove the statement $^{\prime}$ \textit{modulo }$p$ $^{\prime}$ directly by applying Lau's unique lifting lemma, Proposition \ref{Unique lifting lemma}, and then study the relation between lifts and lifts of the Hodge-filtration in the present set-up, to deduce the full statement. In this second step the adjoint nilpotency condition will be crucial.
\subsection{Crystalline equivalence modulo $p$}
Consider still $R$ a $p$-torsion free integral perfectoid $W(k)$-algebra. Then one has the frame $
\mathcal{F}_{\text{cris}}(R/p)=(\Acris(R), \Fil(\Acris(R))+p\Acris(R),R/pR,\varphi,\dot{\varphi}),
$
that of course still has frame-constant $p.$
By Lemma \ref{Framemorphismus von Acris nach W(R)} below,
one has a strict frame-morphism $$\chi/ p\colon \mathcal{F}_{\text{cris}}(R/p)\rightarrow \mathcal{W}(R/p),$$ and the aim of this section is to show that
$$
(\chi/ p)_{\bullet}\colon \G\text{-}\mu\text{-}\text{Win}(
\mathcal{F}_{\text{cris}}(R/p))_{\text{nilp}}\rightarrow \G\text{-}\mu\text{-}\text{Displ}(R/p)_{\text{nilp}}
$$
is an equivalence of groupoids. By Lemma,\ref{Displays sind ein Stack fuer Laus Garbe}, even further below, one sees that the fibered category on $\Spec(R/p)^{\text{aff}}_{\text{ét}},$ given by sending $B=R/p\rightarrow B^{\prime}=R^{\prime}/p,$ $\mathcal{R}(B^{\prime})=R^{\prime},$ towards the groupoid of $\G$-$\mu$-displays over $R^{\prime}$ is a stack; furthermore, for $\G$-$\mu$-Win($\mathcal{F}_{\text{cris}}(R)$) one has built this descent-behaviour into the very definition. Therefore one is immediately reduced to show that $(\chi/ p)$ induces an equivalence on the corresponding quotient-groupoids of banal and adjoint nilpotent windows resp. displays. Let me state the statement explicitely (dropping as usual in the statements that concern the banal case the assumption that the integral perfectoid ring is $p$-torsion free).
\begin{Lemma}
Let $R$ be an integral perfectoid $W(k_{0})$-algebra. Then 
$$
(\chi/ p)_{\bullet}\colon \G\text{-}\mu\text{-}\Win(
\mathcal{F}_{\text{cris}}(R/p))_{\nilp, \banal}\rightarrow \G\text{-}\mu\text{-}\Displ(R/p)_{\nilp, \banal}
$$
is an equivalence.
\end{Lemma}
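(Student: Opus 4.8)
The plan is to apply Lau's unique lifting lemma (Proposition~\ref{Unique lifting lemma}) to the strict frame morphism $\chi/p\colon\mathcal{F}_{\cris}(R/p)\to\mathcal{W}(R/p)$ supplied by Lemma~\ref{Framemorphismus von Acris nach W(R)}. First I would verify the standing hypotheses (a)--(c) of that lemma. The underlying ring map $\chi\colon\Acris(R)\to W(R/p)$ is surjective and induces the identity on the common quotient $R/p$, so its kernel $K$ is contained in $\Fil(\Acris(R))+p\Acris(R)$, the ideal of the source frame; since $\Acris(R)$ is $p$-torsion free (Lau), $K$ is $p$-adically separated, and being the kernel of a surjection of $p$-complete rings it is $p$-adically complete; finally $\dot\varphi(K)\subseteq K$ is forced by the frame-morphism identity $V^{-1}\circ(\chi/p)=(\chi/p)\circ\dot\varphi$ evaluated on $K$. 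Since $\Acris(R)$ is $p$-torsion free, the divided Frobenius $\Phi_{\G,\mu,\mathcal{F}_{\cris}(R/p)}$ is a group homomorphism, so $(\chi/p)_\bullet$ is the morphism of quotient groupoids $[\G(\Acris(R))/_\Phi\G(\mathcal{F}_{\cris}(R/p))_\mu]\to[\G(W(R/p))/_\Phi\G(\mathcal{W}(R/p))_\mu]$ induced by $\chi$.

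\emph{Full faithfulness.} Let two banal nilpotent windows be represented by $g_1,g_2\in\G(\Acris(R))$ with $\chi(g_1)=\chi(g_2)$. I would check that the first alternative hypothesis of Proposition~\ref{Unique lifting lemma} is available: the adjoint nilpotency condition (Definition~\ref{Definition adjoint nilpotent fuer kristalline Windows}) says exactly that the $\varphi$-linear endomorphism $\lambda_{g_i}=(\id_{\Lie(U^+)}\otimes\varphi)\circ\pi\circ\Ad(g_i)$ of $\Lie(U^+)\otimes_{W(k)}\Acris(R)$ is nilpotent modulo $\Fil(\Acris(R))+p\Acris(R)$, and combining this with the elementary fact $\varphi\bigl(\Fil(\Acris(R))\bigr)\subseteq p\Acris(R)$ one gets $\lambda_{g_i}^{N+1}\bigl(\Lie(U^+)\otimes\Acris(R)\bigr)\subseteq\Lie(U^+)\otimes p\Acris(R)$ for the nilpotency index $N$, hence by iteration $\lambda_{g_i}$ is $p$-adically topologically nilpotent, so it is honestly nilpotent on each $p$-torsion graded piece $\G(p^nK/p^{n+1}K)$ occurring in Lau's proof. (Alternatively, one may invoke the fact that $\dot\varphi$ acts $p$-adically topologically nilpotently on $K$ — the second alternative of Proposition~\ref{Unique lifting lemma} — which needs no nilpotency assumption on the window at all.) Proposition~\ref{Unique lifting lemma} then produces a unique $h\in\G(\mathcal{F}_{\cris}(R/p))_\mu$ with $g_2=h^{-1}g_1\Phi(h)$; together with the surjectivity of $\G(\mathcal{F}_{\cris}(R/p))_\mu\to\G(\mathcal{W}(R/p))_\mu$ from Lemma~\ref{Vorbereitung unique lifting lemma} and the simple transitivity established in its proof, this shows $(\chi/p)_\bullet$ is bijective on morphism sets.

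\emph{Essential surjectivity.} A banal nilpotent $\G$-$\mu$-display over $R/p$ is isomorphic to one represented by some $g'\in\G(W(R/p))$. As $\chi$ is surjective with kernel $K$ inside the Jacobson radical of the $p$-adically complete ring $\Acris(R)$ and $\G$ is smooth, the map $\G(\Acris(R))\to\G(W(R/p))$ is surjective; choose a lift $g\in\G(\Acris(R))$ of $g'$. Then $(\chi/p)_\bullet$ of the banal window represented by $g$ is the display represented by $g'$, and it only remains to see that this window is adjoint nilpotent: the condition on $\psi_g$ depends only on the reduction $\bar g\in\G(R/pR)$ of $g$ modulo $\Fil(\Acris(R))+p\Acris(R)$, which equals $w_0(\chi(g))=w_0(g')$, so after the reduction to $\G(W(R/\sqrt{pR}))$ used in the Remark following Definition~\ref{Definition adjoint nilpotent fuer kristalline Windows} it becomes precisely the Bültel--Pappas adjoint nilpotency of the display. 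Hence $g$ represents an object of $\G\text{-}\mu\text{-}\Win(\mathcal{F}_{\cris}(R/p))_{\nilp,\banal}$, and together with full faithfulness this gives the asserted equivalence.

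\emph{Main obstacle.} The technical heart is securing the nilpotency input for Lau's unique lifting lemma: either upgrading the ``nilpotent modulo $\Fil+p$'' of Definition~\ref{Definition adjoint nilpotent fuer kristalline Windows} to genuine $p$-adic topological nilpotence of $\lambda_g$ — which I expect to drop out of $\varphi(\Fil(\Acris))\subseteq p\Acris$ and $p$-adic completeness as sketched — or, more robustly, proving the ``funny'' fact that $\dot\varphi$ is topologically nilpotent on $K=\ker(\chi/p)$. The remaining points (surjectivity of $\chi$ and hence of $\G(\Acris(R))\to\G(W(R/p))$, and the exact matching of the crystalline and display adjoint-nilpotency conditions under $\chi$) are bookkeeping once the $\Acris(R)\to W(R/p)$ picture is set up.
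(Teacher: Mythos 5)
Your proposal is correct and follows essentially the paper's own route: Lau's unique lifting lemma (via either the adjoint-nilpotency alternative, spelled out in the paper as Lemma~\ref{Kristalline aequivalenz modulo p}, or the topological nilpotence of $\dot\varphi$ on $K=\ker(\chi)$, which is the paper's Remark~\ref{Bemerkung zur kristallinen Aequivalenz modulo p}), combined with essential surjectivity via smoothness of $\G$ and henselianity of $\Acris(R)$ along $K$. Two small points of care: first, your ``upgrade'' of adjoint nilpotency to $p$-adic topological nilpotence of $\lambda_g$ does not directly match the hypothesis of Proposition~\ref{Unique lifting lemma}, which asks for genuine nilpotence; what you in fact need (and what the inequality $\lambda_g^{N+1}(\Lie(U^+)\otimes\Acris(R))\subseteq\Lie(U^+)\otimes p\Acris(R)$ gives, after passing to a basis and using $pK_i=0$) is nilpotence of $\lambda_g\otimes\dot\varphi$ on each $K_i=p^iK/p^{i+1}K$, which is exactly what the paper reproves inside Lemma~\ref{Kristalline aequivalenz modulo p} by the explicit matrix computation $N\varphi(N)\cdots\varphi^c(N)\in\Mat(\mathfrak a)$ followed by $\varphi(K)\subseteq pK$. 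Second, for essential surjectivity ``$K\subseteq\Rad(\Acris(R))$'' alone does not justify surjectivity of $\G(\Acris(R))\to\G(W(R/p))$; the paper's actual argument is that $\Acris(R)$ is $p$-adic and $K/p$ is a nil-ideal (pd-ideal in characteristic $p$), making the pair $(\Acris(R),K)$ henselian — so the lift of a structure matrix for a smooth group is guaranteed. With those adjustments, your argument agrees with the paper's.
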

Let me now operate in slightly greater generality. Consider a $p$-adic and $p$-torsion free ring $A,$ a $p$-adic ring $R,$ that is the quotient of $A$ by a pd-ideal $\mathfrak{a}\subseteq A.$ This means that one requires here that for all $m\geq 0$ and all $a\in \mathfrak{a}$ one has that $\gamma_{m}(a)\in \mathfrak{a}.$ Assume that there exists a Frobenius-Lift
$$
\varphi\colon A\rightarrow A.
$$ 
Since $A$ is assumed to be $p$-torsion free, it follows that one can consider $\dot{\varphi}=\frac{\varphi}{p}\colon \mathfrak{a}\rightarrow A$ and one has a frame 
$$
\underline{A}=(A,\mathfrak{a},R,\varphi,\dot{\varphi}).
$$
\begin{Lemma}\label{Framemorphismus von Acris nach W(R)}
There exists a strict frame-morphism
$$
\chi\colon \underline{A}\rightarrow \mathcal{W}(R).
$$
\end{Lemma}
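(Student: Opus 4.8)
The plan is to build $\chi$ out of the Cartier morphism. Since $A$ is $p$-torsion free and carries the Frobenius lift $\varphi$, one has the ring homomorphism $\delta\colon A\to W(A)$ characterised by $w_{n}(\delta(a))=\varphi^{n}(a)$ for all $n\geq 0$ (as in \cite[Lemma 2.38]{ZinkVorlesung}), which satisfies $w_{0}\circ\delta=\id_{A}$ and $F\circ\delta=\delta\circ\varphi$. I would then set $\chi:=W(\pi)\circ\delta\colon A\to W(R)$, where $\pi\colon A\to R=A/\mathfrak{a}$ is the projection. This is manifestly a ring homomorphism; for $a\in\mathfrak{a}$ one gets $w_{0}(\chi(a))=\pi(w_{0}(\delta(a)))=\pi(a)=0$, so $\chi(\mathfrak{a})\subseteq I(R)$; and $F\circ\chi=W(\pi)\circ F\circ\delta=W(\pi)\circ\delta\circ\varphi=\chi\circ\varphi$ by naturality of $F$. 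Thus the conditions for $\chi$ to be a strict frame morphism are immediate except for the $\dot{\varphi}$-condition with $u=1$, i.e.\ for the identity
\[
V^{-1}\bigl(\chi(a)\bigr)=\chi\bigl(\dot{\varphi}(a)\bigr)=\chi\bigl(\varphi(a)/p\bigr)\qquad\text{for all }a\in\mathfrak{a}.
\]
Here I would first note that $\varphi(a)/p\in A$: writing $\varphi(a)=a^{p}+pc$ with $c\in A$, the divided powers on $\mathfrak{a}$ give $a^{p}/p=(p-1)!\,\gamma_{p}(a)\in\mathfrak{a}$, whence $\varphi(a)/p=a^{p}/p+c\in A$.

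Next, since $V$ is injective on Witt vectors, the identity above is equivalent to $\chi(a)=V\bigl(\chi(\varphi(a)/p)\bigr)=W(\pi)\bigl(V(\delta(\varphi(a)/p))\bigr)$, so it would suffice to show
\[
\delta(a)-V\bigl(\delta(\varphi(a)/p)\bigr)\in\ker\bigl(W(\pi)\colon W(A)\to W(R)\bigr),
\]
the kernel being the set of Witt vectors with all components in $\mathfrak{a}$. As $A$, and hence $W(A)$, is $p$-torsion free, the ghost map is injective, so this element is determined by its ghost vector; a short computation — using that $\varphi^{\,k}(\varphi(a)/p)=\varphi^{\,k+1}(a)/p$, which holds because $A$ is $p$-torsion free and $\varphi^{\,j}(a)\in pA$ for $j\geq 1$ (again by the pd-structure) — shows that this ghost vector is $(a,0,0,\dots)$. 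It then remains to check that the Witt vector $(w^{(0)},w^{(1)},\dots)$ with ghost vector $(a,0,0,\dots)$ has all its components in $\mathfrak{a}$, which I would do by induction: $w^{(0)}=a\in\mathfrak{a}$, and from $\sum_{i=0}^{n}p^{i}(w^{(i)})^{p^{n-i}}=0$ together with $(w^{(i)})^{p^{n-i}}=(p^{n-i})!\,\gamma_{p^{n-i}}(w^{(i)})$ and the valuation estimate $v_{p}\bigl(p^{i}(p^{n-i})!\bigr)\geq n$ one obtains $p^{n}w^{(n)}\in p^{n}\mathfrak{a}$, hence $w^{(n)}\in\mathfrak{a}$.

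I expect the main obstacle to be exactly this last point, the $\dot{\varphi}$-compatibility. One cannot argue directly with ghost components over $R$ because $W(R)$ may have $p$-torsion; the remedy is to pass to the $p$-torsion free ring $W(A)$, and there it is precisely the divided power structure on $\mathfrak{a}$ that cancels the denominators produced when reconstructing a Witt vector from its (almost entirely vanishing) ghost vector. Everything else is formal bookkeeping with the standard properties of the Cartier morphism, and in particular the resulting $\chi$ is automatically strict (the frame constants on both sides are $p$, consistent with $u=1$).
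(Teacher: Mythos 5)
Your construction $\chi = W(\pi)\circ\delta$ is exactly the one the paper uses, which then defers the verification to \cite[Cor.~2.40]{ZinkVorlesung}; your ghost-vector computation over the $p$-torsion-free ring $W(A)$ together with the divided-power cancellation of the $p$-denominators is precisely the content of Zink's argument, so the proof is correct. (One small inaccuracy in the closing parenthetical: the relation $u\cdot p = \chi(p) = p$ between frame constants does not by itself force $u=1$, since $p$ need not be a regular element of $W(R)$; strictness is established by your direct verification of the $\dot{\varphi}$-compatibility, not by comparing frame constants.)
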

\begin{proof}
This is contained in \cite[Corollary 2.40.]{ZinkVorlesung}. The frame-morphism is constructed as the composition of the Cartier-morphism
$$
\delta\colon A\rightarrow W(A),
$$
with the homomorphism $W(A)\rightarrow W(R).$\footnote{I am really using $W(\Acris(R))$ here!}
\end{proof}
In this setting, the groupoid of banal $\G$-$\mu$-windows for the frame $\underline{A}$ is given by
$$
[\G(A)/_{\Phi \underline{A}}\G(\underline{A})_{\mu}]
$$
(since the frame-constant is here $p,$ which by assumption is a regular element in $A$ so that the divided Frobenius $\Phi_{\underline{A}}$ is indeed a group homomorphism). An element $U\in \G(A)$ is said to satisfy the adjoint nilpotency condition, if the endomorphism
$$
\psi_{U}\colon \Lie(U^{+})_{A}\rightarrow \Lie(U^{+})_{A},
$$
given by $\psi_{U}=(\id_{\Lie(U^{+})}\otimes\varphi)\circ \pi \circ \Ad(U)$ is nilpotent modulo $\mathfrak{a}+pA.$
Now one may deduce the crystalline equivalence $^{\prime}$ \text{modulo }$p$ $^{\prime}$:
\begin{Lemma}\label{Kristalline aequivalenz modulo p}
Keep the assumptions in Lemma \ref{Framemorphismus von Acris nach W(R)}, but add the assumption that $R$ is a semi-perfect $\mathbb{F}_{p}$-algebra. Then for the strict frame-morphism $\chi\colon \underline{A}\rightarrow \mathcal{W}(R)$  the base-change functor 
$$\chi_{\bullet}\colon \G\text{-}\mu\text{-}\Win(\underline{A})_{\banal,\nilp}\rightarrow \G\text{-}\mu\text{-}\Displ(R)_{\banal,\nilp} $$
is fully faithful.
\end{Lemma}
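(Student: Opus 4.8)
The statement asks only for full faithfulness, so I would not attempt to write down a quasi-inverse; the whole plan is to reduce to Lau's unique lifting lemma, Proposition~\ref{Unique lifting lemma}. First I would unwind the assertion on the banal level. A banal adjoint nilpotent $\G$-$\mu$-window over $\underline A$ is represented by a section $U\in\G(A)$ satisfying the nilpotency condition of this subsection, and a morphism from the window of $U_1$ to the window of $U_2$ is an $h\in\G(\underline A)_\mu$ with $U_2=h^{-1}U_1\Phi_{\underline A}(h)$; the same holds over $\mathcal W(R)$, where by Lemma~\ref{Framemorphismus von Acris nach W(R)} the base change $\chi_\bullet$ is given on representing sections simply by $\chi$ (the frame morphism being strict). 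Hence full faithfulness is exactly the statement that for all such $U_1,U_2$ the map
\[
\{\,h\in\G(\underline A)_\mu : U_2=h^{-1}U_1\Phi_{\underline A}(h)\,\}\longrightarrow\{\,h'\in\G(\mathcal W(R))_\mu : \chi(U_2)=h'^{-1}\chi(U_1)\Phi_{\mathcal W(R)}(h')\,\},\qquad h\mapsto\chi(h),
\]
is a bijection. Injectivity is immediate, because $\chi$ is the composite of the Cartier morphism $\delta\colon A\to W(A)$ with $W(A)\to W(R)$, and as $w_0(\delta(a))=a$ it is injective, hence injective on $\G$-points; so $h$ is recovered from $\chi(h)$. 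The content is therefore surjectivity, i.e. \emph{fullness}, and the engine for it is Proposition~\ref{Unique lifting lemma}, used together with the identification $\G(K)\cong\G(K)_\mu$ and the surjectivity of window groups from Lemma~\ref{Vorbereitung unique lifting lemma}.

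The obstacle to applying Proposition~\ref{Unique lifting lemma} to $\chi$ itself is that its hypotheses ask for a \emph{surjective} ring map inducing an isomorphism on the $R$-quotient, whereas $\chi\colon A\to W(R)$ is injective rather than surjective. I would get around this by factoring $\chi$, up to equivalence of the relevant window categories, through an auxiliary frame carried by $W(A)$: equip $W(A)$ with the ideal $\mathfrak b=\ker(W(A)\twoheadrightarrow R)\supseteq I(A)$ and with a divided Frobenius extending $V^{-1}$ on $I(A)$, the extension to $\mathfrak b$ being manufactured from the divided powers on $\mathfrak a$ (here the hypotheses $p\in\mathfrak a$ and ``$\mathfrak a$ a pd-ideal'' are used). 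Write $\underline{W(A)}=(W(A),\mathfrak b,R,F,\dot\varphi)$ for this frame. Then $\delta$ is a strict frame morphism $\underline A\to\underline{W(A)}$, and $W(A)\to W(R)$ a strict frame morphism $\underline{W(A)}\to\mathcal W(R)$ whose underlying ring map is surjective, with kernel $K$ that is $p$-adically complete and separated (because $\mathfrak a$ is and $A$ is $p$-torsion free) and stable under the divided Frobenius. To the second morphism Proposition~\ref{Unique lifting lemma} applies directly; the adjoint-nilpotency hypothesis it requires is inherited along $\delta_\bullet$ from $\underline A$, and one may alternatively invoke the ``$\dot\varphi$ topologically nilpotent on $K$'' branch, since the pd-structure makes $\varphi/p$ topologically nilpotent on $K$. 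For $\delta_\bullet$ one re-runs the simply-transitive-action argument from the proof of Proposition~\ref{Unique lifting lemma}, but now along the $I(A)$-adic (equivalently $p$-adic) filtration on the cokernel of the split injection $\delta\colon A\hookrightarrow W(A)$, using smoothness of $\G$. Composing the two full faithfulness statements, and chasing the transporter sets through the factorization exactly as in the derivation of full faithfulness from Proposition~\ref{Unique lifting lemma}, yields fullness of $\chi_\bullet$.

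The genuinely delicate point — and the main obstacle — is the construction and bookkeeping of the auxiliary frame $\underline{W(A)}$: one must check that $V^{-1}$ on $I(A)$ extends to a well-defined $\varphi$-linear $\dot\varphi\colon\mathfrak b\to W(A)$ with $\dot\varphi(\mathfrak b)$ generating $W(A)$, so that $(W(A),\mathfrak b,R,F,\dot\varphi)$ is a frame with frame constant $p$ and that both of the maps above are honest strict frame morphisms. This is precisely the divided-power computation underlying Zink's crystalline comparison, and it is where the pd-hypothesis on $\mathfrak a$ really bites. Everything else is formal: verifying that the adjoint nilpotency condition is stable under the $\Phi$-conjugation action and under the two base-change functors $\delta_\bullet$ and $(W(A)\to W(R))_\bullet$, and identifying the window-group kernels so that Lemma~\ref{Vorbereitung unique lifting lemma} may be invoked at the stage where the ring map is surjective.
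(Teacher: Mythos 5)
Your proposal has a genuine gap, and it sits right at the foundation: you assert that $\chi\colon A\to W(R)$ is ``injective rather than surjective,'' and your whole architecture (the auxiliary frame on $W(A)$, the factorization of $\chi$ through $\delta$, the two-stage reduction) is designed to work around this. But the claim is wrong on both counts. First, $\chi$ is not injective: $\chi=\bigl(W(A)\to W(R)\bigr)\circ\delta$, and while the Cartier morphism $\delta$ is split injective via $w_0$, the composition has kernel $K=\{a\in A:\delta(a)_i\in\mathfrak a\text{ for all }i\}$, which is typically large --- in the application $A=\Acris(R/p)$ it is generated by the divided powers $[x]^{(n)}$, $n\geq 1$. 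Second, and decisively, $\chi$ \emph{is} surjective once you use the extra hypothesis that $R$ is a semi-perfect $\mathbb F_p$-algebra, which you never exploit. Semi-perfectness gives $F$ surjective on $W(R)$, hence $I(R)=V(W(R))=pW(R)$, so $W(R)/pW(R)\simeq R$; since $p\in\mathfrak a$, the map $A/pA\to A/\mathfrak a\simeq R\simeq W(R)/pW(R)$ is surjective, and $p$-adic completeness of both sides upgrades this to surjectivity of $\chi$. Once you see this, the unique lifting lemma Proposition~\ref{Unique lifting lemma} applies \emph{directly} to $\chi$, after verifying that $K$ is $p$-adically complete and $\dot\varphi$-stable (the latter being the content of the explicit computation $a\in K\iff\varphi^n(a)\in p^n\mathfrak a$ for all $n$, which indeed uses the pd-structure on $\mathfrak a$ as you anticipated), and that the adjoint nilpotency condition feeds the pointwise-nilpotency input.

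The detour through $\underline{W(A)}$ is not only unnecessary but also unlikely to close as sketched. The morphism $\delta\colon\underline A\to\underline{W(A)}$ you propose is \emph{not} surjective on underlying rings, so Proposition~\ref{Unique lifting lemma} does not apply to it, and the phrase ``re-run the simply-transitive-action argument along the cokernel of the split injection $\delta$'' is not an argument --- the lemma's mechanism (acting simply transitively on fibers of a quotient, using pointwise nilpotency on a kernel ideal) has no analogue in the direction of an injective, non-surjective base change; there you would be trying to prove that no information is \emph{added}, which is a different kind of statement and typically requires a descent or unique-extension argument rather than a fixed-point iteration. Even setting this aside, you would still need to verify that $(W(A),\mathfrak b,R,F,\dot\varphi)$ is a frame at all (that the extension of $V^{-1}$ to $\mathfrak b$ exists and satisfies the frame axioms), which is a nontrivial claim you flag but do not prove. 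All of this is avoided once the surjectivity of $\chi$ is in hand.
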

\begin{proof}
Write $K=\ker(\chi\colon A\rightarrow W(R)).$
I want to apply the unique lifting lemma Proposition \ref{Unique lifting lemma}. Thus one first has to show that $\chi$ satisfies the following hypotheses:
\begin{enumerate}
\item[(a):] $\chi\colon A\rightarrow W(R)$ is a surjection,
\item[(b):] $\ker(\chi)$ is $p$-adically complete,
\item[(c):] $\dot{\varphi}$ stabilizes $K,$ i.e. $\dot{\varphi}(K)\subseteq K.$
\end{enumerate}
First of all, by definition both $\underline{A}$ and $\mathcal{W}(R)$ are frames over the same ring $R.$ Furthermore, as $R$ is semiperfect and $VF=FV=p$ in $W(R),$ it follows that $I(R)=pW(R).$ As both rings $A$ and $W(R)$ are $p$-adically complete, it suffices to show that $\chi$ is surjective after reduction modulo $p.$ This follows because $p\in \mathfrak{a},$ thus $A/pA$ surjects onto $A/\mathfrak{a},$ which is isomorphic to $W(R)/pW(R)\simeq R.$
\\ Part (b) is automatic. Let me show (c). For this, one computes $K=\ker(\chi).$ Let $a\in A$ and write $\delta(a)=(\delta(a)_{0},\delta(a)_{1},...)\in W(A).$ Then $a\in K,$ iff all $\delta(a)_{i}\in \mathfrak{a}.$ I claim that this is the case iff $\varphi^{n}(a)\in p^{n}\mathfrak{a}.$
\\
Let $a\in K.$ Then certainly $\delta(a)_{0}=a\in \mathfrak{a}.$ By the construction of the Cartier-morphism, one has that
\begin{equation}\label{Geistkomponenten von Cartiermorphismus}
W_{n}(\delta(a))=a^{p^{n}}+p(\delta(a)_{1}^{p^{n-1}})+...+p^{n}\delta(a)_{n}=\varphi^{n}(a).
\end{equation}
From (\ref{Geistkomponenten von Cartiermorphismus}), one deduces after division by $p^{n}$ that
\begin{equation}
\dot{\varphi}^{n}(a)=(p^{n}-1)!\gamma_{p^{n}}(a)+(p^{n-1}-1)!\gamma_{p^{n-1}}(\delta(a)_{1})+...+\delta(a)_{n}\in \mathfrak{a}.
\end{equation}
Here I used that all $\delta(a)_{i}\in \mathfrak{a}$ so that one can apply $\gamma$ and that $\mathfrak{a}$ is a pd-ideal, so that $\gamma_{m}(a)\in \mathfrak{a}.$ It follows that $\varphi^{n}(a)\in p^{n}\mathfrak{a}.$
\\
For the other inclusion, I argue by induction: one has that $\delta(a)_{0}=\varphi^{0}(a)=a\in\mathfrak{a}.$ Assume one has already shown for $i\leq n-1$ that $\delta(a)_{i}\in \mathfrak{a}.$ One may write, as $\varphi^{n}(a)\in p^{n}\mathfrak{a},$
$$\delta(a)_{n}=\frac{\varphi^{n}(a)-(\delta(a)_{0})^{p^{n}}-p(\delta(a)_{1})^{p^{n-1}}-...-p^{(n-1)}(\delta(a)_{n-1})^{p}}{p^{n}}
$$
$$
= \dot{\varphi}^{n}(a)-(p^{n}-1)!\gamma_{p^{n}}(\delta(a)_{0})-...-(p-1)!\gamma_{p}(\delta(a)_{n-1})\in \mathfrak{a}.
$$
From this it follows that
$\dot{\varphi}(K)\subseteq K:$ let $a\in K,$ one has that
$$\varphi^{n}(\dot{\varphi}(a))=\frac{\varphi^{n+1}(a)}{p}\in p^{n}\mathfrak{a},$$
because $\varphi^{n+1}(a)\in p^{n+1}\mathfrak{a}.$ Now the above description of the kernel shows that also $\dot{\varphi}(a)\in K.$
\\
Let me write again $K_{i}=p^{i}K/p^{i+1}K.$ Then, to conclude the proof as in Proposition \ref{Unique lifting lemma}, one still has to check that the $\varphi$-linear endomorphism
$$
\psi_{g}\otimes \dot{\varphi}\colon \Lie(U^{+})_{A} \otimes_{A} K_{i} \rightarrow  \Lie(U^{+})_{A}\otimes_{A} K_{i},
$$
is really pointwise nilpotent.
Recall that here $\psi_{g}$ was the $\varphi$-linear endomorphism of $\Lie(U^{+})_{A}$ given by $(1\otimes \varphi)\circ \pr_{2} \circ \Ad(g).$ But this follows from the adjoint nilpotency condition on $g\in \G(A)$. In fact, to make the argument clearer, let me choose a basis of $\Lie(U^{+}),$ i.e. an isomorphism $\Lie(U^{+})\simeq (W(k))^{n_{1}}.$ Then one has $(\psi_{g}\otimes \dot{\varphi})(x)=N\cdot \dot{\varphi}(\underline{x}),$ where $N\in\Mat_{n_{1}}(A)$ and $\underline{x}\in (K_{i})^{n_{1}}$ is the column-vector corresponding to $x$. Inductively, one gets
$$
(\psi_{g}\otimes \dot{\varphi})^{n}(x)=N\cdot \varphi(N)\cdot \varphi^{2}(N) ...\varphi^{n-1}(N)\cdot \dot{\varphi}^{n}(\underline{x}).
$$
The adjoint nilpotency condition says, that there exists some $c\geq 1,$ such that
$$
M=N\cdot \varphi(N)\cdot \varphi^{2}(N) ...\varphi^{c}(N)
$$
has coefficients in $\mathfrak{a}.$ Thus, one gets that $$(\psi_{g}\otimes \dot{\varphi})^{c+1}(x)=N\cdot \varphi(M)\cdot \dot{\varphi}^{c+1}(\underline{x})=N\cdot\dot{\varphi}(M\cdot \dot{\varphi}^{c}(\underline{x}))=N\cdot \dot{\varphi}(M)\cdot\varphi(\dot{\varphi}(\underline{x}))=0.$$
Here I used in the third equation, that $\dot{\varphi}$ is $\varphi$-linear, in the fourth equation, that $M$ has coefficients in $\mathfrak{a}$ and in the final equation, that $\varphi(K)\subseteq pK.$ This concludes the proof.
\end{proof}
Return to the assumption that $R$ is a an integral perfectoid $W(k)$-algebra. Recall that then $R/p$ is semi-perfect and $\Fil_{\cris}(R/p)=\ker(\Acris(R/p)\rightarrow R/p)$ is stabilized by the divided powers.\footnote{As this was critically used above, let me briefly recall the argument. Let $\Acris(R)^{\circ}$ be the pd-envelope of $\ker(\theta)$ over $(p\mathbb{Z}_{p},\mathbb{Z}_{p}),$ so that $\Acris(R)$ is the $p$-adic completion thereof. Let $x=\sum_{n}x_{n}\gamma_{n}(\xi)\in \Acris(R)^{\circ}.$ Then one has that
$$
\frac{x^{m}}{m!}=\sum_{i_{n},s.t. \sum i_{n}=m}\prod_{n}x_{n}\frac{(ni_{n})!}{(n!)^{i_{n}}(i_{n})!}\gamma_{n\cdot i_{n}}(\xi)\in \Acris(R)^{\circ}
$$
(one has to check that $\frac{(ni_{n})!}{(n!)^{i_{n}}(i_{n})!}$ is really a natural number). From this it follows that $\theta(\frac{x^{m}}{m!})=0.$ The case of $\Acris(R)$ now follows from continuity.
} Before I go on, let me give another proof of the fully faithfulness of the base-change along $\chi/p$ in this situation, which works without (!) the adjoint nilpotency condition:
\begin{Remark}
The claim here is that the operator $\dot{\varphi}$ operates topologically nilpotently on 
$$
\ker(\Acris(R/p)\rightarrow W(R/p)).
$$
In fact, this kernel will be generated by
$$
[x]^{(n)},
$$
for all $n\geq 1$ and $x\in \ker(R^{\flat}\rightarrow R/p).$ Then one has that
$$
\varphi^{m}([x]^{(n)})=\frac{(p^{m}n)!)}{n!}[x]^{(np^{m})}.
$$
It follows that $\dot{\varphi}=\varphi/p$ stabilizes $
\ker(\Acris(R/p)\rightarrow W(R/p))
$ and this action is topologically nilpotent; the Lemma \ref{Unique lifting lemma} of Lau above then implies the desired fully-faithfulness. Let me already point out here that the fact that the Frobenius acting on this kernel is very (!) topologically nilpotent will be crucially used later on again, when one compares $G$-quasi-isogenies of $\G$-$\mu$-displays over integral perfectoid rings and the corresponding crystalline $G$-quasi-isogeny of $\G$-Breuil-Kisin-Fargues modules of type $\mu,$ see section \ref{Section Translation of the Quasi-isogeny}.
\end{Remark}\label{Bemerkung zur kristallinen Aequivalenz modulo p}
Either by lemma \ref{Kristalline aequivalenz modulo p} above, or the remark I just made, one deduces that base-change along the strict frame-morphism
$$
\chi/p\colon \mathcal{F}_{\text{cris}}(R/p) \rightarrow \mathcal{W}(R/p)
$$
is fully faithful. It is however also essentially surjective in this context, because $\Acris(R/p)$ is henselian along the ideal $\ker(\Acris(R/p)\rightarrow W(R/p)).$ Indeed, it is enough to check this modulo $p,$ because $\Acris(R/p)$ is $p$-adic. But then $\ker(\Acris(R/p)\rightarrow W(R/p))/p$ is in fact a nil-ideal (to the exponent $p$), so that the pair $(\Acris(R)/p, \ker(\Acris(R/p)\rightarrow W(R/p))/p)$ is indeed henselian; using now that the group $\G$ is always assumed to be smooth over $\mathbb{Z}_{p},$ one may deduce the essential surjectivity by lifting a structure matrix.
\subsection{Proof of the crystalline equivalence}
To deduce that the morphism $\chi\colon \mathcal{F}_{\text{cris}}(R)\rightarrow \mathcal{W}(R)$ is nil-crystalline, one follows \cite[Prop. 9.7.]{LauPerfektoid}.
\\
One has a strict frame-morphism $j\colon \mathcal{F}_{\text{cris}}(R)\rightarrow \mathcal{F}_{\text{cris}}(R/p).$ Note that one gets an injective group homomorphism
$$
j\colon\G(\mathcal{F}_{\text{cris}}(R))_{\mu}\rightarrow \G(\mathcal{F}_{\text{cris}}(R/p))_{\mu}.
$$ Consider the category $\mathcal{C}$ with objects $\G(\Acris(R))\times \G(\mathcal{F}_{\text{cris}}(R/p))_{\mu}/ \G(\mathcal{F}_{\text{cris}}(R))_{\mu}$ and morphism given by $\G(\mathcal{F}_{\text{cris}}(R/p))_{\mu},$ with the action: $(g,\bar{x})\cdot h=(h^{-1}g\Phi_{\mathcal{F}_{\text{cris}}(R/p)}(h),\bar{hx}).$
\\
Then it is straightforward to see, that one gets an equivalence
$$
[\G(\Acris(R))/_{\Phi_{\mathcal{F}_{\text{cris}}(R)}} \G(\mathcal{F}_{\text{cris}}(R))_{\mu}]\simeq \mathcal{C},
$$
given by sending $g\mapsto (g,\bar{e}).$
Oberserve that under the natural maps
$$
\G(\mathcal{F}_{\text{cris}}(R/p))_{\mu}/ \G(\mathcal{F}_{\text{cris}}(R))_{\mu}\hookrightarrow \G(\Acris(R))/ \G(\mathcal{F}_{\text{cris}}(R))_{\mu} \rightarrow (\G/P^{-})(R),
$$
the coset $\G(\mathcal{F}_{\text{cris}}(R/p))_{\mu}/ \G(\mathcal{F}_{\text{cris}}(R))_{\mu}$ gets identified with the sections in the flag-variety, whose reductions modulo $p$ are the identity-coset. Thus lifts under $j$ correspond to lifts of the Hodge-Filtration. On the other hand, by section 3.5.7. in \cite{BueltelPappas}, one sees that lifts of banal and adjoint nilpotent $\G$-$\mu$-displays over $R/pR$ towards $R$ correspond to the coset-space $\G(W(pR))/\G(\mathcal{W}(pR))_{\mu},$ which is again identified with the sections in the flag-variety, whose reductions modulo $p$ are the identity-coset.
\\
Now consider the commutative diagram
$$
\xymatrix{
\G-\mu-\text{Win}(\mathcal{F}_{\text{cris}}(R))_{\nilp, \banal}\ar[r]^{\chi} \ar[d]^{j} & \G-\mu-\text{Displ}(R)_{\text{nilp}, \banal} \ar[d]^{\pi} \\
\G-\mu-\text{Win}(\mathcal{F}_{\text{cris}}(R/pR))_{\nilp, \banal} \ar[r]^{\chi/p} & \G-\mu-\text{Displ}(R/pR)_{\text{nilp}, \banal}.
}
$$
Then I just explained that lifts under $j$ and lifts under $\pi$ correspond to lifts of the Hodge-filtration in the same way. One deduces that this diagram of groupoids is cartesian - because $\chi/p$ is already known to induce an equivalence, it follows that $\chi$ induces an equivalence.
\section{Descent from $\Acris$ to $\Ainf$ }\label{Section Descent from Acris to Ainf}
In this section I will give a group theoretical generalization of the descent of classical windows over $\Acris$ to those over $\Ainf,$ as proven by Cais-Lau in Sections 2.2 and 2.3 of \cite{CaisLau}. Because I follow their methods, I have unfortunately to restrict to the case that $p\geq 3.$
\\
Let as usual $(\G,\mu)$ a 1-bound window datum, $R$ a $p$-torsion free integral perfectoid $W(k)$-algebra. Recall the $u=\frac{\varphi(\xi)}{p}$-frame morphism
$$
\lambda\colon \mathcal{F}_{\infintesimal}(R)\rightarrow \mathcal{F}_{\text{cris}}(R).
$$
Then I want to show the following
\begin{proposition}\label{Deszent von Acris nach Ainf}
The base-change functor
$$
\lambda_{\bullet}\colon \G-\mu-\Win(\mathcal{F}_{\infintesimal}(R))_{\banal}\rightarrow \G-\mu-\Win(\mathcal{F}_{\cris}(R))_{\banal}
$$
is crystalline for $p\geq 3.$
\end{proposition}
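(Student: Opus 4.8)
\emph{Plan of proof.}\quad The plan is to transcribe into group-theoretic language the descent of classical windows from $\Acris$ to $\Ainf$ carried out by Cais--Lau in \cite{CaisLau}, Sections~2.2 and~2.3; it is exactly this transcription that forces $p\geq 3$. Since both sides are by definition the quotient groupoids
$$[\G(\Ainf(R))/_{\Phi_{\infintesimal}}\G(\mathcal{F}_{\infintesimal}(R))_{\mu}]\qquad\text{and}\qquad[\G(\Acris(R))/_{\Phi_{\cris}}\G(\mathcal{F}_{\cris}(R))_{\mu}],$$
and $\lambda_{\bullet}$ sends a structure element $g\in\G(\Ainf(R))$ to $\lambda(g)\mu(u)\in\G(\Acris(R))$ with $u=\varphi(\xi)/p$, everything reduces to full faithfulness and essential surjectivity of this functor. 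The structural input I would isolate first is that $\Acris(R)$, being the $p$-adic completion of the divided power envelope of $\Ainf(R)$ along $(\xi)$, is built from $\Ainf(R)$ by adjoining the divided powers $\gamma_{n}(\xi)$ with $n\geq p$, that this ``divided power tail'' carries a natural filtration stable under $\dot{\varphi}$, that $\varphi$ moves the tail into deeper and deeper steps (so that it is topologically nilpotent transversally to $\Ainf(R)$), and that this filtration is compatible with the weight decomposition of $\Lie(\G)$ under $\mu$ and with the subgroups $U^{\pm}$, $P^{\pm}$, so that the decompositions of Fact~\ref{Zusammenfassung zu Gewichtsgruppen} and the factorization $\G(\mathcal{F})_{\mu}=U^{+}(I)\cdot P^{-}(S)$ of Lemma~\ref{Zerlegungslemma fuer die Window-gruppe} are available level by level. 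Together with $p$-adic completeness of $\G(\Acris(R))$ and smoothness of $\G$, this is what makes the successive-approximation arguments below converge.

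For full faithfulness, injectivity on morphism sets is immediate from $\G(\Ainf(R))\hookrightarrow\G(\Acris(R))$, so only surjectivity is at issue: given $g_{1},g_{2}\in\G(\Ainf(R))$ and $h\in\G(\mathcal{F}_{\cris}(R))_{\mu}$ with $\lambda_{\bullet}(g_{2})=h^{-1}\lambda_{\bullet}(g_{1})\Phi_{\cris}(h)$, I must show $h\in\G(\Ainf(R))$; its reduction modulo $\Fil(\Acris(R))$ then automatically lies in $P^{-}(R)$, so $h\in\G(\mathcal{F}_{\infintesimal}(R))_{\mu}$ is the required preimage. Unwinding the relation using $\mu(u)\mu(p)=\mu(\varphi(\xi))$ turns it into the fixed point equation
$$h=\lambda(g_{1})\,\mu(\varphi(\xi))\,\varphi(h)\,\mu(\varphi(\xi))^{-1}\,\lambda(g_{2})^{-1}.$$
I would then argue by induction along the filtration above: if $h$ agrees with an element of $\G(\Ainf(R))$ modulo the $n$-th step, then $\varphi(h)$ does so modulo a deeper step, and the conjugation by $\mu(\varphi(\xi))$ introduces only denominators along $\varphi((\xi))$, which are harmless because $\varphi(h)$ reduces into $P^{-}$ modulo $\varphi((\xi))$ by the window condition (this is exactly the use of the $U^{+}\cdot P^{-}$-decomposition made in the footnote to Lemma~\ref{Windows ueber Ainf und BKF lokales Statement}); hence the right-hand side, and so $h$, agrees with an element of $\G(\Ainf(R))$ modulo a deeper step. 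Passing to the limit gives $h\in\G(\Ainf(R))$.

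For essential surjectivity I would start from a structure element $g\in\G(\Acris(R))$ and successively modify it by $\Phi_{\cris}$-conjugation with elements of $\G(\mathcal{F}_{\cris}(R))_{\mu}$ lying in ever deeper steps of the filtration, so as to push $g$ towards the $\G(\Ainf(R))$-direction. At each stage one must solve a congruence of the shape $z^{-1}g\,\Phi_{\cris}(z)\equiv g'\pmod{\text{next step}}$, with $z$ in the corresponding kernel of $\G(\mathcal{F}_{\cris}(R))_{\mu}$ and $g'$ a better approximation; exactly as in the proof of Lau's unique lifting lemma (Proposition~\ref{Unique lifting lemma}) and of the crystalline equivalence in \S\ref{Section crystalline equivalence}, $\Phi_{\cris}$ respects the $U^{+}$-- and $P^{-}$--parts, equals $\id\otimes\dot{\varphi}$ on the former and a Frobenius-divided map on the latter, and the topological nilpotence of $\varphi$ on the graded pieces makes the associated correction operator pointwise nilpotent, which makes the congruence solvable. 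Smoothness of $\G$ (so that $\G(\Acris(R))\twoheadrightarrow\G(\Acris(R)/(\text{step}))$) and $p$-adic, hence step-adic, completeness of $\G(\Acris(R))$ then assemble the corrections into an honest $\Phi_{\cris}$-conjugation carrying $g$ to some $\lambda(g_{0})\mu(u)$, $g_{0}\in\G(\Ainf(R))$, using a henselianity argument of the sort used in \S\ref{Section crystalline equivalence} to lift the reductions along $\Acris(R)\twoheadrightarrow\Acris(R)/(\text{step})$; checking independence of choices gives essential surjectivity.

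The main obstacle is the bookkeeping that welds all of this together: one must pin down an explicit filtration of $\Acris(R)$ interpolating between $\Ainf(R)$ and $\Acris(R)$ that is simultaneously $\dot{\varphi}$-stable, $\varphi$-contracting on its graded pieces, and compatible with $U^{\pm}$, $P^{\pm}$ and the weight grading on $\Lie(\G)$, and then propagate the induction through the \emph{non-abelian} groups $\G$, $P^{-}$, $U^{+}$ rather than through modules and matrices as \cite{CaisLau} do in the $\GL_{n}$ case; this is where the decomposition of Lemma~\ref{Zerlegungslemma fuer die Window-gruppe} and the $\log^{\pm}$ isomorphisms of Fact~\ref{Zusammenfassung zu Gewichtsgruppen} do all the work. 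The contraction property on the first graded piece is what fails at $p=2$ --- there the first divided power $\gamma_{p}(\xi)=\gamma_{2}(\xi)$ is not sufficiently separated from the $p$-adic filtration --- which, exactly as in \cite{CaisLau}, is why the hypothesis $p\geq 3$ cannot be removed by this method.
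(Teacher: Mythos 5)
Your plan correctly identifies that the Cais--Lau method is being transcribed group-theoretically, that the decomposition $\G(\mathcal{F})_{\mu}\cong U^{+}(I)\times P^{-}(S)$ together with the $\log^{\pm}$-isomorphisms of Fact~\ref{Zusammenfassung zu Gewichtsgruppen} must do the work in the non-abelian setting, and that the hypothesis $p\geq 3$ is ultimately a divided-power estimate; the concluding remark about $\gamma_{p}(\xi)$ at $p=2$ is the right intuition. But the engine you propose --- a single successive-approximation argument running ``transverse to $\Ainf(R)$ inside $\Acris(R)$'' --- has a gap at its very center. Your whole induction hinges on ``a natural filtration of the divided power tail'' along which $h$ ``agrees with an element of $\G(\Ainf(R))$ modulo the $n$-th step'', and you never produce it. The inclusion $\Ainf(R)\hookrightarrow\Acris(R)$ is not a quotient map, so neither Lau's unique lifting lemma (Proposition~\ref{Unique lifting lemma}, which assumes a \emph{surjective} strict frame morphism) nor the henselianity trick you invoke for essential surjectivity is available in the form you want: there is no ideal $K\subset\Acris(R)$ with $\Acris(R)/K\cong\Ainf(R)$ along which $\dot{\varphi}$ is nilpotent, and the divided-power filtration $\Fil^{m}(\Acris)$ does not have $\Ainf(R)$ as a graded piece or step. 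Similarly, the limiting step ``passing to the limit gives $h\in\G(\Ainf(R))$'' requires $\Ainf(R)$ to be closed in $\Acris(R)$ for your unstated filtration topology, which is again not something that comes for free.

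The actual argument is structured quite differently and resolves exactly this difficulty. First one reduces modulo $p^{n}$ and runs an induction on $n$: crystallinity of $\lambda_{n}$ is lifted to $\lambda_{n+1}$ via an \emph{additive} (Lie-algebra) deformation lemma, Lemma~\ref{Aequivalenz der Fasern unter Reduktion modulo p}, after observing that the fiber of $\pi_{n+1}$ over a banal window depends only on its reduction modulo $p$; this step is completely absent from your sketch. Second, the base case $n=1$ is not attacked head-on but factored through a third frame $\underline{A_{0}}$ built on $A_{0}=\Ainf(R)/(p,\xi^{p}A_{1})$, to which \emph{both} $\mathcal{F}_{\infintesimal,1}(R)$ and $\mathcal{F}_{\cris,1}(R)$ admit \emph{surjective} frame morphisms $\pi$ and $\chi$; now the unique lifting lemma genuinely applies. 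The constraint $p\geq 3$ then appears twice, concretely: on the $\pi$-side because $\dot{\varphi}(\xi^{p}a)=\xi^{p(p-1)}a^{p}$ is topologically nilpotent on $K=\xi^{p}A_{0}$ iff $p(p-1)>p$, and on the $\chi$-side because $\dot{\varphi}_{\cris}$ annihilates $\Fil^{p}(\Acris(R)/p)$ iff $n-1-\nu_{p}(n!)>0$ for $n\geq p$, which uses $\nu_{p}(n!)\leq(n-1)/(p-1)$. You should replace your informal ``filtration'' with these two surjections onto $\underline{A_{0}}$, and insert the $p$-adic induction via the Lie-algebra lemma; as written, the proposal does not converge to a proof.
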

Consider the frames $$\mathcal{F}_{\infintesimal,n}(R)=\mathcal{F}_{\infintesimal}(R)\otimes \mathbb{Z}/p^{n}\mathbb{Z} = (\Ainf(R)/p^{n}\Ainf(R),(\xi)/p^{n}(\xi),R/p^{n}R,\varphi (\text{ mod }p^{n}), \dot{\varphi} (\text{ mod }p^{n}))$$ for $n\geq 1$ resp. the same with $\mathcal{F}_{\cris}(R).$ One thus obtains by assumptions that $\mathcal{F}_{\infintesimal}(R)=\lim_{n\geq 1}\mathcal{F}_{\infintesimal,n}(R)$ and $\mathcal{F}_{\cris}(R)=\lim_{n\geq 1} \mathcal{F}_{\cris,n}(R),$ in the sense of \cite[Section 2]{LauFrames}. Due to general problems with torsion, let me first check the following
\begin{Lemma}
Let $\mathcal{F}=(S,I,R,\varphi,\dot{\varphi})\in \lbrace \mathcal{F}_{\infintesimal}(R), \mathcal{F}_{\cris}(R)\rbrace$ and $\mathcal{F}_{n}=(S/p^{n}S,I/p^{n}I,R/p^{n}R,\varphi,\dot{\varphi})\in \lbrace \mathcal{F}_{\infintesimal,n}(R), \mathcal{F}_{\cris,n}(R)\rbrace.$ Then the divided Frobenius-map
$$
\Phi_{\mathcal{F}_{n}}\colon \G(\mathcal{F}_{n})_{\mu}\rightarrow \G(S_{n})
$$
is a group homomorphism.
\end{Lemma}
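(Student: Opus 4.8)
The plan is to reduce the assertion to the case of a $\zeta$-torsion free ring, which is already covered by the Lemma above asserting that $\Phi_{\G,\mu,\mathcal{F}}$ is a group homomorphism whenever $S$ is $\zeta_{\mathcal{F}}$-torsion free. Indeed, for $\mathcal{F}\in\{\mathcal{F}_{\infintesimal}(R),\mathcal{F}_{\cris}(R)\}$ the ring $S\in\{\Ainf(R),\Acris(R)\}$ is $p$-adically complete and $\zeta_{\mathcal{F}}$-torsion free ($\xi$, hence $\varphi(\xi)$, is a non-zero divisor in $\Ainf(R)$, and $p$ is a non-zero divisor in $\Acris(R)$ by \cite[Prop.\ 8.11]{LauPerfektoid}), so $\Phi_{\mathcal{F}}$ is already known to be a group homomorphism. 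I would first record that $\mathcal{F}_{n}$ is again a frame over $W(k_{0})$ with frame constant the image $\overline{\zeta_{\mathcal{F}}}$ of $\zeta_{\mathcal{F}}$: the axiom $pS_{n}+I_{n}\subseteq\Rad(S_{n})$ holds because $p$ is nilpotent in $S_{n}$ and $I\subseteq\Rad(S)$ reduces to $I_{n}\subseteq\Rad(S_{n})$, while $\dot{\varphi}(I_{n})$ generates $S_{n}$ and $\varphi(x)\equiv x^{p}\bmod pS_{n}$ follow by reduction of the corresponding statements for $\mathcal{F}$; moreover $\dot{\varphi}(p^{n}I)\subseteq p^{n}S$ since $\dot{\varphi}$ is $\varphi$-linear and $\varphi(p)=p$, so $\dot{\varphi}$ descends to $I_{n}\to S_{n}$. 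Since $(\G,\mu)$ is $1$-bound, $\log^{+}$ is a group isomorphism over any $W(k_{0})$-frame, so $\Phi_{\mathcal{F}_{n}}$ is at least a well-defined map of sets, and the construction of the divided Frobenius via the decomposition $\G(\mathcal{F})_{\mu}=U^{+}(I)\times P^{-}(S)$ of Lemma \ref{Zerlegungslemma fuer die Window-gruppe} out of $\log^{\pm}$, $\mu$-conjugation, $\varphi$ and $\dot{\varphi}$ shows that it is compatible with reduction: $\Phi_{\mathcal{F}_{n}}\circ\mathrm{red}=\mathrm{red}\circ\Phi_{\mathcal{F}}$.

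Granting this, it suffices to show that $\mathrm{red}\colon\G(\mathcal{F})_{\mu}\to\G(\mathcal{F}_{n})_{\mu}$ is surjective. Then for $h_{1},h_{2}\in\G(\mathcal{F}_{n})_{\mu}$ one picks lifts $\tilde h_{i}\in\G(\mathcal{F})_{\mu}$ and computes
$$\Phi_{\mathcal{F}_{n}}(h_{1}h_{2})=\mathrm{red}\bigl(\Phi_{\mathcal{F}}(\tilde h_{1}\tilde h_{2})\bigr)=\mathrm{red}\bigl(\Phi_{\mathcal{F}}(\tilde h_{1})\Phi_{\mathcal{F}}(\tilde h_{2})\bigr)=\Phi_{\mathcal{F}_{n}}(h_{1})\Phi_{\mathcal{F}_{n}}(h_{2}),$$
using that $\Phi_{\mathcal{F}}$ is a group homomorphism. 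For the surjectivity I would again invoke the decomposition of Lemma \ref{Zerlegungslemma fuer die Window-gruppe}, which reduces us to surjectivity of $U^{+}(I)\to U^{+}(I_{n})$ and of $P^{-}(S)\to P^{-}(S_{n})$. The first follows from $\log^{+}$, which identifies the map with $\Lie(U^{+})\otimes_{W(k_{0})}I\to\Lie(U^{+})\otimes_{W(k_{0})}(I/p^{n}I)$, surjective since $\Lie(U^{+})$ is finite free over $W(k_{0})$ and $I\twoheadrightarrow I/p^{n}I$. The second follows from formal smoothness of $P^{-}$ over $W(k_{0})$ together with $p$-adic completeness of $S$: a point of $P^{-}(S_{n})$ lifts successively along the square-zero surjections $S/p^{m+1}S\to S/p^{m}S$ for $m\ge n$, and one passes to the limit using $P^{-}(S)=\lim_{m}P^{-}(S/p^{m}S)$.

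The whole argument is essentially formal; the only point that is not completely immediate — and hence the main (mild) obstacle — is the surjectivity of $\mathrm{red}\colon\G(\mathcal{F})_{\mu}\to\G(\mathcal{F}_{n})_{\mu}$, which does not follow directly from smoothness of $\G$ because the window group is cut out by the condition "$g\bmod I\in P^{-}$"; it is precisely the product decomposition $U^{+}(I)\times P^{-}(S)$ that makes the lifting transparent, splitting it into a right-exactness statement in the unipotent part and a formal-smoothness statement in $P^{-}$.
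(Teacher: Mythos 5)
Your argument matches the paper's: both reduce to showing that $\pi_n\colon\G(\mathcal{F})_{\mu}\to\G(\mathcal{F}_n)_{\mu}$ is surjective and that $\Phi$ commutes with reduction along the strict frame morphism $\pi_n$, then lift and apply the known homomorphism property of $\Phi_{\mathcal{F}}$. The only cosmetic difference is that for the $P^{-}$-factor the paper invokes that $(S,p^nS)$ is a henselian pair (since $S$ is $p$-adically complete, Stacks Tag 0CT7) together with smoothness of $P^{-}$, whereas you lift step-by-step along square-zero extensions and pass to the limit — these are the same argument in substance.
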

\begin{proof}
The proof rests on the following 
\\
\textbf{Claim}: The homomorphisms $G(S)\rightarrow \G(S/p^{n}S)$ and $\G(\mathcal{F})_{\mu}\rightarrow \G(\mathcal{F}_{n})_{\mu}$ are surjective.
\\
Let me explain this: one has that $(S,\ker(\pi_{n})=p^{n}S)$ is a henselian pair by \cite[Tag 0CT7]{stacks}. Then one can use that as $\G$ is smooth, the map
$$\pi_{n}\colon \G(S)\rightarrow \G(S/\ker(\pi_{n}))=\G(S/p^{n}S)$$ is surjective. This implies, as $P^{-}$ is also smooth, that likewise the map
$$\pi_{n}\colon P^{-}(S) \rightarrow P^{-}(S/p^{n}S)$$ is surjective. Now the map $I\rightarrow I/p^{n}I$ is surjective. Thus one can use the decomposition $$\G(\mathcal{F})_{\mu}\cong P^{-}(S) \times \Lie(U^{+})\otimes I$$ to conclude the proof of the \textbf{Claim}.
\\
Let me now prove the lemma. As $\pi_{n}\colon \mathcal{F}\rightarrow \mathcal{F}_{n}$ is a \textit{strict} frame-homomorphism, one has the following commutative diagram
$$
\xymatrix{
\G(\mathcal{F})_{\mu} \ar[r]^{\Phi_{\mathcal{F}}} \ar[d]^{\pi_{n}} & \G(S) \ar[d]^{\pi_{n}} \\
 \G(\mathcal{F}_{n})_{\mu} \ar[r]^{\Phi_{\mathcal{F}_{n}}} & \G(S/p^{n}S).
}
$$
Let $h_{n},h_{n}^{\prime}\in \G(\mathcal{F}_{n})_{\mu} ,$ one wants to show the equation
\begin{equation}\label{Gleichung geteilter Frobenius gruppenhomomorphismus modulo p}
\Phi_{\mathcal{F}_{n}}(h_{n}\cdot h_{n}^{\prime})=\Phi_{\mathcal{F}_{n}}(h_{n})\cdot \Phi_{\mathcal{F}_{n}}(h_{n}^{\prime}).
\end{equation}
Let $h,h^{\prime}\in \G(\mathcal{F})_{\mu}$ be pre-images of $h_{n},h_{n}^{\prime}$ under $\pi_{n}.$ Then one has the equation
\begin{equation}\label{Gleichung geteilter Frobenius gruppenhomomorphismus}
\Phi_{\mathcal{F}}(h\cdot h^{\prime})=\Phi_{\mathcal{F}}(h)\cdot \Phi_{\mathcal{F}}(h^{\prime}),
\end{equation}
by the assumption that $\Phi_{\mathcal{F}}$ is a group homomorphism. Now one can apply $\pi_{n}$ to both sides of equation (\ref{Gleichung geteilter Frobenius gruppenhomomorphismus}) and by the commutativity of the above diagram, it follows that equation (\ref{Gleichung geteilter Frobenius gruppenhomomorphismus modulo p}) holds true.
\end{proof}
Now I consider the induced $u_{n}$-frame homomorphisms
$$
\lambda_{n}\colon \mathcal{F}_{\infintesimal,n}(R)\rightarrow \mathcal{F}_{\cris,n}(R),
$$
for all $n\geq 1.$ Here $u_{n}$ is the reduction of $u=\frac{\varphi(\xi)}{p}\in \Acris(R)$ modulo $p^{n}.$
\\
The key statement is now the following:
\begin{Lemma}\label{Liften von kristallinen Morphismen modulo p}
For all $n\geq 1,$ if $\lambda_{n}$ is crystalline, also $\lambda_{n+1}$ is crystalline.
\end{Lemma}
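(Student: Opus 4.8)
The plan is a dévissage along the $p$-adic filtration. Write $S^{\infintesimal}_{m}=\Ainf(R)/p^{m}$ and $S^{\cris}_{m}=\Acris(R)/p^{m}$; since $\Ainf(R)$ and $\Acris(R)$ are $p$-torsion free, reduction modulo $p^{n}$ yields short exact sequences $0\to S^{?}_{1}\xrightarrow{p^{n}}S^{?}_{n+1}\to S^{?}_{n}\to 0$ with square-zero kernel $p^{n}S^{?}_{n+1}$, for $?\in\{\infintesimal,\cris\}$, and refines to \emph{strict} frame morphisms $\mathcal{F}_{?,n+1}(R)\to\mathcal{F}_{?,n}(R)$. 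Together with $\lambda_{n+1}$ and $\lambda_{n}$ these assemble into a commutative square of frames, hence a commutative square of groupoids of banal $\G$-$\mu$-windows whose horizontal arrows are $\lambda_{n+1,\bullet}$, $\lambda_{n,\bullet}$ and whose vertical arrows are the reduction functors. As $\lambda_{n,\bullet}$ is assumed to be an equivalence, it suffices to prove this square is $2$-cartesian; then $\lambda_{n+1,\bullet}$ is a base change of an equivalence, hence an equivalence.

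First I would record that the vertical arrows are surjective in the relevant sense: since $(S^{?}_{n+1},p^{n}S^{?}_{n+1})$ is a henselian pair (indeed $p^{n}S^{?}_{n+1}$ is a nilpotent ideal) and $\G$, $P^{-}$ are smooth, the maps $\G(S^{?}_{n+1})\to\G(S^{?}_{n})$ and $P^{-}(S^{?}_{n+1})\to P^{-}(S^{?}_{n})$ are surjective, and together with the decomposition of Lemma \ref{Zerlegungslemma fuer die Window-gruppe} this gives surjectivity of $\G(\mathcal{F}_{?,n+1}(R))_{\mu}\to\G(\mathcal{F}_{?,n}(R))_{\mu}$ — this is exactly the argument used in the lemma preceding the present one. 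Using these surjectivities, $2$-cartesianness reduces to a deformation statement: fixing a banal $\G$-$\mu$-window $\bar{\mathcal{P}}$ over $\mathcal{F}_{\infintesimal,n}(R)$, a representing structure matrix $\bar g$, and compatible lifts of $\bar g$ to $\G(S^{\infintesimal}_{n+1})$ and (via $\lambda_{n+1}$) to $\G(S^{\cris}_{n+1})$, one must show $\lambda_{n+1,\bullet}$ induces an equivalence between the groupoid of lifts of $\bar{\mathcal{P}}$ to $\mathcal{F}_{\infintesimal,n+1}(R)$ and the groupoid of lifts of $\lambda_{n,\bullet}(\bar{\mathcal{P}})$ to $\mathcal{F}_{\cris,n+1}(R)$. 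By smoothness of $\G$ and the square-zero property, the lifts form a torsor under $\G(p^{n}S^{?}_{n+1})\cong\Lie(\G)\otimes_{\mathbb{Z}_{p}}S^{?}_{1}$, the isomorphisms reducing to the identity are the kernel $\G(p^{n}S^{?}_{n+1})_{\mu}$ of $\G(\mathcal{F}_{?,n+1})_{\mu}\to\G(\mathcal{F}_{?,n})_{\mu}$ (which equals $\bigl(\Lie(U^{+})\otimes (I^{?}_{n+1}\cap p^{n}S^{?}_{n+1})\bigr)\oplus\bigl(\Lie(P^{-})\otimes p^{n}S^{?}_{n+1}\bigr)$ by that decomposition), and the $\Phi_{\mathcal{F}_{?,n+1}}$-twisted conjugation action on these square-zero kernels is the $\mathbb{Z}_{p}$-linear map $\Phi^{\mathrm{lin}}_{?}-\Ad(\bar g)^{-1}$, with $\Phi^{\mathrm{lin}}_{?}$ built from $\id\otimes\dot\varphi$ on the $\Lie(U^{+})$-part and from the frame-constant-twisted $\varphi$ on the $\Lie(P^{-})$-part, as in the proof of Proposition \ref{Unique lifting lemma}.

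The question thereby becomes: the map of two-term complexes $[\,\G(p^{n}S^{\infintesimal}_{n+1})_{\mu}\to\G(p^{n}S^{\infintesimal}_{n+1})\,]\to[\,\G(p^{n}S^{\cris}_{n+1})_{\mu}\to\G(p^{n}S^{\cris}_{n+1})\,]$ induced by $\Ainf(R)/p\hookrightarrow\Acris(R)/p$ (twisted by the unit $u$) is a quasi-isomorphism. Its cone is controlled by $\ker\bigl(\Ainf(R)/p\to\Acris(R)/p\bigr)$ and by the cokernel $\Acris(R)/(p,\Ainf(R))$, the divided-power part, topologically generated by the classes of $\gamma_{m}(\xi)=\xi^{m}/m!$, $m\geq 2$. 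On both pieces the operator in play is a $\dot\varphi=\varphi/p$-type map, and the plan is to show it acts topologically nilpotently there; given that, the cone is acyclic by the fixed-point argument of Lau's unique lifting lemma (Proposition \ref{Unique lifting lemma}, the ``$\dot\varphi$ topologically nilpotent'' alternative), completing the inductive step.

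The crux — and the only place the hypothesis $p\geq 3$ is used — is the topological nilpotence of $\dot\varphi$ on the divided-power part: writing $\varphi(\xi)=pz$ with $z\in\Acris(R)$, one has $\dot\varphi(\gamma_{m}(\xi))=\varphi(\xi)^{m}/(p\,m!)=p^{m-1}z^{m}/m!$, whose $p$-adic (equivalently, divided-power) order strictly increases under iteration precisely when $p\geq 3$, while for $p=2$ already $\dot\varphi(\gamma_{2}(\xi))=z^{2}$ gains nothing and nilpotence fails. A secondary, purely bookkeeping point is that the vertical reductions are strict frame morphisms which do \emph{not} induce an isomorphism on the $R$-component, so Lau's lemma cannot be quoted verbatim; the adaptation needed is the same as in the proof of Lemma \ref{Kristalline aequivalenz modulo p}.
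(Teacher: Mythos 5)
Your overall strategy---d\'evissage along the $p$-adic filtration, the square-zero identification of the fiber of the reduction-mod-$p^{n}$ functor with a linear (Lie-algebra-valued) quotient groupoid over $\Ainf(R)/p$ resp.\ $\Acris(R)/p$, and the reduction to a $\dot\varphi$-nilpotence computation mod $p$---is the paper's, and the two computations you single out (on $\xi^{p}\Ainf(R)/p$ and on the divided-power part) are precisely the ones it needs. The divergence is in the last step. You want to compare the two linear quotient groupoids directly along $\Ainf(R)/p \to \Acris(R)/p$ and dispatch the cone by nilpotence; the paper instead factors through the common quotient $A_{0}=\Ainf(R)/(p,\xi^{p})\cong\Acris(R)/(p,\Fil^{p})$ and invokes the additive Lemma \ref{Aequivalenz der Fasern unter Reduktion modulo p} separately for the two surjections $\pi\colon\mathcal{F}_{\infintesimal,1}(R)\to\underline{A_{0}}$ and $\chi\colon\mathcal{F}_{\cris,1}(R)\to\underline{A_{0}}$. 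That route is cleaner exactly because that lemma is built for surjective frame morphisms with isomorphic $R$-component, whereas your map is neither injective nor surjective, so a genuine filtration argument is hiding in your phrase ``the cone is acyclic by the fixed-point argument''; unwinding it (filter by $0\to\xi^{p}A_{1}\to A_{1}\to A_{0}\to 0$ and $0\to A_{0}\to A_{1}^{\cris}\to\mathrm{coker}\to 0$) reproduces the paper's two applications of the additive lemma, and also forces you to confront the fact that the frame constants of $\mathcal{F}_{\infintesimal,1}$ and $\mathcal{F}_{\cris,1}$ differ, so the $\psi$-operators on the $\mathfrak{p}^{-}$-summand are not literally the same map---this is handled in the paper by the $u$-frame morphism formalism for $\chi$, not by a blanket ``$\dot\varphi=\varphi/p$-type'' statement.

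A few concrete corrections. (1) The arrow $\Ainf(R)/p\hookrightarrow\Acris(R)/p$ is not injective: $\xi^{p}=p!\,\gamma_{p}(\xi)\in p\Acris(R)$, so the kernel is exactly $\xi^{p}\Ainf(R)/p$---which, not coincidentally, is the ideal $\ker(\pi)$ the paper analyzes in the ``$\pi$ is crystalline'' step. (2) Your closing claim that $p\geq 3$ is used \emph{only} for the divided-power part is wrong: on the kernel piece one has $\dot\varphi(\xi^{p}a)=\varphi(\xi)^{p-1}\varphi(a)\equiv\xi^{p(p-1)}a^{p}$ in $A_{1}$, and the exponent $e_{n+1}=p(e_{n}-1)$ is strictly increasing from $e_{0}=p$ precisely when $p\geq 3$ (for $p=2$ it stays at $2$); so both pieces need $p\geq 3$. (3) The remark about the vertical reductions not inducing an isomorphism on the $R$-component, with Lemma \ref{Kristalline aequivalenz modulo p} as the purported fix, misreads the paper: the additive Lemma \ref{Aequivalenz der Fasern unter Reduktion modulo p} is never applied to the $\pi_{n+1}$'s but to $\pi,\chi\colon\mathcal{F}_{?,1}(R)\to\underline{A_{0}}$, both of which do induce $R/p\simeq R/p$; Lemma \ref{Kristalline aequivalenz modulo p} concerns the unrelated strict morphism $\underline{A}\to\mathcal{W}(R)$.
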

For the proof, I will need a certain variation of the usual deformation theory arguments in an additive setting. Thus let me start with a little excursion on this topic.
\\
Let $\mathcal{F}$ and $\mathcal{F}^{\prime}$ be two frames over $W(k)$ fulfilling the following properties:
\begin{enumerate}
\item[(a):] The divided Frobenii $\Phi_{\mathcal{F}}$ and $\Phi_{\mathcal{F}^{\prime}}$ are group homomorphisms,
\item[(b):] one has a $u$-frame-morphism 
$$
\lambda\colon \mathcal{F}\rightarrow \mathcal{F}^{\prime},
$$
such that the corresponding ring-homomorphism $\lambda\colon S\rightarrow S^{\prime}$ is surjective and one has $R\simeq R^{\prime}.$
\item[(c):] Denoting by $K=\ker(S\rightarrow S^{\prime}),$ one requires that $\dot{\varphi}(K)\subseteq K$ and
$$\dot{\varphi}\colon K\rightarrow K$$
is elementwise (topologically) nilpotent.
\end{enumerate}
Let me consider the following quotient groupoid, which is a Lie-algebra version of the quotient groupoid of banal $\G$-$\mu$-windows: one denotes by $\mathfrak{g}=\Lie(\G)$ the Lie-algebra, a finite free $\Z_{p}$-module, $\mathfrak{u}^{+}=\Lie(U^{+})$ and $\mathfrak{p}^{-}=\Lie(P^{-}).$ Let $g\in \G(S)$ and $\lambda_{\bullet}(g)=\lambda(g)\mu(u)\in \G(S^{\prime}).$ Furthermore, I will consider the Lie-theoretic analog of the divided Frobenius map
$$
\psi_{\mathcal{F}} \colon \mathfrak{u}^{+}\otimes I \oplus \mathfrak{p}^{-} \otimes S \rightarrow \mathfrak{g}\otimes S,
$$
which is defined to be $\id\otimes \dot{\varphi}$ on $\mathfrak{u}^{+}\otimes I$ and $\zeta_{\mathcal{F}}^{m}\cdot \varphi$ on weight $m$-components. Consider
$$
\mathcal{C}_{g}=[\mathfrak{g}\otimes S/_{g} \mathfrak{u}^{+}\otimes I \oplus \mathfrak{p}^{-}\otimes S],
$$
where the action is by $X.Z=X-\Ad(g)(Z)+\psi_{\mathcal{F}}(Z).$ Likewise, one has
$$
\mathcal{C}_{\lambda_{\bullet}(g)}=[\mathfrak{g}\otimes S^{\prime}/_{\lambda_{\bullet}(g)} \mathfrak{u}^{+}\otimes I^{\prime} \oplus \mathfrak{p}^{-}\otimes S^{\prime}],
$$
where the action is now by $X^{\prime}.Z^{\prime}=X^{\prime}-\Ad(\lambda_{\bullet}(g))(Z^{\prime})+\psi_{\mathcal{F}^{\prime}}(Z^{\prime}).$ Observe that for all $Z\in \mathfrak{u}^{+}\otimes I \oplus \mathfrak{p}^{-}\otimes S,$ the following equation is true
\begin{equation}
\Ad(\mu(u))\lambda(\psi_{\mathcal{F}}(Z))=\psi_{\mathcal{F}^{\prime}}(\lambda(Z)).\footnote{ In fact, it suffices to check this on elements $Z=u^{+}\otimes i + p^{-}\otimes s,$ where $u^{+}\in \mathfrak{u}^{+}$ and $p^{-}\in \mathfrak{p}^{-}.$ By the definition of a $u$-framemorphism, one obtains that $\lambda(\id\otimes \dot{\varphi})(u^{+}\otimes i)=u^{-1}(\id \otimes \dot{\varphi}^{\prime})(u^{+}\otimes \lambda(i))$ and $\lambda(\zeta_{\mathcal{F}}^{m}\varphi(p^{-}\otimes s))=u^{+m}\zeta_{\mathcal{F}^{\prime}}^{m}\varphi^{\prime}(p^{-}\otimes \lambda(s)).$ Now use that $\Ad(\mu(u))$ will act by multiplication with $u$ on the first expression, while by division by $u^{m}$ on the second, to conclude.}
\end{equation}
This implies now that whenever an element $Z\in \mathfrak{u}^{+}\otimes I \oplus \mathfrak{p}^{-} \otimes S$ defines a morphism between $X,Y\in \mathfrak{g}\otimes S,$ then $\lambda(Z)$ will define a morphism between $\Ad(\mu(u))\lambda(X)$ and $\Ad(\mu(u))\lambda(Y).$ In total, one has constructed a functor
$$
\mathcal{C}_{g}\rightarrow \mathcal{C}_{\lambda_{\bullet}(g)}.
$$
\begin{Lemma}\label{Aequivalenz der Fasern unter Reduktion modulo p}
The previously introduced morphism of groupoids
$$
\mathcal{C}_{g}\rightarrow \mathcal{C}_{\lambda_{\bullet}(g)}
$$
is an equivalence.
\end{Lemma}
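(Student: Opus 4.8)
The plan is to exploit that $\mathcal{C}_{g}$ and $\mathcal{C}_{\lambda_{\bullet}(g)}$ are \emph{affine} groupoids. Writing $N=\mathfrak{u}^{+}\otimes I\oplus\mathfrak{p}^{-}\otimes S$ and $\delta_{g}\colon N\to\mathfrak{g}\otimes S$, $Z\mapsto\psi_{\mathcal{F}}(Z)-\Ad(g)(Z)$, the quotient groupoid $\mathcal{C}_{g}=[\mathfrak{g}\otimes S/N]$ has action $X\cdot Z=X+\delta_{g}(Z)$, so a morphism $X\to Y$ in $\mathcal{C}_{g}$ is exactly a $Z$ with $\delta_{g}(Z)=Y-X$; hence $\pi_{0}(\mathcal{C}_{g})=\operatorname{coker}\delta_{g}$, every object has automorphism group $\ker\delta_{g}$, and $\Hom_{\mathcal{C}_{g}}(X,Y)$ is either empty or a torsor under $\ker\delta_{g}$. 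The analogous description holds for $\mathcal{C}_{\lambda_{\bullet}(g)}$ with $\delta_{\lambda_{\bullet}(g)}\colon N'=\mathfrak{u}^{+}\otimes I'\oplus\mathfrak{p}^{-}\otimes S'\to\mathfrak{g}\otimes S'$, and the functor of the excerpt is induced by $\lambda$ on $N$ and by $f\colon X\mapsto\Ad(\mu(u))\lambda(X)$ on objects, the square $f\circ\delta_{g}=\delta_{\lambda_{\bullet}(g)}\circ\lambda$ commuting by construction. Since $\lambda\colon S\to S'$ is surjective and $\Ad(\mu(u))$ is invertible, $f$ is surjective, so the functor is already surjective on objects, in particular essentially surjective. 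Thus it only remains to prove that $f$ induces a bijection $\operatorname{coker}\delta_{g}\xrightarrow{\sim}\operatorname{coker}\delta_{\lambda_{\bullet}(g)}$ and that $\lambda$ induces a bijection $\ker\delta_{g}\xrightarrow{\sim}\ker\delta_{\lambda_{\bullet}(g)}$: the first ensures that ``$\Hom(X,Y)=\emptyset$'' matches on both sides, the second makes all the torsor maps bijective, so together they give full faithfulness.

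I would deduce both bijections from the snake lemma. Because $R\simeq R'$, the map $I\to I'$ is surjective with kernel $K=\ker(\lambda\colon S\to S')$, and since $\mathfrak{u}^{+},\mathfrak{p}^{-},\mathfrak{g}$ are finite free over $\Z_{p}$ we obtain a commutative ladder with exact rows $0\to\mathfrak{g}\otimes K\to N\to N'\to 0$ and $0\to\mathfrak{g}\otimes K\to\mathfrak{g}\otimes S\to\mathfrak{g}\otimes S'\to 0$ (the lower $f$ being surjective with kernel $\mathfrak{g}\otimes K$), whose middle vertical arrow is $\delta_{g}$ and whose left vertical arrow $\delta^{K}$ is the restriction of $\delta_{g}$ to $\mathfrak{g}\otimes K$. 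This restriction makes sense: $\Ad(g)$ is $S$-linear, hence stabilises $\mathfrak{g}\otimes K$; and $\psi_{\mathcal{F}}$ stabilises $\mathfrak{g}\otimes K$ because on $\mathfrak{u}^{+}\otimes K$ it acts by $\id\otimes\dot{\varphi}$ with $\dot{\varphi}(K)\subseteq K$ (hypothesis (c)), while on the weight-$(-n)$ summand of $\mathfrak{p}^{-}$ ($n\ge 0$) it acts by $\zeta_{\mathcal{F}}^{n}\varphi$, and $\varphi(x)=\zeta_{\mathcal{F}}\dot{\varphi}(x)$ for $x\in K\subseteq I$ gives $\zeta_{\mathcal{F}}^{n}\varphi(K)=\zeta_{\mathcal{F}}^{n+1}\dot{\varphi}(K)\subseteq K$. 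The snake lemma then produces the exact sequence
\[ 0\to\ker\delta^{K}\to\ker\delta_{g}\to\ker\delta_{\lambda_{\bullet}(g)}\to\operatorname{coker}\delta^{K}\to\operatorname{coker}\delta_{g}\to\operatorname{coker}\delta_{\lambda_{\bullet}(g)}\to 0, \]
so everything reduces to showing that $\delta^{K}\colon\mathfrak{g}\otimes K\to\mathfrak{g}\otimes K$ is bijective.

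To prove that, I would factor $\delta^{K}=\psi^{K}-\Ad(g)|_{K}$ (with $\psi^{K}=\psi_{\mathcal{F}}|_{\mathfrak{g}\otimes K}$) through the automorphism $\Ad(g)|_{K}$ of $\mathfrak{g}\otimes K$, reducing to bijectivity of $\id-\theta$ with $\theta=\Ad(g)|_{K}^{-1}\psi^{K}$. Using $\varphi(x)=\zeta_{\mathcal{F}}\dot{\varphi}(x)$ for $x\in K$ one can write $\psi^{K}=M_{0}\circ(\id\otimes\dot{\varphi})$ for an $S$-linear endomorphism $M_{0}$ of $\mathfrak{g}\otimes S$ (the identity on $\mathfrak{u}^{+}$ and a non-negative power of $\zeta_{\mathcal{F}}$ on each weight summand of $\mathfrak{p}^{-}$), hence, after a choice of $W(k)$-basis of $\mathfrak{g}$, $\theta=M\cdot(\id\otimes\dot{\varphi})$ for a matrix $M$ over $S$, and $\theta^{m}(x)=M\,\varphi(M)\cdots\varphi^{m-1}(M)\cdot(\id\otimes\dot{\varphi}^{m})(x)$. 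By hypothesis (c) the operator $\dot{\varphi}$ is (topologically) nilpotent on $K$, so $\dot{\varphi}^{m}(x)\to 0$ $p$-adically and thus $\theta^{m}(x)\to 0$; since $K$ is $p$-adically complete (as in the setup of Proposition~\ref{Unique lifting lemma}), the Neumann series $\sum_{m\ge 0}\theta^{m}(x)$ converges and inverts $\id-\theta$ (when $\dot{\varphi}$ is honestly nilpotent on $K$ the sum is finite and no completeness is needed). This is precisely the additive shadow of the pointwise-nilpotence argument that proves Proposition~\ref{Unique lifting lemma}.

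The only genuinely non-formal step is this last one — the bijectivity of $\delta^{K}$ — and it is exactly where hypothesis (c) enters; everything else is the formal structure of affine groupoids together with the snake lemma applied along the surjection $\lambda$. I expect the one real bookkeeping nuisance to be confirming that $\psi_{\mathcal{F}}$ stabilises $\mathfrak{g}\otimes K$ on every weight summand, but this is immediate from $\varphi(K)\subseteq\zeta_{\mathcal{F}}K$ and $\dot{\varphi}(K)\subseteq K$, and needs nothing beyond the $1$-bound hypothesis on $(\G,\mu)$.
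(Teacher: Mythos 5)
Your proof is correct and takes essentially the same route as the paper: both reduce, via surjectivity of the object map $f$ and of $\lambda$ on $N$ together with identifying both kernels with $\mathfrak{g}\otimes K$, to showing that $\psi_{\mathcal{F}}-\Ad(g)$ is bijective on $\mathfrak{g}\otimes K$, and both prove that bijectivity by factoring $\Ad(g)^{-1}\circ\psi_{\mathcal{F}}$ through the (topologically) nilpotent $\dot{\varphi}$ on $K$. Your snake-lemma packaging of full faithfulness through $\ker\delta$ and $\operatorname{coker}\delta$ is just a more explicit reformulation of the paper's statement that the action of $\mathfrak{g}\otimes K$ on the fibers of $f$ is simply transitive.
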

\begin{proof}
This is an additive version of the usual argument for a frame morphism being crystalline: first one notes that
$$\Ad(\mu(u))\circ (\id_{\mathfrak{g}}\otimes \lambda)\colon \mathfrak{g}\otimes S \rightarrow \mathfrak{g}\otimes S^{\prime}$$ is a surjective homomorphism of $S$-modules with kernel $\mathfrak{g}\otimes K.$ Furthermore, $$\lambda\colon \mathfrak{u}^{+}\otimes I \oplus \mathfrak{p}^{-}\otimes S\rightarrow \mathfrak{u}^{+}\otimes I^{\prime} \oplus \mathfrak{p}^{-}\otimes S^{\prime}$$ is surjective.\footnote{Here one uses the assumption that $R\simeq R^{\prime},$ to be able to deduce that $\lambda$ sends $I$ surjectively onto $I^{\prime}$ by the snake-lemma.} It follows that one has to see that the action of $\mathfrak{u}^{+}\otimes I \oplus \mathfrak{p}^{-}\otimes S$ on the fibers is simply transitive. Spelled out this means: it remains to be shown that for all $X\in \mathfrak{g}\otimes S$ and all $X_{0}\in \mathfrak{g}\otimes K,$ there exists a unique $Z\in \mathfrak{g}\otimes K,$ such that
\begin{equation}
X-\Ad(g)(Z)+\psi_{\mathcal{F}}(Z)=X+X_{0}.
\end{equation}
Note that as $\Ad\colon \G\rightarrow V(\Lie(\G))$ is a group scheme homomorphism, one has the following commutative diagram
$$
\xymatrix{
0 \ar[r] & \mathfrak{g}\otimes K \ar[r] \ar[d]^{\Ad(g)} & \mathfrak{g} \otimes S \ar[r]^{\id \otimes \lambda} \ar[d]^{\Ad(g)} & \mathfrak{g} \otimes S^{\prime} \ar[d]^{\Ad(\lambda(g))} \ar[r] & 0
\\
0 \ar[r] & \mathfrak{g}\otimes K \ar[r] & \mathfrak{g} \otimes S \ar[r]^{\id \otimes \lambda} & \mathfrak{g} \otimes S^{\prime} \ar[r] & 0.
}
$$
It follows that $\Ad(g)$ induces an automorphism of $\mathfrak{g}\otimes K.$ Furthermore, note that $\psi_{\mathcal{F}}$ stabilizes $\mathfrak{g}\otimes K,$ as $K$ is an ideal and $\dot{\varphi}$ does so by assumption.
One thus has to show that the map
$$
(\psi_{\mathcal{F}}-\Ad(g))\colon \mathfrak{g}\otimes K \rightarrow \mathfrak{g} \otimes K
$$
is a bijection. As was noted above that $\Ad(g)$ induces an automorphism of $\mathfrak{g}\otimes K,$ it suffices to show that $(U-\id)$ is a bijection, where $U=\Ad(g)^{-1}\circ \psi_{\mathcal{F}}.$ But observe that as $\varphi=\zeta_{\mathcal{F}}\cdot \dot{\varphi}$ on $K$, one my write
$$
\psi_{\mathcal{F}}=\begin{vmatrix}
E_{n_{1}}& 0 &0 \\
0 & \text{diag}_{n_{2}}(\zeta_{\mathcal{F}})& 0 \\
0 & 0 & \text{diag}_{n_{3}}(\zeta_{\mathcal{F}}^{2})
\end{vmatrix} \circ \dot{\varphi},
$$
where $n_{1}=\text{rg}(\mathfrak{u}^{+}),$ $n_{2}=\text{rg}(\mathfrak{p}^{0})$ and $n_{3}=\text{rg}(\mathfrak{p}^{-1}).$ But it was assumed that $\dot{\varphi}$ is pointwise (topologically) nilpotent on $K$. It follows that $U$ is (topologically) nilpotent and thus that $(U-1)$ is a bijection. This concludes the proof.
\end{proof}
Now let me explain why Lemma \ref{Liften von kristallinen Morphismen modulo p} above is true.
\begin{proof}
First one notes that one has strict frame morphisms
$
\pi_{n+1}\colon \mathcal{F}_{\infintesimal,n+1}(R)\rightarrow \mathcal{F}_{\infintesimal,n}(R),
$ resp. $\pi_{n+1}\colon \mathcal{F}_{\cris,n+1}(R)\rightarrow \mathcal{F}_{\cris,n}(R),$ for all $n\geq 1,$ induced by reduction modulo $p^{n}.$ Consider the following commutative diagram of groupoids
$$
\xymatrix{
\G-\mu-\Win(\mathcal{F}_{\infintesimal,n+1}(R))_{\banal} \ar[r]^{\lambda_{n+1}} \ar[d]^{\pi_{n+1}} & \G-\mu-\Win(\mathcal{F}_{\cris,n+1}(R))_{\banal} \ar[d]^{\pi_{n+1}} \\
\G-\mu-\Win(\mathcal{F}_{\infintesimal,n}(R))_{\banal} \ar[r]^{\lambda_{n}} & \G-\mu-\Win(\mathcal{F}_{\cris,n}(R))_{\banal}.
}
$$ By assumption the lower horizontal functor is an equivalence. The idea is to show that $\lambda_{n+1}$ induces equivalences on the fibers of the reduction modulo $p^{n}$-functors. Then the statement follows.
\\
Let $g_{n}\in \G(\Ainf(R)/p^{n}\Ainf(R)),$ considered as an representative for a banal $\G$-$\mu$-window over $\mathcal{F}_{\infintesimal,n}(R).$ Then I want to understand the fiber over this object of the above frame morphisms as a groupoid. Thus, denote by $\mathcal{L}(g_{n})$ the fiber of $\pi_{n+1,\bullet}$ over $g_{n}.$ This is the quotient groupoid with objects $$\lbrace g\in \G(\Ainf(R)/p^{n+1})\colon g \text{ mod }p^{n}=g_{n} \rbrace$$ and morphisms $$\lbrace h \in \G(\mathcal{F}_{\infintesimal,n+1}(R))_{\mu}\colon h\text{ mod }p^{n}=1 \rbrace$$ and the usual action by $\Phi$-conjugation. Choosing a lift $\tilde{g}_{n+1}\in \G(\Ainf(R)/p^{n+1})$ of $g_{n}$ (remember $\G$ is smooth), one can identify\footnote{The set $\lbrace g\in \G(\Ainf(R)/p^{n+1})\colon g \text{ mod }p^{n}=g_{n} \rbrace$ is a principal homogenous space under $\ker(\G(\Ainf(R)/p^{n+1})\rightarrow \G(\Ainf(R)/p^{n})).$ Sending $g$ towards $g^{\sharp}-e^{\sharp}$ ($e^{\sharp}$ is the unit section) induces an isomorphism $\ker(\G(\Ainf(R)/p^{n+1})\rightarrow \G(\Ainf(R)/p^{n}))\simeq \Hom_{\mathbb{Z}_{p}}(I_{\G}/I_{G}^{2},I),$ where $I_{G}\subseteq \mathcal{O}_{\G}$ is the ideal cutting out the unit section of $\G.$ Since $\G$ is smooth over $\mathbb{Z}_{p}$, one sees that $I_{\G}/I_{\G}^{2}$ is finite free as a $\mathbb{Z}_{p}$-module, so that one gets
$$
\Hom_{\mathbb{Z}_{p}}(I_{\G}/I_{G}^{2},I)\simeq \Hom_{\mathbb{Z}_{p}}(I_{\G}/I_{\G}^{2},\mathbb{Z}_{p})\otimes_{\mathbb{Z}_{p}}I,
$$
where I wrote $I:=p^{n}\Ainf/p^{n+1}\Ainf.$}
$$\lbrace g\in \G(\Ainf(R)/p^{n+1})\colon g \text{ mod }p^{n}=g_{n} \rbrace \simeq \mathfrak{g}\otimes (\Ainf(R)/p).$$
Here I used that $p^{n}\Ainf(R)/p^{n+1}\Ainf(R)\simeq \Ainf(R)/p\Ainf(R),$ by $p$-torsion freeness.
Furthermore, one has a bijection
$$
\lbrace h \in \G(\mathcal{F}_{\infintesimal,n+1}(R))_{\mu}\colon h\text{ mod }p^{n}=1 \rbrace \simeq \mathfrak{u}^{+} \otimes I_{\infintesimal,1} \oplus \mathfrak{p}^{-} \otimes \Ainf(R)/p.
$$
Indeed, recall that multiplication induces a bijection
$$
U^{+}(I_{\infintesimal,n+1})\times P^{-}(\Ainf(R)/p^{n+1})\rightarrow \G(\mathcal{F}_{\infintesimal,n+1}(R))_{\mu},
$$
then let $h\in \lbrace h \in \G(\mathcal{F}_{\infintesimal,n+1}(R))_{\mu}\colon h\text{ mod }p^{n}=1 \rbrace$ and write it as $u^{+}\cdot p^{-}=h,$ with $u^{+}\in U^{+}(I_{inf,n+1}),$ $p^{-}\in P^{-}(\Ainf(R)/p^{n+1}).$ Then one has to see that $u^{+}\in \ker(U^{+}(I_{inf,n+1})\rightarrow U^{+}(I_{inf,n}))$ and $p^{-}\in \ker(P^{-}(\Ainf(R)/p^{n+1})\rightarrow P^{-}(\Ainf(R)/p^{n})).$ But this follows because, modulo $p^{n},$ one then gets $u^{+}=(p^{-})^{-1}$ and since $U^{+}\cap P^{-}=\lbrace e \rbrace.$ Then one uses the isomorphisms
$$
\ker(U^{+}(I_{\infintesimal,n+1})\rightarrow U^{+}(I_{inf,n}))\simeq \mathfrak{u}^{+}\otimes_{W(k_{0})} p^{n}I_{inf}/p^{n+1}I_{inf}
$$
and
$$
\ker(P^{-}(\Ainf(R)/p^{n+1})\rightarrow P^{-}(\Ainf(R)/p^{n})) \simeq \mathfrak{p}^{-}\otimes_{W(k_{0})} p^{n}\Ainf(R)/p^{n+1}\Ainf(R)
$$
to conclude the verification of the above claim.
\\
Now let $g_{1}\in \G(\Ainf(R)/p)$ be the reduction modulo $p$ of $g_{n}.$ Under the above identifications, the action of the groupoid $\mathcal{L}(g_{n})$ gets identified with
$$
X.Z=X-\Ad(g_{1})(Z)+\psi_{\mathcal{F}_{\infintesimal,1}(R)}(Z).
$$
In the above notation, there is an equivalence of groupoids
$
\mathcal{L}(g_{n})\simeq \mathcal{C}_{g_{1}}.\footnote{In particular one observes that this fiber just depends on the reduction modulo $p$!}
$  I claim that there is an equivalence
$$
\lambda_{n+1}\colon \mathcal{L}(g_{n})\simeq \mathcal{L}(\lambda_{n,\bullet}(g_{n})).
$$
But this follows from the above lemma, \ref{Aequivalenz der Fasern unter Reduktion modulo p}, applied to the frame morphisms $\pi\colon\mathcal{F}_{\infintesimal,1}(R)\rightarrow \underline{A_{0}}$ and $\chi\colon\mathcal{F}_{\cris,1}(R)\rightarrow \underline{A_{0}},$ which I shall introduce in the proof of the next Lemma (where I will also check that these frame morphisms will satisfy the hypothesis I imposed in the lemma one is applying here).
\end{proof}
To finish the proof of Proposition \ref{Deszent von Acris nach Ainf}, it remains to show the following
\begin{Lemma}
Assume that $p\geq 3.$ Then the $u$-frame morphism
$$
\lambda_{1}\colon \G-\mu-\Win(\mathcal{F}_{\infintesimal,1}(R))_{\banal}\rightarrow \G-\mu-\Win(\mathcal{F}_{\cris,1}(R))_{\banal}
$$
is crystalline.
\end{Lemma}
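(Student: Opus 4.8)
The plan is to factor the $u_1$-frame morphism $\lambda_1\colon\mathcal{F}_{\infintesimal,1}(R)\to\mathcal{F}_{\cris,1}(R)$ through an auxiliary frame $\underline{A_0}$ and to show that both legs of the factorization are crystalline, so that $\lambda_{1,\bullet}$ is an equivalence by a two-out-of-three argument. Here $\underline{A_0}$ is the quotient frame of $\mathcal{F}_{\cris,1}(R)$ by the second divided power $\Fil(\Acris(R))^{[2]}$ of the pd-ideal $\Fil(\Acris(R))$; its top ring is $A_0=\Acris(R)/\bigl(p+\Fil(\Acris(R))^{[2]}\bigr)$, its frame constant is $0$, and the quotient map $\chi\colon\mathcal{F}_{\cris,1}(R)\to\underline{A_0}$ is a strict frame morphism. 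Since for $p\geq 3$ every divided power $\gamma_n(\xi)$ of a distinguished generator $\xi$ with $n\geq 2$ lies in $\Fil(\Acris(R))^{[p]}\subseteq\Fil(\Acris(R))^{[2]}$, the composite ring homomorphism $\Ainf(R)/p\to\Acris(R)/p\to A_0$ is surjective, and one identifies $A_0$ with a quotient $R^{\flat}/J$ of $R^{\flat}=\Ainf(R)/p$ with $(\overline{\xi}^{2})\subseteq J\subseteq\overline{\xi}R^{\flat}$; thus $\pi:=\chi\circ\lambda_1\colon\mathcal{F}_{\infintesimal,1}(R)\to\underline{A_0}$ is a frame morphism (a $\chi(u_1)$-morphism; one reduces to the strict case by the standard twist by the cocharacter $\mu(u_1)$, or applies Lau's unique lifting lemma in its form for general $u$-frame morphisms) whose underlying ring map is surjective. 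One also records, as for the lemma on $\Phi_{\mathcal{F}_n}$ above, that $\Phi_{\underline{A_0}}$ is a group homomorphism, using surjectivity and strictness of $\chi$.

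It then remains to check that $\pi$ and $\chi$ are both crystalline, via Proposition \ref{Unique lifting lemma} (in its second, non--adjoint-nilpotent variant) for full faithfulness and a henselian lifting for essential surjectivity. For $\chi$, the kernel $K=\ker\chi$ is $p$-adically complete (vacuously, since $p=0$ on $\Acris(R)/p$), and the identity $\dot{\varphi}(\gamma_n(x))=(p^{\,n-1}/n!)\,\dot{\varphi}(x)^{n}$ for $x\in\Fil(\Acris(R))$ gives $v_p(p^{\,n-1}/n!)\geq 1$ for $p\geq 3$ and $n\geq 2$, so $\dot{\varphi}$ acts as the zero map on $K$; as $K$ is a pd-ideal in a ring of characteristic $p$, hence nil, a structure matrix lifts by smoothness of $\G$, and $\chi_{\bullet}$ is an equivalence. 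For $\pi$, the kernel $K=J\subseteq\overline{\xi}R^{\flat}$ lies in $\Rad(R^{\flat})$ and $R^{\flat}$ is $\overline{\xi}$-adically complete and separated (as $R$ is integral perfectoid), so again essential surjectivity follows by smoothness; for full faithfulness one computes $\dot{\varphi}(\overline{\xi}^{m}x)=\overline{\xi}^{\,p(m-1)}x^{p}$, so that $\dot{\varphi}$ carries $\overline{\xi}^{m}R^{\flat}$ into $\overline{\xi}^{\,p(m-1)}R^{\flat}$, and $p(m-1)>m$ for every $m\geq 2$ exactly when $p\geq 3$; hence $\dot{\varphi}$ is topologically nilpotent on $K$ for the $\overline{\xi}$-adic topology and Proposition \ref{Unique lifting lemma} applies, so $\pi_{\bullet}$ is an equivalence. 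Then $\chi_{\bullet}\circ\lambda_{1,\bullet}=\pi_{\bullet}$ forces $\lambda_{1,\bullet}$ to be an equivalence.

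I expect the main obstacle to be the full-faithfulness half of the crystallinity of $\pi$: one must exhibit a topology on $\ker\pi\subseteq\overline{\xi}R^{\flat}$ for which $\dot{\varphi}$ is topologically nilpotent and $\ker\pi$ is complete — supplied by the $\overline{\xi}$-adic topology since $R$ is integral perfectoid — and this is exactly where $p\geq 3$ is indispensable: at $p=2$ the filtration shift $\overline{\xi}^{m}\mapsto\overline{\xi}^{p(m-1)}$ degenerates to the identity, and correspondingly $\dot{\varphi}$ fails to annihilate $\ker\chi$ modulo $p$ (for $p=2$, $n=2$ one has $v_p(p^{\,n-1}/n!)=0$). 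A secondary nuisance is the unit bookkeeping in the triangle $\chi\circ\lambda_1=\pi$ and the reconciliation of the hypotheses of Proposition \ref{Unique lifting lemma} (whose $p$-adic completeness clause is empty here) with the adic topologies actually used on the kernels.
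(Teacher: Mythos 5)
Your proposal follows the same two-out-of-three factorization strategy as the paper: introduce an auxiliary frame $\underline{A_0}$, show both legs of the factorization are crystalline via (adaptations of) Lau's unique lifting lemma, and deduce the result for $\lambda_1$. The variant in your auxiliary ring (quotient by $\Fil^{[2]}$, i.e.\ essentially $R^\flat/\overline{\xi}^2$, rather than the paper's $R^\flat/\overline{\xi}^p\cong A_1^{\cris}/\Fil^{p}(A_1^{\cris})$) and in your choice to make $\chi$ strict by inheriting the frame structure from $\Acris/p$ (the paper instead equips $A_0$ with the intrinsic structure $\dot{\varphi}_0(\xi a)=a^p$, making $\pi$ strict) are in themselves harmless. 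But the way you handle the resulting non-strictness has a real gap.

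Your $\pi=\chi\circ\lambda_1$ is a $\chi(u_1)$-frame morphism, and Proposition \ref{Unique lifting lemma} is stated and proved only for \emph{strict} morphisms. You invoke a ``standard twist by $\mu(u_1)$'', which indeed converts $\pi$ into a strict morphism $\pi^{\mathrm{twist}}\colon\mathcal{F}_{\infintesimal,1}\to\underline{A_0}^{\mathrm{twist}}$ — but it simultaneously turns $\chi$ into a non-strict $u^{-1}$-morphism into $\underline{A_0}^{\mathrm{twist}}$, and in either normalization you must verify that a $u$-frame morphism whose source and target both have frame constant $0$ induces a well-defined base-change functor on banal $\G$-$\mu$-windows. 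Because $\zeta=0$ is a zero-divisor, the general criterion for the existence of this functor fails, and the verification is exactly the non-trivial computation the paper performs before invoking the modified unique lifting lemma (its equations establishing that $\Phi_{\underline{A_0}}(\chi(q))$ lies in the $\mu$-centralizer so that $\Phi$-conjugation and $\mu(u)$-translation interact correctly). Your writeup treats Proposition \ref{Unique lifting lemma} as directly applicable to $\pi$, which it is not; the twist relocates, but does not remove, the paper's Lemma \ref{chi ist kristalline!}-style argument. Separately, the identification $\Acris(R)/(p+\Fil^{[2]})\cong R^\flat/J$ is asserted with only the two-sided estimate $(\overline{\xi}^2)\subseteq J\subseteq\overline{\xi}R^\flat$, but your nilpotence estimate $\dot{\varphi}(\overline{\xi}^m x)=\overline{\xi}^{p(m-1)}x^p$ controls only the subideal $(\overline{\xi}^2)$; one must show $J=(\overline{\xi}^2)$ (the analogue of the paper's identification $A_1/\xi^p A_1\cong A_1^{\cris}/\Fil^{p}(A_1^{\cris})$ obtained from the universal property of the pd-hull). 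Finally, the statement of Proposition \ref{Unique lifting lemma} concerns the $p$-adic topology on $\ker\pi$, while here $p=0$; the paper explicitly records the modification to the $\xi^p$-adic topology at the start of its proof of ``$\pi$ is crystalline'', a point you should also make.
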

This will be shown in two steps. Again, this is a group theoretic adaptation of the argument of Cais-Lau.
\subsubsection{$\pi$ is crystalline!}
Let me introduce the notation $A_{1}:=\Ainf(R)/p\Ainf(R)$ and $A_{0}:=A_{1}/\xi^{p}A_{1}.$
\\
Then there is the following frame-structure on $A_{0}$:
$$\underline{A_{0}}:=(A_{0},\xi A_{0}/\xi^{p}A_{0},R/p,\varphi=\text{Frob}_{A_{0}},\dot{\varphi}_{0}),$$
where $\dot{\varphi}_{0}(\xi a)=\varphi(a),$ for all $a\in A_{0}.$ Sometimes I will write $\text{Fil}(A_{0}):=\xi A_{0}/\xi^{p}A_{0}.$ Then it follows that the quotient homomorphism
$$A_{1}\rightarrow A_{0},$$
induces a strict frame-morphism
$$\pi\colon \mathcal{F}_{\infintesimal}(R) \otimes \Z /p\Z \rightarrow \underline{A_{0}}.$$
\begin{Lemma}
Assume that $p\geq 3,$ then $\pi$ is crystalline.
\end{Lemma}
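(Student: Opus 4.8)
The plan is to run Lau's unique lifting lemma (Proposition~\ref{Unique lifting lemma}) along the strict frame morphism $\pi$ to obtain full faithfulness of $\pi_\bullet$, and then to get essential surjectivity by lifting a structure matrix; this is the group-theoretic version of the Cais--Lau argument \cite{CaisLau}, and it is in the topological-nilpotence step that the hypothesis $p\geq 3$ will be forced.

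First I would record the setup. Write $K=\ker(\pi)=\xi^{p}A_{1}\subseteq \xi A_{1}$, so that $\pi\colon A_{1}\to A_{0}$ is a surjection inducing the identity on the ring component $R/p$ (hypothesis $(a)$). Hypothesis $(b)$ is immediate: since $pA_{1}=0$, the ideal $K$ is trivially $p$-adically complete and separated. For hypothesis $(c)$, using the frame axiom $\varphi(\xi)\equiv\xi^{p}\pmod p$ one computes, for $a\in A_{1}$,
\[
\dot{\varphi}(\xi^{p}a)=\dot{\varphi}\bigl(\xi\cdot\xi^{p-1}a\bigr)=\varphi(\xi^{p-1}a)=\varphi(\xi)^{p-1}\varphi(a)=\xi^{p(p-1)}\varphi(a)\in\xi^{p}A_{1}=K,
\]
so $\dot{\varphi}(K)\subseteq K$. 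Since $\pi$ is strict the base-change functor $\pi_\bullet$ is defined, the divided Frobenius for $\underline{A_{0}}$ being a group homomorphism inherited from $\mathcal{F}_{\infintesimal,1}(R)$ by lifting along the smooth surjection $A_{1}\to A_{0}$, exactly as in the lemma preceding this one.

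The heart of the matter is that $\dot{\varphi}$ acts topologically nilpotently on $K$ for the $\xi$-adic topology that $A_{1}=\Ainf(R)/p$ inherits (it is $\xi$-adically complete and separated because $\Ainf(R)$ is $(p,\xi)$-adically complete). Iterating the formula above, if $\dot{\varphi}^{n}(\xi^{p}a)=\xi^{e_{n}}\varphi^{n}(a)$ then $e_{n+1}=p(e_{n}-1)$ with $e_{0}=p$. The self-map $e\mapsto p(e-1)$ has fixed point $p/(p-1)$; for $p=2$ one has $e_{0}=2=2/(2-1)$, the exponents stagnate, and the argument breaks --- this is precisely why $p=2$ is excluded --- whereas for $p\geq 3$ one has $e_{0}=p>p/(p-1)$, hence $e_{n}\to\infty$ and $\dot{\varphi}^{n}(k)\to 0$ for every $k\in K$. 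Feeding this into the second alternative of Proposition~\ref{Unique lifting lemma} (topological nilpotence of $\dot{\varphi}$ on $K$, which dispenses with any adjoint-nilpotency hypothesis on the windows) shows that $\pi_\bullet$ is fully faithful.

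For essential surjectivity I would argue that a banal $\G$-$\mu$-window for $\underline{A_{0}}$ is represented by some $g'\in\G(A_{0})$, and since $(A_{1},\xi^{p}A_{1})$ is a henselian pair (the $\xi^{p}$-adic and $\xi$-adic topologies on $A_{1}$ are cofinal, and $A_{1}$ is $\xi$-adically complete) and $\G$ is smooth over $\mathbb{Z}_{p}$, the reduction $\G(A_{1})\to\G(A_{0})$ is surjective; any lift $g\in\G(A_{1})$ of $g'$ represents a window mapping to the given one. Together with full faithfulness this gives that $\pi$ is crystalline. The main obstacle is the topological-nilpotence estimate of the third paragraph: everything else is routine verification, but the exponent recursion $e_{n+1}=p(e_{n}-1)$ is exactly what separates $p\geq 3$ from $p=2$, mirroring the role of the constant in Cais--Lau.
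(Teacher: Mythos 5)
Your proof follows the same strategy as the paper's: compute $\dot{\varphi}(\xi^p a)=\xi^{p(p-1)}\varphi(a)$, deduce stability and topological nilpotence of $\dot{\varphi}$ on $K=\xi^pA_1$ (for $p\geq 3$), and invoke the unique lifting lemma for full faithfulness and a henselian-lift for essential surjectivity. Your exponent recursion $e_{n+1}=p(e_n-1)$, with fixed point $p/(p-1)$, is a pleasant and more explicit way of seeing exactly why $p=2$ fails that the paper leaves implicit; this is a genuine expository improvement.

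There is, however, one imprecision worth correcting. You verify hypothesis (b) of Proposition~\ref{Unique lifting lemma} by observing that $pA_1=0$, hence $K$ is "trivially $p$-adically complete." This is literally true, but for the wrong reason: the $p$-adic topology on $K$ is then \emph{discrete}, so in the proof of the unique lifting lemma the tower $\G(K)\cong\lim_i\G(K/p^iK)$ collapses, the filtration $\G(p^iK)$ is trivial, and "topologically nilpotent in the $p$-adic topology" would mean literally nilpotent — which $\dot{\varphi}$ is not on $K$. What actually makes the infinite product $\Pi=\cdots\,\mathcal U_g^2(z)\,\mathcal U_g(z)\,z$ converge is that $A_1$ is $\xi$-adically (equivalently $\xi^p$-adically) complete and $\dot{\varphi}$ is topologically nilpotent for that topology; but then Proposition~\ref{Unique lifting lemma} cannot be cited verbatim — one has to rerun its proof with the $p$-adic filtration replaced by the $\xi^p$-adic one, and re-verify along the way that the various kernels are nil-ideals (this time because $K/\xi^pK$ is a nil-ideal, not $K/pK$). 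The paper is explicit about this modification ("Here one is faced with frames, whose frame-constant is $\xi^p$. One has to modify the arguments."). Your argument has all the right ingredients — you do note the $\xi$-adic completeness of $A_1$ — but you present the $p$-adic check and the $\xi$-adic nilpotence as two separate observations both feeding into the proposition, when in fact the proposition's two hypotheses must live in the same topology, and that topology must be the $\xi^p$-adic one for the argument to be non-vacuous.
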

\begin{proof}
I want to adapt the Unique Lifting Lemma, Proposition \ref{Unique lifting lemma}, to the situation at hand.
\\
Consider
$$K:=\ker(\pi)=\xi^{p}A_{0}.$$
Then it is true that $\dot{\varphi}(\xi^{p}a))=\varphi(\xi)^{p-1}\varphi(a)=\xi^{p(p-1)}a^{p},$ and therefore $\dot{\varphi}$ stabilizes $K.$ Furthermore, one sees that for $p\geq 3$ the restriction
$$\dot{\varphi}\colon K\rightarrow K,$$
is topologically nilpotent.
\\
Recall that in Proposition \ref{Unique lifting lemma} I worked with frames such that the frame-constant is equal to $p.$ Here one is faced with frames, whose frame-constant is $\xi^{p}$. One has to modify the arguments.
\\
First, $\Ainf(R)$ is $(p,\xi)$-adically complete, and thus $A_{1}$ is $\xi$-adic and also complete in the $\xi^{p}$-adic topology (since $(\xi^{p})\subseteq (\xi)$) and $K$ is open, thus closed in this topology. In Lemma \ref{Vorbereitung unique lifting lemma} the only place, where I used that the frame-constant is $p,$ was to show that $K/pK$ is a nilideal. Recall that this was true, because $\varphi$ is a Frobenius-lift. In the case at hand it follows directly that $K/\xi^{p}K$ is a nil-ideal. In total, one also has Lemma \ref{Vorbereitung unique lifting lemma} in the present set-up.
\\
Then in the proof of Proposition \ref{Unique lifting lemma} the same arguments work, using in the end that $\dot{\varphi}$ is pointwise topologically nilpotent on $K.$
\end{proof}
\subsubsection{$\chi$ is crystalline!}
Now I want to connect the frame $\underline{A_{0}}$ from before with the frame $$\mathcal{F}_{\cris}(R)\otimes \mathbb{Z} /p \mathbb{Z}.$$ To shorten notation, in the following one write $A_{1}^{\cris}:=\Acris(R)/p\Acris(R).$
\begin{Lemma}\label{Framemorphismus von Acris mod p zu A0} Assume $p\geq 3.$
There exists a $u^{-1}$-frame homomorphism
$$\chi\colon \mathcal{F}_{\cris}(R)\otimes \Z /p\Z \rightarrow \underline{A_{0}}.$$
\end{Lemma}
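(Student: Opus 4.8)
The plan is to produce the underlying ring homomorphism of $\chi$ from the universal property of the $p$-completed divided power envelope that defines $\Acris(R)$, to take for the unit of the frame morphism the image of $u^{-1}$, and then to check the axioms of a $u^{-1}$-frame morphism more or less by hand; the hypothesis $p\geq 3$ will be used at exactly one point, the compatibility with the divided Frobenii.

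First I would note that $\bar\xi A_{0}\subseteq A_{0}$ is a pd-ideal. Indeed $\xi^{p}=0$ in $A_{0}=\Ainf(R)/(p,\xi^{p})$, so $(\bar\xi A_{0})^{p}=0$, and in a ring in which $p$ vanishes any ideal $J$ with $J^{p}=0$ carries a canonical pd-structure, given by $\gamma_{n}(x)=x^{n}/n!$ for $n\leq p-1$ and $\gamma_{n}(x)=0$ for $n\geq p$; compatibility with the canonical pd-structure on $(p)$ is vacuous since $p=0$ in $A_{0}$. The quotient map $\Ainf(R)\to A_{0}$ sends $\ker(\theta)=(\xi)$ into $\bar\xi A_{0}$, so the universal property of the pd-envelope of $\Ainf(R)$ along $\ker(\theta)$ (compatible with $(p)$), together with the $p$-adic completeness of $A_{0}$ (where in fact $p=0$), yields a pd-morphism $\Acris(R)\to A_{0}$; since it kills $p$ it factors through a ring homomorphism $\bar\chi\colon \Acris(R)/p\Acris(R)=A_{1}^{\cris}\to A_{0}$ whose restriction to $A_{1}=\Ainf(R)/p$ is the quotient $A_{1}\to A_{0}$. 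I then set $v:=\bar\chi(\bar u)^{-1}\in A_{0}^{\times}$, where $\bar u\in A_{1}^{\cris}$ is the image of $u=\varphi(\xi)/p\in\Acris(R)^{\times}$; this makes sense because $u$ is a unit in $\Acris(R)$, so $\bar\chi(\bar u)$ is a unit. The claim is that $(\bar\chi,v)$ is the required $u^{-1}$-frame homomorphism (note that both frames have frame-constant $0$, so the issue raised in the Remark on base-change for non-torsion-free $S'$ does not interfere).

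It then remains to check: (a) $\bar\chi(\Fil(A_{1}^{\cris}))\subseteq \bar\xi A_{0}=\Fil(A_{0})$, which is immediate since $\bar\chi$ is a pd-morphism and thus carries the pd-ideal $\Fil$ into the pd-ideal $\bar\xi A_{0}$; (b) $\varphi$-equivariance $\mathrm{Frob}_{A_{0}}\circ\bar\chi=\bar\chi\circ\varphi$, again immediate because $p=0$ in both rings forces $\varphi$ to be the $p$-power map on $A_{1}^{\cris}$ while $\bar\chi$ is a ring map; and (c) the relation $\dot\varphi_{0}(\bar\chi(i))=v\cdot\bar\chi(\dot\varphi(i))$ for $i\in\Fil(A_{1}^{\cris})$. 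Both sides of (c) are additive and $\varphi$-semilinear in $i$ (using (b) and the semilinearity of $\dot\varphi_{0}$ and $\dot\varphi$), so it suffices to test on the pd-generators $\gamma_{n}(\xi)$, $n\geq 1$. For $n=1$: $\dot\varphi_{0}(\bar\chi(\bar\xi))=\dot\varphi_{0}(\bar\xi)=1$ and $v\cdot\bar\chi(\dot\varphi(\xi))=v\cdot\bar\chi(\bar u)=1$, using $\dot\varphi(\xi)=\varphi(\xi)/p=u$.

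The case $n\geq 2$ is where $p\geq 3$ is needed, and is the only real obstacle. Here $n!\,\varphi(\gamma_{n}(\xi))=\varphi(\xi)^{n}=(pu)^{n}$ gives $\dot\varphi(\gamma_{n}(\xi))=\varphi(\gamma_{n}(\xi))/p=(p^{n-1}/n!)\,u^{n}$ in $\Acris(R)$; an elementary valuation estimate shows $v_{p}(p^{n-1}/n!)=n-1-v_{p}(n!)\geq 1$ for all $n\geq 2$ as soon as $p\geq 3$ (whereas $v_{p}(p^{n-1}/n!)=0$ already for $p=2$, $n=2$, which is exactly what breaks the argument there). Hence $\dot\varphi(\gamma_{n}(\xi))\in p\Acris(R)$ and $\bar\chi(\dot\varphi(\gamma_{n}(\xi)))=0$. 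On the other side $\bar\chi(\gamma_{n}(\xi))=\gamma_{n}(\bar\xi)$, which is $0$ for $n\geq p$ and equals $\bar\xi\cdot(\bar\xi^{n-1}/n!)$ for $2\leq n\leq p-1$, so $\dot\varphi_{0}(\bar\chi(\gamma_{n}(\xi)))=\mathrm{Frob}_{A_{0}}(\bar\xi^{n-1}/n!)=\bar\xi^{p(n-1)}/(n!)^{p}=0$ since $p(n-1)\geq p$. Thus both sides of (c) vanish for $n\geq 2$, which finishes the verification and shows $(\bar\chi,v)$ is a $u^{-1}$-frame homomorphism.
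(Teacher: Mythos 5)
Your proof is correct. The construction of $\bar\chi$ from the universal property of the ($p$-completed) pd-envelope, with the ``trivial'' pd-structure $\gamma_n(x)=x^n/n!$ for $n<p$ and $\gamma_n(x)=0$ for $n\geq p$ on $\bar\xi A_0$ (which is exactly what the paper means by ``setting $\gamma_p(\xi)=0$''), is the same as in the paper, and the central $p$-adic input --- $\nu_p(p^{n-1}/n!)\geq 1$ for $p\geq 3$, which fails for $p=2$ --- is also the same. Where you diverge is in how the frame-morphism identity $\dot\varphi_0\circ\bar\chi=v\cdot\bar\chi\circ\dot\varphi_{\cris}$ is established. The paper uses the valuation estimate only for $n\geq p$ to show that $\dot\varphi_{\cris}$ kills $\Fil^p(A_1^{\cris})$, identifies $A_0\cong A_1^{\cris}/\Fil^p(A_1^{\cris})$, and then transfers the identity from the already-constructed frame morphisms $\lambda\colon\mathcal{F}_{\infintesimal,1}(R)\to\mathcal{F}_{\cris,1}(R)$ (a $u$-morphism) and $\pi\colon\mathcal{F}_{\infintesimal,1}(R)\to\underline{A_0}$ (strict), using $\pi=\chi\circ\lambda$ and the surjectivity of $\pi$ on the filtration. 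You instead verify the identity directly on the pd-generators $\gamma_n(\xi)$, using the estimate for all $n\geq 2$ to show both sides vanish there and the $n=1$ case to pin down the unit $v=\bar\chi(\bar u)^{-1}$. Your route is slightly more elementary and self-contained (it does not invoke $\lambda$ or $\pi$), at the cost of needing the easy observation that both sides of the identity are additive, continuous, and $\varphi$-semilinear over $A_1^{\cris}$ so that testing on generators suffices; the paper's route is a bit slicker and reuses existing structure. Both are complete and use $p\geq 3$ in the identical place.
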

\begin{proof}
One can give the ideal $\text{Fil}(A_{0})$ the trivial pd-structure by setting $\gamma_{p}(\xi)=0.$ The universal property of $A_{1}^{\cris}$  gives a unique pd-homomorphism
$$\chi\colon A_{1}^{\cris}\rightarrow A_{0},$$
extending the map $\pi\colon A_{1}\rightarrow A_{0}$ and sending $\Fil^{p}(A_{1}^{\cris})$ to zero. Then the identity of $A_{0}$ factors as
$$A_{0}=A_{1}/\xi^{p}A_{1}\rightarrow A_{1}^{\cris}/\Fil^{p}(A_{1}^{\cris}) \rightarrow A_{0}.$$
Here the first map is surjective and therefore both homomorphisms have to be bijective.
One has the following identity in $\Acris(R):$
\begin{equation}
\dot{\varphi}_{\cris}(\frac{\xi^{n}}{n!})=u^{n}\frac{p^{n-1}}{n!}=x\cdot p^{n-1-\nu_{p}(n!)},
\end{equation}
where still $u=\frac{\varphi(\xi)}{p}$ and $x$ is some unit. From the estimate $\nu_{p}(n!)\leq \frac{n-1}{p-1}$ one deduces for $p\geq 3$ that $n-1-\nu_{p}(n!)$ is positive and thus
$$\dot{\varphi}_{\cris}\colon \Fil^{p}(A_{1}^{\cris})\rightarrow A_{1}^{\cris}$$ is the zero map. Therefore
$$\dot{\varphi}_{\cris}\colon \Fil(A_{1}^{\cris})\rightarrow A_{1}^{\cris}$$ factorizes over $\Fil(A_{0})\rightarrow A_{1}^{\cris}$ and induces a map
$$\overline{\dot{\varphi}}\colon \Fil(A_{0})\rightarrow A_{0}.$$
Because $\lambda$ is a $u$-framehomomorphism, it follows that $\overline{\dot{\varphi}}=u\cdot \dot{\varphi}_{0}.$ It follows that $\chi$ is a $u^{-1}$-frame-homomorphism, as desired.
\end{proof}
\begin{Lemma}\label{chi ist kristalline!}
Assume $p\geq 3.$ Then the $u^{-1}$-frame homomorphism from Lemma \ref{Framemorphismus von Acris mod p zu A0} before is crystalline.
\end{Lemma}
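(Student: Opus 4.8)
The plan is to imitate the proof that $\pi$ is crystalline, i.e.\ to adapt Lau's unique lifting lemma (Proposition~\ref{Unique lifting lemma}) to the morphism $\chi\colon\mathcal{F}_{\cris}(R)\otimes\mathbb{Z}/p\mathbb{Z}\to\underline{A_0}$ of Lemma~\ref{Framemorphismus von Acris mod p zu A0}. Write $S=A_1^{\cris}=\Acris(R)/p$, $S'=A_0$ and $K=\ker(\chi)=\Fil^p(A_1^{\cris})$. The map $\chi$ is surjective (the identity of $A_0$ factors through it, by the construction in Lemma~\ref{Framemorphismus von Acris mod p zu A0}) and induces the identity on the residue rings $R/p$; and the base-change functor $\chi_{\bullet}$ is well defined by the last part of the remark on functoriality in frame morphisms, since both $\mathcal{F}_{\cris}(R)\otimes\mathbb{Z}/p\mathbb{Z}$ and $\underline{A_0}$ have frame constant $0$ (the reductions of $p$, resp.\ $\xi^p$). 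Two features differ from the setting of Proposition~\ref{Unique lifting lemma} --- $\chi$ is only a $u^{-1}$-frame morphism, and the frame constants vanish --- but, as in the case of $\pi$, these cause no trouble; in fact they will make the argument essentially trivial.

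The first step is to record the arithmetic of $K$. Since $\varphi$ is a pd-morphism, $\varphi(\gamma_n(\xi))=\varphi(\xi)^n/n!=p^n u^n/n!\in p\Acris(R)$ for $n\geq 1$ (as $\nu_p(p^n/n!)=n-\nu_p(n!)>0$), so $\varphi$ vanishes on $\Fil(A_1^{\cris})$, hence on $K\subseteq\Fil(A_1^{\cris})$. The estimate already made in the proof of Lemma~\ref{Framemorphismus von Acris mod p zu A0} shows moreover, for $p\geq 3$, that $\dot\varphi_{\cris}$ vanishes identically on $\Fil^p(A_1^{\cris})=K$. Finally, from $\gamma_n(\xi)^p=\tfrac{(np)!}{(n!)^p}\gamma_{np}(\xi)$ together with $\nu_p\!\bigl(\tfrac{(np)!}{(n!)^p}\bigr)=s_p(n)\geq 1$ for $n\geq 1$ (here $s_p(n)$ is the digit sum of $n$ in base $p$), one gets $\gamma_n(\xi)^p=0$ in $A_1^{\cris}$ for $n\geq p$; since $A_1^{\cris}$ has characteristic $p$ the Frobenius is additive, and as every element of $K$ is a finite $A_1^{\cris}$-combination of the $\gamma_n(\xi)$ with $n\geq p$, one concludes $x^p=0$ for \emph{every} $x\in K$. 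In particular $K$ is a nil ideal and $(S,K)$ is a henselian pair.

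With these facts the two halves of the equivalence are short. For essential surjectivity, a structure matrix $g'\in\G(A_0)$ lifts to $g\in\G(A_1^{\cris})$ because $\G$ is smooth and $(S,K)$ is henselian, so $\G(S)\twoheadrightarrow\G(S')$. For full faithfulness I would run the argument of Proposition~\ref{Unique lifting lemma}: Lemma~\ref{Vorbereitung unique lifting lemma} applies verbatim (its only inputs being that $K$ is a nil ideal and $\G$, $P^-$ are smooth), reducing everything to the claim that the $\Phi$-conjugation action of $\G(K)\cong\G(K)_{\mu}$ on the fibres of $\G(S)\to\G(S')$ is simply transitive. I would then simply compute $\Phi=\Phi_{\mathcal{F}_{\cris}(R)\otimes\mathbb{Z}/p\mathbb{Z}}$ on $\G(K)$: decompose $z=u\cdot q$ along $U^+(K)\times P^-(K)\xrightarrow{\ \sim\ }\G(K)$. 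On $U^+(K)$ one has $\Phi=\id\otimes\dot\varphi_{\cris}$, which is zero since $\dot\varphi_{\cris}(K)=0$, so $\Phi(u)=e$; on $P^-(K)$ the coordinate formula $(\Phi q)^{\sharp}(f_{-n})=\zeta^n\varphi(q^{\sharp}(f_{-n}))$ from the proof of Proposition~\ref{Unique lifting lemma}, specialised to $\zeta=0$, gives $(\Phi q)^{\sharp}(f_{-n})=0$ for $n\geq 1$ and $(\Phi q)^{\sharp}(f_0)=\varphi(q^{\sharp}(f_0))$, while $q^{\sharp}(f)-e^{\sharp}(f)\in K$ and $\varphi(K)=0$ force $\varphi\circ q^{\sharp}=\varphi\circ e^{\sharp}$, hence $\Phi(q)=e$ by functoriality of $\Phi$ on the identity section. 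Thus $\Phi$ is trivial on $\G(K)$, the self-map $z\mapsto\mathcal{U}_g(z)^{-1}z$ of $\G(K)$ is the identity, and the required simple transitivity is immediate. Note that no adjoint-nilpotency hypothesis enters, which is exactly what the word ``crystalline'' in the statement demands.

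The only real obstacle I foresee is bookkeeping: making precise the ``$\mu(\zeta)$-conjugation on $P^-$ for $\zeta=0$'' appearing in $\Phi$ --- that it is the retraction $P^-\to P^-/U^-$ composed with the endomorphism induced by $\varphi$ --- and matching it with the Hopf-coordinate description used in the proof of Proposition~\ref{Unique lifting lemma}. Once this is set up the argument carries no analytic content: every $\Phi$-iterate already vanishes on $\G(K)$, so no completeness of $K$ is needed, which is why this case is genuinely easier than the general unique lifting lemma. Finally, combining this lemma with the preceding one (that $\pi$ is crystalline) and the factorisation $\chi\circ\lambda_1=\pi$ --- a strict frame morphism --- shows by two-out-of-three that $\lambda_1$ is crystalline, which with Lemmas~\ref{Liften von kristallinen Morphismen modulo p} and~\ref{Aequivalenz der Fasern unter Reduktion modulo p} completes the proof of Proposition~\ref{Deszent von Acris nach Ainf}.
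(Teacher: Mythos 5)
Your computation that $\Phi_{\mathcal{F}_{\cris}(R)\otimes\mathbb{Z}/p\mathbb{Z}}$ is trivial on $\G(K)$, and your deduction of essential surjectivity from the fact that $(A_1^{\cris},K)$ is henselian, are exactly the second half of the paper's proof (you take a slightly longer route to $x^p=0$ via the formula $\gamma_n(\xi)^p=\frac{(np)!}{(n!)^p}\gamma_{np}(\xi)$ and Legendre's formula, where the paper just notes $k^p=p!\,\gamma_p(k)=0$ since $K$ carries a pd-structure and $p=0$, but the content is the same). The concluding two-out-of-three remark also matches how the paper implicitly deduces that $\lambda_1$ is crystalline.

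However, there is a genuine gap at the very beginning: you assert that the base-change functor $\chi_\bullet$ is well defined ``by the last part of the remark on functoriality in frame morphisms,'' but that remark is a \emph{forward pointer to this very proof}. It explicitly says the usual criterion (no $\zeta_{\mathcal{F}'}$-torsion, or $u=1$) fails here and that ``one can check'' the functor exists in this case — the check is then carried out in the paper's proof of the lemma and occupies its entire first half. Concretely, you need to show: for $g'=h^{-1}g\,\Phi_{\cris,1}(h)$ there exists $z\in\G(\underline{A_0})_\mu$ with $\chi(g')\mu(u^{-1})=z^{-1}\chi(g)\mu(u^{-1})\Phi_{\underline{A_0}}(z)$. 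The candidate is $z=\chi(v)\chi(q)$ for the decomposition $h=v\cdot q$, and verifying it requires the identities $\Phi_{\underline{A_0}}(\chi(q))\mu(u^{-1})=\mu(u^{-1})\Phi_{\underline{A_0}}(\chi(q))$ (using that $\mu(0)\varphi(\chi(q))\mu(0)^{-1}$ lies in the $\mu$-centralizer $P^-\cap P^+$), together with $\mu(u^{-1})\Phi_{\underline{A_0}}(\chi(v))\mu(u)=\chi(\Phi_{\mathcal{F}_{\cris,1}}(v))$ and $\Phi_{\underline{A_0}}(\chi(q))=\chi(\Phi_{\mathcal{F}_{\cris,1}}(q))$. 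This is not the ``bookkeeping about $\mu(0)$-conjugation'' you flag at the end (which concerns describing $\Phi$ on $\G(K)$); it is a separate compatibility that must be established before the unique lifting lemma can even be invoked, since without it there is no functor whose full faithfulness the unique lifting lemma is certifying. Your proof should carry out this verification rather than cite a remark whose justification is the argument being constructed.
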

\begin{proof}
Before checking that the $u^{-1}$-frame morphism is crystalline, let me explain, why it induces indeed a functor of the associated banal $\G$-$\mu$-window categories.
Explicitely, this means that one has to show that for $g,g^{\prime}\in \G(A_{1}^{\cris}),$ such that there is a $h\in\G(\mathcal{F}_{\cris,1}(R))_{\mu}$ with
\begin{equation}
g^{\prime}=h^{-1}g\Phi_{\cris,1}(h),
\end{equation}
there exists a $z\in \G(\underline{A_{0}})_{\mu},$ such that
\begin{equation}\label{Gleichung fuer chi is funktoriell 3}
\chi(g^{\prime})\mu(u^{-1})=z^{-1}\chi(g)\mu(u^{-1})\Phi_{\underline{A_{0}}}(z).
\end{equation}
For this, write as usual $h=v\cdot q,$ where $v\in U^{+}(\Fil(A_{1}^{\cris}))$ and $q\in P^{-}(A_{1}^{\cris}).$ Then $$z=\chi(v)\cdot \chi(q)\in\G(\underline{A_{0}})_{\mu}$$ will be the desired element. In fact, by definition one has that $\Phi_{\underline{A_{0}}}(\chi(q))=\mu(0)\varphi(\chi(q))\mu(0)^{-1},$ as the frame constant of $\underline{A_{0}}$ is zero. This implies that $\Phi_{\underline{A_{0}}}(\chi(q))$ lies in the centralisator of $\mu$\footnote{In fact, one has that $P^{-}\cap P^{+}$ is exactly the $\mu$-centralizer, \cite[Lemma 2.1.5.]{CGP}and by definition $\mu(0)\varphi(\chi(q))\mu(0)^{-1}$ has no parts of weight $<0,$ as one killed those parts via multiplication with $0.$} and thus
\begin{equation}\label{Kommutiert mit Cocharacter}
\Phi_{\underline{A_{0}}}(\chi(q))\mu(u^{-1})=\mu(u^{-1})\Phi_{\underline{A_{0}}}(\chi(q)).
\end{equation}
Furthermore, one checks directly the equations
\begin{equation}\label{Gleichung fuer chi ist funktoriell 1}
\mu(u^{-1})\Phi_{\underline{A_{0}}}(\chi(v))\mu(u)=\chi(\Phi_{\mathcal{F}_{\cris,1}}(v))
\end{equation}
and
\begin{equation}\label{Gleichung fuer chi is funktoriell 2}
\Phi_{\underline{A_{0}}}(\chi(q))=\chi(\Phi_{\mathcal{F}_{\cris,1}}(q)).
\end{equation}
Now equations (\ref{Kommutiert mit Cocharacter})-(\ref{Gleichung fuer chi is funktoriell 2}) together readly imply  equation (\ref{Gleichung fuer chi is funktoriell 3}) for $z=\chi(v)\cdot \chi(q)$ as above.
\\
Now one can turn to checking that $\chi$ is indeed crystalline, where one already knows the game: one has to modify the Unique Lifting Lemma, Proposition \ref{Unique lifting lemma}, to apply it in situation here.
\\
Consider $K:=\ker(A_{1}^{\cris}\rightarrow A_{0})=\Fil^{p}(A_{1}^{\cris}).$ This ideal has pd-structure and because $p=0,$ it follows that it is a nil-ideal (to the exponent $p$). Since smooth morphisms, locally of finite presentation satisfy Grothendieck$^{\prime}$s lifting criterium for nilideals and because $K\subseteq \Fil(A_{1}^{\cris})\subseteq \Rad(A_{1}^{\cris}),$ the same arguments as in the omitted proof of Lemma \ref{Vorbereitung unique lifting lemma} apply. I make the following
\\
\textbf{Claim:} The divided Frobenius
$$\Phi_{\mathcal{F}_{\cris}(R)\otimes \Z/p\Z}\colon \G(K) \rightarrow \G(K)$$ is the trivial map.
\\
If one has verified this statement, the proof of the present Lemma follows exactly as in Proposition \ref{Unique lifting lemma}.
\\
Let me prove the claim. For 
$$\Phi_{\mathcal{F}_{\cris}(R)\otimes \Z/p\Z}=\id \otimes \dot{\varphi}\colon \Lie(U^{+})\otimes K \rightarrow \Lie(U^{+}) \otimes K$$ one already knows that it is true, as one computed above that $\dot{\varphi}$ is zero on $K.$
\\
For
$$\Phi_{\mathcal{F}_{\cris}(R)\otimes \Z/p\Z}\colon P^{-}(K)\rightarrow P^{-}(K)$$
the usual argument applies: if $g\in P^{-}(K)$ and $f=\sum_{n\in \Z} f_{n},$ one has that $g^{\sharp}(f_{-n})\in K$ for $n\geq 1.$ As the $\zeta_{\mathcal{F}_{\cris}(R)\otimes \Z/p\Z}=p=0,$ $\Phi_{\mathcal{F}_{\cris}(R)\otimes \Z/p\Z}(g)^{\sharp}(f_{-n})=0.$ Writing $g^{\sharp}(f_{0})=e^{\sharp}_{A_{1}^{\cris}}(f_{0})+k$ for $k\in K,$ one has that $k^{p}=0$ and the Lemma follows, because Frobenius maps the unit section to the unit section.
\end{proof}
\subsection{Extension to the general case}\label{Section extension to the general case}
Now I sketch how to deduce the descent result, Proposition \ref{Deszent von Acris nach Ainf}, for an arbitrary integral perfectoid $W(k)$-algebra, that is not necessarily $p$-torsion free\footnote{As the category of integral perfectoids is closed under products, one can certainly construct integral perfectoids with $p$-torsion, that are not perfect.}. This follows by the same arguments as used by Lau to prove the $\GL_{n}$-case, see \cite[Lemma 9.2., Prop. 9.3 ]{LauPerfektoid}. Therefore I just briefly recall these.
\\
Let $R$ be an integral perfectoid $W(k)$-algebra, let $R_{1}=R/R[\sqrt{pR}],$ $R_{2}=R/\sqrt{pR}$ and $R_{3}=R_{1}/\sqrt{pR_{1}}.$ Then $R_{1}$ is $p$-torsion free integral perfectoid, while $R_{2}$ and $R_{3}$ are perfect. One then has a crucial pullback and pushout square with the canonical maps
$$
\xymatrix{
R \ar[r] \ar[d] & R_{1} \ar[d] \\
R_{2} \ar[r] & R_{3}.
}
$$
It follows that one has a  pushout diagram in the category of frames, presenting $\mathcal{F}_{\infintesimal}(R)$ (resp. $\mathcal{F}_{\cris}(R)$) as the pushout of $\mathcal{F}_{inf}(R_{1})$ and $\mathcal{F}_{\infintesimal}(R_{2})$ over $\mathcal{F}_{\infintesimal}(R_{3})$ (resp. the same for $\mathcal{F}_{\cris}(.)$).
Thus there is also a pushout diagram of groupoids
$$
\xymatrix{
\G-\mu-\text{Win}(\mathcal{F}_{\infintesimal}(R))_{\banal} \ar[r] \ar[d] & \G-\mu-\text{Win}(\mathcal{F}_{\infintesimal}(R_{1}))_{\banal} \ar[d] \\
\G-\mu-\text{Win}(\mathcal{F}_{\infintesimal}(R_{2}))_{\banal} \ar[r] & \G-\mu-\text{Win}(\mathcal{F}_{\infintesimal}(R_{3}))_{\banal},
}
$$
(resp. the same for banal $\G$-$\mu$-windows for $\mathcal{F}_{\cris}(.)$). The point is now that in the descent result, one may reduce to the case that the integral perfectoid $W(k)$-algebra is either a perfect $\mathbb{F}_{p}$-algebra, where it is trivial as the frame morphism $\lambda\colon \mathcal{F}_{\infintesimal}(R)\rightarrow \mathcal{F}_{\cris}(R)$ is just the identity, or $p$-torsion free, which was addressed before.
\section{Connection to local mixed-characteristic shtukas à la Scholze}\label{Section Connection to local mixed-characteristic shtukas}
Fix a reductive and minuscule window datum $(\G,\mu).$ I will now explain, how to go from adjoint nilpotent $\G$-$\mu$-displays to local mixed characteristic $\G$-shtukas. As the real work has been done in the previous sections, I will be rather quick here.
\\
Let me begin with the obvious extension of the notion of a Breuil-Kisin-Fargues module, as in \cite{FarguesQuelquesresultats}, \cite{LauPerfektoid}, that will be used.
\begin{Definition}\label{Definition BKF}
Let $R$ be an integral perfectoid ring with fixed perfectoid pseudo-uniformizer $\varpi\in R.$
\begin{enumerate}
\item[(a):]
 Breuil-Kisin-Fargues module with $\G$-structure over $R$ is a pair $(\mathcal{P},\varphi_{\mathcal{P}}),$ where $\mathcal{P}\rightarrow \Spec(\Ainf(R))$ is a $\G$-torsor and 
$$
\varphi_{\mathcal{P}}\colon (\varphi^{*}\mathcal{P})[1/\varphi(\xi)]\simeq \mathcal{P}[1/\varphi(\xi)],
$$
for some generator $\xi$ of $\ker(\theta)$ and $\mathcal{P}[1/\varphi(\xi)]=\mathcal{P}\times_{\Spec(\Ainf(R))} \Spec(\Ainf(R)[1/\varphi(\xi)]).$
\item[(b):] One says that a $\G$-Breuil-Kisin-Fargues module $(\mathcal{P},\varphi_{\mathcal{P}})$ is of type $\mu,$ if for all maps $R\rightarrow V,$ where $V$ is a $\varpi$-complete rank $1$ valuation ring with algebraically closed fraction field and any representative $U\in \G(\Ainf(V)[1/\varphi(\xi)])$ for the base-change of $(\mathcal{P},\varphi_{\mathcal{P}})$ to $\Spec(\Ainf(V))$\footnote{Here one uses that $V$ is strictly henselian, as an absolutely integrally closed local domain and that then $\Ainf(V)$ will also be strictly henselian, so that any $\G$-torsor over $\Spec(\Ainf(V))$ will be trivial.} one has that
$$
U\in \G(\Ainf(V))\mu(\varphi(\xi))\G(\Ainf(V)).
$$
\end{enumerate}
\end{Definition}
\begin{Remark}
The condition in (b) is phrased in a way that it exactly gives that the $\G$-shtuka one constructs from $(\mathcal{P},\varphi_{\mathcal{P}})$ will have singularities bounded by the cocharacter $\mu.$ It may be checked for one representative (since changing by $\varphi$-conjugation with elements in $\G(\Ainf(V))$ does not change this containement condition).
\end{Remark}
I will write $\G$-$\BKF(R)$ for the corresponding groupoid and  $\G$-$\BKF(\mathcal{R})$ for the fibered category for the affine etale topology on $\Spec(R/p),$ constructed via Lau$^{\prime}$s sheaf. Note that one defined the fibered category of $\G$-$\mu$-windows for the frame $\mathcal{F}_{\infintesimal}(\mathcal{R})$ in such a way that it forms a stacks.
\begin{Lemma}\label{Displays sind ein Stack fuer Laus Garbe}
\begin{enumerate}
\item[(a):] $\G$-$\BKF(\mathcal{R})$ is a stack.
\item[(b):] $\G$-$\mu$-$\Displ(\mathcal{R})$ is a stack.
\end{enumerate}
\end{Lemma}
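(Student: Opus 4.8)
The plan is to reduce both statements to the general principle that, on any site, torsors under a sheaf of groups form a stack, and that equipping such torsors with a map to a fixed sheaf — subject to a twisted-equivariance condition and, in case (a), to a closed boundedness condition — preserves the stack property. For (b) an object over $\Spec(B')$ (where $R'=\mathcal{R}(B')$ is the unique perfectoid lift of $B'$ from Lemma \ref{Eindeutige integral perfektoide algebra}) is a $\G(\mathcal{W})_{\mu}$-torsor together with the structure map $\alpha$ to $L^{+}(\G_{W(k_{0})})$, and both $\G(\mathcal{W})_{\mu}$ and $L^{+}(\G_{W(k_{0})})$ are étale sheaves on $\Spec(R/p)$ by Zink's Witt descent and its consequences recorded in Lemma \ref{Garbeneigenschaft Window Gruppe Inf-Frame}. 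For (a) the picture is identical with $\G(\mathcal{A}_{\infintesimal}(\mathcal{R}))$ replacing $\G(\mathcal{W})_{\mu}$ and with $\varphi_{\mathcal{P}}$ valued in $\G(\mathcal{A}_{\infintesimal}(\mathcal{R})[1/\varphi(\xi)])$; the latter is again a sheaf since in the affine étale topology covers may be taken finite and localization commutes with finite limits. One preliminary observation is needed: a $\G$-torsor over the scheme $\Spec(\Ainf(R'))$ is the same datum as a torsor under the sheaf $\G(\mathcal{A}_{\infintesimal}(\mathcal{R}))$ on $\Spec(R/p)^{\text{aff}}_{\text{ét}}$, because the small étale sites of $\Spec(R'/p)$, of its perfection, and of $\Spec W((R'/p)^{\flat})=\Spec(\Ainf(R'))$ all coincide, so such a torsor is trivial étale-locally on $\Spec(R/p)$; the analogous statement over $\Spec(W(R'))$ holds by Zink's descent.

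Granting this, the prestack condition (descent for isomorphisms) is straightforward. After an étale cover trivializing the underlying torsors — legitimate because $\G$, and hence $\G(\mathcal{W})_{\mu}$ and the relevant group sheaves, is smooth affine — an isomorphism between two objects becomes a section of $\G(\mathcal{W})_{\mu}$ resp. $\G(\mathcal{A}_{\infintesimal}(\mathcal{R}))$ (in case (b) together with an equality of Hodge filtrations, i.e. of sections of the flag bundle $\bar{\mathcal{E}}/P^{-}$, cf. Remark \ref{Remark zur Hodge-Filtration}) cut out by the condition that it intertwine the two structure maps. All of these presheaves are sheaves and equalizers of sheaves are sheaves, so $\underline{\Isom}$ of two objects is a sheaf.

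For effectivity, let $B'\to B''$ be a cover with \v{C}ech nerve $B'\to B''\to B'''$ and $(x,\sigma)$ a descent datum. The underlying torsor descends by effective descent for torsors under a sheaf of groups. The remaining datum — $\alpha$ in (b), $\varphi_{\mathcal{P}}$ in (a) — is a morphism of sheaves over $B''$ which, by the cocycle condition, lies in the equalizer $\eq\big(\Gamma(B'')\rightrightarrows\Gamma(B''')\big)$ formed in the appropriate sheaf, hence descends to a morphism over $B'$; that the result still satisfies twisted equivariance, and in case (a) is still of type $\mu$, is checked after pullback to the cover — the type-$\mu$ condition being phrased via maps to $\varpi$-complete valuation rings, it is inherited automatically. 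In case (b) one must moreover descend the Hodge filtration: it is a section of the proper $R'$-scheme $\bar{\mathcal{E}}/P^{-}$, and sections of a scheme glue by fpqc descent for morphisms of schemes, exactly as in the proof of Lemma \ref{Displays ueber adischen Ringen}.

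The one step that is not a mere citation — and which I regard as the main point — is the reduction just sketched: identifying $\G$-torsors over the affine schemes $\Spec(\Ainf(R'))$ and $\Spec(W(R'))$ with torsors under the sheaves of groups $\G(\mathcal{A}_{\infintesimal}(\mathcal{R}))$ and $\G(\mathcal{W})_{\mu}$ on $\Spec(R/p)^{\text{aff}}_{\text{ét}}$, via the comparison of small étale sites together with the sheaf properties already established in Lemma \ref{Lau perfektoide Garbe}, Lemma \ref{Garbeneigenschaft Window Gruppe Inf-Frame} and Zink's Witt descent. Once this is in place the rest is the formal manipulation of sheaves of groups and twisted-equivariant maps indicated above, running parallel to the $\GL_{n}$-discussions of Bültel-Pappas and of Lau.
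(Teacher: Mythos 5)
The proposal is along the right lines, and the high-level strategy (reduce to descent of group-sheaf torsors plus descent of the twisted-equivariant structure maps) is what underlies the paper's brief citation of \cite[Lemma 10.9]{LauPerfektoid}. But the ``preliminary observation'' on which you hang the whole argument --- that a $\G$-torsor over the \emph{scheme} $\Spec(\Ainf(R'))$ (resp.\ $\Spec(W(R'))$) is the \emph{same datum} as a torsor under the sheaf $\G(\mathcal{A}_{\infintesimal}(\mathcal{R}))$ (resp.\ $\G(\mathcal{W})_{\mu}$) on $\Spec(R/p)^{\text{aff}}_{\text{\'et}}$ --- is not established by the comparison of small \'etale sites alone, and it is precisely there that the nontrivial input sits.

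Topological invariance gives you one direction, as you say: any $\G$-torsor over $\Spec(\Ainf(R'))$ trivializes over some honestly \'etale $\Ainf(R')$-algebra $C$, and by invariance of the \'etale site (for the henselian pair $(\Ainf(R'),(p))$ and the universal homeomorphism $\Spec((B')^{\flat})\to\Spec(B')$) this $C$ corresponds to an \'etale $B'\to B''$, together with a map $C\to\Ainf(\mathcal{R}(B''))$ into the $p$-adic completion. So the presheaf $B''\mapsto\mathcal{P}(\Ainf(\mathcal{R}(B'')))$ is indeed a sheaf-torsor. What topological invariance does \emph{not} give you is the reverse direction, which is the direction effectivity needs. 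Lau's sheaf assigns to $B''$ the ring $\Ainf(\mathcal{R}(B''))=W((B'')^{\flat})$, which is the $p$-adic \emph{completion} of the \'etale $\Ainf(R')$-algebra $C$ singled out by the site equivalence, not $C$ itself; likewise, on the \v{C}ech nerve, $\Ainf(\mathcal{R}(B''\otimes_{B'}B''))$ is the $p$-adic completion of $\Ainf(R'')\otimes_{\Ainf(R')}\Ainf(R'')$, not the fiber product itself. So a descent datum for a sheaf-torsor is only a descent datum over the \emph{completed} \v{C}ech nerve, and to produce from it an actual $\G$-torsor over $\Spec(\Ainf(R'))$ one has to know that $\G$-torsors descend along $(p,\xi)$-completely (faithfully flat/\'etale) maps of $(p,\xi)$-complete rings --- not something fpqc descent or the identification of \'etale sites gives for free. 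This is exactly the role of \cite[Lemma B.0.4.(b)]{BueltelPappas}, which the paper cites for case (a). Symmetrically, for case (b) the cover $R'\to R''$ is only $p$-completely \'etale, and the descent data lives over $W$ of the $p$-completed fiber product; the paper handles this by reducing mod $p^{n}$, applying honest \'etale descent there, and reassembling via the algebraization statement of Lemma \ref{Displays ueber adischen Ringen}. Neither the completion issue nor the reduction-mod-$p^n$-and-algebraize step appears in your proposal; you should either quote the relevant input ($p$-complete fpqc descent for $\G$-torsors, e.g.\ via Bültel--Pappas, Gabber--Ramero, or Drinfeld) or carry out the reduction modulo $p^n$ and invoke Lemma \ref{Displays ueber adischen Ringen} explicitly.
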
 
\begin{proof}
The reader is referred to \cite[Lemma 10.9.]{LauPerfektoid}, where the corresponding statements for $\G=\GL_{n}$ and minuscule BKFs resp. $p$-divisible groups are proven. Using \cite[Lemma B.0.4.(b)]{BueltelPappas} for $\G$-BKFs and GFGA, Lemma \ref{Displays ueber adischen Ringen}, for $\G$-$\mu$-displays, the same arguments apply.
\end{proof}
\begin{Corollary}\label{Korollar Vergleich G-mu-Windows und G-mu-BKF}
Let $R$ be an integral perfectoid $W(k)$-algebra. Then there exists a fully faithful functor
$
\G\text{-}\mu\text{-}\Win(\mathcal{F}_{\infintesimal}(R))\rightarrow \G\text{-}\BKF(R)
$
with essential image exactly those $\G$-Breuil-Kisin-Fargues modules over $R,$ which are of type $\mu.$
\end{Corollary}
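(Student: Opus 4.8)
The plan is to reduce the statement to the banal case, where Lemma \ref{Windows ueber Ainf und BKF lokales Statement} already provides the required equivalence, and then to stackify. First I would record that both sides are stacks on $\Spec(R/p)^{\aff,\op}_{\text{ét}}$: the source is, by construction, the quotient stack $[\G(\mathcal{A}_{\infintesimal}(\mathcal{R}))/_{\Phi_{\infintesimal}}\G(\mathcal{F}_{\infintesimal}(\mathcal{R}))_{\mu}]$ (using Lemma \ref{Garbeneigenschaft Window Gruppe Inf-Frame}), and the target is a stack by Lemma \ref{Displays sind ein Stack fuer Laus Garbe}(a). Moreover both are generated under étale descent by their banal (trivialized) objects: a $\G(\mathcal{F}_{\infintesimal}(\mathcal{R}))_{\mu}$-torsor is by definition étale-locally trivial, and a $\G$-torsor $\mathcal{P}$ over $\Spec(\Ainf(R))$ becomes trivial after passing to an étale cover $R/p\to R'/p$ over which the reduction of $\mathcal{P}$ modulo $(p,\xi)$ is trivial, because $\G$ is smooth and $\Ainf(R')$ is $(p,\xi)$-adically complete, so that a section over $\Spec(R'/p)$ lifts to $\Spec(\Ainf(R'))$. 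Hence it is enough to produce a fully faithful functor on banal objects, functorial in the perfectoid algebra, and to match its stackified essential image with the type-$\mu$ condition.

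On banal objects this is exactly Lemma \ref{Windows ueber Ainf und BKF lokales Statement}: banal $\G$-$\mu$-windows for $\mathcal{F}_{\infintesimal}(R)$ form the groupoid $[\G(\Ainf(R))\mu^{\sigma}(\varphi(\xi))\G(\Ainf(R))/_{\varphi}\G(\Ainf(R))]$, and I would send a representative $g$ in the double coset $\G(\Ainf(R))\mu^{\sigma}(\varphi(\xi))\G(\Ainf(R))\subseteq\G(\Ainf(R)[1/\varphi(\xi)])$ to the trivialized $\G$-Breuil--Kisin--Fargues module given by the trivial $\G$-torsor over $\Spec(\Ainf(R))$ together with $\varphi_{\mathcal{P}}=$ multiplication by $g$. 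Since changing the trivialization of the torsor is $\varphi$-conjugation by $\G(\Ainf(R))$, this functor is fully faithful, with essential image exactly the trivialized $\G$-Breuil--Kisin--Fargues modules whose structure element lies in that double coset. This is manifestly compatible with étale base change along Lau's sheaf $\mathcal{R}$, so it stackifies to the desired fully faithful functor $\G\text{-}\mu\text{-}\Win(\mathcal{F}_{\infintesimal}(R))\to\G\text{-}\BKF(R)$.

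It then remains to identify the stackified essential image with the $\G$-Breuil--Kisin--Fargues modules of type $\mu$. By construction $(\mathcal{P},\varphi_{\mathcal{P}})$ lies in the essential image iff, after base change along some étale cover $R/p\to R'/p$, it becomes a trivialized module with structure element in $\G(\Ainf(R'))\mu^{\sigma}(\varphi(\xi))\G(\Ainf(R'))$. For ``essential image $\Rightarrow$ type $\mu$'' I would argue: given $R\to V$ as in Definition \ref{Definition BKF}(b), the ring $V$ is strictly henselian (a complete, hence henselian, valuation ring with algebraically closed residue field, the latter because $\Frac(V)$ is algebraically closed), so the pulled-back étale cover of $\Spec(V/p)$ splits; the resulting $R/p$-map $R'/p\to V/p$ then yields, via the uniqueness in Lemma \ref{Eindeutige integral perfektoide algebra}, a factorization $R\to R'\to V$, whence $\varphi_{\mathcal{P}}\otimes\Ainf(V)\in\G(\Ainf(V))\mu^{\sigma}(\varphi(\xi))\G(\Ainf(V))=\G(\Ainf(V))\mu(\varphi(\xi))\G(\Ainf(V))$, the two double cosets coinciding because $\mu$ and $\mu^{\sigma}$ are $\G(W(k))$-conjugate ($\G_{W(k)}$ being split) and $W(k)\subseteq\Ainf(V)$, and because changing the trivialization over $\Ainf(V)$ preserves the double coset. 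For the converse I would trivialize $\mathcal{P}$ étale-locally, reducing to showing that a $g\in\G(\Ainf(R)[1/\varphi(\xi)])$ whose every specialization $g_V$ lies in the $\mu$-double coset itself lies, étale-locally on $\Spec(R/p)$, in $\G(\Ainf(R))\mu^{\sigma}(\varphi(\xi))\G(\Ainf(R))$: one views the class of $g$ (modulo $\G(\Ainf(R))$ on both sides) as a section over $\Spec(R)\cong\Spec(\Ainf(R)/\varphi(\xi))$ of the affine Grassmannian of $\G$, notes that the type-$\mu$ condition forces this section into the closed Schubert variety $\mathrm{Gr}_{\G,\le\mu}\cong\G/P^{-}$ (smooth, as $\mu$ is minuscule), and lifts it étale-locally along the $P^{-}$-torsor $\G\to\G/P^{-}$ to recover an element of the double coset. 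This last step is the group-theoretic transcription of the $\GL_{n}$ argument of \cite[\S 10]{LauPerfektoid}.

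The main obstacle is precisely this converse direction: showing that the type-$\mu$ condition, which is only probed on $\varpi$-complete rank-$1$ valuation rings, genuinely propagates to an étale-local statement over $R$. This uses that integral perfectoid rings are reduced and ``uniform'' enough that such valuation points detect closed subschemes, and — crucially — minusculeness of $\mu$, through the smoothness of $\G/P^{-}$ and the resulting clean local structure of $\mu$-bounded modifications; without minusculeness the type-$\mu$ locus need not coincide with the image of the window functor. All other steps (the stack properties, the lifting of torsors, the double-coset bookkeeping) are routine, essentially by the arguments of \cite{LauPerfektoid} and \cite{BueltelPappas} adapted to the present group-theoretic setting.
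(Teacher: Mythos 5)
Your construction of the functor and the verification of full faithfulness match the paper's argument exactly: reduce to the banal case via the stack property of both fibered categories (Lemma \ref{Displays sind ein Stack fuer Laus Garbe} for $\G$-$\BKF(\mathcal{R})$ and the quotient-stack description of $\G$-$\mu$-windows), apply Lemma \ref{Windows ueber Ainf und BKF lokales Statement}, and glue. That part is fine.

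For the essential image, your route diverges from the paper's. You propose a Grassmannian/Schubert-variety argument: view the modification of $\G$-torsors along $\varphi(\xi)=0$ as a section of an affine Grassmannian over $\Spec(R)$, argue that the type-$\mu$ condition forces the section into $\mathrm{Gr}_{\G,\le\mu}\cong \G/P^{-}$ (using minusculeness), and lift through the smooth surjection $\G\rightarrow \G/P^{-}$ étale-locally. The paper instead uses $\varpi$-complete arc-descent: it checks (via a result of Ito and the Tannakian formalism) that both fibered categories satisfy $\varpi$-complete arc-descent, then reduces to $R'=\prod_{i\in I}V_i$ a product of rank-$\leq 1$ valuation rings with algebraically closed fraction fields, where every torsor is trivial and the key point is the injectivity of $\Ainf(R')[1/\varphi(\xi)]\rightarrow\prod_i\Ainf(V_i)[1/\varphi(\xi_i)]$, which lets one take the product of the valuation-ring double-coset decompositions to produce a global one.

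The genuine gap in your argument is exactly the step you flagged as ``the main obstacle'': showing that the type-$\mu$ condition, which by Definition \ref{Definition BKF}(b) is only probed on maps $R\rightarrow V$ to $\varpi$-complete rank-$1$ valuation rings with algebraically closed fraction field, forces the Grassmannian section over $\Spec(R)$ into the closed Schubert variety. You appeal to integral perfectoid rings being ``reduced and uniform enough that such valuation points detect closed subschemes,'' but this is not justified and is in fact delicate: an integral perfectoid $W(k)$-algebra need not be $p$-torsion free (the corollary does not assume it), so $R$ can fail to be reduced, and even in the reduced case one needs an argument that the valuation-ring probes are jointly schematically dominant in $\Spec(R)$ (or, more precisely, in $\Spec(\Ainf(R))$ near $V(\varphi(\xi))$). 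There is also a subtlety in what ``the affine Grassmannian over $\Spec(R)$'' means in this mixed-characteristic setting: the natural object is the $B_{\mathrm{dR}}^+$-type Grassmannian on perfectoid test objects, not a section over the scheme $\Spec(R)$, so the schematic closed-subscheme reasoning does not directly apply. The paper's use of $\varpi$-complete arc-descent is precisely what eliminates these worries: instead of arguing that valuation points ``see enough,'' it literally replaces $R$ by a product of valuation rings, where the type-$\mu$ condition becomes an honest double-coset condition that one can take products of and transport back via the injectivity of the comparison map. To repair your argument you would need to either prove the density/propagation statement for the Schubert condition or, more simply, follow the paper and set up the descent.
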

\begin{proof}
At first one has to construct the functor: let $(Q,\alpha)$ be a $\G$-$\mu$-window for the frame $\mathcal{F}_{\infintesimal}(R).$ Let $B=R/p$ and $f\colon B\rightarrow B^{\prime}$ an étale covering, which trivializes $(Q,\alpha).$ Let $R^{\prime}=\mathcal{R}(B^{\prime}).$ Then Lemma \ref{Windows ueber Ainf und BKF lokales Statement} produces from $f^{*}(Q,\alpha)$ a uniquely determined (trivial) Breuil-Kisin-Fargues module over $R^{\prime}$ (which is automatically of type $\mu$). Using uniqueness and the fact that the fibered category $\G$-BKF($\mathcal{R}$) forms a stack on $\Spec(B)^{\text{aff}}_{\text{ét}},$ one may build a uniquely determined $\G$-BKF over $R.$ This association is functorial and one has therefore constructed the desired functor.
\\
The fully faithfullness part follows directly from the local statement in Lemma, \ref{Windows ueber Ainf und BKF lokales Statement}, and the fact that $\G$-$\BKF(\mathcal{R})$ forms a stack.
\\
For the claim about the essential image, I will use $\varpi$-complete arc-descent: for this, fix an integral perfectoid $W(k)$-algebra $R$ as above and also a perfectoid pseudo-uniformizer $\varpi\in R.$ Then one considers the category of integral perfectoid $R$-algebra and equips this category with the $\varpi$-complete arc-topology: covers are generated by morphism of integral perfectoid $R$-algebras $A\rightarrow B,$ such that for all $\varpi$-complete valuation rings $V$ of rank $\leq 1,$ and any map $A\rightarrow V,$ there exists a faithfully flat extension of $\varpi$-complete valuation rings of rank $\leq 1,$ (=just an inclusion) $V\rightarrow V^{\prime}$ and a morphism $B\rightarrow V^{\prime}$ making the following diagram commutative:
$$
\xymatrix{
A \ar[r] \ar[d] & B \ar[d] \\
V \ar[r] & V^{\prime}
}.
$$
Then, by \cite[Lemma 4.2.6.]{CesnaviciusScholzePurity}, the pre-sheaf $R\rightarrow R^{\prime}\mapsto \Ainf(R^{\prime})$ satisfies $\varpi$-complete arc-descent. It follows that the pre-sheaves $R\rightarrow R^{\prime}\mapsto \G(\Ainf(R^{\prime}))$ resp. $R\rightarrow R^{\prime}\mapsto \G(\mathcal{F}_{\infintesimal}(R^{\prime}))_{\mu}$ satisfy $\varpi$-complete arc-descent, by using the same tricks as I used to check étale descent as in Lemma \ref{Garbeneigenschaft Window Gruppe Inf-Frame}. From this one obtains $\varpi$-complete arc-descent for the fibered category of $\G$-$\mu$-windows for the frame $\mathcal{F}_{\infintesimal}(.)$. One can find in the literature the result that also the fibered category sending $R\rightarrow R^{\prime}\mapsto \G-\text{Tors}(\Ainf(R^{\prime}))$ satisfies $\varpi$-complete arc-descent: indeed, here one can use very recent results due to Ito \cite[Corollary 4.2.]{Ito} that imply $\varpi$-complete arc-descent for finite projective modules and the case of $\G$-torsors then follows by making use of the Tannakian formalism. Since the condition to be of type $\mu$ was formulated in a way which is local for the $\varpi$-complete arc-topology, also the fibered category $(R\rightarrow R^{\prime})\mapsto \G\text{-}\mu\text{-}\BKF(R^{\prime})$ satisfies $\varpi$-complete arc-descent. It follows that one may check arc-locally on $R$ that the functor
$$
\G\text{-}\mu\text{-}\Win(\mathcal{F}_{\infintesimal}(R))\rightarrow \G\text{-}\mu\text{-}\BKF(R)
$$
is an equivalence. This allows one to restrict to a situation, where $R^{\prime}=\prod_{i\in I}V_{i}$ with all $V_{i}$ $\varpi_{i}$-adically complete and separated valuation rings of rank $\leq 1$ and which have algebraically closed fraction field. In this situation, any $\G$-torsor over $\Spec(\Ainf(R^{\prime}))$ is trivial and the condition to be of type $\mu$ becomes that for any structure matrix $U\in \G(\Ainf(R^{\prime})[1/\varphi(\xi)]),$ representing this $\G$-Breuil-Kisin-Fargues module over $R^{\prime},$ the image of $U$ under the morphism
$$
\G(\Ainf(R^{\prime})[1/\varphi(\xi)])\rightarrow \G(\Ainf(V_{i})[1/\varphi(\xi_{i})]),
$$
can be written as
$$
U_{i}=g_{i}\mu(\varphi(\xi_{i}))h_{i},
$$
where $g_{i},h_{i}\in \G(\Ainf(V_{i})).$ Taking the product, one obtains element $g,h\in \G(\Ainf(R^{\prime})),$ such that under all morphisms $\G(\Ainf(R^{\prime})[1/\varphi(\xi)])\rightarrow \G(\Ainf(V_{i})[1/\varphi(\xi_{i})]),$ $g\mu(\varphi(\xi))h$ is send towards $U_{i}.$ Using now that the map $\Ainf(R)[1/\varphi(\xi)]\rightarrow \prod_{i\in I}\Ainf(V_{i})[1/\varphi(\xi_{i})]$ is injective, one deduces that
$$
U=g\mu(\varphi(\xi))h.
$$
This allows one to use lemma \ref{Windows ueber Ainf und BKF lokales Statement} to construct the required $\G$-$\mu$-window over $R^{\prime},$ proving the essential surjectivity.
\end{proof}
As a consequence of the previously proven equivalence between banal adjoint nilpotent $\G$-$\mu$ displays over integral perfectoid $W(k)$-algebras and banal adjoint nilpotent $\G$-$\mu$ windows for the frame $\mathcal{F}_{\cris}(R),$ plus the descent result along $\lambda\colon \mathcal{F}_{\infintesimal}(R)\rightarrow \mathcal{F}_{\cris}(R)$ of the last section, one deduces
\begin{Corollary}\label{Korollar Vergleich Displays und BKF}
Let $R$ still be an integral perfectoid $W(k)$-algebra and assume $p\geq 3,$ there exists a fully faithful functor
$\G-\mu^{\sigma}-\Displ(R)_{\nilp}\rightarrow \G-\mu-\BKF(R).$
\end{Corollary}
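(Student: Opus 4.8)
The plan is to produce the functor as a composite of three functors already constructed above, each of which is either an equivalence or fully faithful, and to read off full faithfulness from that. First I would invoke the crystalline equivalence, Proposition \ref{Kristalline Aequivalenz}, applied to the window datum $(\G,\mu^{\sigma})$ rather than to $(\G,\mu)$; this is permissible because base change of $\mu$ along the Frobenius $\sigma$ of $W(k)$ leaves the weight decomposition of $\Lie(\G)$ unchanged, so $(\G,\mu^{\sigma})$ is again reductive and $1$-bound. Together with the nil-crystallinity of $\chi\colon\mathcal{F}_{\cris}(R)\to\mathcal{W}(R)$ established in the course of the proof of that proposition, this gives an equivalence $\chi_{\bullet}\colon\G\text{-}\mu^{\sigma}\text{-}\Win(\mathcal{F}_{\cris}(R))_{\nilp}\xrightarrow{\sim}\G\text{-}\mu^{\sigma}\text{-}\Displ(R)_{\nilp}$, whose inverse is the first arrow of the chain. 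For $p$-torsion free $R$ this is the proposition verbatim; for an arbitrary integral perfectoid $W(k)$-algebra one reduces to the $p$-torsion free case and to the perfect case (where $\mathcal{F}_{\cris}=\mathcal{W}$ and there is nothing to check) via the pushout square of frames of subsection \ref{Section extension to the general case}.

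Next I would invert the descent functor $\lambda_{\bullet}$ of Proposition \ref{Deszent von Acris nach Ainf}, again for the datum $(\G,\mu^{\sigma})$ and using $p\geq 3$. That statement is formulated only for \emph{banal} windows, so a small amount of work is needed to upgrade it to an equivalence $\lambda_{\bullet}\colon\G\text{-}\mu^{\sigma}\text{-}\Win(\mathcal{F}_{\infintesimal}(R))_{\nilp}\xrightarrow{\sim}\G\text{-}\mu^{\sigma}\text{-}\Win(\mathcal{F}_{\cris}(R))_{\nilp}$ of the full window stacks: both sides are by construction quotient stacks on $\Spec(R/p)^{\aff}_{\et}$ built from Lau's sheaves $\mathcal{R}$, $\mathcal{A}_{\infintesimal}$, $\mathcal{A}_{\cris}$ (Lemmas \ref{Lau perfektoide Garbe} and \ref{Descent fuer Acris}), the morphism $\lambda$ is a morphism of these sheaves, and an equivalence on banal objects which is compatible with étale localisation extends to an equivalence of the associated stacks; since the adjoint nilpotency condition is cut out pointwise on $\Spec(R/pR)$ (Definition \ref{Definition adjoint nilpotent fuer kristalline Windows}), it is respected by this equivalence. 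The non-$p$-torsion free case is once more reduced to the $p$-torsion free and perfect cases by the pushout square of subsection \ref{Section extension to the general case}.

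Finally I would compose with the fully faithful functor of Corollary \ref{Korollar Vergleich G-mu-Windows und G-mu-BKF}, which (once the Frobenius twist is absorbed into the frame constant $\varphi(\xi)$ of $\mathcal{F}_{\infintesimal}$, exactly as in the local description of Lemma \ref{Windows ueber Ainf und BKF lokales Statement} via the identity $\varphi(\mu(\xi))=\mu^{\sigma}(\varphi(\xi))$) embeds $\G\text{-}\mu^{\sigma}\text{-}\Win(\mathcal{F}_{\infintesimal}(R))$ into $\G\text{-}\BKF(R)$ with essential image exactly the $\G$-Breuil--Kisin--Fargues modules of type $\mu$. As a composite of two equivalences and one fully faithful functor, the resulting functor $\G\text{-}\mu^{\sigma}\text{-}\Displ(R)_{\nilp}\to\G\text{-}\mu\text{-}\BKF(R)$ is fully faithful, which is the assertion. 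All the substance has already been done in the earlier sections, so I do not expect a genuine obstacle here; the only points requiring care are precisely the two just flagged --- the consistent tracking of the twist replacing $\mu$ by $\mu^{\sigma}$ through the three functors, and the promotion of the banal descent result of Proposition \ref{Deszent von Acris nach Ainf} to the level of the window stacks so that it can legitimately be composed with the (stacky) crystalline equivalence and the Breuil--Kisin--Fargues comparison.
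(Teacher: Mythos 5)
Your chain of functors --- invert the crystalline equivalence of Proposition~\ref{Kristalline Aequivalenz} for the twisted datum $(\G,\mu^{\sigma})$, invert the descent functor $\lambda_{\bullet}$ of Proposition~\ref{Deszent von Acris nach Ainf}, compose with Corollary~\ref{Korollar Vergleich G-mu-Windows und G-mu-BKF} --- is exactly the composition the paper has in mind; the paper merely cites the earlier results without writing out the chain, so you are filling the same slot with the same pieces. You also correctly identify the two points requiring care (consistent carrying of the Frobenius twist using $\varphi(\mu(\xi))=\mu^{\sigma}(\varphi(\xi))$, and the promotion of the banal statement of Proposition~\ref{Deszent von Acris nach Ainf} to the stack level).

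One small imprecision in the write-up is worth flagging. For an integral perfectoid $R$ that is \emph{not} $p$-torsion free, the intermediate stack $\G\text{-}\mu^{\sigma}\text{-}\Win(\mathcal{F}_{\cris}(R))_{\nilp}$ in the middle of your chain is not actually available: Lemma~\ref{Descent fuer Acris} on descent for $\mathcal{A}_{\cris}(\mathcal{R})$ requires $p$-torsion-freeness, and the paper itself (see the remark after Definition~\ref{Definition adjoint nilpotent fuer kristalline Windows}) only defines non-banal $\G$-$\mu$-windows for $\mathcal{F}_{\cris}(R)$ under that hypothesis. The pushout square of subsection~\ref{Section extension to the general case} reduces the \emph{banal} equivalences to the $p$-torsion free and perfect cases, but it does not produce the missing intermediate stack. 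The cleaner phrasing (which is what your argument amounts to anyway) is to run the whole composite on banal objects --- for which everything does hold for arbitrary integral perfectoid $R$ --- and then glue using the stack property of $\G\text{-}\mu^{\sigma}\text{-}\Displ(R)_{\nilp}$ and of $\G\text{-}\BKF(R)$ from Lemma~\ref{Displays sind ein Stack fuer Laus Garbe}, bypassing the intermediate window stacks entirely. This is a matter of presentation rather than of substance; your argument is correct and coincides with the paper's.
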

\begin{Remark}
Although the frame $\mathcal{F}_{\infintesimal}(R)$ depends on a choice of $\xi,$ the above functor does not.
\end{Remark}
Let $S=\Spa(R,R^{+})$ be an affinoid perfectoid space over $\Spa(k)$ and $S^{\sharp}=\Spa(R^{\sharp},R^{\sharp,+})$ an untilt over $\Spa(W(k)).$ Then $R^{\sharp,+}$ is an integral perfectoid $W(k)$-algebra and by restricting from $\Spec(\Ainf(R^{+}))$ to $\Spa(W(R^{+}),W(R^{+}))-\lbrace [\varpi]=0 \rbrace =: S\dot{\times} \Spa(\mathbb{Z}_{p})$ (with the $(p,[\varpi])$-adic topology, where $\varpi\in R$ is of course some pseudo-uniformizer), one is finally able to produce for $p\geq 3$ a local mixed-characteristic $\G$-shtuka over $S$ with a leg at $S^{\sharp}\subset S \dot{\times} \Spa(\mathbb{Z}_{p}),$ as introduced in \cite{Berkeleylectures}.
\subsection{Classification of adjoint nilpotent $\G$-$\mu$-displays over $\mathcal{O}_{C}$}
I will now state a description of the essential image of the previous functor from adjoint nilpotent $\G$-$\mu$-displays to local mixed-characteristic $\G$-shtukas over a geometric perfectoid point in terms of the schematical Fargues-Fontaine curve \cite{FarguesFontaine} - in the spirit of Scholze-Weinstein. Using GAGA one could also work with the adic Fargues-Fontaine curve. Since most of this material is explained sufficiently in the $\GL_{n}$-case in Scholze$^{\prime}$s Berkeley notes \cite{Berkeleylectures}, and the case of general reductive $\G/\Spec(\mathbb{Z}_{p})$ requires only few new ideas, I decided to not give full proofs here.
\\
Let $C$ be a complete non-archimedean, algebraically closed field extension of $W(k)[1/p].$ Then its ring of integers $\mathcal{O}_{C}$ is a strictly henselian and integral perfectoid $W(k)$-algebra. I will abbreviate $\Ainf=\Ainf(\mathcal{O}_{C})$ and $\mathcal{F}_{\infintesimal}=\mathcal{F}_{\infintesimal}(\mathcal{O}_{C}).$ Note that $\Ainf$ is again a strictly henselian $W(k)$-algebra. Let $X=X^{\text{alg}}_{\mathbb{Q}_{p},C^{\flat}}$ be the schematical Fargues-Fontaine curve over $\mathbb{Q}_{p}$ associated to $C^{\flat}.$ The chosen untilt $C$ of $C^{\flat}$ gives a closed point $\infty\in |X|,$ with residue field $C$ and $\widehat{\mathcal{O}_{X,\infty}}\cong B^{+}_{dR}(C)=B^{+}_{dR}.$
\\
The reason why some crucial arguments go through via the Tannakian formalism is the following handy 
\begin{Lemma}\label{Fargues Lemma fuer Tannaka Formalismus}
Let $K$ be a field of characteristic $0,$ $H$ a reductive group over $K,$ $R$ a $K$-algebra and denote by $\Proj_{R}$ the category of finite projective $R$-modules with the usual additive tensor structure. Then a functor
$$
\omega\colon \Rep_{K}(H)\rightarrow \Proj_{R}
$$
is a fiber functor if and only if it is an additive tensor-functor.
\end{Lemma}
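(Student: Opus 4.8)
The implication from fiber functor to additive tensor functor is immediate, since a fiber functor is by definition a faithful, exact, $K$-linear tensor functor; so the plan is to prove the converse, namely that for $H$ reductive over $K$ with $\car K=0$ every additive tensor functor $\omega\colon\Rep_K(H)\to\Proj_R$ is automatically faithful and exact. I would first dispose of the case $R=0$, where $\Proj_R$ is trivial and the statement is vacuous, and then exploit the one non-formal input: since $H$ is reductive and $\car K=0$, the category $\Rep_K(H)$ is semisimple, i.e.\ every object is a finite direct sum of simple objects and every short exact sequence splits (Weyl complete reducibility, or linear reductivity in characteristic zero).

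Given semisimplicity, exactness of $\omega$ is formal: an additive functor preserves split short exact sequences, in $\Proj_R$ a split short exact sequence is exact, and in $\Rep_K(H)$ every short exact sequence splits, so $\omega$ carries all exact sequences to exact sequences. For faithfulness I would reduce to showing $\omega(V)\neq 0$ whenever $V\neq 0$: granting this, if $f\colon V\to W$ satisfies $\omega(f)=0$, then exactness of $\omega$ gives $\omega(\operatorname{im}f)=\operatorname{im}\omega(f)=0$, forcing $\operatorname{im}f=0$ and hence $f=0$. To prove $\omega(V)\neq 0$ I would use rigidity of $\Rep_K(H)$: the composite $\mathbf 1\to V\otimes V^\vee\to V^\vee\otimes V\to\mathbf 1$ built from coevaluation, the symmetry, and evaluation equals multiplication by the integer $\dim_K V>0$ in $\End(\mathbf 1)=K$, and a tensor functor carries this construction to its analogue for $\omega(V)$; hence the categorical dimension of $\omega(V)$ in $\Proj_R$ is the image of $\dim_K V$ under the induced ring map $K=\End(\mathbf 1)\to\End(\omega\mathbf 1)=R$, which is nonzero because $R$ is a nonzero $\Q$-algebra. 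Therefore $\omega(V)$ is a nonzero finite projective $R$-module. (Alternatively one can argue directly: decompose $f$ along the simple summands of $V$ and $W$; a nonzero component is an isomorphism of simple objects, which $\omega$ preserves, so additivity forces $\omega(f)\neq 0$.)

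Finally I would check $K$-linearity of $\omega$: the action of $K=\End(\mathbf 1)$ on every object factors through the monoidal unit, so the induced ring map $K\to\End(\omega\mathbf 1)=R$ controls the $K$-action on the target, and identifying it with the given $K$-algebra structure on $R$ makes $\omega$ $K$-linear. Collecting everything, $\omega$ is a faithful, exact, $K$-linear tensor functor $\Rep_K(H)\to\Proj_R$, i.e.\ a fiber functor over $R$. The only genuinely non-formal step is invoking semisimplicity of $\Rep_K(H)$ in the needed generality — allowing $H$ non-split, so simple objects may have division-algebra endomorphism rings, which is harmless throughout — and everything else is soft category theory, so I expect the write-up to be short.
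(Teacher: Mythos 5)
Your proof is correct, and it reproduces the standard argument for this kind of statement (the paper itself gives no proof, only a pointer to Fargues's Lemme 5.4 in \cite{FarguesGtorseurs}, whose proof runs along precisely these lines: semisimplicity of $\Rep_K(H)$ in characteristic zero gives exactness for free, and the categorical dimension argument, using that $\mathbb{Q}\subset R$, gives non-vanishing and hence faithfulness). One small remark: the parenthetical ``alternatively one can argue directly'' at the end of your faithfulness step is not actually an independent route. Decomposing $f$ into components between simple constituents and observing that a nonzero component is an isomorphism, hence sent by $\omega$ to an isomorphism, only yields $\omega(f)\neq 0$ once you already know $\omega$ does not kill the simple objects -- if $\omega(V_i)=0$, then $\omega(f_{ij})$ is the identity of the zero module, which is an isomorphism, and yet contributes nothing. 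So that alternative still presupposes exactly the non-vanishing statement that the trace/dimension argument was introduced to prove, and the dimension argument should be regarded as the essential input rather than as one of two interchangeable options.
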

For a proof, see Lemma 5.4. in \cite{FarguesGtorseurs}. Since Fargues-Fontaine showed that the categories of trivial vector bundles on the curve and finite dimensional $\mathbb{Q}_{p}$-vector spaces are equivalent as tensor-categories (compatible with direct sums), one deduces from the previous Lemma that the groupoid of trivial $G$-torsors on $X$ and $G$-torsors on $\Spec(\mathbb{Q}_{p})$ are equivalent.
\\ 
Let $G=\G_{\mathbb{Q}_{p}}$ be the associated generic fiber, an unramified reductive group. Consider a $G$-torsor $\mathcal{E}\rightarrow X.$ Under a modification of $\mathcal{E}\rightarrow X$ at the chosen point $\infty,$ one understands the datum of a $G$-torsor $\mathcal{E}^{\prime}\rightarrow X,$ together with an isomorphism
$$
\iota\colon \mathcal{E}^{\prime}\!\mid_{X-\infty}\simeq \mathcal{E}\!\mid_{X-\infty}.
$$
For a (Galois-orbit of a) co-character $\chi\in X_{*}(T)^{+}/\Gamma,$ let me recall, what it means that a modification $(\mathcal{E}^{\prime},\iota)$ is of type $\chi:$ the given isomorphism $\iota$ corresponds to a double coset, that one gets after pullback of $\mathcal{E}$ and $\mathcal{E}^{\prime}$ along $\Spec(B_{dR}^{+})\rightarrow X$ and choosing respective trivializations, and forgetting about them again, in
$$
G(B_{dR}^{+})\backslash G(B_{dR})/G(B_{dR}^{+}).
$$
By the Cartan-decomposition, this double quotient is in bijection with $X_{*}(T)^{+}/\Gamma.$ Then one requires that $\iota$ corresponds to $\chi\in X_{*}(T)^{+}/\Gamma.$
\\
Now one can state the following theorem, that in the $\GL_{n}$-case is due to Fargues.
\begin{Theorem}
The following categories are equivalent
\begin{enumerate}
\item[(a):] $\G$-$\mu^{\sigma}$ windows over $\mathcal{F}_{\infintesimal},$ and
\item[(b):] tuples $(\mathcal{E}_{1},\mathcal{E}^{\prime},\iota,\mathcal{T}),$ where $\mathcal{E}_{1}$ is a trivial $G$-torsor on $X,$ $(\mathcal{E}^{\prime},\iota)$ is a modification of type $\mu^{\sigma}$\footnote{This is the same as being of type $\mu$.} at $\infty$ of $\mathcal{E}_{1},$ $\mathcal{T}$ is a $\G$-torsor on $\Spec(\mathbb{Z}_{p}),$ such that if $\mathcal{V}$ is the $G$-torsor on $\Spec(\mathbb{Q}_{p})$ corresponding to $\mathcal{E}_{1},$ one requires that
$$
\mathcal{T}\times_{\Spec(\mathbb{Z}_{p})}\Spec(\mathbb{Q}_{p})\cong \mathcal{V}.
$$
\end{enumerate}
\end{Theorem}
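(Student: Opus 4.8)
\emph{The plan is} to reduce the statement to the case $\G=\GL_{n}$, a theorem of Fargues, and then to propagate it to arbitrary reductive $\G$ by the Tannakian formalism. Since $C$ is algebraically closed, $\mathcal{O}_{C}$ is strictly henselian, hence so is $\Ainf=\Ainf(\mathcal{O}_{C})$, and $\mathcal{O}_{C}/p$ is a strictly henselian local ring. As $\G$ is smooth affine, every $\G(\mathcal{F}_{\infintesimal}(\mathcal{R}))_{\mu}$-torsor on $\Spec(\mathcal{O}_{C}/p)^{\aff}_{\et}$ and every $\G$-torsor on $\Spec(\Ainf)$ is trivial, so the category in (a) is the banal quotient groupoid $[\G(\Ainf)/_{\Phi_{\infintesimal},\G,\mu^{\sigma}}\G(\mathcal{F}_{\infintesimal})_{\mu}]$. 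By Lemma \ref{Windows ueber Ainf und BKF lokales Statement} this is equivalent to $[\G(\Ainf)\mu^{\sigma}(\varphi(\xi))\G(\Ainf)/_{\varphi}\G(\Ainf)]$, and by Corollary \ref{Korollar Vergleich G-mu-Windows und G-mu-BKF}, applied to the window datum $(\G,\mu^{\sigma})$, the latter is equivalent to the groupoid of $\G$-Breuil-Kisin-Fargues modules over $\mathcal{O}_{C}$ of type $\mu^{\sigma}$ (equivalently, of type $\mu$, cf.\ Definition \ref{Definition BKF}). It thus suffices to identify this last groupoid with the category of tuples in (b).

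For $\G=\GL_{n}$ this identification is Fargues' theorem, in the form used in \cite[Thm. 14.2.1]{Berkeleylectures} and \cite{ScholzeWeinstein}: restricting a Breuil-Kisin-Fargues module $(M,\varphi_{M})$ over $\Ainf(\mathcal{O}_{C})$ to $\mathcal{Y}_{(0,\infty)}$ yields a local shtuka over $\Spa(C^{\flat},\mathcal{O}_{C^{\flat}})$ with one leg at $\infty$, whose vector-bundle incarnation on $X$ is a modification $(\mathcal{E}_{1},\mathcal{E}^{\prime},\iota)$ at $\infty$ of a trivial bundle $\mathcal{E}_{1}$; the additional datum carried by $(M,\varphi_{M})$ over the full $\Spec(\Ainf)$ --- equivalently, its integral étale realization --- is recorded by a finite free $\mathbb{Z}_{p}$-module $T$ with $T\otimes_{\mathbb{Z}_{p}}\mathbb{Q}_{p}$ identified with the $\mathbb{Q}_{p}$-vector space attached to $\mathcal{E}_{1}$ via Fargues-Fontaine, and one obtains an equivalence $(M,\varphi_{M})\mapsto(\mathcal{E}_{1},\mathcal{E}^{\prime},\iota,T)$. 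Under $B^{+}_{dR}=\widehat{\mathcal{O}}_{X,\infty}=(\Ainf[1/p])^{\wedge}_{(\xi)}$ the modification $\iota$ is given, after trivializing $\mathcal{E}_{1}$ and $\mathcal{E}^{\prime}$ over $\Spec(B^{+}_{dR})$, by a double coset in $G(B^{+}_{dR})\backslash G(B_{dR})/G(B^{+}_{dR})$, and ``$(M,\varphi_{M})$ of type $\mu$'' translates --- checking Definition \ref{Definition BKF} over $V=\mathcal{O}_{C}$ itself, the general case of $\mathcal{O}_{C}\to V$ following by base change along $\Ainf(\mathcal{O}_{C})\to\Ainf(V)$ --- into the statement that this coset equals $G(B^{+}_{dR})\mu(\varphi(\xi))G(B^{+}_{dR})=G(B^{+}_{dR})\mu(t)G(B^{+}_{dR})$, i.e.\ that $\iota$ is of type $\mu^{\sigma}$, since $\varphi(\xi)$ differs from a uniformizer $t$ of $B^{+}_{dR}$ by a unit.

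For general reductive $\G$ one uses the Tannakian description. A $\G$-Breuil-Kisin-Fargues module over $\mathcal{O}_{C}$ is an exact tensor functor from $\Rep_{\mathbb{Z}_{p}}(\G)$ to $\GL_{n}$-Breuil-Kisin-Fargues modules over $\Ainf(\mathcal{O}_{C})$; composing with the equivalence of the previous paragraph it becomes a tensor functor $\omega_{0}\colon\Rep_{\mathbb{Z}_{p}}(\G)\to\Proj_{\mathbb{Z}_{p}}$ together with a modification at $\infty$ of $\omega_{0}(-)\otimes_{\mathbb{Z}_{p}}\mathcal{O}_{X}$, functorially in $\Rep_{\mathbb{Z}_{p}}(\G)$. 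By Tannaka duality for the reductive $\mathbb{Z}_{p}$-group $\G$, $\omega_{0}$ is (the fibre functor of) a $\G$-torsor $\mathcal{T}$ on $\Spec(\mathbb{Z}_{p})$; and by Lemma \ref{Fargues Lemma fuer Tannaka Formalismus} applied over the $\mathbb{Q}_{p}$-algebras $B_{dR}$ and $B^{+}_{dR}$, the modification datum is exactly a modification $\iota$ at $\infty$ --- of type $\mu^{\sigma}$, by the previous paragraph --- of the $G$-torsor $\mathcal{E}_{1}:=\mathcal{T}\times_{\Spec\mathbb{Z}_{p}}X$. Since $\G$ is reductive with connected special fibre, $H^{1}_{\et}(\mathbb{Z}_{p},\G)$ is trivial (Lang's theorem together with henselianity of $\mathbb{Z}_{p}$), so $\mathcal{T}$, hence $\mathcal{V}=\mathcal{T}_{\mathbb{Q}_{p}}$, hence --- by the equivalence between trivial $G$-bundles on $X$ and $G$-torsors on $\Spec(\mathbb{Q}_{p})$ recalled above (\cite{FarguesGtorseurs},\cite{FarguesFontaine}) --- also $\mathcal{E}_{1}$, is trivial, and $\mathcal{T}_{\mathbb{Q}_{p}}\cong\mathcal{V}$ holds tautologically. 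This produces the tuple $(\mathcal{E}_{1},\mathcal{E}^{\prime},\iota,\mathcal{T})$ of (b); as every step is functorial in isomorphisms and reversible, the desired equivalence of categories follows (and, combined with the crystalline equivalence of Section \ref{Section crystalline equivalence} and the descent along $\lambda$ of Section \ref{Section Descent from Acris to Ainf}, so does Corollary \ref{Scholze-Weinstein G-displays}). I expect the main obstacle to be the translation in the middle step of the two boundedness conditions --- ``of type $\mu$'' for Breuil-Kisin-Fargues modules over $\Ainf(V)$ versus ``of type $\mu^{\sigma}$'' for modifications at $\infty$ --- which relies on the local analysis at $V(\xi)$ identifying $(\Ainf[1/p])^{\wedge}_{(\xi)}$ with $B^{+}_{dR}=\widehat{\mathcal{O}}_{X,\infty}$ and $\varphi(\xi)$ with a unit multiple of the uniformizer, together with keeping track of which of $\mathcal{E}_{1},\mathcal{E}^{\prime}$ is the modified bundle; apart from that, the substantive input is Fargues' $\GL_{n}$-theorem, which --- as with the rest of this section --- we cite rather than reprove.
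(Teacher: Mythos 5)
Your overall strategy — reduce (a) to $\G$-Breuil-Kisin-Fargues modules of type $\mu$ via Lemma \ref{Windows ueber Ainf und BKF lokales Statement} and Corollary \ref{Korollar Vergleich G-mu-Windows und G-mu-BKF}, invoke Fargues' $\GL_n$ theorem, and propagate Tannakianly — is in outline the same as the paper's. But the Tannakian step hides, rather than handles, the one genuine difficulty, and it is precisely the difficulty the paper flags and works around.

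The gap is here: you compose the exact tensor functor $\Rep_{\mathbb{Z}_p}(\G)\to\GL_n\text{-}\BKF(\mathcal{O}_C)$ with the $\GL_n$-equivalence and then cite ``Tannaka duality for the reductive $\mathbb{Z}_p$-group $\G$'' to conclude $\omega_0$ is the fibre functor of a $\G$-torsor on $\Spec(\mathbb{Z}_p)$. That step requires $\omega_0$ to be \emph{exact}. You never check this, and it is not automatic: when you invert the $\GL_n$-equivalence to go from a tuple back to a Breuil-Kisin-Fargues module, the step that extends a $\varphi$-module from $\mathcal{Y}_{[r,\infty)}$ to $\mathcal{Y}_{[r,\infty]}$ (equivalently, extends a $\G$-torsor across a closed point of $\Spec(\Ainf)$) is not an exact operation, so a short exact sequence in $\Rep_{\mathbb{Z}_p}(\G)$ a priori only goes to a complex. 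For the same reason, the last sentence of your third paragraph (``as every step is functorial in isomorphisms and reversible, the desired equivalence of categories follows'') asserts exactly what needs to be proved. Lemma \ref{Fargues Lemma fuer Tannaka Formalismus} rescues the construction only for $\mathbb{Q}_p$-algebras (reductive group over a characteristic-$0$ field); it does not directly apply to $\omega_0$ valued in $\Proj_{\mathbb{Z}_p}$. The paper therefore has to run a chain: use Fargues' lemma to upgrade the non-exact Tannakian extension to a genuine $\G^{\text{adic}}$-torsor on $\mathcal{Y}_{[0,\infty]}$; transfer it to a $\G$-torsor on $\Spec(\Ainf)\setminus\{\mathfrak{m}\}$ by GAGA (Kedlaya, via \cite{JohannesExtending}); extend over the closed point by a Colliot-Th\'el\`ene-type argument (again via \cite{JohannesExtending}); and, for the lattice component $\mathcal{T}$, use the Kedlaya-Liu equivalence between $\varphi$-modules on $\mathcal{Y}_{[0,r)}$ and $\G$-torsors on $\Spec(\mathbb{Z}_p)$, explicitly checking it respects exact structures. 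Your proposal contains none of this, and indeed misidentifies the main obstacle as the translation of the boundedness condition (that translation, once one has a $B^+_{dR}$-coset, is routine — though note that your parenthetical ``$\varphi(\xi)$ differs from a uniformizer of $B^+_{dR}$ by a unit'' is off: $\varphi(\xi)$ is a \emph{unit} in $B^+_{dR}$, and the Frobenius twist is precisely why the leg sits at $\varphi^{-1}(\infty)$ and the modification ends up of type $\mu^\sigma$). Your remark that $\mathcal{T}$ and $\mathcal{E}_1$ are automatically trivial because $H^1_{\et}(\mathbb{Z}_p,\G)=*$ is correct but orthogonal to the issue: triviality of the torsor does not substitute for first knowing that $\omega_0$ \emph{is} a torsor.
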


\begin{Remark}\label{Bemerkung zum zweiten Buendel in den Modifikationen}
\begin{enumerate}
\item[(i):] This is a mild generalization of \cite[Theorem 14.1.1.]{Berkeleylectures}.
\item[(ii):] Recall that one has an equivalence between vector bundles on $X$ and $\varphi$-modules on $\Bcris(\mathcal{O}_{C}/p)$ (as $C$ is algebraically closed). Using Lemma \ref{Fargues Lemma fuer Tannaka Formalismus} above, one deduces that $G$-torsors on $X$ and $G$-torsors with $\varphi$-structure on $\Bcris(\mathcal{O}_{C}/p)$ are equivalent. Under this equivalence, the $G$-bundle $\mathcal{E}$ in the theorem corresponds to the base-change of the window over $\mathcal{F}_{\infintesimal}$ to $\Bcris(\mathcal{O}_{C}/p).$
\end{enumerate}
\end{Remark}
\begin{proof}
Let me explain, where one has to depart from Scholzes$^{\prime}$s argument in the proof of \cite[Theorem 14.1.1.]{Berkeleylectures}. First, one checks, using that $\Ainf$ is strictly henselian and the local description from Lemma \ref{Windows ueber Ainf und BKF lokales Statement}, that $\G$-$\mu^{\sigma}$-windows over $\mathcal{F}_{\infintesimal}$ and $\G$-BKFs over $\mathcal{O}_{C}$ of type $\mu^{\sigma}$ are equivalent. Here $\G$-BKFs over $\mathcal{O}_{C}$ of type $\mu^{\sigma}$ are simliar defined as modifications of $G$-bundles on the curve of type $\mu^{\sigma}.$ In fact, I will actually show that $G$-BKFs are equivalent to tuples as in the theorem and under this being of type $\mu^{\sigma}$ coincides. Then one shows, that local mixed characteristic $\G^{\text{adic}}$-shtuka over $\Spa(C^{\flat})$ with a paw at $\varphi^{-1}(x_{C})$ uniquely extend to $\G$-BKFs over $\mathcal{O}_{C}.$ To extend $\varphi$-modules from $\mathcal{Y}_{[r,\infty)}$ to $\mathcal{Y}_{[r,\infty]},$ where one follows Scholze$^{\prime}$s notation and $r$ is some rational number $0\leq r < \infty,$ Lemma \ref{Fargues Lemma fuer Tannaka Formalismus} saves one, as this is \textit{not} an exact operation. Then one uses a GAGA-statement originally due to Kedlaya and reproduced in the form needed here e.g. in \cite[Prop. 5.3.]{JohannesExtending}, to transport $\G^{\text{adic}}$-torsors on $\mathcal{Y}_{[0,\infty]}$ to $\G$-torsors on $\Spec(\Ainf)-\lbrace \mathfrak{m} \rbrace,$ where $\mathfrak{m}$ denotes the unique maximal ideal of $\Ainf.$ It remains to extend over the unique closed point, where one can use an old argument of Colliot-Thélène, again reproduced by Anschuetz, see \cite[Prop. 6.5.]{JohannesExtending}. Using work of Kedlaya-Liu, one easily checks that $\G$-torsors on $\Spec(\mathbb{Z}_{p})$ are equivalent to $\G^{\text{adic}}$-torsors over $\mathcal{Y}_{[0,r)}$ together with an isomorphism to their Frobenius pullback.\footnote{In other words, one just observes that the equivalence in \cite[Prop. 12.3.5.]{Berkeleylectures} respects exact structures.} Then one has all ingredients one needs to follow the argument given in loc. cit. to conclude.
\end{proof}
To conclude this section, one can state the promised classification of adjoint nilpotent $\G$-$\mu^{\sigma}$ displays over $\mathcal{O}_{C}$ in a purely geometric way by using the Fargues-Fontaine curve.
\begin{Corollary}\label{Scholze-Weinstein G-displays}
The following categories are equivalent:
\begin{enumerate}
\item[(a):] Adjoint nilpotent $\G$-$\mu^{\sigma}$-displays over $\mathcal{O}_{C},$
\item[(b):] tuples $(\mathcal{E}_{1},\mathcal{E},\iota,\mathcal{T}),$ as in the previous theorem, such that $\Ad(\mathcal{E})$ has all HN-slopes $<1,$ where $\Ad(\mathcal{E})$ is the vector bundle on $X$ obtained by pushing out along the adjoint representation.
\end{enumerate}
\end{Corollary}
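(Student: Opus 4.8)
The strategy is to run the equivalence of the preceding Theorem through the fully faithful embedding of adjoint nilpotent displays and then cut out the essential image by translating the adjoint nilpotency condition into a Harder--Narasimhan condition on $\mathcal{E}$. By Corollary \ref{Korollar Vergleich Displays und BKF} (where $p\geq 3$ enters) together with Corollary \ref{Korollar Vergleich G-mu-Windows und G-mu-BKF}, the category of adjoint nilpotent $\G$-$\mu^{\sigma}$-displays over $\mathcal{O}_{C}$ is equivalent, via $\G$-$\mu$-Breuil--Kisin--Fargues modules of type $\mu$, to a full subcategory of the category of $\G$-$\mu^{\sigma}$-windows over $\mathcal{F}_{\infintesimal}$; composing with the preceding Theorem it becomes a full subcategory of the category of tuples $(\mathcal{E}_{1},\mathcal{E},\iota,\mathcal{T})$ on $X$. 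Thus it remains to show that a tuple comes from an adjoint nilpotent display precisely when $\Ad(\mathcal{E})$ has all Harder--Narasimhan slopes $<1$.

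First I would reduce adjoint nilpotency to the closed point. Since $\mathcal{O}_{C}$ is a rank one valuation ring with divisible, hence dense, value group, one has $\sqrt{p\mathcal{O}_{C}}=\mathfrak{m}_{C}$, so that $\Spec(\mathcal{O}_{C}/p^{n})$ consists of a single point whose residue field is the algebraically closed field $k_{C}=\mathcal{O}_{C}/\mathfrak{m}_{C}$, for every $n\geq 1$. Hence a $\G$-$\mu^{\sigma}$-display over $\mathcal{O}_{C}$ is adjoint nilpotent if and only if its pullback to $k_{C}$ is. By the Corollary preceding the definition of adjoint nilpotency (i.e. by Bültel--Pappas, see \cite{BueltelPappas}), the latter holds exactly when, for the element $b=g\mu^{\sigma}(p)\in G(W(k_{C})[1/p])$ attached to the pulled-back display, the isocrystal $(\Lie(G),\Ad(b))$ --- equivalently $\Ad\circ\nu_{b}$ --- has all slopes $>-1$.

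Next I would identify $\mathcal{E}$ with the $G$-bundle $\mathcal{E}_{G}(b)$ on $X$ attached to $b$. By Remark \ref{Bemerkung zum zweiten Buendel in den Modifikationen}(ii), $\mathcal{E}$ corresponds to the base change of the window over $\mathcal{F}_{\infintesimal}$ to $\Bcris(\mathcal{O}_{C}/p)$, and the $\varphi$-equivariant map $\Bcris(\mathcal{O}_{C}/p)\rightarrow \Bcris(k_{C})=W(k_{C})[1/p]$ induced by $\mathcal{O}_{C}/p\twoheadrightarrow k_{C}$ carries this $G$-$\varphi$-module to the $G$-isocrystal $b$. The key claim is that this last base change leaves the isomorphism class of the bundle unchanged: the divided Frobenius is topologically nilpotent on the kernel of $\Acris(\mathcal{O}_{C}/p)\rightarrow W(k_{C})$ (as recorded, in a slightly different guise, in Remark \ref{Bemerkung zur kristallinen Aequivalenz modulo p}), so a unique lifting argument in the spirit of Proposition \ref{Unique lifting lemma}, combined with the $\GL_{n}$-case and the Tannakian formalism of Lemma \ref{Fargues Lemma fuer Tannaka Formalismus}, shows the $G$-$\varphi$-module over $\Bcris(\mathcal{O}_{C}/p)$, hence $\mathcal{E}$, is pulled back from $W(k_{C})[1/p]$; thus $\mathcal{E}\cong\mathcal{E}_{G}(b)$. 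Then $\Ad(\mathcal{E})\cong\mathcal{E}\bigl(\Lie(G),\Ad(b)\bigr)$, and by the Fargues--Fontaine dictionary the Harder--Narasimhan slopes of this bundle are the negatives of the slopes of the isocrystal $\Ad(b)$. Hence $\Ad(\mathcal{E})$ has all slopes $<1$ if and only if $\Ad(b)$ has all slopes $>-1$, i.e. if and only if the display is adjoint nilpotent; together with the fully faithfulness of the first step, this finishes the proof.

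The step I expect to be the main obstacle is the identification $\mathcal{E}\cong\mathcal{E}_{G}(b)$. One must treat with care the interplay of $\Ainf(\mathcal{O}_{C})$, $\Acris(\mathcal{O}_{C}/p)$ and $\Bcris(\mathcal{O}_{C}/p)$ and of the two natural surjections onto $\mathcal{O}_{C}/p$ and onto $k_{C}=\mathcal{O}_{C}/\sqrt{p\mathcal{O}_{C}}$ --- in particular, the precise kernel on which the Frobenius is topologically nilpotent and why this nilpotence persists after inverting $p$ --- and one must fix once and for all the sign convention in the Fargues--Fontaine correspondence between isocrystal slopes and bundle slopes, so that the bound $>-1$ matches the bound $<1$. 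Both are routine, but this is where the genuine bookkeeping lies.
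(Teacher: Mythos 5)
Your proposal follows the same route as the paper's proof: reduce adjoint nilpotency to the closed fibre, establish the (non-canonical) identification $\mathcal{E}\cong\mathcal{E}_{b}$ via a crystalline base-change statement over $\Bcris(\mathcal{O}_{C})$, and then translate slopes via the Fargues--Fontaine dictionary. The paper invokes \cite[Lemma 4.27.]{BMS1} for that identification and remarks that the $G$-torsor version follows by the Tannakian formalism, which is exactly the shape of your outline, so the two arguments coincide.

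One caveat on your sketch of $\mathcal{E}\cong\mathcal{E}_{b}$: $\dot{\varphi}$ is not topologically nilpotent on $\ker(\Acris(\mathcal{O}_{C}/p)\rightarrow W(k_{C}))$ --- in fact $\dot{\varphi}$ is not even defined on that ideal, since it is not contained in $\Fil(\Acris(\mathcal{O}_{C}/p)).$ Concretely, if $a\in\mathfrak{m}_{C}/p\mathcal{O}_{C}$ is nonzero, then the Teichm\"uller lift $[a]$ lies in the kernel (it dies in $W(k_{C})$) but $\varphi([a])=[a^{p}]$ is not divisible by $p.$ The kernel treated in Remark \ref{Bemerkung zur kristallinen Aequivalenz modulo p} is $\ker(\Acris(R/p)\rightarrow W(R/p))$, which \emph{is} contained in $\Fil$; the further passage from $W(\mathcal{O}_{C}/p)$ down to $W(k_{C})$ requires a different contraction, which in Bhatt--Morrow--Scholze operates inside $\Bcris(\mathcal{O}_{C})$ with its Fr\'echet topology rather than via $\dot{\varphi}$ on $\Acris.$ You rightly flag exactly this as the genuine bookkeeping, but the specific nilpotence statement in your sketch would not go through as written; citing \cite[Lemma 4.27.]{BMS1} as the paper does, together with Lemma \ref{Fargues Lemma fuer Tannaka Formalismus}, is the cleaner route.
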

\begin{proof}
Let $\mathcal{P}=(Q,\alpha)$ be an adjoint nilpotent $\G$-$\mu^{\sigma}$-display over $\Spec(\mathcal{O}_{C}).$ Let $\overline{\mathcal{P}}$ be the reduction of $\mathcal{P}$ along the projection of $\mathcal{O}_{C}$ to the residue-field $\kappa.$ Then $\overline{\mathcal{P}}$ is determined by $[b]\in B(G),$ where $b=u\mu^{\sigma}(p),$ for some structure matrix $u\in L^{+}\G(\kappa)$ describing $\overline{\mathcal{P}}.$ Now fix a splitting of the projection $\mathcal{O}_{C}\rightarrow \kappa,$ so that one can associate to $b$ a $G$-bundle $\mathcal{E}_{b}$ on $X.$
\\
On the other hand, one can consider the uniquely determined tuple $(\mathcal{E}_{1},\mathcal{E},\iota,\mathcal{T}),$ as in the previous theorem, associated to $\mathcal{P}.$ It follows from Remark \ref{Bemerkung zum zweiten Buendel in den Modifikationen} (ii), that (non-canonically)
$$
\mathcal{E}\simeq \mathcal{E}_{b}.
$$
Indeed, here one has to use that for a $\G$-Breuil-Kisin-Fargues module $\mathcal{P}$ over $\mathcal{O}_{C}$ with reduction $\overline{P}$ over $k,$ one has a $\varphi$-equivariant isomorphism of $G$-torsors over $\Bcris(\mathcal{O}_{C}),$ lifting the identity,
$$
\mathcal{P}\times_{\Spec(\Ainf(\mathcal{O}_{C}))}\Spec(\Bcris(\mathcal{O}_{C}))\simeq \overline{\mathcal{P}}\times_{\Spec(W(k)),s}\Spec(\Bcris(\mathcal{O}_{C})),
$$
where one uses the chosen section to make the sense of the second base-change. In the case of $\GL_{n}$ this is contained in, \cite[Lemma 4.27.]{BMS1}, and the statement for $G$-torsors works similiarly.
The adjoint nilpotency condition says that the $G$-isocrystal $\Ad(b)$ has all slopes $>-1.$ Passing from $G$-isocrystals to $G$-bundles on the Fargues-Fontaine curve, the HN-slopes swap signs, thus the condition is that $\Ad(\mathcal{E})$ has all slopes $<1.$
\end{proof}
\section{Translation of the quasi-isogeny}\label{Section Translation of the Quasi-isogeny}
First, let me recall/introduce the set-up one is working in in this section:
Let $p\geq 3$ and $k=\overline{\mathbb{F}}_{p}$ be an algebraic closure of $\mathbb{F}_{p}.$ Let $R$ be an integral perfectoid $W(k)$-algebra with perfectoid pseudo-uniformizer $\varpi\in R.$ I will suppose that $R$ is $p$-torsion free, to be able to apply certain descent results, see Lemma \ref{Descent fuer Isokristalle}.
\\
As usual $(\G,\mu)$ is the pair of a reductive group scheme over $\mathbb{Z}_{p}$ and $\mu$ is a minuscule cocharacter that is defined over $W(k_{0}),$ where $k_{0}$ is some finite extension of $\mathbb{F}_{p}$ contained in $k.$ 
\\
Let $u_{0}\in \G(W(k)),$ that fulfills the adjoint nilpotence condition and $\mathcal{P}$ a $\G$-$\mu$-display over $R.$ One gives oneself furthermore a $G$-quasi-isogeny
$$
\rho\colon \mathcal{P}_{0}\times_{\Spec(k)}\Spec(R/p)\dashrightarrow \mathcal{P}\times_{\Spec(R)}\Spec(R/p).
$$
By the crystalline equivalence proven before, Proposition \ref{Kristalline Aequivalenz}, one may lift $\mathcal{P}$ uniquely to an adjoint nilpotent $\G$-$\mu$-window for the frame $\mathcal{F}_{\cris}(R),$ denote it by $(Q_{\cris},\alpha_{\cris}).$  By Prop. \ref{Deszent von Acris nach Ainf} it corresponds to a $\G$-$\mu$-window for the frame $\mathcal{F}_{\infintesimal}(R),$ which then gives rise to a $\G$-Breuil-Kisin-Fargues module $(\mathcal{P},\varphi_{\mathcal{P}})$ over $\Spec(\Ainf(R)),$ which is of type $\mu.$ Finally, let $\mathcal{P}_{\cris}=\mathcal{P}\times_{\Spec(\Ainf(R))}\Spec(\Acris(R))$ be the base-change, with Frobenius-structure $\varphi_{\mathcal{P}_{\cris}}.$ Then the aim is to prove the following statement:
\begin{Lemma}\label{Lemma Vergleich der Quasi-isogenie kristalline Qis, p torsionfreier Fall}
There is a uniquely determined isomorphism of $G$-torsors over $\Spec(\Bcris(R/p))$
$$
\rho_{\cris}\colon \mathcal{P}_{0}\times_{\Spec(W(k))}\Spec(\Bcris(R/p))\simeq \mathcal{P}_{\cris}\times_{\Spec(\Acris(R))}\Spec(\Bcris(R/p)),
$$
which is compatible with the Frobenius-structure and that lifts the given $G$-quasi-isogeny $$\rho\colon \mathcal{P}_{0}\times_{\Spec(k)}\Spec(R/p)\dashrightarrow \mathcal{P}\times_{\Spec(R)}\Spec(R/p).
$$
\end{Lemma}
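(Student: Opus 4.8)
The plan is to produce $\rho_{\cris}$ by reducing to a banal situation and then solving the intertwining equation over $\Bcris(R/p)$ by successive approximation, the convergence being forced by the fact --- recorded in Remark \ref{Bemerkung zur kristallinen Aequivalenz modulo p} --- that the divided Frobenius $\dot{\varphi}=\varphi/p$ is not merely topologically nilpotent but \emph{very} topologically nilpotent on $K:=\ker(\chi\colon \Acris(R)\rightarrow W(R/p))$: from $\varphi^{m}([x]^{(n)})=\tfrac{(p^{m}n)!}{n!}[x]^{(np^{m})}$ and the growth of $v_{p}$ of the occurring factorials one gets $\dot{\varphi}^{m}(K)\subseteq p^{c_{m}}\Acris(R)$ with $c_{m}\to\infty$. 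Uniqueness will then follow from the same input applied to the automorphism group, so the whole argument runs in parallel to the unique lifting lemma, Proposition \ref{Unique lifting lemma}.

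The first step is the reduction to the banal case. By the proof of Proposition \ref{Kristalline Aequivalenz}, the strict frame morphism $\chi/p\colon \mathcal{F}_{\cris}(R/p)\rightarrow \mathcal{W}(R/p)$ of Lemma \ref{Framemorphismus von Acris nach W(R)} identifies the reduction of $\mathcal{P}_{\cris}$ along $\chi\colon \Acris(R)\rightarrow W(R/p)$ with $\mathcal{P}\times_{\Spec(R)}\Spec(R/p)$ together with its Frobenius; and since $W(k)\rightarrow \Acris(R)\rightarrow W(R/p)$ (the second map being $\chi$) is the natural map, the reduction of $\mathcal{P}_{0}\times_{\Spec(W(k))}\Spec(\Acris(R))$ is $\mathcal{P}_{0}\times_{\Spec(k)}\Spec(R/p)$. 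Using descent for $\Acris(\cdot)$ (Lemma \ref{Descent fuer Acris}, where the $p$-torsion freeness of $R$ is used) together with Daniels' Tannakian description of $\G$-$\mu$-displays and of $\G$-Breuil--Kisin--Fargues modules and Lemma \ref{Fargues Lemma fuer Tannaka Formalismus} (which, $G$ being reductive, reduces the construction of $\rho_{\cris}$ to producing, functorially in $(V,\rho)\in\Rep_{\mathbb{Q}_{p}}(G)$, an isomorphism of the associated $F$-isocrystals over $\Bcris(R/p)$), I may pass to an étale cover of $\Spec(R/p)$ trivialising all torsors at once; the uniqueness statement then guarantees that the local isomorphisms descend. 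So one may assume everything banal: $\mathcal{P}_{0}$, $\mathcal{P}\bmod p$ and $\mathcal{P}_{\cris}$ are presented by Frobenius elements $b_{0}$ (of the form $u_{0}\mu^{\sigma}(p)$), $b_{1}$ and $\widetilde{b}_{1}$ over $W(k)$, $W(R/p)$ and $\Bcris(R/p)$ respectively --- the $\varphi(\xi)$-pole of the $\G$-BKF structure disappearing over $\Bcris(R/p)=\Acris(R)[1/p]$ since $\varphi(\xi)=pu$ with $u$ a unit --- and $\rho$ is an element $g\in\G(W(R/p)[1/p])$ with $g\,b_{0}=b_{1}\,\varphi(g)$. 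Because $\mu$ is minuscule, over $\Bcris(R/p)$ all of $b_{0},\widetilde{b}_{1},b_{0}^{-1},\widetilde{b}_{1}^{-1}$ have only a bounded number of negative powers of $p$.

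For existence I would solve $\widetilde{g}\,\widetilde{b}_{0}=\widetilde{b}_{1}\,\varphi(\widetilde{g})$ in $\G(\Bcris(R/p))$, with $\widetilde{b}_{0}$ the image of $b_{0}$, by starting from the reduction $g$ and correcting step by step: given an approximate solution whose error $\widetilde{g}\,\widetilde{b}_{0}\varphi(\widetilde{g})^{-1}\widetilde{b}_{1}^{-1}$ lies in the congruence subgroup $\G(p^{n}K[1/p])$, one multiplies by a suitable element of $\G(K[1/p])$ to improve it, exactly as in Proposition \ref{Unique lifting lemma}; the relevant correction operator $z\mapsto\widetilde{b}_{1}\varphi(z)\widetilde{b}_{0}^{-1}$ is contracting on $\G(K[1/p])$ because $\dot{\varphi}$ is very topologically nilpotent on $K$, so the resulting infinite product converges $p$-adically and, thanks to the bounded $p$-denominators, stays inside a fixed $p^{-N}\G(\Acris(R))$; this produces $\rho_{\cris}$, and $\varphi$-equivariance together with the fact that it reduces to $\rho$ are built into the construction. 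For uniqueness, the quotient $\delta$ of two solutions is a Frobenius-compatible automorphism of $\mathcal{P}_{0}\times_{\Spec(W(k))}\Spec(\Bcris(R/p))$ reducing to the identity over $W(R/p)[1/p]$, hence $\delta\in\G(K[1/p])$ with $\delta=\widetilde{b}_{0}\varphi(\delta)\widetilde{b}_{0}^{-1}$; being a fixed point of a topologically nilpotent operator, $\delta=1$.

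The hard part is precisely this denominator bookkeeping in the banal step: one must arrange the approximation to converge inside $\Bcris(R/p)$ rather than merely formally, and this is possible only because $\dot{\varphi}$ is ``very'' nilpotent on $K$ --- asymptotically, one application of $\varphi$ gains strictly more than one power of $p$ on $K$ --- which is exactly what beats the finitely many powers of $p^{-1}$ coming from the minuscule cocharacter $\mu$, so that the various $\varphi$-semilinear correction operators are genuinely contracting after inverting $p$. A secondary point to verify carefully is the compatibility, across the chain display $\rightsquigarrow$ $\mathcal{F}_{\cris}$-window $\rightsquigarrow$ $\mathcal{F}_{\infintesimal}$-window $\rightsquigarrow$ $\G$-BKF, of the identifications used when reducing along $\chi$, so that $\widetilde{b}_{1}$ really does reduce to the Frobenius element $b_{1}$ of $\mathcal{P}\bmod p$.
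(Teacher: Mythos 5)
Your proposal follows the same route as the paper: reduce to the banal case by étale descent, pass to a fixed $\mathbb{Z}_{p}$-representation of $\G$ via the Tannakian formalism, and solve the intertwining equation by successive approximation using the super-linear gain $\nu_{p}(\varphi^{m}(K_{\cris}))\geq\frac{p^{m}-1}{p-1}$ to beat the linear loss of $p$-integrality coming from $\mu'(p)$-conjugation. Two remarks on the referencing: the reduction to the banal case in the paper is carried out via Lemma \ref{Descent fuer Isokristalle} (Drinfeld's descent for ``Banachian'' modules combined with $\Acris(B)/p\simeq L\Omega_{B/\mathbb{F}_{p}}$), not via Lemma \ref{Descent fuer Acris} together with Lemma \ref{Fargues Lemma fuer Tannaka Formalismus}; and your fixed-point argument at the level of $\G(K[1/p])$ is in the paper linearized via Lemma \ref{Tannakische Interpretation einer Quasi-Isogenie}, since the correction operator $z\mapsto\widetilde{b}_{1}\varphi(z)\widetilde{b}_{0}^{-1}$ is not a group homomorphism, whereas the matrix-level operator $M\mapsto\rho(\delta(u_{0}))\mu'(p)\varphi(M)\mu'(p)^{-1}\rho(U_{\cris})^{-1}$ is $\mathbb{Z}_{p}$-linear and the Cauchy estimate becomes a clean computation with denominators.

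There is, however, one genuine step you gloss over, which the paper has to address at some length and which would trip up the argument as written: once one has the Cauchy sequence $C_{m}$ in $\Mat_{r}(\Bcris(R/p))$, one still has to prove that the limit $g_{\cris}=\lim_{m}C_{m}$ satisfies $\chi_{\mathbb{Q}}(g_{\cris})=\rho(g)$ and not merely that each $\chi_{\mathbb{Q}}(C_{m})=\rho(g)$. This is not automatic from continuity of $\chi_{\mathbb{Q}}$, because the target $W_{\mathbb{Q}}(R/p)$, in its natural topology (the one for which $W(R/p)\rightarrow W_{\mathbb{Q}}(R/p)$ is open), is \emph{not separated}: $R/p$ is a very non-reduced semiperfect ring, so $W(R/p)$ has a priori unbounded $p^{\infty}$-torsion and limits in the localization are not unique. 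Your phrase ``the corrections lie in $K[1/p]$, hence the limit still maps to $g$'' presupposes exactly the uniqueness of limits that fails here. The paper's fix is to compose $\chi_{\mathbb{Q}}$ with the continuous surjection to $W(R/p)^{\mathrm{bdd}}[1/p]$, which \emph{is} separated; checking that $W(R/p)^{\mathrm{bdd}}$ is $p$-adic requires the Bouthier--Česnavičius result \cite[Lemma 2.1.11.]{BouthierCesnavicius}. Without this (or some substitute), the claim that the constructed $\rho_{\cris}$ actually lifts the given $G$-quasi-isogeny is not established.
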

\begin{Remark}
The statement 'that lifts the given $G$-quasi-isogeny' needs explanation, because one does not know descent for finite projective $W_{\mathbb{Q}}(R/p)$-modules resp. $G$-torsors over $\Spec(W_{\mathbb{Q}}(R/p))$ along faithfully flat étale morphisms $B=R/p\rightarrow B^{\prime}.$ Here is what is meant by this:
recall that for any integral perfectoid ring $R,$ there is a natural ring homomorphism
$$
\chi\colon \Acris(R/p)\rightarrow W(R/p),
$$
which induces
$$
\chi_{\mathbb{Q}}\colon \Bcris(R/p)\rightarrow W_{\mathbb{Q}}(R/p).
$$
Now the datum of a $G$-quasi isogeny
$$
\rho\colon \mathcal{P}_{0}\times_{\Spec(k)}\Spec(R/p)\dashrightarrow \mathcal{P}\times_{\Spec(R)}\Spec(R/p)
$$
comes down to the following data: choose a $p$-completely faithfully flat étale morphism $R\rightarrow R^{\prime}$ of integral perfectoids rings trivializing the $\G$-$\mu$-display $(Q,\alpha)$ over $R;$ choose a structure matrix $U^{\prime}\in \G(W(R^{\prime}))$ representing this base-change. Write $B=R/p,$ $B^{\prime}=R^{\prime}/p,$ $B^{\prime (2)}=R^{\prime}/p \otimes_{R/p}R^{\prime}/p,$ $$B^{\prime (3)}=R^{\prime}/p \otimes_{R/p}R^{\prime}/p \otimes_{R/p}R^{\prime}/p.$$ Then $\rho$ corresponds to an element $g^{\prime}\in G(W_{\mathbb{Q}}(B^{\prime})),$ such that
$$
g^{\prime -1}u_{0}\mu(p)F(g^{\prime})=\overline{U^{\prime}}\mu(p),
$$
such that the two pullbacks $p^{1 *}(g^{\prime}),$ $p^{2 *}(g^{\prime})$ are isomorphic in $G(W_{\mathbb{Q}}(B^{\prime (2)}))$ and satisfy a cocycle condition in $G(W_{\mathbb{Q}}(B^{\prime (3)})).$ Similiarly, one can unpack what it means to give an isomorphism of $G$-torsors over $\Spec(\Bcris(R/p))$ as above, using the upcoming Lemma \ref{Descent fuer Isokristalle} - to apply this I have to assume that $R$ is $p$-torsion free, so let me assume this from now on: to give onself an isomorphism $$
\rho_{\cris}\colon \mathcal{P}_{0}\times_{\Spec(W(k))}\Spec(\Bcris(R/p))\simeq \mathcal{P}_{\cris}\times_{\Spec(\Acris(R))}\Spec(\Bcris(R/p)),
$$
is equivalent to give oneself an element $g^{\prime}_{\cris}\in G(\Bcris(B^{\prime}))$ satisfying
$$
g^{\prime -1}_{\cris}u_{0}\mu(p)\varphi(g^{\prime}_{\cris})=U^{\prime}_{\text{cris}}\mu(p);
$$
here $U^{\prime}_{\cris}$ is a structure matrix representing the banal $\G$-$\mu$-window 
$$
(Q_{\cris},\alpha_{\cris})\times_{\Spec(B)}\Spec(B^{\prime})
$$
for the frame $\mathcal{F}_{\cris}(B^{\prime}).$ As before one requires that the two pullbacks of $g^{\prime}_{\cris}$ towards $\Spec(\Bcris(B^{\prime (2)}))$ become isomorphic and satisfy a cocycle condition over $\Spec(\Bcris(B^{\prime (3)})).$
\\
After this unpacking, one can finally explain what it means that $\rho_{\cris}$ lifts the $G$-quasi isogeny $\rho:$ it just means that
$$
\chi_{\mathbb{Q}}(g^{\prime}_{\cris})=g^{\prime},
$$
compatible with the isomorphism of the pullbacks and the cocycle condition.
\end{Remark}
In the following, I will sometimes refer to the datum of an isomorphism of $G$-torsors $\rho_{\cris}$ as above as a crystalline $G$-quasi-isogeny.
\\
Before I embark on the proof, let me point out here that the principal challenge is that one works with torsors (and thus groupoids) throughout and therefore a notion of 'isogeny' does not make sense; making it impossible to deduce such a statement directly from the crystalline equivalence proven before.
\\
The idea of the proof is simple: first reduce to the case, where the $\G$-$\mu$-display $\mathcal{P}$ is banal, represented by some structure matrix $U\in \G(W(R)),$ and then attack this case by direct calculation (getting inspired by the unique lifting lemmas in their different versions).
\subsection{Reduction to the banal case}
So much for the strategy; as said before, let me first quickly explain how to reduce to the banal case. This will use the following statement:
\begin{Lemma}{(Drinfeld-Matthew)}\label{Descent fuer Isokristalle}
Let $R$ be a $p$-torsion free integral perfectoid ring. Consider the functor on the category of integral perfectoid $R$-algebras, sending $R\rightarrow R^{\prime}$ towards
$$
\Isom_{G}(\mathcal{P}_{0}\times_{\Spec(W(k))}\Spec(\Bcris(R^{\prime}/p)),\mathcal{P}_{\cris}\times_{\Spec(\Acris(R))}\Spec(\Bcris(R^{\prime}/p))).
$$
Then this functor is a sheaf for the $p$-adic étale topology.
\end{Lemma}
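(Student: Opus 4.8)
\emph{Strategy.} The plan is to write the functor in question as the composite of the ring-valued functor $R'\mapsto \Bcris(R'/p)=\Acris(R'/p)[1/p]$ with a limit-preserving functor, thereby reducing the whole statement to $p$-adic \'etale descent for $\Acris$, which is Lemma~\ref{Descent fuer Acris}.

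First I would record the standard facts about $\Isom_{G}$-schemes. As $G=\G_{\mathbb{Q}_{p}}$ is reductive, hence smooth affine of finite presentation over $\mathbb{Q}_{p}$, for every $\mathbb{Q}_{p}$-algebra $A$ and every pair of $G$-torsors $\mathcal{Q}_{1},\mathcal{Q}_{2}$ on $\Spec(A)$ the functor $\Isom_{G}(\mathcal{Q}_{1},\mathcal{Q}_{2})$ is representable by an affine $A$-scheme $\mathcal{I}=\Spec(\Gamma)$ of finite presentation, and $\mathcal{I}$ commutes with base change in $A$: indeed $\mathcal{I}$ is a torsor under the smooth affine inner form $\underline{\Aut}_{G}(\mathcal{Q}_{1})$ of $G$, hence has sections \'etale-locally, hence is a scheme which is \'etale-locally isomorphic to $\underline{\Aut}_{G}(\mathcal{Q}_{1})$ (so \'etale-locally affine), and affineness of morphisms descends along fppf covers. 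Taking $A=\Bcris(R/p)$, $\mathcal{Q}_{1}=\mathcal{P}_{0}\times_{\Spec(W(k))}\Spec(A)$ and $\mathcal{Q}_{2}=\mathcal{P}_{\cris}\times_{\Spec(\Acris(R))}\Spec(A)$, the functor $F$ in the statement becomes
\begin{equation}
F(R')\ =\ \Hom_{A\text{-}\mathrm{alg}}\bigl(\Gamma,\,\Bcris(R'/p)\bigr),
\end{equation}
where $\Bcris(R'/p)$ is regarded as an $A$-algebra via the canonical map $A\to\Bcris(R'/p)$.

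Since for a fixed $A$-algebra $\Gamma$ the functor $\Hom_{A\text{-}\mathrm{alg}}(\Gamma,-)$ preserves all limits, in particular finite products and equalizers, it now suffices to prove that $R'\mapsto\Bcris(R'/p)$ is a sheaf for the $p$-adic \'etale topology whose sheaf condition can be tested on covers with finite index set. Every object of this site is $p$-completely \'etale over the $p$-torsion free ring $R$ (Lemma~\ref{Eindeutige integral perfektoide algebra}), hence $p$-torsion free, so Lemma~\ref{Descent fuer Acris} gives that $R'\mapsto\Acris(R'/p)$ is a sheaf; by quasi-compactness of $\Spec(R'/p)$ every covering family refines to a single $p$-completely faithfully flat \'etale cover $R'\to R''$ (using that finite products and completed tensor powers of integral perfectoid rings remain integral perfectoid), so the sheaf condition for $\Acris(-/p)$ at such a cover is exactness of the finite diagram
\begin{equation}
\Acris(R'/p)\ \longrightarrow\ \Acris(R''/p)\ \rightrightarrows\ \Acris(R''\widehat{\otimes}_{R'}R''/p);
\end{equation}
as $[1/p]$ is flat it preserves this finite limit, which yields the same diagram for $\Bcris(-/p)$ and hence, composing with $\Hom_{A\text{-}\mathrm{alg}}(\Gamma,-)$, for $F$.

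\emph{Main obstacle.} The one genuinely delicate point is the passage from $\Acris$-descent to $\Bcris$-descent: there is no descent statement for quasi-coherent modules, or for $G$-bundles, over the non-noetherian $p$-inverted rings $\Bcris(R'/p)$ to which one could simply appeal. The entire arrangement above is engineered so that the only input used is that $\Bcris(-/p)$ is a sheaf of \emph{rings}; this in turn is what forces the reduction to finite covers (so the relevant limits stay finite and survive inverting $p$) and the use of the genuine affine-scheme representability of $\Isom_{G}$ together with its compatibility with base change.
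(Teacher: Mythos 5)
Your proposal is correct, but it follows a genuinely different route from the paper. The paper's proof is a descent statement for the \emph{whole category} of $G$-torsors over $\Bcris$: it invokes Drinfeld's result \cite[Prop.~3.5.4]{drinfeld2018stacky} on descent for finite locally free modules over $p$-completely faithfully flat extensions of $p$-torsion-free $p$-adic rings (so-called ``Banachian'' descent), combines it with the Bhatt--Morrow--Scholze identification $\Acris(B)/p\simeq L\Omega_B$ to recognize $\Acris(B)\to\Acris(B')$ as $p$-completely faithfully flat \'etale, identifies the two candidate \v{C}ech nerves $\Acris(B')_\bullet$ and $\Acris(B'_\bullet)$, and finally passes from vector bundles to $G$-torsors via a Chevalley embedding $G\hookrightarrow\GL_n$ realizing $G$ as a line stabilizer. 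Your argument sidesteps all of this: you observe that for \emph{fixed} torsors $\mathcal{P}_0,\mathcal{P}_{\cris}$ the functor $\Isom_G$ is representable by an affine scheme $\Spec(\Gamma)$ over $A=\Bcris(R/p)$ commuting with base change (because it is a torsor under the smooth affine inner form $\underline{\Aut}_G(\mathcal{P}_0\times\Spec A)$), so that $F(R')=\Hom_{A\text{-alg}}(\Gamma,\Bcris(R'/p))$; since a representable functor preserves all limits, the statement reduces to $\Bcris(-/p)$ being a sheaf of rings, which you get from Lemma~\ref{Descent fuer Acris} by refining to a single finite cover (so the relevant limits stay finite) and using that localization at $p$ is flat, hence preserves finite limits. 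This is more elementary — no Drinfeld, no Tannakian embedding — and honestly cleaner for the lemma as stated. What the paper's heavier route buys is the stronger intermediate result that $\Bun_{\mathbb{Q}}(\Acris(\mathcal{R}))$ and $\G$-$\text{Tors}(\Bcris(\mathcal{R}))$ themselves form stacks, which could be of independent use; your argument deliberately proves only the weaker assertion that the particular $\Isom$-presheaf is a sheaf. Two small points you should spell out if you want this airtight: (i) in the reduction to a single cover $R'\to R''$, you need that $\Acris$ commutes with finite products of quasi-regular semiperfect rings (true, via the decomposition of the prismatic site over a disjoint union), so that the finite-cover sheaf condition is equivalent to the one for the finite family; and (ii) you need separatedness of the presheaf for arbitrary covers to pass from the finite-cover sheaf condition to the full one, but this follows from the finite case by the usual projection argument, so it is not a gap.
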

To prove this lemma, I will use some heavy machinery; one key input being a result of Drinfeld \cite{drinfeld2018stacky} about descent of vector bundles in 'Banachian modules' and the other being the computation of Bhatt-Morrow-Scholze contained in \cite{BMS2} of $\Acris(S)/p,$ where $S$ is any quasi-regular semiperfect $\mathbb{F}_{p}$-algebra, which was already used before in verifying the sheaf property for $\Acris(.).$ In the following proof one writes 
$$
\Bun_{\mathbb{Q}}(A)=\Bun(A\otimes_{\mathbb{Z}_{p}}\mathbb{Q})
$$
for the groupoid of finite locally free $A\otimes_{\mathbb{Z}_{p}}\mathbb{Q}$-modules for any $\mathbb{Z}_{p}$-flat ring $A.$
\begin{proof}
The key claim is the following:
\\
\textit{Claim}: Let $R$ be a $p$-torsion free integral perfectoid ring. Denote by $B:=R/p$ and let $B\rightarrow B^{\prime}$ be a faithfully flat étale covering; observe that by the standing assumption that $R$ is $p$-torsion free, the ring $B$ is quasi-regular semiperfect and also all $B^{\prime (n)}=B^{\prime}\otimes_{B}...\otimes_{B}B^{\prime}$ are quasi-regular semiperfect - they form a simplicial $\mathbb{F}_{p}$-algebra $B^{\prime}_{\bullet}$. Consider the simplicial ring $\Acris(B^{\prime}_{\bullet})$ with $n$-th component given by $\Acris(B^{\prime (n)}).$ Then I claim that the natural morphism induces an equivalence
$$
\Bun_{\mathbb{Q}}(\Acris(B))\rightarrow \lim_{\Delta} \Bun_{\mathbb{Q}}(\Acris(B^{\prime}_{\bullet})).
$$
This will be deduced from Drinfeld's result mentioned above using the aforementioned calculation of Bhatt-Morrow-Scholze.
\\
Namely, recall that if $L\Omega_{B}:=L\Omega_{B/\mathbb{F}_{p}}$ is the (absolute) derived de Rham complex of $B,$ then they check in \cite[Prop.8.12]{BMS2} that one has an isomorphism
$$
\Acris(B)/p\simeq L\Omega_{B},
$$
which is functorial in morphisms of quasi-regular semiperfect $\mathbb{F}_{p}$-algebras.
Next, note that the étale morphism $B\rightarrow B^{\prime}$ gives the following base-change identity for the derived de Rham complex
$$
L\Omega_{B}\otimes_{B}B^{\prime}\simeq L\Omega_{B^{\prime}}.
$$
In fact, this may be checked as in Lemma \ref{Descent fuer Acris} by comparing the conjugate filtration.
It follows that $\Acris(B)\rightarrow \Acris(B^{\prime})$ is $p$-completely faithfully flat étale, as modulo $p$ it identifies with the base-change $L\Omega_{B}\rightarrow L\Omega_{B}\otimes_{B}B^{\prime}$ and since $\Acris(B)$ and $\Acris(B^{\prime})$ are $p$-torsion free, one may use the result of Drinfeld \cite[Prop. 3.5.4]{drinfeld2018stacky} to obtain that
$$
\Bun_{\mathbb{Q}}(\Acris(B))\simeq \lim_{\Delta}\Bun_{\mathbb{Q}}(\Acris(B^{\prime})_{\bullet}),
$$
where $\Acris(B^{\prime})_{\bullet}$ is the simplicial $p$-complete ring with $n$-th component given by
$$
\Acris(B^{\prime})^{(n)}=\Acris(B^{\prime})\widehat{\otimes}_{\Acris(B)}\Acris(B^{\prime})....\widehat{\otimes}_{\Acris(B)}\Acris(B^{\prime}).
$$
To finish the proof of the above claim, it remains therefore just to identify the simplicial ring $\Acris(B^{\prime})_{\bullet}$ with the simplicial ring $\Acris(B^{\prime}_{\bullet}).$ By functoriality, there is a morphism of simplicial rings
$$
\Acris(B^{\prime})_{\bullet}\rightarrow \Acris(B^{\prime}_{\bullet}).
$$
One has to see that it induces isomorphisms
$$
\Acris(B^{\prime})^{(n)}\simeq \Acris(B^{\prime (n)}).
$$
Note that both sides are $p$-adic and $p$-torsion free. It therefore suffices to see that they are isomorphic after modding out by $p.$ But this can then be deduced from the formulas $
\Acris(B)/p\simeq L\Omega_{B}
$ and $
L\Omega_{B}\otimes_{B}B^{\prime}\simeq L\Omega_{B^{\prime}}
$ above.
\\
In other words, one has now verified that the fibered category $\Bun_{\mathbb{Q}}(\Bcris(\mathcal{R}))$ (with the notation as in Lemma \ref{Lau perfektoide Garbe}) is indeed a stack on $\Spec(R/p)^{\text{aff}}_{\text{ét}}.$ As usual, it is not hard to deduce the same descent result for $G$-torsors, which then implies the statement one is after here. Namely, if one chooses an embedding $G\hookrightarrow \GL_{n},$ then there is a linear representation $\rho\colon\GL_{n}\rightarrow \GL(W),$ where $W$ is a finite dimensional $\mathbb{Q}_{p}$-vector space, such that one may realize $G$ as the fixer of a line $L\subset W.$ Then to give onself a $G$-torsor $\mathcal{P}$ over $\Spec(\Bcris(R))$ is equivalent to give a finite locally free rank $n$ $\Bcris(R)$-module $\mathcal{E}$, together with a locally direct factor line sub-bundle of $\rho_{*}(\mathcal{\mathcal{E}});$ in this way one may reduce the problem of descending $G$-torsors to the linear problem one just dealt with.
\end{proof}
Using this lemma, it is easy to see that one may reduce to the banal case.
\subsection{Proof in the banal case}
From now on, I assume that $\mathcal{P}$ is banal, represented by a structure matrix $U\in \G(W(R));$ let me denote by $\overline{U}\in \G(W(R/p))$ the reduction. The $G$-quasi-isogeny between $\mathcal{P}_{0}$ and $\mathcal{P}$ is then just the datum of an element $g\in G(W_{\mathbb{Q}}(R/p)),$ such that
\begin{equation}
g^{-1}u_{0}\mu(p)F(g)=\overline{U}\mu(p),
\end{equation}
that is
$$
u_{0}\mu(p)F(g)\mu(p)^{-1}\overline{U}^{-1}=g
$$
(which is the form I will use later)
is satisfied. One can lift $U$ to an element $U_{\cris}\in \G(\Acris(R/p))$ that is uniquely determined up to $\Phi_{\cris}$-conjugation and which is a structure matrix for the banal $\G$-$\mu$-window $\mathcal{P}_{\cris}$ for the frame $\mathcal{F}_{\cris}(R).$ I will write $\delta\colon W(k)\rightarrow \Acris(R/p)$ for the ring-homomorphism that gives the $W(k)$-algebra structure on $\Acris(R/p).$ I recall again that $\Acris(R/p)$ is $p$-torsion free. By Stacks project, Tag 05GG, $\Acris(R/p)$ is $p$-adically complete(and thus separated). Recall that $\Bcris(R/p)=\Acris(R/p)[1/p]$ is the localization at $p.$ One gives $\Bcris(R/p)$ the Banach-topology, such that $\Acris(R/p)\subseteq \Bcris(R/p)$ is open. With this topology, the $\mathbb{Q}_{p}$-algebra $\Bcris(R/p)$ has the structure of a Banach-space for the norm
$$
|.|\colon \Bcris(R/p)\rightarrow \mathbb{R}_{\geq 0},
$$
given by
$$
|b|=\inf\lbrace |\lambda|_{\mathbb{Q}_{p}}\colon \lambda\cdot b\in \Acris(R/p)\rbrace.
$$
For later use, let me recall that one has a surjection
$$
\chi\colon \Acris(R/p)\rightarrow W(R/p),
$$ 
(which induces the surjection
$$
\chi_{\mathbb{Q}}\colon \Bcris(R/p)\rightarrow W_{\mathbb{Q}}(R/p) )
$$
whose kernel $K_{\cris}:=\ker(\chi)$ is generated by elements
$$
[x]^{(n)},
$$
for $n\geq 1$ and $x\in \ker(R^{\flat}\rightarrow R/p),$ c.f. \cite[Prop. 5.3.5.]{JohannesArthur}. It follows from (\cite[Tag 00CT]{stacks})
$$
W_{\mathbb{Q}}(R/p)=\frac{\Bcris(R/p)}{K_{\cris}[1/p]},
$$
that the kernel of $\chi_{\mathbb{Q}}$ is $K_{\cris}[1/p].$ It identifies with $K_{\cris}\otimes_{\mathbb{Z}_{p}}\mathbb{Q}_{p}$ as an $\Bcris(R/p)=\Acris(R/p)\otimes_{\mathbb{Z}_{p}}\mathbb{Q}_{p}$-module. In total, one sees that $\ker(\chi_{\mathbb{Q}})$ is generated by elements of the form
$$
[x]^{(n)}\otimes \lambda,
$$
with $n\geq 1,$ $x$ as above and $\lambda\in\mathbb{Q}_{p}.$
\\
Before I go on, let me discuss a bit topological pathologies for the ring $W_{\mathbb{Q}}(R/p),$ which come from the fact that $R/p$ has tons of nilpotents, which leads to the ring $W(R/p)$ having a priori unbounded $p^{\infty}$-torsion, so one gets problems with $W_{\mathbb{Q}}(R/p).$ First, one gives the ring $W_{\mathbb{Q}}(R/p)$ the topology given by declaring as a basis of open neighborhoods of zero
$$
\lbrace \text{Im}((p^{m}W(R/p))\rightarrow W_{\mathbb{Q}}(R/p))\rbrace_{m \geq 0}.
$$
Here one has to be careful, that this topology is not given by ideals, so that it is not $W(R/p)$-linear.
Note that although $W(R/p)$ is $p$-adic, with the above topology on $W_{\mathbb{Q}}(R/p),$ this ring is a priori neither separated nor complete. Nevertheless, with these topologies the morphism
$$
\chi_{\mathbb{Q}}\colon \Bcris(R/p)\rightarrow W_{\mathbb{Q}}(R/p)
$$
is continous. To get around these worries, look at the ring $W(R/p)^{\text{bdd}}=W(R/p)/W(R/p)[p^{\infty}],$ where $W(R/p)[p^{\infty}]=\cup_{n\geq 1} \Ann(p^{n}).$ In the ring $W(R/p)^{\text{bdd}}$ the element $p$ is regular. Note that passing to the localization at $p,$ one gets the identity
$$
W_{\mathbb{Q}}(R/p)=W(R/p)^{\text{bdd}}[1/p].
$$
I give $W(R/p)^{\text{bdd}}$ the $p$-adic topology and $W(R/p)^{\text{bdd}}[1/p]$ the topology with basis of open neighborhoods of zero given by
$$
\lbrace \text{Im}((p^{m}W(R/p)^{\text{bdd}})\rightarrow W(R/p)^{\text{bdd}}[1/p]) \rbrace_{m\geq 0}.
$$
With this topology, the identity morphism $W_{\mathbb{Q}}(R/p)=W(R/p)^{\text{bdd}}[1/p]$ is continous and the advantage is now that the topological ring $W(R/p)^{\text{bdd}}[1/p]$ is separated and complete in the given topology. In fact, one has to explain why this ring stays indeed complete; for this it is enough to explain why $W(R/p)^{\text{bdd}}$ is $p$-adic. Here one can use a result of Bouthier-Česnavičius, \cite[Lemma 2.1.11.]{BouthierCesnavicius}, which implies that
$$
\widehat{(W(R/p)^{\text{bdd}})}\simeq (\widehat{W(R/p)})^{\text{bdd}}=W(R/p)^{\text{bdd}},
$$
where all completions are $p$-adic and in the equality I am using that $W(R/p)$ is $p$-adic because $R/p$ is semi-perfect. In Bothier-Česnavičius' notation, one is looking at the Gabber-Romero triple $$(W(R/p),t=p,I=(1))$$ and what I denote by $(.)^{\text{bdd}}$ they denote by $\overline{(.)}.$
Later this will be used as follows: suppose one is given a converging sequence $a_{n}\in \Bcris(R/p)$ with $a=\lim_{n\rightarrow \infty}a_{n}$ and such that $\chi(a_{n})=c$ is constant for all $n\geq 1.$ Then I want to deduce that $\chi(a)=c.$ Since the ring $W_{\mathbb{Q}}(R/p)$ is not separated, so uniqueness of limits fails, this is a priori not clear; but the identity $\chi(a)=c$ can be checked in $W(R/p)^{\text{bdd}}[1/p].$ Since $\chi_{\mathbb{Q}}$ and $W_{\mathbb{Q}}(R/p)\rightarrow W(R/p)^{\text{bdd}}[1/p]$ are continous and since the topology on $W(R/p)^{\text{bdd}}[1/p]$ is separated, the image of $\chi(a)$ in $W(R/p)^{\text{bdd}}[1/p]$ agrees with the image of $c,$ so that $\chi(a)=c$ also in $W_{\mathbb{Q}}(R/p).$ This will be later used to see that one really lifted the given quasi-isogeny.
\\
Now I can finally come to the statement that one wants to prove in the banal case:
\begin{Lemma}
There is is a uniquely determined element $g_{\cris}\in G(\Bcris(R/p)),$ such that
\begin{equation}
g_{\cris}^{-1}\delta(u_{0})\mu(p)\varphi(g_{\cris})=U_{\cris}\mu(p)
\end{equation}
and 
\begin{equation}
\chi_{\mathbb{Q}}(g_{\cris})=g.
\end{equation}
\end{Lemma}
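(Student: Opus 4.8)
\emph{Proof idea.} Fix a closed embedding $\G\hookrightarrow\GL_{N,\mathbb{Z}_{p}}$ (so also $G\hookrightarrow\GL_{N,\mathbb{Q}_{p}}$), so that all the elements in question live in $\Mat_{N}$. The plan is to turn the first equation into a fixed–point equation for an \emph{affine} operator on $\Mat_{N}(\Bcris(R/p))$ and to solve it by iteration, the convergence being governed by the extreme topological nilpotence of $\varphi$ on $K_{\cris}:=\ker(\chi\colon\Acris(R/p)\to W(R/p))$ already exploited in Remark~\ref{Bemerkung zur kristallinen Aequivalenz modulo p}. Choosing $U_{\cris}$ so that $\chi(U_{\cris})=\overline{U}$, I would set
$$\Psi(M)=\delta(u_{0})\,\mu(p)\,\varphi(M)\,\mu(p)^{-1}U_{\cris}^{-1},\qquad M\in\Mat_{N}(\Bcris(R/p)).$$
Since $\varphi$ is a ring endomorphism, $\Psi(M)-\Psi(M')=\delta(u_{0})\mu(p)\varphi(M-M')\mu(p)^{-1}U_{\cris}^{-1}$, so $\Psi$ is affine; it preserves $G(\Bcris(R/p))$ and $\GL_{N}(\Bcris(R/p))$; and, because $\chi$ is a frame morphism over $W(k)$ (hence $\chi\circ\varphi=F\circ\chi$ and $\chi(\delta(u_{0}))=u_{0}$), it lifts along $\chi_{\mathbb{Q}}$ the operator $\overline{\Psi}(M)=u_{0}\mu(p)F(M)\mu(p)^{-1}\overline{U}^{-1}$ on $\Mat_{N}(W_{\mathbb{Q}}(R/p))$. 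The two equations of the Lemma say exactly that $g_{\cris}$ is a fixed point of $\Psi$ lying in $\chi_{\mathbb{Q}}^{-1}(g)\cap G(\Bcris(R/p))$; and the quasi-isogeny equation $u_{0}\mu(p)F(g)\mu(p)^{-1}\overline{U}^{-1}=g$ is precisely $\overline{\Psi}(g)=g$, so $\Psi$ maps the fibre $\chi_{\mathbb{Q}}^{-1}(g)$ into itself.

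Next I would pick a point $g_{0}\in G(\Bcris(R/p))$ with $\chi_{\mathbb{Q}}(g_{0})=g$. Since $g$ and $g^{-1}$ are bounded, $g\in p^{-N_{0}}\Mat_{N}(W(R/p))$; as $\chi\colon\Acris(R/p)\twoheadrightarrow W(R/p)$ is surjective and $\G$ is smooth with $K_{\cris}\subseteq\Fil(\Acris(R/p))\subseteq\Rad(\Acris(R/p))$, one lifts the matrix entries of $g$ into $p^{-N_{0}}\Acris(R/p)$ and corrects to an honest point of $G(\Bcris(R/p))$; alternatively one first reduces to the perfect ring $R/\sqrt{pR}$, where $\Bcris=W_{\mathbb{Q}}$ so nothing is to be lifted, and then propagates back using the estimate below. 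Put $g_{n}=\Psi^{n}(g_{0})\in G(\Bcris(R/p))$. All $g_{n}$ lie in $\chi_{\mathbb{Q}}^{-1}(g)$, so $g_{n+1}-g_{n}\in\Mat_{N}(\ker\chi_{\mathbb{Q}})=\Mat_{N}(K_{\cris}[1/p])$ for every $n$; in particular $g_{1}-g_{0}\in p^{-M}\Mat_{N}(K_{\cris})$ for some $M$, and likewise $g_{1}^{-1}-g_{0}^{-1}\in p^{-M}\Mat_{N}(K_{\cris})$ since $\chi_{\mathbb{Q}}(g_{1}^{-1})=g^{-1}=\chi_{\mathbb{Q}}(g_{0}^{-1})$.

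The core step is the contraction estimate. Unwinding the affine recursion $g_{n+1}-g_{n}=\delta(u_{0})\mu(p)\varphi(g_{n}-g_{n-1})\mu(p)^{-1}U_{\cris}^{-1}$ writes $g_{n+1}-g_{n}=A_{n}\,\varphi^{n}(g_{1}-g_{0})\,B_{n}$, where $A_{n},B_{n}\in\Mat_{N}(\Bcris(R/p))$ are products of $2n$ factors among the $\varphi^{j}(\delta(u_{0})^{\pm1}),\varphi^{j}(U_{\cris}^{\pm1})\in\Mat_{N}(\Acris(R/p))$ (which have Banach norm $\leq1$) and the $\varphi^{j}(\mu(p)^{\pm1})\in p^{-c_{0}}\Mat_{N}(\Acris(R/p))$ (bounded denominators, $c_{0}$ depending only on $\mu$ and the embedding), so $\|A_{n}\|,\|B_{n}\|\leq p^{c_{1}n}$ for a constant $c_{1}$. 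On the other hand $K_{\cris}$ is generated by the divided powers $[x]^{(m)}$ with $m\geq1$ and $x\in\ker(R^{\flat}\to R/p)$, and $\varphi([x]^{(m)})=\tfrac{(pm)!}{m!}[x]^{(pm)}$ with $v_{p}\!\big(\tfrac{(pm)!}{m!}\big)=m$, so inductively $\varphi^{n}(K_{\cris})\subseteq p^{c(n)}\Acris(R/p)$ with $c(n)=1+p+\dots+p^{n-1}$. Hence $g_{n+1}-g_{n}\in p^{\,c(n)-2c_{1}n-M}\Mat_{N}(\Acris(R/p))$, and since $c(n)$ grows faster than $2c_{1}n$ this tends to $0$; as $\Acris(R/p)$ is $p$-adically complete, $\Bcris(R/p)=\Acris(R/p)[1/p]$ is Banach-complete, so $(g_{n})$ converges to some $g_{\cris}$. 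The same estimate applied to $g_{n}^{-1}=U_{\cris}\mu(p)\varphi(g_{n-1}^{-1})\mu(p)^{-1}\delta(u_{0})^{-1}$ (again affine) shows $(g_{n}^{-1})$ converges, whence $g_{\cris}\in\GL_{N}(\Bcris(R/p))$; since $\G\hookrightarrow\GL_{N}$ is closed and every $g_{n}\in G(\Bcris(R/p))$, also $g_{\cris}\in G(\Bcris(R/p))$; and by continuity of $\Psi$, $\Psi(g_{\cris})=g_{\cris}$, which is the first equation.

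Finally, $\chi_{\mathbb{Q}}(g_{n})=g$ for every $n$. One cannot pass to the limit in $W_{\mathbb{Q}}(R/p)$ directly, as it is not separated, but the continuous map $\Bcris(R/p)\to W(R/p)^{\mathrm{bdd}}[1/p]$ obtained by composing $\chi_{\mathbb{Q}}$ with the identification $W_{\mathbb{Q}}(R/p)=W(R/p)^{\mathrm{bdd}}[1/p]$ lands in a separated and complete ring (by the Bouthier-Česnavičius input recalled above), so there the image of $g_{\cris}$ equals the limit of the constant sequence of images of the $g_{n}$, hence the image of $g$; as the two rings coincide, $\chi_{\mathbb{Q}}(g_{\cris})=g$. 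For uniqueness, if $g_{\cris}'$ is another solution then $g_{\cris}-g_{\cris}'\in\Mat_{N}(K_{\cris}[1/p])$ and, both being fixed points of the affine $\Psi$, one has $g_{\cris}-g_{\cris}'=A_{n}\varphi^{n}(g_{\cris}-g_{\cris}')B_{n}\in p^{\,c(n)-2c_{1}n-M'}\Mat_{N}(\Acris(R/p))$ for every $n$, hence $g_{\cris}-g_{\cris}'=0$. I expect the main obstacle to be the core estimate — making the contraction of $\Psi$ genuinely uniform at matrix level, i.e. pinning down the exponent $c(n)=1+p+\dots+p^{n-1}$ and checking it dominates the $p^{-2c_{1}n}$ coming from conjugating by $\mu(p)$ — together with the delicate initial lifting $g_{0}\in G(\Bcris(R/p))$ of $g$, where the non-separatedness and the unbounded $p^{\infty}$-torsion of $W(R/p)$ obstruct naive smoothness/henselian arguments.
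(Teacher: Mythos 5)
Your iterative scheme, the Banach contraction estimate via $\nu_{p}\bigl(\tfrac{(p^{m}n)!}{n!}\bigr)=n\tfrac{p^{m}-1}{p-1}$, the treatment of non-separatedness of $W_{\mathbb{Q}}(R/p)$ via $W(R/p)^{\mathrm{bdd}}[1/p]$, and the uniqueness argument all coincide with the paper's. The genuine divergence is that you fix a single closed embedding $\G\hookrightarrow\GL_{N}$ and run the iteration inside $\Mat_{N}$, whereas the paper works representation by representation and then invokes Tannakian reconstruction. This is exactly where the difficulty you flag at the end becomes fatal: the initial lift $g_{0}\in G(\Bcris(R/p))$ with $\chi_{\mathbb{Q}}(g_{0})=g$ is not available by the arguments you sketch. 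Although $(\Acris(R/p),K_{\cris})$ is a henselian pair, $g$ lives only in $G(W_{\mathbb{Q}}(R/p))$ and does not come from $\G(W(R/p))$ --- clearing denominators by $p^{N_{0}}$ lands outside the group --- while the rationalized pair $(\Bcris(R/p),K_{\cris}[1/p])$ is not henselian (for instance $1+p^{-m}[x]^{(1)}$ has no convergent geometric series for $m\gg0$), so no smoothness argument lets you ``correct'' an entry-wise lift of $g$ into an honest $G$-point. Your fallback of reducing to $R/\sqrt{pR}$ also does not help: there is no section $W_{\mathbb{Q}}(R/\sqrt{pR})\to\Bcris(R/p)$ over $\chi_{\mathbb{Q}}$ through which one could ``propagate back.''

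The paper avoids needing any initial lift in $G$. For each $(\rho,T)\in\Rep_{\mathbb{Z}_{p}}(\G)$ it lifts $\rho(g)$ entry-wise to an \emph{arbitrary} matrix $C_{0}\in\Mat_{r}(\Bcris(R/p))$, iterates, obtains $g_{\cris}(\rho,T)$, checks invertibility by building a second convergent sequence $D_{m}$ with $\id_{r}-C_{m}D_{m}\to0$, and then extracts functoriality and $\otimes$-compatibility of the family $\{g_{\cris}(\rho,T)\}$ \emph{from the uniqueness statement itself}; the Tannakian interpretation of crystalline $G$-quasi-isogenies (Lemma~\ref{Tannakische Interpretation einer Quasi-Isogenie}, built on Daniels' work and the h-frame structure from section~\ref{hframe auf prismatischem Frame}) then converts this compatible family into a genuine element $g_{\cris}\in G(\Bcris(R/p))$. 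Membership in $G$ is the \emph{output}, not an input. If you insist on a single embedding, the missing idea is: start from an arbitrary entry-wise lift $M'$, show $g_{\infty}=\lim\Psi^{n}(M')$ exists and is invertible, and then prove $g_{\infty}\in G(\Bcris(R/p))$ directly by observing that each generator $f_{i}$ of the ideal of $\G$ in $\GL_{N}$ has $f_{i}(g_{\infty})\in K_{\cris}[1/p]$ (since $\chi_{\mathbb{Q}}(f_{i}(g_{\infty}))=f_{i}(g)=0$) and, by $\Psi$-invariance of $g_{\infty}$ together with stability of the ideal of $\G$ under left and right $\G(\Bcris(R/p))$-translation, satisfies a relation $f_{i}(g_{\infty})=\sum_{j}a_{ij}\varphi(f_{j}(g_{\infty}))$ with bounded $a_{ij}$; the same divided-power estimate then forces $f_{i}(g_{\infty})=0$. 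That step is genuinely absent from your proposal.
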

To prepare the proof, I will now apply the Tannakian reconstruction theorem to give the following interpretation of the datum of the $G$-quasi-isogeny $g\in G(W_{\mathbb{Q}}(R/p))$ and of the crystalline $G$-quasi-isogeny $g_{\cris}\in G(\Bcris(R/p))$ one wants to produce:
\begin{Lemma}\label{Tannakische Interpretation einer Quasi-Isogenie}
\begin{enumerate}
\item[(a):] The datum of the $G$-quasi-isogeny $g\in G(W_{\mathbb{Q}}(R/p))$ is equivalent to the following datum: For all $(\rho,T)\in \Rep_{\mathbb{Z}_{p}}(\G),$ an element
$$
g(\rho,T)\in \GL_{r}(T\otimes_{\mathbb{Z}_{p}}W_{\mathbb{Q}}(R/p)),
$$
such that 
\begin{equation}
g((\triv,\mathbb{Z}_{p}))=\id_{W_{\mathbb{Q}}(R/p)},
\end{equation} 
\begin{equation}
g(\rho,T)\otimes_{W_{\mathbb{Q}}(R/p)} g(\rho^{\prime},T^{\prime})=g((\rho,T)\otimes_{\mathbb{Z}_{p}}(\rho^{\prime},T^{\prime}))
\end{equation}
inside of $\GL_{r}(T\otimes_{\mathbb{Z}_{p}}T^{\prime}\otimes_{\mathbb{Z}_{p}}W_{\mathbb{Q}}(R/p)),$ which is furthermore functorial in $(\rho,T)\in \Rep_{\mathbb{Z}_{p}}(\G)$ and such that
\begin{equation}
g(\rho,T)^{-1}\rho(u_{0}\mu(p))F(g(\rho,T))=\rho(\overline{U}\mu(p)).
\end{equation}
\item[(b):] Similiarly everything for $g_{\cris}:$ The datum of the crystalline $G$-quasi-isogeny $g_{\cris}\in G(\Bcris(R/p))$ is equivalent to the following datum: For all $(\rho,T)\in \Rep_{\mathbb{Z}_{p}}(\G),$ an element
$$
g_{\cris}(\rho,T)\in \GL_{r}(T\otimes_{\mathbb{Z}_{p}}\Bcris(R/p)),
$$
such that 
\begin{equation}
g_{\cris}((\triv,\mathbb{Z}_{p}))=\id_{\Bcris(R/p)},
\end{equation} 
\begin{equation}
g_{\cris}(\rho,T)\otimes_{\Bcris(R/p)} g_{\cris}(\rho^{\prime},T^{\prime})=g_{\cris}((\rho,T)\otimes_{\mathbb{Z}_{p}}(\rho^{\prime},T^{\prime})),
\end{equation}
inside of $\GL_{r}(T\otimes_{\mathbb{Z}_{p}}T^{\prime}\otimes_{\mathbb{Z}_{p}}\Bcris(R/p)),$ which is furthermore functorial in $(\rho,T)\in \Rep_{\mathbb{Z}_{p}}(\G)$ and such that
\begin{equation}
g_{\cris}(\rho,T)^{-1}\rho(\delta(u_{0})\mu(p))\varphi(g_{\cris}(\rho,T))=\rho(U_{\cris}\mu(p)).
\end{equation}
\end{enumerate}
\end{Lemma}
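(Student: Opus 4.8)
The plan is to deduce both parts formally from the Tannakian reconstruction theorem for the reductive group scheme $\G$ over $\mathbb{Z}_{p}$: for any $\mathbb{Z}_{p}$-algebra $A$ the natural map
$$
\G(A)\longrightarrow \Aut^{\otimes}(\omega_{A}),\qquad \omega_{A}\colon \Rep_{\mathbb{Z}_{p}}(\G)\to \mathrm{Mod}_{A},\ (\rho,T)\mapsto T\otimes_{\mathbb{Z}_{p}}A,
$$
is a bijection onto the group of families $(g(\rho,T))_{(\rho,T)}$ of $A$-linear automorphisms of $T\otimes_{\mathbb{Z}_{p}}A$ that are functorial in $(\rho,T)$, multiplicative for $\otimes$, and send $(\triv,\mathbb{Z}_{p})$ to the identity. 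For $\G$ reductive over $\mathbb{Z}_{p}$ this is standard — one picks a faithful representation exhibiting $\G$ as the stabiliser of a line in a $\G$-module, so that $\Rep_{\mathbb{Z}_{p}}(\G)$ generates enough — and it is purely algebraic, the $\mathbb{Z}_{p}$-modules underlying objects of $\Rep_{\mathbb{Z}_{p}}(\G)$ being finite free hence flat; compare the discussion of the Tannakian picture in \cite{Daniels1}. In particular the theorem is available over an arbitrary $\mathbb{Z}_{p}$-algebra, which is what matters here since $W_{\mathbb{Q}}(R/p)$ and $\Bcris(R/p)$ are ill-behaved rings.

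Concretely, I would apply this with $A=W_{\mathbb{Q}}(R/p)$ for (a) and $A=\Bcris(R/p)$ for (b); both are even $\mathbb{Q}_{p}$-algebras, so $\G(A)=G(A)$. Then a tensor-compatible, functorial, unital family $(g(\rho,T))$ as in the statement is exactly the datum of an element $g\in G(W_{\mathbb{Q}}(R/p))$, and likewise $(g_{\cris}(\rho,T))$ is exactly the datum of $g_{\cris}\in G(\Bcris(R/p))$. It then remains to match the last displayed equation of each part with the corresponding single group equation. For this I would use that $F$ on $W_{\mathbb{Q}}(R/p)$, $\varphi$ on $\Bcris(R/p)$ and $\delta\colon W(k)\to\Acris(R/p)$ are ring homomorphisms, so that, each $\rho$ being defined over $\mathbb{Z}_{p}$, one has $\rho(F(x))=F(\rho(x))$, $\rho(\varphi(x))=\varphi(\rho(x))$, and $\rho$ is compatible with $\delta$; hence the family of identities $g(\rho,T)^{-1}\rho(u_{0}\mu(p))F(g(\rho,T))=\rho(\overline{U}\mu(p))$ is obtained by applying every $\rho$ to $g^{-1}u_{0}\mu(p)F(g)=\overline{U}\mu(p)$, and conversely this family forces the group equation since a single faithful representation already detects equality in $G$ of a ring. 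The crystalline case is identical, with $\varphi$, $\delta(u_{0})$ and $U_{\cris}$ replacing $F$, $u_{0}$ and $\overline{U}$.

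I do not expect a serious obstacle; the one point that needs attention is precisely that $W_{\mathbb{Q}}(R/p)$ need not be reduced, separated or complete. As indicated this is harmless, because Tannakian reconstruction for $\G$ over $\mathbb{Z}_{p}$, and the passage to $G$ over $\mathbb{Q}_{p}$-algebras, use only flatness of the relevant modules and no topology at all; the topological pathologies of $W_{\mathbb{Q}}(R/p)$ recorded above intervene only afterwards, when one proves the uniqueness of $g_{\cris}$ and the fact that it lifts $g$ along $\chi_{\mathbb{Q}}$.
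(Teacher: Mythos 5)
Your argument is correct, and it is genuinely different from — and considerably shorter than — the one in the paper. You observe that in the banal case the statement is purely about group elements, so it collapses to the reconstruction theorem $\G(A)\simeq\Aut^{\otimes}(\omega_A)$ for a flat affine group scheme of finite type over $\mathbb{Z}_p$ (applied with $A=W_{\mathbb{Q}}(R/p)$ resp.\ $A=\Bcris(R/p)$, where $\G(A)=G(A)$), followed by applying each $\rho$ to the single Frobenius relation and, conversely, reading off that relation in one faithful representation. This is sound: the reconstruction theorem you invoke is purely algebraic, does not see the topological pathologies of $W_{\mathbb{Q}}(R/p)$, and is available for reductive $\G/\mathbb{Z}_p$ via the usual argument with a faithful representation and tensors cutting out $\G$; and the compatibility $\rho(F(x))=F(\rho(x))$, $\rho(\varphi(x))=\varphi(\rho(x))$ is just naturality of $\rho$ on $A$-points under ring homomorphisms, as you say.

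The paper instead derives part (a) from Daniels' Tannakian interpretation of $\G$-$\mu$-displays and their quasi-isogenies (\cite[Lemma 3.27]{Daniels1}), and for part (b) builds an $h$-frame structure on $\mathcal{F}_{\cris}(R/p)$, verifies $p$-completely \'etale descent of the Nygaard filtration using the Bhatt--Scholze graded-pieces computation, checks that higher windows over this $h$-frame have normal decompositions, and then redoes the relevant section of \cite{Daniels1} following \cite{Daniels2}. This establishes a full Tannakian interpretation of $\G$-$\mu$-windows for $\mathcal{F}_{\cris}$, which is more information than the lemma as stated requires and would be what one needs if one wanted to drop the banality hypothesis; your argument buys directness and avoids the $h$-frame/descent machinery at the cost of producing only the group-level statement. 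Since the lemma is explicitly formulated in the banal case (with the structure matrix $U$ resp.\ $U_{\cris}$ fixed), your shorter route is fully adequate here.
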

\begin{proof}
The item (a) follows from the Tannakian re-interpretation of $\G$-$\mu$-displays that was worked out by Patrick Daniels, see \cite[Lemma 3.27 ]{Daniels1}. Now the item (b) follows similiarly by using the h-frame structure on the frame $\mathcal{F}_{\cris}(R/p)$ that I discussed in section \ref{hframe auf prismatischem Frame}; fortunately one may use here results that were proven by Daniels in his second paper \cite{Daniels2}. Namely, let me denote by $\mathcal{F}_{\cris}^{h}(R/p)$ the h-frame introduced above. Then sending $B=R/p\rightarrow B^{\prime}=R^{\prime}/p$ towards $\mathcal{F}_{\cris}^{h}(B^{\prime})$ defines an étale sheaf of h-frames by descending the Nygaard-filtration; let me momentarily denote by $\underline{S}_{\cris}(B^{\prime})$ the $\mathbb{Z}$-graded ring corresponding to the h-frame structure (i.e. $\underline{S}_{\cris}(B^{\prime})_{\geq 0}=\bigoplus_{n\geq 0}\mathcal{N}^{\geq n}(\Prism_{B^{\prime}})$ and $\underline{S}_{\cris}(B^{\prime})_{\leq 0}=\Prism_{B^{\prime}}[t]).$ Next, note that the fibered category of finitely generated projective graded $\underline{S}_{\cris}(B^{\prime})$-modules satisfies descent for $\Spec((R/p)^{\text{aff}}_{\text{ét}})$ (i.e. it satisfies $p$-completely faithfully flat étale descent): to argue as in the proof \textit{loc.cit.}[Prop. A.18.] (coupled with $p$-completely faithfully flat descent for finite projective modules as provided by the Appendix to \cite{JohannesArthur} and its extension to finitely generated projetive graded (!) modules) one has to check that if $B\rightarrow B^{\prime}$ is $p$-completely faithfully flat étale, then
$$
\underline{S}_{\cris}(B)\rightarrow \underline{S}_{\cris}(B^{\prime})
$$
is $p$-completely faithfully flat étale, which follows because $\Prism_{B}\rightarrow \Prism_{B^{\prime}}$ is $p$-completely faithfully flat étale. Furthermore, one has that
$$
\mathcal{N}^{\geq n}(\Prism_{B})\otimes_{\Prism_{B}}\Prism_{B^{\prime}}\simeq \mathcal{N}^{\geq n}(\Prism_{B^{\prime}}).
$$
Namely, the ring $\Prism_{B^{\prime}}$ has then two $\mathbb{N}$-index and descending filtrations $\mathcal{N}^{\bullet}(\Prism_{B})\otimes_{\Prism_{B}}\Prism_{B^{\prime}}$ and $\mathcal{N}^{\bullet}(\Prism_{B^{\prime}})$ and there is a natural morphism of filtrations $\mathcal{N}^{\bullet}(\Prism_{B})\otimes_{\Prism_{B}}\Prism_{B^{\prime}}\rightarrow \mathcal{N}^{\bullet}(\Prism_{B^{\prime}}).$ To show that this is an isomorphism, it suffices to test this on graded pieces, but then
\begin{align*}
\gr^{i}(\mathcal{N}^{\bullet}(\Prism_{B})\widehat{\otimes}_{\Prism_{B}}\Prism_{B^{\prime}}) & \simeq \gr^{i}(\mathcal{N}^{\bullet}(\Prism_{B})\otimes_{\Prism_{B}}\Prism_{B^{\prime}} \\
 & \simeq \gr^{i}(\overline{\Prism}_{B})\lbrace i \rbrace \otimes_{B}B^{\prime} \\
 & \simeq \gr^{i}(\overline{\Prism}_{B^{\prime}})\lbrace i \rbrace \\
 &  \simeq \gr^{i}(\mathcal{N}^{\bullet}(\Prism_{B^{\prime}})), 
\end{align*}
where in the first isomorphism one uses that $\Prism_{B}\rightarrow \Prism_{B^{\prime}}$ is $p$-completely flat and $\Prism_{B},\Prism_{B^{\prime}}$ are $p$-torsion free (so that $.\widehat{\otimes}_{\Prism_{B}}\Prism_{B^{\prime}}$ is exact), in the second the computation of the graded pieces of the Nygaard-filtration provided by Bhatt-Scholze in \cite[Thm.12.2.]{bhatt2022prisms}, in the third that $B\rightarrow B^{\prime}$ is étale and in the last again [Thm.2.2.]\textit{loc.cit.} In Daniels' words (see \cite[Def. 3.6]{Daniels2}, the étale sheaf of $\mathbb{Z}$-graded rings $\underline{S}_{\cris}(.)$ satisfies descent for modules on $\Spec(R/p)^{\text{aff}}_{\text{ét}}.$ It follows from Lau's computation of the window group \cite[Prop. 6.2.2, Rem. 6.3.3]{LauHigherFrames} that the version of the window group agrees with the one defined using h-frames. Then one may cite \cite[Thm. 3.14]{Daniels2}, to obtain a Tannakian interpretation of $\G$-$\mu$-windows for the frame $\mathcal{F}_{\cris}(R/p).$ Observing that higher windows for the h-frame $\mathcal{F}^{h}_{\cris}(R/p)$ always admit a normal decomposition (which follows because $\Prism_{B}$ is henselian along $\mathcal{N}^{\geq 1}(\Prism_{B}),$ so that one may use \cite[Lemma 3.1.4]{LauHigherFrames}), one may redo section 3.4. in \cite{Daniels1}, to obtain the desired Tannakian interpretation of the crystalline $G$-quasi-isogeny.\footnote{Is this detailed enough?}
\end{proof}
The next observation will be critical for the following arguments to work: denote as usual by $\varphi$ the Frobenius-lift on $\Acris(R/p).$ One then has that for $[x]^{(n)},$ $x,n$ as above,
$$
\varphi([x]^{(n)})=\frac{(pn)!}{n!}[x]^{(np)},
$$
and inductively that
$$
\varphi^{m}([x]^{(n)})=\frac{(p^{m}n)!}{n!}[x]^{(np^{m})}.
$$
The key point is now that the $p$-adic valuation of $\frac{(p^{m}n)!}{n!}$ grows very fast. 
\begin{Lemma}
For $m\geq 1, n\geq 1,$ one has that
$$
\nu_{p}(\frac{(p^{m}n)!}{n!})=n\frac{(p^{m}-1)}{p-1}.
$$ 
\end{Lemma}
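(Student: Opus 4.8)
The plan is to compute the $p$-adic valuation via Legendre's formula. Recall that for any positive integer $N$ one has $\nu_p(N!)=\sum_{i\geq 1}\lfloor N/p^i\rfloor$. First I would apply this with $N=p^m n$ and split the sum at $i=m$:
$$
\nu_p\bigl((p^m n)!\bigr)=\sum_{i=1}^{m}\Bigl\lfloor \tfrac{p^m n}{p^i}\Bigr\rfloor+\sum_{i>m}\Bigl\lfloor \tfrac{p^m n}{p^i}\Bigr\rfloor=\sum_{i=1}^{m}p^{m-i}n+\sum_{j\geq 1}\Bigl\lfloor \tfrac{n}{p^{j}}\Bigr\rfloor,
$$
where in the first sum $p^i\mid p^m n$ so the floor is exact, and in the second I reindexed $j=i-m$. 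The first sum is $n(1+p+\cdots+p^{m-1})=n\frac{p^m-1}{p-1}$, and the second sum is exactly $\nu_p(n!)$ again by Legendre's formula.

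Subtracting $\nu_p(n!)$ from both sides then gives $\nu_p\bigl((p^m n)!/n!\bigr)=n\frac{p^m-1}{p-1}$, as claimed. (Alternatively one can phrase the same computation through digit sums: $\nu_p(N!)=\frac{N-s_p(N)}{p-1}$ with $s_p$ the base-$p$ digit sum, and note $s_p(p^m n)=s_p(n)$ since multiplication by $p^m$ merely shifts the base-$p$ expansion; this yields the identity immediately.)

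There is no real obstacle here — the statement is an elementary exercise in $p$-adic valuations of factorials, and the only thing to be careful about is the bookkeeping of the index shift in the second sum and the observation that $p^i$ exactly divides $p^m n$ for $i\leq m$. The point of recording it at this stage is that, combined with the preceding formula $\varphi^m([x]^{(n)})=\frac{(p^m n)!}{n!}[x]^{(np^m)}$, it shows that $\varphi$ acts on $\ker(\chi_{\mathbb Q})$ with $p$-adic valuation growing linearly in $m$ (uniformly in $n\geq 1$, since $\frac{p^m-1}{p-1}\geq p^{m-1}$), which is precisely the "very nilpotent" behaviour of Frobenius on $\ker(\Acris(R/p)\to W(R/p))$ needed for the contraction argument producing $g_{\cris}$.
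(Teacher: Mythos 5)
Your proof is correct, and your parenthetical digit-sum argument (using $\nu_p(N!)=\frac{N-s_p(N)}{p-1}$ and $s_p(p^m n)=s_p(n)$) is exactly the paper's proof; the floor-sum version you lead with is an equivalent reformulation of the same Legendre-formula computation.
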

\begin{proof}
Recall the following formula: if $n=n_{0}p^{0}+n_{1}p^{1}+...+n_{\ell}p^{\ell}$ is written in base $p,$ let $$s_{p}(n)=\sum_{k=0}^{\ell}n_{k},$$ then
\begin{equation}\label{Legendres Formel}
\nu_{p}(n!)=\frac{n-s_{p}(n)}{p-1}.
\end{equation}
I claim that $\nu_{p}((p^{m}n)!)=n\frac{(p^{m}-1)}{p-1}+\nu_{p}(n!).$ This implies the lemma.
But one has $s_{p}(n)=s_{p}(p^{m} n),$ so that the above formula (\ref{Legendres Formel}) gives
$$
\nu_{p}((p^{m}n)!)=\frac{p^{m}n-s_{p}(p^{m}n)}{p-1}=\frac{p^{m}n-s_{p}(n)}{p-1}=n\frac{p^{m}-1}{p-1}+\frac{n-s_{p}(n)}{p-1}=n\nu_{p}(p^{m}!)+\nu_{p}(n!),
$$
as desired.
\end{proof}
Now one can turn to the proof of the statement in the banal case:
\\
\textit{Uniqueness:} To prepare the proof, take a representation 
$$
\rho\colon \G\rightarrow \GL_{\mathbb{Z}_{p}}(T)
$$
on a finite free rank $r$ $\mathbb{Z}_{p}$-module.
\\
Of course, $\mu^{\prime}=\rho\circ\mu$ is not minuscule anymore, but one has the following fact: if $$M\in \Mat_{r}(p^{-N}\Acris(R/p)),$$ then $\mu^{\prime}(p)M\mu^{\prime}(p)^{-1}\in \Mat_{r}(p^{-(N+c)}\Acris(R/p)),$ i.e. conjugation by $\mu^{\prime}(p)$ makes the denominators worse, but only by a linear amount. This makes it $p$-adically controllable. To convince oneself of this, note that the cocharacter defined over $W(k_{0})[1/p]$
$$
\mu^{\prime}\colon \mathbb{G}_{m,W(k_{0})[1/p]}\rightarrow G_{W(k_{0})[1/p]}\rightarrow \GL_{r,W(k_{0}[1/p])}
$$
is given by $z\mapsto \text{diag}(t^{m_{1}},...,t^{m_{n}}),$ where $m_{i}\in\mathbb{Z}.$ Then one may take for $c=\min_{1\leq i,j \leq n}(m_{i}-m_{j}),$ because if $B$ is a $W(k_{0})[1/p]$-algebra and $M\in \Mat_{r}(B)$, then the $(i,j)$-th entry of $\mu^{\prime}(p)M\mu^{\prime}(p)^{-1}$is
$$
p^{m_{i}-m_{j}}M_{i,j}.
$$
Now assume that $g_{\cris}^{\prime}(\rho,T)\in \GL(T\otimes_{\mathbb{Z}_{p}}\Bcris(R/p)),$ such that $$\chi_{\mathbb{Q}}(g_{\cris}^{\prime}(\rho,T))=\chi_{\mathbb{Q}}(g_{\cris}(\rho,T))=g(\rho,T)$$ and which satisfies the equation of a crystalline quasi-isogeny.  Consider 
$$
M=g_{\cris}^{\prime}(\rho,T)-g_{\cris}(\rho,T)\in \End(T\otimes_{\mathbb{Z}_{p}}\Bcris(R/p)).
$$
The aim is to see that $M=0.$
Since $g_{\cris}(\rho,T)$ and $g_{\cris}^{\prime}(\rho,T)$ lift $g(\rho,T)$ under $\chi_{\mathbb{Q}},$ one sees that $M$ has coefficients in $\ker(\chi_{\mathbb{Q}})=K_{\cris}\otimes_{\mathbb{Z}_{p}}\mathbb{Q}_{p}.$ Now let $N_{0}:=\min_{i,j}(\nu_{p}(M_{i,j})),$ so that $M\in \Mat_{r}(p^{N_{0}}K_{\cris}).$ Furthermore, it is true that
\begin{equation}\label{Gleichung crystalline quasi-isogeny M}
M=\rho(\delta(u_{0}))\mu^{\prime}(p)\varphi(M)\mu^{\prime}(p)^{-1}\rho(U_{\cris}^{-1}),
\end{equation}
since both $g_{\cris}(\rho,T)$ and $g_{\cris}^{\prime}(\rho,T)$ satisfy the equations of a crystalline quasi-isogeny.
I claim that this implies that $M=0.$ Indeed, it is enough to show that the coefficients of $M$ lie in $\cap_{n=1}^{\infty} p^{n}\Acris(R/p),$ because $\Acris(R/p)$ is $p$-adically separated. First notice, that the equation (\ref{Gleichung crystalline quasi-isogeny M}) implies that 
$$
M\in \Mat_{r}(p^{-c}\varphi(p^{N_{0}}K_{\cris})).
$$
Iterating the equation, it follows that 
$$
M=\rho(\delta(u_{0}))\mu^{\prime}(p)\varphi[\rho(\delta(u_{0}))\mu^{\prime}(p)\varphi(M)\mu^{\prime}(p)^{-1}\rho(U_{\cris}^{-1})]\mu^{\prime}(p)^{-1}\rho(U_{\cris}^{-1}),
$$
so that one gets
$$
M\in \Mat_{r}(p^{-c}\varphi(p^{-c}\varphi(p^{N_{0}}K_{\cris})))\subseteq \Mat_{r}(p^{-2c}\varphi^{2}(p^{N_{0}}K_{\cris})).
$$
 It follows by induction that for every $m\geq 1,$ the following containement is satisfied:
$$
M\in \Mat_{r}(p^{-cm}\varphi^{m}(p^{N_{0}}K_{\cris})).
$$
But observe that $\varphi^{m}(K_{\cris})\subseteq p^{\frac{p^{m}-1}{p-1}}\Acris(R/p).$ Indeed, a typical element of $K_{\cris}$ is of the form
$$
[x]^{(n)},
$$
$n\geq 1,$ $x\in \ker(R^{\flat}\rightarrow R/p),$ as explained above. Then $\varphi^{m}([x]^{(n)})=\frac{(np^{m})!}{n!}[x]^{(np^{m})},$ but by the above Lemma,
$$
\nu_{p}(\frac{(np^{m})!}{n!})=\nu_{p}((np^{m})!)-\nu_{p}(n!)=n\frac{p^{m}-1}{p-1}.
$$
It follows that $\varphi^{m}([x]^{(n)})$ is indeed contained in $p^{n\frac{p^{m}-1}{p-1}}\Acris(R/p),$ which itself is then also contained inside $p^{\frac{p^{m}-1}{p-1}}\Acris(R/p).$ Observe now that
$$
p^{-mc+N_{0}}\cdot p^{\frac{p^{m}-1}{p-1}}=p^{\frac{p^{m}-1}{p-1}-cm+N_{0}}\rightarrow \infty,
$$
for $m\rightarrow \infty,$ because $p\geq 2$ and exponential growth beats any linear growth. In some sense, the fact that the action of $\varphi$ on $\ker(\chi)$ is very (!) $p$-adically nilpotent is the key why this argument works.
\\
In total, one deduces the vanishing of $M$ from the fact that $\Acris(R/p)$ is $p$-adically separated. This finishes the uniqueness.
\\
\textit{Functoriality and $\otimes$-compatibility, if a lift exists:} Observe first, that the previously proven uniqueness will also imply functoriality of $g_{\cris}(\rho,T)\in \GL(T\otimes_{\mathbb{Z}_{p}}\Bcris(R/p)),$ if it exists. Indeed, if $f\colon (\rho,T)\rightarrow (\rho^{\prime},T^{\prime})$ is a morphism of representations, then both $f(g_{\cris}(\rho,T)),g_{\cris}(\rho^{\prime},T^{\prime})\in \GL(T\otimes_{\mathbb{Z}_{p}}\Bcris(R/p))$ satisfy the equation of a crystalline quasi-isogeny (use that $\rho^{\prime}=\rho\circ f$) and
$$
\chi_{\mathbb{Q}}(f(g_{\cris}(\rho,T)))=f(\chi_{\mathbb{Q}}(g_{\cris}(\rho,T)))=f(g(\rho,T))=g(\rho^{\prime},T^{\prime})=\chi_{\mathbb{Q}}(g_{\cris}(\rho^{\prime},T^{\prime})),
$$
(use in the first equation that $f\colon \GL_{\mathbb{Z}_{p}}(T)\rightarrow \GL_{\mathbb{Z}_{p}}(T^{\prime})$ is a group scheme homomorphism and in the third equation known functoriality for the $G$-quasi-isogeny)
so that the uniqueness part proven above implies that actually $f(g_{\cris}(\rho,T))=g_{\cris}(\rho^{\prime},T^{\prime}).$ A similiar argument also takes care of the $\otimes$-compatibility of the hypothetical lifts $g_{\cris}(\rho,T).$
\\
\textit{Existence:}
Let a $G$-quasi-isogeny $g\in G(W_{\mathbb{Q}}(R/p))$ be given. Take a representation $(\rho,T)\in \Rep_{\mathbb{Z}_{p}}(\G)$ of $\G$ on a finite free, rank $r$ $\mathbb{Z}_{p}$-module. Consider $\rho(g)\in \Mat_{r}(W_{\mathbb{Q}}(R/p)).$ Because $\chi_{\mathbb{Q}}\colon \Bcris(R/p)\rightarrow W_{\mathbb{Q}}(R/p)$ is surjective, one can lift $\rho(g)$ arbitrarily to an element
$$
M^{\prime}\in \Mat_{r}(\Bcris(R/p)),
$$
simply by lifting all the entries in the matrix $\rho(g).$ Define inductively,
$$
C_{0}=M^{\prime}
$$
and
$$C_{m+1}=\rho(\delta(u_{0}))\mu^{\prime}(p)\varphi(C_{m})(\rho(U_{\cris})\mu^{\prime}(p))^{-1}= \rho(\delta(u_{0}))\mu^{\prime}(p)\varphi(C_{m})\mu^{\prime}(p)^{-1}\rho(U_{\cris}^{-1})\in \Mat_{r}(\Bcris(R/p)).$$
This is a well-defined element, as $U_{\cris}\in \G(\Acris(R/p)),$ so that $\rho(U_{\cris})\in \GL_{r}(\Acris(R/p))$ and $\mu^{\prime}(p)^{-1}\in \GL_{r}(W_{\mathbb{Q}}(k)).$
One finds an integer $N(\rho,T)=N>0,$ (depending on the choice of the representation $(\rho,T)$) such that all $\rho(\delta(u_{0}))\mu^{\prime}(p),\rho(U_{\cris})\mu^{\prime}(p),C_{0}$ are contained inside
$$
\Mat_{r}(p^{-N}\Acris(R/p)).
$$
Using again that conjugation by $\mu^{\prime}(p)$ leads to a linear amount of growth in denominators of $p,$ it follows that
$$
C_{m}\in \Mat_{r}(p^{-(N+cm)}\Acris(R/p)).
$$
Now the claim is that the sequence $\lbrace C_{m} \rbrace_{m}$ converges in $\Mat_{r}(\Bcris(R/p))$ for the topology induced by the Banach-space topology on $\Bcris(R/p)$ introduced in the beginning of this section.
\\
To show this, one has to see that $\lbrace C_{m+1}-C_{m}\rbrace_{m}$ is a zero-sequence for the norm $
|b|=\inf\lbrace |\lambda|_{\mathbb{Q}_{p}}\colon b\cdot \lambda \in \Acris(R/p)\rbrace
$, so that $\lbrace C_{m} \rbrace_{m}$ is Cauchy and it will indeed converge, as $\Bcris(R/p)$ is complete and separated in the topology induced by the norm $|.|$. Now calculate that
$$
C_{1}-C_{0}=\rho(\delta(u_{0}))\mu^{\prime}(p)\varphi(C_{0})(\mu^{\prime}(p))^{-1}(\rho(U_{\cris}))^{-1}-C_{0}\in \Mat_{r}(K_{\cris}\otimes_{\mathbb{Z}_{p}}\mathbb{Q}_{p}).
$$
Indeed, after applying $\chi_{\mathbb{Q}}$ one gets the equation
$$
\rho(u_{0}\mu(p)F(g)\mu(p)^{-1}U^{-1})-\rho(g)=0,
$$
by the assumption that $g\in G(W_{\mathbb{Q}}(R/p))$ is a $G$-quasi-isogeny. Now define $N_{1}:=\min_{i,j}(\nu_{p}((C_{1}-C_{0})_{i,j})).$ Then one has that
$$
C_{1}-C_{0}\in \Mat_{r}(p^{N_{1}}K_{\cris}).
$$
It follows that
$$
C_{2}-C_{1}=\rho(\delta(u_{0}))\mu^{\prime}(p)\varphi(C_{1}-C_{0})(\mu^{\prime}(p))^{-1}(\rho(U_{\cris}))^{-1}
$$
is contained inside of
$$
\Mat_{r}(p^{-c}\varphi(p^{N_{1}}K_{\cris})).
$$
By induction, it follows that
$$
C_{m+1}-C_{m}=\rho(\delta(u_{0}))\mu^{\prime}(p)\varphi(C_{m}-C_{m-1})(\mu^{\prime}(p))^{-1}(\rho(U_{\cris}))^{-1}
$$
is contained inside of
$$
\Mat_{r}(p^{-c}\varphi(p^{-cm}\varphi^{m}(p^{N_{1}}K_{\cris})))\subseteq \Mat_{r}(p^{-(m+1)c}\varphi^{m+1}(p^{N_{1}}K_{\cris})),
$$
here again the $c$ depends on the choice of the representation $(\rho,T).$ 
But above it was already explained, that $p^{-cm}\varphi^{m}(K_{\cris})\subseteq p^{\frac{p^{m}-1}{p-1}-cm}\Acris(R/p),$ and using again that 
$$
p^{-mc+N_{1}}\cdot p^{\frac{p^{m}-1}{p-1}}=p^{\frac{p^{m}-1}{p-1}-cm+N_{1}}\rightarrow \infty,
$$
for $m\rightarrow \infty,$ one sees that the sequence $\lbrace C_{m+1}-C_{m}\rbrace_{m}$ indeed converges to zero for the $p$-adic topology.
\\
It remains to see that
$$
g_{\cris}(\rho,T)=\lim_{m\rightarrow \infty}C_{m}\in \GL(T\otimes_{\mathbb{Z}_{p}} \Bcris(R/p)),
$$
because the equation of a crystalline quasi-isogeny and $\chi_{\mathbb{Q}}(g_{\cris})=\rho(g)$ are true by construction. Namely, here the discussion of the topological pathologies of the ring $W_{\mathbb{Q}}(R/p)$ becomes relevant; this discussion implies that to show that $\chi_{\mathbb{Q}}(g_{\cris})=\rho(g),$ one has to convince onself that $\chi_{\mathbb{Q}}(C_{m})=\rho(g).$ This is done by induction. First, choose $C_{0}$ to be a lift of $\rho(g)$ under $\chi_{\mathbb{Q}},$ so that $\chi_{\mathbb{Q}}(C_{0})=\rho(g).$ Assume that one already knows that $\chi_{\mathbb{Q}}(C_{m})=\rho(g),$ then one gets
$$
\chi_{\mathbb{Q}}(C_{m+1})=\rho(u_{0}\mu(p)F(g)\mu(p)^{-1}U)=\rho(g),
$$
where it was used that $\chi\circ \varphi=F\circ \chi,$ $\chi(U_{\cris})=U$ and that $g$ is a $G$-quasi-isogeny between $u_{0}$ and $U.$
\\
Next, one has to show that the $p$-adic limit $\lim_{m\rightarrow \infty}C_{m}$ is indeed invertible. To see this, lift the matrix $\rho(g^{-1})\in \Mat_{r}(W_{\mathbb{Q}}(R/p))$ similiarly to a matrix $N^{\prime}\in Mat_{r}(\Bcris(R/p)).$ Define inductively, $D_{0}=N^{\prime}$ and 
$$
D_{m}=\rho(U_{\cris})\mu^{\prime}(p)\varphi(D_{m-1})\mu^{\prime}(p)^{-1}\rho(\delta(u_{0}))^{-1}.
$$
Then one sees by the same arguments as before, that
$$
\lim_{m\rightarrow \infty} D_{m}\in \Mat_{r}(\Bcris(R/p))
$$
exists. I claim that
$$
\id_{r}-CD=0.
$$
For this, one shows that $\lbrace \id_{r}-C_{m}D_{m}\rbrace_{m}$ is a zero-sequence. Inductively one sees that
\begin{align*}
\id_{r}-C_{m+1}D_{m+1} & =\id_{r}-\rho(\delta(u_{0}))\mu^{\prime}(p)\varphi(C_{m}D_{m})\mu^{\prime}(p)^{-1}\rho(\delta(u_{0}))^{-1} \\
& =\rho(\delta(u_{0}))\mu^{\prime}(p)(\varphi(\id_{r}-C_{m}D_{m}))\mu^{\prime}(p)^{-1}\rho(\delta(u_{0}))^{-1}
\end{align*}
is contained inside of
$$
\Mat_{r}(p^{-c(m+1)}\varphi^{m+1}(K_{\cris}\otimes_{\mathbb{Z}_{p}}\mathbb{Q}_{p})).
$$
By the same argument as before, one sees that $\lbrace \id_{r}-C_{m}D_{m}\rbrace_{m}$ indeed converges to zero. This finishes the proof.
\subsection{Extension to the general case}
In the previous section I just considered the case of a $p$-torsion free integral perfectoid ring $R$; this was needed to insure that the semi-perfect ring $R/p$ is quasi-regular. Here I want to explain how to extend the translation of the quasi-isogeny towards the case of a general integral perfectoid ring. This will use the same ideas as in section \ref{Section extension to the general case} to reduce separatly to the case of a perfect ring in characteristic $p$ and to a $p$-torsion free integral perfectoid ring.
\\
Let me first adress the translation in the case of a perfect ring: let $R$ be a perfect $k$-algebra, $\mathcal{P}_{0}=(\mathcal{Q}_{0},\alpha_{0})$ an adjoint nilpotent $\G$-$\mu$-Display over $\Spec(k),$ $\mathcal{P}=(\mathcal{Q},\alpha)$ a $\G$-$\mu$-Display over $R;$ recall that one can also consider $\mathcal{P}_{0}$ resp. $\mathcal{P}$ as $\G$-torsors over $\Spec(W(k))$ resp. $\Spec(W(R))$ with additional structure. One is given a $G$-quasi-isogeny
$$
\rho\colon \mathcal{P}_{0}\times_{\Spec(k)}\Spec(R)\dashrightarrow \mathcal{P}.
$$
\begin{Lemma}\label{Uebersetzung Quasi-isogenie perfekter Fall}
The datum of $\rho$ is equivalent to the datum of an isomorphism of $G$-torsors
$$
\rho^{\prime}\colon \mathcal{P}_{0}\times_{\Spec(W(k))}\Spec(W(R)[1/p])\simeq \mathcal{P}\times_{\Spec(W(R))}\Spec(W(R)[1/p]),
$$
that is compatible with Frobenius-structure.
\end{Lemma}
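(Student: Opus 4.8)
The plan is to unwind the Bültel--Pappas definition of a $G$-quasi-isogeny in the case where the base ring $R$ is a perfect $k$-algebra, and to observe that in this case all the topological and descent pathologies that forced the detour through $W(R/p)^{\mathrm{bdd}}$ and $\Bcris(R/p)$ in the proof of Lemma \ref{Lemma Vergleich der Quasi-isogenie kristalline Qis, p torsionfreier Fall} disappear. Indeed, $R$ being reduced, $W(R)$ is $p$-torsion free, so $W_{\mathbb{Q}}(R)=W(R)[1/p]$ is an ordinary localisation without pathologies, and $\mathcal{W}(R)=\mathcal{F}_{\infintesimal}(R)=\mathcal{F}_{\cris}(R)$, so a $\G$-$\mu$-display over $R$ is literally a $\G$-$\mu$-window for $\mathcal{W}(R)$. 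First I would make explicit the functor sending $\mathcal{P}=(Q,\alpha)$ to its underlying $\G$-torsor $\mathcal{Q}_{\mathcal{P}}$ on $\Spec(W(R))$ (pushout of $Q$ along $\G(\mathcal{W})_{\mu}\hookrightarrow L^{+}(\G_{W(k)})$, identified via descent with a $\G$-torsor on $\Spec(W(R))$) and note that $\alpha$ together with $\Phi_{\G,\mu,\mathcal{W}}$ --- whose difference from $\varphi$ is conjugation by $\mu(p)$, hence invertible after inverting $p$ --- equips $\mathcal{Q}_{\mathcal{P}}[1/p]$ with a $\varphi$-semilinear isomorphism $F_{\mathcal{P}}\colon(\varphi^{*}\mathcal{Q}_{\mathcal{P}})[1/p]\xrightarrow{\sim}\mathcal{Q}_{\mathcal{P}}[1/p]$, i.e. a $\G$-isocrystal over $W(R)[1/p]$; this is the ``additional structure'' referred to in the statement.

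Next I would spell out, exactly as in the Remark following Lemma \ref{Lemma Vergleich der Quasi-isogenie kristalline Qis, p torsionfreier Fall} but now over $R$ itself, what the datum of $\rho$ amounts to concretely: choose a $p$-completely faithfully flat étale cover $R\to R'$ trivialising $(Q,\alpha)$ and a structure matrix $U'\in\G(W(R'))$; then $\rho$ is encoded by $g'\in G(W(R')[1/p])$ with $g'^{-1}u_{0}\mu(p)F(g')=U'\mu(p)$, such that the two pullbacks of $g'$ to $W(R'^{(2)})[1/p]$ agree and the cocycle condition holds over $W(R'^{(3)})[1/p]$. The content of the Lemma is that these descent data glue to a Frobenius-equivariant isomorphism $\rho'$ of $\G$-torsors over $\Spec(W(R)[1/p])$, and conversely; so one needs faithfully flat descent for $\G$-torsors and their morphisms along $W(R)[1/p]\to W(R')[1/p]$. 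Here $W(R)\to W(R')$ is $p$-completely faithfully flat étale ($W(R')$ is the unique such lift of $R\to R'$, compare Lemma \ref{Eindeutige integral perfektoide algebra}) and $W(R)$ is $p$-torsion free, so one can run the argument of the proof of Lemma \ref{Descent fuer Isokristalle} verbatim: Drinfeld's Banachian descent \cite[Prop. 3.5.4]{drinfeld2018stacky} gives descent of $\Bun_{\mathbb{Q}}$ along the simplicial ring $W(R'_{\bullet})[1/p]$, and the Tannakian trick (realising $\G$ as the stabiliser of a line in a faithful representation) upgrades this to $\G$-torsors. Compatibility of $\rho'$ with $F_{\mathcal{P}_{0}}$ and $F_{\mathcal{P}}$, and the fact that the assignments $\rho\leftrightarrow\rho'$ are mutually inverse, are then formal.

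The main obstacle is exactly this descent statement; everything else is bookkeeping with the definitions of Section \ref{Section Gmu windows}. Two simplifications are available. First, by Bültel's (unpublished) result recalled in the introduction $\mathcal{M}^{\mathrm{BP}}$ has no nontrivial automorphisms, which gives \emph{uniqueness} of $\rho'$ for free, so only the gluing (existence) is at issue. Second, if one prefers to avoid Drinfeld's machinery, one may reduce the gluing to the banal case, where $g'$ is by definition the desired isomorphism, and then appeal to faithfully flat descent for quasi-coherent sheaves on $\Spec(W(R)[1/p])$ together with the Tannakian formalism directly. Finally I would record that the equivalence is compatible with base change along the maps of the pullback square $R\to R_{1}$, $R\to R_{2}$, $R\to R_{3}$ of Section \ref{Section extension to the general case}, so that the present Lemma combines with the $p$-torsion free case, Lemma \ref{Lemma Vergleich der Quasi-isogenie kristalline Qis, p torsionfreier Fall}, to yield the translation of the quasi-isogeny over an arbitrary integral perfectoid $W(k)$-algebra.
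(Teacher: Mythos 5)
Your primary argument is exactly the paper's: the paper's own proof is a two-sentence reference to the method of Lemma \ref{Descent fuer Isokristalle}, invoking Drinfeld's Banachian descent along $W(R)\to W(R')$ for a faithfully flat \'etale map of perfect rings, together with the identification $W(R'^{\bullet})\simeq W(R')^{\widehat{\otimes}\bullet}$. You unwind the definitions much more explicitly than the paper does, which is useful, and your reduction to the banal case and the Tannakian trick to pass from vector bundles to $\G$-torsors are all as in Lemma \ref{Descent fuer Isokristalle}, so the core of the proposal is sound.

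Both of your claimed ``simplifications'' are, however, off the mark. For (a): the uniqueness of $\rho'$ given $\rho$ is the separatedness of the sheaf $\underline{\Isom}_G$ for the \'etale topology on $\Spec(R)$, which is immediate from the injectivity of $W(R)[1/p]\to W(R')[1/p]$ (itself a consequence of the flatness of $W(R)\to W(R')$ for perfect rings). It has nothing to do with B\"ultel's result about automorphisms of objects $(\mathcal{P},\rho)\in\mathcal{M}^{\text{BP}}$ --- that is a statement about the whole pair, not about $\underline{\Isom}$ of two fixed isocrystals, and it is not what gives uniqueness here.

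For (b) there is a genuine gap. The descent datum encoded by $\rho$ lives on the simplicial ring $W(B'^{(\bullet)})[1/p]$, where $B'^{(n)}=B'\otimes_{B}\cdots\otimes_{B}B'$ is the ordinary tensor product in characteristic $p$; each $W(B'^{(n)})$ is automatically $p$-complete since $B'^{(n)}$ is perfect. Ordinary faithfully flat descent for $\Spec(W(R)[1/p])\to\Spec(W(R')[1/p])$ would instead use the \emph{uncompleted} Cech nerve $\bigl(W(B')\otimes_{W(B)}\cdots\otimes_{W(B)}W(B')\bigr)[1/p]$, and in general $W(B')\otimes_{W(B)}W(B')$ is \emph{not} $p$-adically complete (its $p$-completion is $W(B'^{(2)})$, but the two are genuinely different rings after inverting $p$ when the cover is not finite). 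Comparing descent along the completed versus the uncompleted Cech nerve is precisely what Drinfeld's theorem buys you, so the claim that one can ``avoid Drinfeld's machinery'' by ordinary fpqc descent simply moves the difficulty elsewhere without resolving it. Drop both asides and the proposal is correct and agrees with the paper.
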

\begin{proof}
This follows from the fact that the association $R\rightarrow R^{\prime}\mapsto \underline{\Isom}_{G}(\mathcal{P}_{0, W(R^{\prime})[1/p]},\mathcal{P}_{W(R^{\prime})[1/p]})$ is a sheaf for the étale topology. For this the argument is as in Lemma \ref{Descent fuer Isokristalle} but easier, using that if $R\rightarrow R^{\prime}$ is a faithfully flat étale morphism of perfect rings, then $W(R)\rightarrow W(R^{\prime})$ is $p$-completely faithfully flat étale and that if $R^{\prime ^{\bullet}}$ is the Cech-nerve of the covering $R\rightarrow R^{\prime},$ one has that $W(R^{\prime \bullet})\simeq W(R^{\prime})^{\bullet},$ where this is the simplicial object with $W(R^{\prime})\widehat{\otimes}_{W(R)}...W(R^{\prime})$ in degree $n:$ now one can again directly apply Drinfeld's descent result for 'Banachian'-modules. 
\end{proof}
Now one can turn to the general case: let $R$ be a non-necessarily $p$-torsion free integral perfectoid $W(k)$-algebra, and fix for later use a perfectoid pseudo-uniformizer $\varpi\in R$ dividing $p$. Let again $R_{1}=R/R[\sqrt{pR}],$ $R_{2}=R/\sqrt{pR}$ and $R_{3}=R_{1}/\sqrt{pR_{1}}.$ Then $R_{1}$ is $p$-torsion free integral perfectoid, while $R_{2}$ and $R_{3}$ are perfect. One then has a crucial pullback and pushout square with the canonical maps
$$
\xymatrix{
R \ar[r] \ar[d] & R_{1} \ar[d] \\
R_{2} \ar[r] & R_{3},
}
$$
where the morphisms $R_{1}\rightarrow R_{3} \leftarrow R_{2}$ are both surjective. Then one the following pullback square
$$
\xymatrix{
\Acris(R) \ar[r] \ar[d] & \Acris(R_{1}) \ar[d] \\
\Acris(R_{2}) \ar[r] & \Acris(R_{3}),
}
$$
where $\Acris(R_{1})\rightarrow \Acris(R_{3}) \leftarrow \Acris(R_{2})$ is still surjective. Inverting $p,$ one gets similiarly that
$$
\Bcris(R)\simeq \Bcris(R_{1})\times_{\Bcris(R_{3})} \Bcris(R_{2})
$$
and the surjectivity is still satisfied.\footnote{Here one has to use that inverting $p$ commutes with a fiber product, which follows because there is a natural morphism from $\Bcris(R)$ into $\Bcris(R_{1})\times_{\Bcris(R_{3})} \Bcris(R_{2})$ and for the bijectivity, which is a question about abelian groups, one uses that filtered colimits commute with finite limits.} Let as before $\mathcal{P}_{0}$ be an adjoint nilpotent $\G$-$\mu$-Display and $\mathcal{P}$ a $\G$-$\mu$-Display over $R.$ Suppose one is given a $G$-quasi-isogeny\footnote{Note that as $\sqrt{pR}=\bigcup_{n}(\varpi^{1/p^{n}}),$ the image of $\varpi$ in $R_{1}$ and $R_{3}$ is zero and therefore one may apply the case discussed in lemma \ref{Uebersetzung Quasi-isogenie perfekter Fall} to the base change of $\rho.$}
$$
\rho\colon \mathcal{P}_{0}\times_{\Spec(k)}\Spec(R/\varpi)\dashrightarrow \mathcal{P}\times_{\Spec(R)}\Spec(R/\varpi).
$$

Then the $\G$-$\mu$-Display $\mathcal{P}$ is adjoint nilpotent and by Prop. \ref{Deszent von Acris nach Ainf} corresponds to a $\G$-$\mu$-window for the frame $\mathcal{F}_{\infintesimal}(R),$ which then gives rise to a $\G$-Breuil-Kisin-Fargues modulo $(\mathcal{P},\varphi_{\mathcal{P}})$ over $\Spec(\Ainf(R)),$ which is of type $\mu.$ Finally, let $\mathcal{P}_{\cris}=\mathcal{P}\times_{\Spec(\Ainf(R))}\Spec(\Acris(R))$ be the base-change, with Frobenius-structure $\varphi_{\mathcal{P}_{\cris}}.$ 
\begin{Lemma}\label{Uebersetzung der Qis allgemeiner Fall}\footnote{For peace of mind, I recall that $\Acris(R)=\Acris(R/p)=\Acris(R/\varpi),$ since $R^{\flat}=\lim_{\text{Frob}}R/pR=\lim_{\text{Frob}}R/\varpi R.$}
There is a uniquely determined isomorphism of $G$-torsors over $\Spec(\Bcris(R))$
$$
\rho_{\cris}\colon \mathcal{P}_{0}\times_{\Spec(W(k))}\Spec(\Bcris(R))\simeq \mathcal{P}_{\cris}\times_{\Spec(\Acris(R))}\Spec(\Bcris(R)),
$$
which is compatible with the Frobenius-structure and that lifts the given $G$-quasi-isogeny $$\rho\colon \mathcal{P}_{0}\times_{\Spec(k)}\Spec(R/p)\dashrightarrow \mathcal{P}\times_{\Spec(R)}\Spec(R/p).
$$
\end{Lemma}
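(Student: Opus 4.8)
The plan is to follow the reduction of Section~\ref{Section extension to the general case}: deduce the statement from the two cases already settled --- the $p$-torsion free case (Lemma~\ref{Lemma Vergleich der Quasi-isogenie kristalline Qis, p torsionfreier Fall}) and the perfect case (Lemma~\ref{Uebersetzung Quasi-isogenie perfekter Fall}) --- along the Milnor square $\Bcris(R)\simeq \Bcris(R_1)\times_{\Bcris(R_3)}\Bcris(R_2)$ recorded just above the statement, in which both maps to $\Bcris(R_3)$ are surjective. Note that $\mathcal{P}_{\cris}$ and $\mathcal{P}_0$ restrict to the analogous pieces over $\Spec(\Acris(R_i))$ (for $\mathcal{P}_0$ by base change from $W(k)$; for $\mathcal{P}_{\cris}$ using $\Spec(\Acris(R))=\Spec(\Acris(R_1))\times_{\Spec(\Acris(R_3))}\Spec(\Acris(R_2))$), and the Frobenius on $\Bcris(R)$ is recovered from those on the three pieces through the same square.

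First I would apply Lemma~\ref{Lemma Vergleich der Quasi-isogenie kristalline Qis, p torsionfreier Fall} to the $p$-torsion free ring $R_1$ and Lemma~\ref{Uebersetzung Quasi-isogenie perfekter Fall} to the perfect rings $R_2,R_3$ (recall that the image of $\varpi$ in $R_1$ and $R_3$ is $0$, so the base change of $\rho$ makes sense there), obtaining Frobenius-compatible isomorphisms $\rho_{\cris,i}$ over $\Bcris(R_i)$ lifting the base change $\rho_i$ of $\rho$, for $i=1,2,3$. For $i\in\{2,3\}$ the ring $R_i$ is perfect, so $\Acris(R_i)=W(R_i)$, the map $\chi_{\mathbb{Q}}$ is the identity, and ``$\rho_{\cris,i}$ lifts $\rho_i$'' just says $\rho_{\cris,i}=\rho_i$ under the identification of Lemma~\ref{Uebersetzung Quasi-isogenie perfekter Fall}. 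Next I would check that $\rho_{\cris,1}$ and $\rho_{\cris,2}$ restrict to the same isomorphism over $\Bcris(R_3)$: by functoriality of $\chi_{\mathbb{Q}}$ along $R_i\to R_3$ both restrictions lift $\rho_3$, and since $\chi_{\mathbb{Q}}$ is the identity for the perfect ring $R_3$ both coincide with $\rho_{\cris,3}$.

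Then I would glue $\rho_{\cris,1}$ and $\rho_{\cris,2}$ by Milnor patching along the square for $\Bcris$ to an isomorphism $\rho_{\cris}$ of $G$-torsors over $\Bcris(R)$, automatically Frobenius-compatible by the last remark of the first paragraph. To make Milnor patching work for $G$-torsors rather than vector bundles I would reduce to the linear situation exactly as in the proof of Lemma~\ref{Descent fuer Isokristalle}: pick a faithful $G\hookrightarrow\GL_n$ realising $G$ as the stabiliser of a line $L\subset W$, patch the underlying rank-$n$ module and the line separately via classical Milnor patching for finitely generated projective modules, and observe the result carries a $G$-structure. Uniqueness of $\rho_{\cris}$ as a Frobenius-compatible lift of $\rho$ is then easy: a second lift agrees with $\rho_{\cris}$ over $\Bcris(R_1)$ by the uniqueness clause of Lemma~\ref{Lemma Vergleich der Quasi-isogenie kristalline Qis, p torsionfreier Fall} and over $\Bcris(R_2)=W(R_2)[1/p]$ because there the datum \emph{is} the quasi-isogeny, hence the two agree over $\Bcris(R)$ since $\Bcris(R)$ embeds into $\Bcris(R_1)\times\Bcris(R_2)$.

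The step I expect to be the main obstacle is the verification that $\rho_{\cris}$ genuinely lifts $\rho$. One knows $\chi_{\mathbb{Q}}(\rho_{\cris})$ and $\rho$ have the same restriction to $W_{\mathbb{Q}}(R_1/p)$ and to $W_{\mathbb{Q}}(R_2/p)$, and to conclude equality over $W_{\mathbb{Q}}(R/p)$ one needs that the formation of $W_{\mathbb{Q}}(\cdot)$ also turns the Milnor square $R=R_1\times_{R_3}R_2$ into a Milnor square with surjectivity preserved --- equivalently, that the datum ``lifts $\rho$'' glues along the square, so that $\rho_{\cris,1}$ and $\rho_{\cris,2}$ may be patched \emph{as lifts}. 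This is where the topological pathologies of $W_{\mathbb{Q}}(R/p)$ discussed in Section~\ref{Section Translation of the Quasi-isogeny} re-enter, and I would handle it by the same bookkeeping as in Lau's $\GL_n$-treatment, using the ring $W(R/p)^{\text{bdd}}$ and that inverting $p$ commutes with finite limits. Everything else is routine: the three base-change statements, the étale-sheaf properties of the relevant $\Isom$-functors from Lemma~\ref{Descent fuer Isokristalle} and the proof of Lemma~\ref{Uebersetzung Quasi-isogenie perfekter Fall}, and compatibility of all constructions with Frobenius.
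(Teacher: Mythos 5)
Your proposal follows essentially the same route as the paper: decompose along the Milnor square $\Bcris(R)\simeq \Bcris(R_1)\times_{\Bcris(R_3)}\Bcris(R_2)$, apply Lemma~\ref{Lemma Vergleich der Quasi-isogenie kristalline Qis, p torsionfreier Fall} to the $p$-torsion free piece and Lemma~\ref{Uebersetzung Quasi-isogenie perfekter Fall} to the two perfect pieces, observe that the outputs coincide over $\Bcris(R_3)$ because $\chi_{\mathbb{Q}}$ is the identity there, and glue. The only difference lies in the gluing step: the paper represents $\underline{\Isom}_{G}(\mathcal{P}_{0,\Bcris(R)},\mathcal{P}_{\cris,\Bcris(R)})$ by a flat affine $\Bcris(R)$-scheme and cites \cite[Prop.~2.2.15]{BouthierCesnavicius} to conclude its section functor takes the Milnor square to a pullback, whereas you propose to reduce via a faithful embedding $G\hookrightarrow\GL_n$ and classical Milnor patching for Hom-sets of finite projective modules --- a more elementary argument that reproves the special case of that reference you actually need, at the cost of carrying along the line sub-bundle. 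Your closing worry about verifying the lift condition on $\chi_{\mathbb{Q}}$-level is legitimate; the paper does not spell this out either, but handles it in effect by noting that the elements $x,y$ produced by the prior lemmas are the unique lifts, so the glued section inherits the lifting property because each leg of the square does.
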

\begin{proof}
Consider $X=\underline{\Isom}_{G}(\mathcal{P}_{0, \Bcris(R)},\mathcal{P}_{\cris \Bcris(R)});$ as $G$ is smooth and affine, this functor is representable by a flat and affine scheme over $\Spec(\Bcris(R)).$ Then one is in a situation, where one can use \cite[Prop. 2.2.15]{BouthierCesnavicius}, to deduce
\begin{equation}
X(\Bcris(R))\simeq X(\Bcris(R_{1}))\times_{X(\Bcris(R_{3}))} X(\Bcris(R_{2})).
\end{equation}
As explained before, one can use the given $G$-quasi-isogeny $\rho,$ to produce uniquely determined elements $x,y\in X(\Bcris(R_{1})) \times X(\Bcris(R_{2})),$ which have the property that mapping towards $\Bcris(R_{3}))$ they both agree, i.e. $z=(x,y)\in X(\Bcris(R)).$ This isomorphism $z$ is still compatible with Frobenius structures and gives then the desired lift.
\end{proof}
\section{Applications to the Bültel-Pappas moduli problem}\label{Applications to the Bueltel Pappas}
\subsection{Recollections on the Bültel-Pappas moduli problem}\label{Subsection Recollections on the Bueltel-Pappas moduli problem}
Let $(G,\lbrace \mu \rbrace, [b])$ be an unramified local Shimura-datum over $\mathbb{Q}_{p},$ (c.f. \cite[Def. 24.1.1.]{Berkeleylectures}, unramified means here just that the reductive group $G$ over $\mathbb{Q}_{p}$ is unramified, i.e. is quasi-split and split after an unramified extension) and $E$ the local Shimura-field; an unramified extension of $\mathbb{Q}_{p}.$ Write as usual $\breve{E}$ for the completion of the maximal unramified extension of $E$ and denote by $k$ its residue-field. Let $\G$ be a reductive group scheme over $\mathbb{Z}_{p},$ which is an integral model of $G.$ One may and do assume that one finds a representative $\mu\in \lbrace \mu \rbrace,$ which extends integrally, i.e. is given by a minuscule cocharchter
$$
\mu\colon \mathbb{G}_{m,\mathcal{O}_{E}}\rightarrow \G_{\mathcal{O}_{E}}.
$$
Here one use the assumption that one is working in the unramified case.
Then its $\G(\mathcal{O}_{E})$-conjugation class is well-defined (c.f. the discussion in \cite[section 4.1.1.]{BueltelPappas}).
Let $K=\G(\mathbb{Z}_{p})\subseteq G(\mathbb{Q}_{p})$ and $\Sh_{K}$ the local Shimura variety with maximal hyperspecial level structure $K$ associated to $(G,\lbrace \mu \rbrace, [b],K)$ by Scholze, \cite[Def. 24.1.3]{Berkeleylectures}.
\\
Next, I want to consider the moduli problem of adjoint nilpotent $\G$-$\mu$-displays up to isogeny; to have this well-defined I will add the following further assumptions to the present set-up: I assume that the $\sigma$-conjugacy $[b]\in B(G,\mu)$ has a representative $b=u_{0}\mu(p),$ for some $u_{0}\in \G(W(k))$ (in total one considers an integral unramified local Shimura-datum as in \cite[Def. 4.1.2]{BueltelPappas}) and then one requires that it satisfies the adjoint nilpotency condition, i.e. the isocrystal which is constructed from $b$ using the adjoint representation has all slopes $>-1.$ 
\\
Let then $\mathcal{M}^{\BP}=\mathcal{M}^{\BP}(\G,\lbrace \mu \rbrace, [b])$ be the moduli problem of adjoint nilpotent $\G$-$\mu$-displays up to isogeny, that was introduced by Bültel-Pappas in \cite[section 4.2.]{BueltelPappas}. Furthermore, denote by $\mathcal{P}_{u_{0}}$ the banal $\G$-$\mu$-display over $k$ constructed from $u_{0}.$ Recall the following conjecture put forward by Bültel-Pappas.
\begin{Conjecture}{(Bültel-Pappas \cite[Conjecture 4.2.1.]{BueltelPappas})}\label{Darstellbarkeitsvermutung BP}
The functor
$$
\mathcal{M}^{\BP}\colon \Nilp_{W}^{\text{op}}\rightarrow \Set,
$$
is representably by a formal scheme, which is locally formally of finite type and formally smooth over $\Spf(\mathcal{O}_{\breve{E}}).$
\end{Conjecture}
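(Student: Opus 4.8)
The plan is to upgrade the $v$-sheaf theoretic results proven earlier to a statement about the honest functor $\mathcal{M}^{\BP}$ on $\Nilp_{W}$ by combining them with a representability criterion for $v$-sheaves in the style of Gleason's thesis \cite{GleasonPhD}. First I would construct, exactly as in the proof of Proposition \ref{Formale Modelle fuer die Tubes}, the morphism of $v$-sheaves over $\Spd(\mathcal{O}_{\breve{E}})$
$$
\mathcal{M}^{\BP}_{v}\rightarrow \mathcal{M}^{\text{int}}_{\text{Scholze}},
$$
using Corollary \ref{Korollar Vergleich Displays und BKF} together with the translation of the quasi-isogeny, Lemma \ref{Uebersetzung der Qis allgemeiner Fall}, and I would aim to show that it identifies $\mathcal{M}^{\BP}_{v}$ with the adjoint-nilpotent locus of the target. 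On geometric points this is already known; the delicate point, flagged in the introduction, is that the product-of-points trick does not interact with the adjoint nilpotency condition, so one either has to characterise the image of $\mathcal{M}^{\BP}_{v}$ by a Harder--Narasimhan slope bound on the adjoint bundle, as in Corollary \ref{Scholze-Weinstein G-displays}, that is visibly $v$-local, or else work directly with $\mathcal{M}^{\BP}_{v}$ throughout.

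Granting this, I would check that $\mathcal{M}^{\BP}_{v}$ is a \emph{kimberlite} in Gleason's sense: it is a small $v$-sheaf over $\Spd(\mathcal{O}_{\breve{E}})$, its analytic locus has diamond $(\Sh_{K})^{\lozenge}$ by Proposition \ref{Vergleich der Diamanten}, hence is an honest smooth, partially proper rigid space, and it carries the specialization map $\mathrm{sp}\colon|\Sh_{K}|\rightarrow|X_{\mu}(b)|$ of Gleason whose reduced special fibre is $X_{\mu}(b)^{\mathrm{red}}$ — the latter identification being precisely the statement that over perfect $\mathbb{F}_{p}$-algebras $\G$-$\mu$-displays coincide with local shtukas, i.e. the Witt-vector description of the points of the affine Deligne--Lusztig variety. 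By Proposition \ref{Formale Modelle fuer die Tubes} the formal completions $\widehat{(\mathcal{M}^{\BP}_{v})}_{/x}$ at all $x\in X_{\mu}(b)(k)$ are representable by $\Spf$ of regular complete noetherian local $\mathcal{O}_{\breve{E}}$-algebras; together with the deformation theory of $\G$-$\mu$-displays — unobstructed by the crystalline equivalence, Proposition \ref{Kristalline Aequivalenz}, and the Bültel--Pappas deformation computation — this yields formal smoothness and shows the completions are topologically of finite type over $\mathcal{O}_{\breve{E}}$. Gleason's criterion should then produce a formal scheme $\mathfrak{M}$, formally smooth and locally formally of finite type over $\Spf(\mathcal{O}_{\breve{E}})$, with $\mathfrak{M}^{\lozenge}\cong\mathcal{M}^{\BP}_{v}$.

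It remains to identify $\mathfrak{M}$ with $\mathcal{M}^{\BP}$ as functors on $\Nilp_{W}$. Since $\mathcal{M}^{\BP}$ has no non-trivial automorphisms and is an fpqc sheaf, one has $(\mathcal{M}^{\BP})^{\lozenge}=\mathcal{M}^{\BP}_{v}$, so $\mathcal{M}^{\BP}$ and $\mathfrak{M}$ agree after $v$-sheafification; to promote this to an equality on $\Nilp_{W}$ I would use Lemma \ref{Displays ueber adischen Ringen} to reduce to artinian test rings, then pass to perfect and perfectoid coefficients where the comparison is the content of the earlier sections, and finally use that both sides are formal schemes locally formally of finite type over $\mathcal{O}_{\breve{E}}$ and are therefore determined by their locally noetherian points, where Bültel--Pappas have already established representability (Theorem 5.1.3 of \cite{BueltelPappas} in the Hodge-type case).

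The main obstacle is twofold. First, making the comparison $\mathcal{M}^{\BP}_{v}\simeq\mathcal{M}^{\text{int}}_{\text{Scholze}}$, or a clean $v$-local description of $\mathcal{M}^{\BP}_{v}$, work in the presence of the adjoint nilpotency condition over non-reduced, non-perfectoid products of points: this is exactly the gap the author identifies and only circumvents on the generic fibre in Proposition \ref{Vergleich der Diamanten}. Second, verifying that $\mathcal{M}^{\BP}_{v}$ genuinely satisfies the hypotheses of a $v$-sheaf representability criterion — in particular that the specialization map is topologically controlled (the author stresses it is not quasi-compact and that the geometric shape of the tubes $\mathrm{sp}^{-1}(U)$ is hard to pin down) and that the resulting formal scheme is locally \emph{of finite type} and not merely possessed of noetherian completions. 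This last finiteness point is what would force one back to a careful study of $\mathcal{O}^{\circ}_{\Sh_{K}}(\mathrm{sp}^{-1}(U))$ as in the ``loose ends'' discussion, and I expect it to be the genuinely hard step.
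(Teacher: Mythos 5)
This statement is a \emph{conjecture}: the paper does not prove it, and the author explicitly treats Conjecture~\ref{Darstellbarkeitsvermutung BP} as open even after all the results of the article have been established. In the ``loose ends'' discussion the author lays out the same strategy you propose — use the comparison of diamonds to get a specialization map $\mathrm{sp}\colon|\Sh_{K}|\to|X_{\mu}(b)|$, hope that $(|X_{\mu}(b)|,\mathrm{sp}_*\mathcal{O}^{\circ}_{\Sh_K})$ is a formal scheme locally formally of finite type, and then deduce representability — and identifies the same obstruction you flag at the end: showing $\mathcal{O}^{\circ}_{\Sh_K}(\mathrm{sp}^{-1}(U))$ is $\mathcal{O}^{\circ\circ}$-adic and formally of finite type, which is hard precisely because $\mathrm{sp}$ is not quasi-compact and the geometric shape of the tubes is not under control. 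So your proposal is a faithful reconstruction of the author's intended route, not a proof that the paper offers.

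Two places where the proposal overreaches as a proof sketch. First, the step ``Gleason's criterion should then produce a formal scheme $\mathfrak{M}$, formally smooth and locally formally of finite type'' does not follow from what is available: Proposition~\ref{Formale Modelle fuer die Tubes} only gives representability of the formal completions at individual points of $X_{\mu}(b)(k)$ by noetherian complete local rings, and there is no off-the-shelf criterion — in Gleason's thesis or elsewhere cited — that glues these local data into a formal scheme \emph{locally of finite type} over $\Spf(\mathcal{O}_{\breve{E}})$. This is exactly the finiteness issue the author isolates. Second, the final identification step appeals to ``Bültel--Pappas have already established representability (Theorem 5.1.3 of \cite{BueltelPappas} in the Hodge-type case)''; but that result is restricted to the Hodge-type case and to locally noetherian test objects, whereas the conjecture concerns arbitrary unramified $\G$ (including exceptional groups) and the full functor on $\Nilp_W$. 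Invoking it here is circular for the Hodge-type case and unavailable beyond it. Your own last paragraph already names both difficulties, which is to your credit, but they mean the proposal is an accurate description of the problem rather than a solution to it — in agreement with the status of the statement in the paper.
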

Concerning the special fiber, after the work of Bhatt-Scholze \cite{BhattScholzeWitt} on the projectivity of the Witt vector affine Grassmannian, one knows that the restriction of $\mathcal{M}^{\BP}$ towards perfect $k$-schemes is representable by a perfect scheme, locally perfectly of finite type. In the rest of this section I want to work in the direction of putting also a geometric structure on the generic fiber of this moduli problem. 
\\
To explain what is meant by this generic fiber, I borrow from Scholze-Weinstein \cite[section 2.2]{ScholzeWeinstein} a general procedure how to associate to a functor like $\mathcal{M}^{\BP}$ defined on $\Nilp_{\mathcal{O}_{\breve{E}}}$ a functor defined on complete affinoid Huber-pairs $\text{CAffd}_{\Spa(\breve{E},\mathcal{O}_{\breve{E}})}.$
\\
First define a pre-sheaf on $\text{CAffd}_{\Spa(\breve{E},\mathcal{O}_{\breve{E}})}$ as follows: if $(R,R^{+})\in \text{CAffd}_{\Spa(\breve{E},\mathcal{O}_{\breve{E}})},$ then let
$$
\mathcal{M}^{\BP}_{\eta, \text{pre}}(R,R^{+})=\underset{{R_{0}\subseteq R^{+}}} \colim \mathcal{M}^{\BP}(\Spf(R_{0})),
$$
where the colimit runs over all open and bounded $\mathcal{O}_{\breve{E}}$-subalgebras of $R^{+}.$ The point is here that open and bounded $\mathcal{O}_{\breve{E}}$-subalgebras of $R^{+}$ will have the $p$-adic topology and one can evaluate
$$
\mathcal{M}^{\BP}(\Spf(R_{0}))=\lim_{n}\mathcal{M}^{\BP}(\Spec(R_{0}/p^{n})).
$$ Now equip the oppposite category $\text{CAffd}^{\text{op}}_{\Spa(\breve{E},\mathcal{O}_{\breve{E}})}$ with the structure of a site, where one declares the coverings to be generated by coverings for the analytic topology on $\Spa(R,R^{+});$ more precisely, a family of morphisms
$$
(R,R^{+})\rightarrow (R_{i},R_{i}^{+})
$$
such that $(R_{i},R_{i}^{+})=(\mathcal{O}_{X}(U_{i}),\mathcal{O}_{X}^{+}(U_{i}))$ for a covering $\lbrace U_{i} \rbrace_{i\in I}$ of $X=\Spa_{\text{top}}(R,R^{+})$ is declared to a covering in $\text{CAffd}^{\text{op}}_{\Spa(\breve{E},\mathcal{O}_{\breve{E}})}$ (c.f. \cite[Def. 2.1.5]{ScholzeWeinstein}). Then by definition
$$
\mathcal{M}^{\BP}_{\eta}\colon \text{CAffd}^{\text{op}}_{\Spa(\breve{E},\mathcal{O}_{\breve{E}})}\rightarrow \Set
$$
is the analytic sheafification of the pre-sheaf $\mathcal{M}^{\BP}_{\eta, \text{pre}}.$ The Proposition 2.2.2. in loc.cit. makes sure that if the above conjecture, Conjecture \ref{Darstellbarkeitsvermutung BP}, would be satisfied, it would follow that $\mathcal{M}^{\BP}_{\eta}$ is representable by the usual Berthelot generic fiber of $\mathcal{M}^{\BP}.$ Now the aim of this section can be stated more precisely as showing directly that $\mathcal{M}^{\BP}_{\eta}$ is representable by the local Shimura-variety $\Sh_{K}.$
\subsection{Representability of the Diamond}\label{subsection Representability of the Diamond}
Recall that previously I considered the following sheaf for the analytic topology on complete affinoid adic spaces over $\Spa(\breve{E}):$
$$
\mathcal{M}^{\BP}_{\eta}\colon \text{CAffd}^{\text{op}}_{\Spa(\breve{E})}\rightarrow \Set.
$$
Now I will consider a diamantine version of it; let $\Perf_{k}$ be the category of affinoid perfectoid spaces over $k.$ Then $(\mathcal{M}^{\BP}_{\eta})^{\lozenge}\rightarrow \Spd(\breve{E})$ is the sheaf for the analytic topology on affinoid perfectoid spaces over $k:$
$$
(\mathcal{M}^{\BP}_{\eta})^{\lozenge}\colon \text{AffdPerf}^{\text{op}}_{k}\rightarrow \Set,
$$
which sends $S=\Spa(R,R^{+})\in \Perf_{k},$ towards equivalence classes of pairs $((S^{\sharp},\iota),x),$ where $S^{\sharp}$ is an affinoid perfectoid space over $\breve{E},$ $\iota\colon (S^{\sharp})^{\flat}\simeq S$ and $x\in \mathcal{M}^{\BP}_{\eta}(S^{\sharp}).$ Two points $((S^{\prime \sharp},\iota^{\prime}),x^{\prime})$ and $((S^{\sharp},\iota),x)$ are equivalent, if there is an isomorphism $f\colon S^{\sharp}\simeq S^{\prime \sharp},$ such that $\iota^{\prime}\circ f^{\sharp}=\iota$ and $f^{*}(x^{\prime})=x\in \mathcal{M}^{\BP}_{\eta}(S^{\sharp}).$
\begin{Remark}
If $\mathcal{M}^{\BP}$ would be representable by a formal scheme, locally formally of finite type over $\Spf(\mathcal{O}_{\breve{E}}),$ then $\mathcal{M}^{\BP}_{\eta}$ would describe the functor of points of its Berthelot generic fiber and $(\mathcal{M}^{\BP}_{\eta})^{\lozenge}$ the functor of points of the diamond associated to this generic fiber.
\end{Remark}
The basic aim of this sub-section is then to show the following
\begin{proposition}\label{Vergleich der Diamanten}
There is an isomorphism of analytic sheaves on $\text{AffdPerf}^{\text{op}}_{k}$
$$
\psi\colon (\mathcal{M}^{\BP}_{\eta})^{\lozenge}\simeq (\Sh_{K})^{\lozenge},
$$
over $\Spd(\breve{E}).$
\end{proposition}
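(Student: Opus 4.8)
The plan is to build the comparison morphism $\psi$ out of the chain of equivalences of Sections \ref{Section crystalline equivalence}, \ref{Section Descent from Acris to Ainf}, \ref{Section Connection to local mixed-characteristic shtukas} and \ref{Section Translation of the Quasi-isogeny}, and then to verify that it is simultaneously a monomorphism and an epimorphism of analytic sheaves by working analytic-locally on the source and target. Throughout one uses that both $(\mathcal{M}^{\BP}_{\eta})^{\lozenge}$ and $(\Sh_{K})^{\lozenge}$ are sheaves for the analytic topology on $\Perf_{k}$, so all constructions only have to be made locally and then descended.

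\emph{Construction of $\psi$.} Let $S=\Spa(R,R^{+})\in\Perf_{k}$ and let $((S^{\sharp},\iota),x)$ represent a section of $(\mathcal{M}^{\BP}_{\eta})^{\lozenge}(S)$, with $S^{\sharp}=\Spa(R^{\sharp},R^{\sharp,+})$. The ring $R^{\sharp,+}$ is an open and bounded $\mathcal{O}_{\breve{E}}$-subalgebra of $R^{\sharp}$ which is moreover integral perfectoid, so after an analytic cover of $S$ we may assume $x$ comes from a genuine point of $\mathcal{M}^{\BP}(\Spf R^{\sharp,+})$, i.e. from an adjoint nilpotent $\G$-$\mu$-display $\mathcal{P}$ over $R^{\sharp,+}$ (using Lemma \ref{Displays ueber adischen Ringen} to evaluate $\mathcal{M}^{\BP}$ on the $p$-adic ring $R^{\sharp,+}$) together with a $G$-quasi-isogeny $\rho\colon\mathcal{P}_{0}\times_{\Spec(k)}\Spec(R^{\sharp,+}/p)\dashrightarrow\mathcal{P}\times_{\Spec(R^{\sharp,+})}\Spec(R^{\sharp,+}/p)$. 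By Corollary \ref{Korollar Vergleich Displays und BKF}, $\mathcal{P}$ produces a $\G$-Breuil--Kisin--Fargues module of type $\mu$ over $\Spec(\Ainf(R^{\sharp,+}))=\Spec(W(R^{+}))$, and restricting to $S\dot{\times}\Spa(\mathbb{Z}_{p})=\Spa(W(R^{+}),W(R^{+}))-\{[\varpi]=0\}$ gives a local mixed-characteristic $\G$-shtuka over $S$ with one leg at $S^{\sharp}$ bounded by $\mu$. Applying Lemma \ref{Uebersetzung der Qis allgemeiner Fall} to $\rho$ yields a Frobenius-compatible crystalline $G$-quasi-isogeny over $\Spec(\Bcris(R^{\sharp,+}))$, which is exactly the framing of this $\G$-shtuka by the constant shtuka $\mathcal{E}_{b}$ attached to $[b]$ away from the leg. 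By the moduli characterization of local Shimura varieties (\cite[Def. 23.1.1, Def. 24.1.3]{Berkeleylectures}) this datum is a section of $(\Sh_{K})^{\lozenge}(S)$; since every step is functorial and compatible with analytic localization and descent, the local constructions glue to a morphism $\psi\colon(\mathcal{M}^{\BP}_{\eta})^{\lozenge}\to(\Sh_{K})^{\lozenge}$ over $\Spd(\breve{E})$.

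\emph{$\psi$ is an isomorphism.} It suffices to check injectivity and surjectivity on sections after analytic covers, so we may again assume $R^{\sharp,+}$ integral perfectoid and all torsors trivial. For surjectivity, given a section of $(\Sh_{K})^{\lozenge}(S)$ one first extends the underlying $\G$-shtuka over $S\dot{\times}\Spa(\mathbb{Z}_{p})$ to a $\G$-Breuil--Kisin--Fargues module over $\Spec(\Ainf(R^{\sharp,+}))$: by Lemma \ref{Lemma Bouthier Cesnavicius} (and the accompanying remark) the $\G$-torsor on $\Spec(W(R^{+}))-V(p,[\varpi])$ extends over $V(p,[\varpi])$ after a further analytic cover, and these local extensions glue by analytic descent; boundedness by $\mu$ at the leg forces the extension to be of type $\mu$. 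Running the quasi-inverse of Corollary \ref{Korollar Vergleich G-mu-Windows und G-mu-BKF}, then Proposition \ref{Deszent von Acris nach Ainf} (where $p\geq 3$ enters), then the crystalline equivalence Proposition \ref{Kristalline Aequivalenz}, produces an adjoint nilpotent $\G$-$\mu$-display over $R^{\sharp,+}$ — adjoint nilpotency being forced by the framing by $b$ together with the standing hypothesis that $-1$ is not a slope of $\Ad(b)$, as in Corollary \ref{Scholze-Weinstein G-displays} applied on geometric points — while the framing, fed through the reverse direction of Lemma \ref{Uebersetzung der Qis allgemeiner Fall} along $\chi_{\mathbb{Q}}$, becomes a $G$-quasi-isogeny over $R^{\sharp,+}/p$ to $\mathcal{P}_{0}$. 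As $R^{\sharp,+}$ is itself an admissible subalgebra in the Scholze--Weinstein colimit, this is a section of $(\mathcal{M}^{\BP}_{\eta})^{\lozenge}(S)$. Injectivity follows because the display-to-Breuil--Kisin--Fargues functor of Corollary \ref{Korollar Vergleich Displays und BKF} is fully faithful, because a $\G$-shtuka over $S\dot{\times}\Spa(\mathbb{Z}_{p})$ has at most one extension to a $\G$-Breuil--Kisin--Fargues module over $\Spec(\Ainf(R^{\sharp,+}))$ (the complement of $S\dot{\times}\Spa(\mathbb{Z}_{p})$ having codimension $\geq 2$, as in the proof of Corollary \ref{Scholze-Weinstein G-displays}), and because the crystalline $G$-quasi-isogeny lifting a prescribed mod-$p$ quasi-isogeny is unique by the uniqueness assertion inside the proof of Lemma \ref{Uebersetzung der Qis allgemeiner Fall}.

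\emph{Main obstacle.} I expect the genuinely delicate points to be, first, the precise matching of the two moduli descriptions: checking that ``bounded by $\mu$ at the leg'' for the $\G$-shtuka corresponds to ``of type $\mu$'' (equivalently ``of type $\mu^{\sigma}$'') for the Breuil--Kisin--Fargues module, and that the two versions of the framing datum agree on the nose including Frobenius-equivariance; and second, the extension of a $\G$-shtuka over $S\dot{\times}\Spa(\mathbb{Z}_{p})$ across the closed locus $V(p,[\varpi])$ to a $\G$-Breuil--Kisin--Fargues module over all of $\Spec(\Ainf(R^{\sharp,+}))$, which forces one to combine Lemma \ref{Lemma Bouthier Cesnavicius} with analytic descent in order to globalize an extension that is produced only locally, and to know the extension is unique. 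Everything else is bookkeeping assembled from the equivalences already established in the previous sections.
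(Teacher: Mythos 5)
Your construction of $\psi$ and the overall decomposition into injectivity and surjectivity using Lemma \ref{Lemma Bouthier Cesnavicius} follow the paper's proof closely, and the surjectivity outline is essentially right. But the injectivity argument contains a genuine error. You claim that a $\G$-shtuka over $S\dot{\times}\Spa(\mathbb{Z}_{p})$ has \emph{at most one} extension to a $\G$-Breuil--Kisin--Fargues module over $\Spec(\Ainf(R^{\sharp,+}))$ because the complement has codimension at least two. This fails for two reasons. First, $S\dot{\times}\Spa(\mathbb{Z}_{p})=\Spa(W(R^{+}))-V([\varpi])$; the locus $V([\varpi])$ is a codimension-one divisor in $\Spec(W(R^{+}))$, not a codimension-two locus, so the Hartogs-type uniqueness you invoke does not apply. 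Second, and more to the point, the paper explicitly warns (footnote in the proof of Proposition \ref{Vergleich der Diamanten}, referencing \cite[Prop. 2.2.6]{RapoPappasShtuka}) that the restriction functor from $\G$-Breuil--Kisin--Fargues modules to $\G$-shtukas is \emph{faithful but not fully faithful}: the shtuka forgets the $+$-ring, so two non-isomorphic BKF modules can restrict to isomorphic shtukas. The statement that is actually true, and that the paper proves as a standalone lemma inside the injectivity argument, concerns the $\G$-torsor on $\Spec(W(R^{+}))-V(p,[\varpi])$ obtained by gluing the shtuka with the trivial torsor near $\infty$ along the framing $\iota$. \emph{There} the complement is genuinely of codimension $\geq 2$, and part (a) of that lemma (Kedlaya's GAGA, \cite[Thm. 3.8]{KedlayaAinf}) together with part (b) (full faithfulness of restriction across $V(p,[\varpi])$) rigidify the BKF module. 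You do note that the crystalline $G$-quasi-isogeny lifting a prescribed mod-$p$ one is unique, which is correct and is the other needed ingredient, but you list it as a parallel fact rather than as the mechanism that supplies the missing information near the $p$-adic locus.

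There is also a smaller imprecision in surjectivity: the local extensions of $\mathcal{E}^{\text{approx}}$ produced by Lemma \ref{Lemma Bouthier Cesnavicius} do not automatically agree on overlaps, so ``glue by analytic descent'' is not a complete argument. The paper first proves injectivity and then uses it, together with faithfulness of the BKF-to-shtuka restriction, to check agreement on overlaps and the cocycle condition. With these two points corrected your argument coincides with the paper's.
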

\begin{Remark}
Since $(\Sh_{K})^{\lozenge}$ is already known to be a sheaf for the much finer $v$-topology on $\Perf_{k},$ (c.f. \cite{ScholzeDiamonds}, Prop. 11.9.) this in particular shows that also $(\mathcal{M}^{\BP}_{\eta})^{\lozenge}$ is one.
\end{Remark}
\begin{proof}
One first uses the previous results to obtain that a $S=\Spa(R,R^{+})$-valued point of $(\mathcal{M}^{\BP}_{\eta})^{\lozenge}$ can be concretely described as follows: one gives an untilt $S^{\sharp}=\Spa(R^{\sharp},R^{\sharp +})$ over $\breve{E},$ a covering by rational opens $\bigcup_{i\in I} U_{i}^{\sharp}=S^{\sharp},$ $U_{i}^{\sharp}=\Spa(R^{\sharp}_{i},R^{\sharp +}_{i}),$ a $\G$-Breuil-Kisin-Fargues-module $(\mathcal{M}_{i},\varphi_{\mathcal{M}_{i}})$ over $\Spec(R^{\sharp +}_{i})$ of type $\mu,$ and a crystalline $G$-quasi-isogeny
$$
\rho_{i}\colon \mathcal{P}_{u_{0}}\times_{\Spec(W(k))}\Spec(\Bcris(R^{\sharp +}_{i}/p))\simeq \mathcal{M}_{i} \times_{\Spec(\Ainf(R^{\sharp +}_{i}))} \Spec(\Bcris(R^{\sharp +}_{i}/p)).
$$
This data $((\mathcal{M}_{i},\varphi_{\mathcal{M}_{i}}),\rho_{i})$ is required to be isomorphic on overlaps $U_{i,j}^{\sharp}=U_{i}^{\sharp} \cap U_{j}^{\sharp}$ and to satisfy a cocycle condition on triple overlaps $U_{i,j,k}^{\sharp}=U_{i}^{\sharp} \cap U_{j}^{\sharp} \cap U_{k}^{\sharp}.$ 
\\
The precise statements that have been used are the following: the translation between $\G$-$\mu$-displays over integral perfectoid rings and Breuil-Kisin-Fargues modules, Corollary \ref{Korollar Vergleich Displays und BKF}\footnote{I use here that a $\G$-Breuil-Kisin-Fargues module equipped with a crystalline quasi-isogeny towards $\mathcal{P}_{u_{0}}$ lies in the essential image; for this one may reduce to the banal case and one has to see that the banal $\G$-$\mu$-window for the crystalline frame that corresponds to the trivial $\G$-BKF of type $\mu$ satisfies the adjoint nilpotency condition as in Definition \ref{Definition adjoint nilpotent fuer kristalline Windows}. But this follows from the existence of the crystalline $G$-quasi isogeny together with the running assumption on the slopes of $b.$}; together with the translation of the $G$-quasi-isogeny, Lemma \ref{Lemma Vergleich der Quasi-isogenie kristalline Qis, p torsionfreier Fall} (and that $R^{+ \sharp}_{i}$ is open bounded of course and since $\Spa(R^{\sharp},R^{+ \sharp})$ is an untilt over $\Spa(\breve{E})$ one may work with $p$ as the pseudo-uniformizer, so that the $R^{+ \sharp}_{i}$ will carry the $p$-adic topology; furthermore note that all $R^{\sharp +}_{i}$ will be $p$-torsion free as subrings of $R^{\sharp}_{i},$ where $p$ is invertible. Therefore the running assumptions in the translation statements between $\G$-$\mu$-displays and $\G$-Breuil-Kisin-Fargues modules of type $\mu$ are met). 
\\
To construct the map of analytic sheaves over $\Spd(\breve{E})$
$$
\psi\colon  (\mathcal{M}^{\BP}_{\eta})^{\lozenge}\simeq (\Sh_{K})^{\lozenge}
$$
one can restrict the Breuil-Kisin-Fargues modules, which are torsors over $\Spec(W(R_{i}^{+})),$ towards $$\Spa(W(R_{i}^{+}))-V([\varpi])=\mathcal{Y}_{[0,\infty)}(\Spa(R_{i},R_{i}^{+})),$$ to obtain a local shtuka over $U_{i}$ with a leg at $U_{i}^{\sharp}$ and singularities bounded by $\mu.$ Using the inclusions
$$
H^{0}(\mathcal{Y}_{[1/p^{p},1]},\mathcal{O})\subset \Bcris(R_{i}^{+}/p) \subset H^{0}(\mathcal{Y}_{[1/p^{p-1},1]},\mathcal{O}),
$$
one sees that the quasi-isogeny $\rho_{i}$ then translates as required to the trivialization of the local shtuka near infinity. Now analytic descent for local shtuka produces the desired point of $(\Sh_{K})^{\lozenge}.$
\subsubsection{Injectivity of the morphism $\psi$}
First, I want to explain why the morphism $\psi$ is injective; this will use the following statement:
\begin{Lemma} Let $A$ be an integral perfectoid $k$-algebra with perfectoid pseudo-uniformizer $\varpi\in A$ (in particular; it is assumed that this element is regular).
\begin{enumerate}
\item[(a):] There is an equivalence of categories between algebraic $\G$-torsors over $\Spec(W(A))-V(p,[\varpi])$ and analytic $\G^{\adic}$-torsors over $\Spa(W(A),W(A))-V(p,[\varpi]),$ where one equips $W(A)$ as usual with the $(p,[\varpi])$-adic topology.
\item[(b):] The restriction functor from $\G$-torsors over $\Spec(W(A))$ towards $\G$-torsors over $\Spec(W(A))-V(p,[\varpi])$ is fully faithful.
\end{enumerate}
\end{Lemma}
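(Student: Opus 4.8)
The plan is to reduce both parts to statements about vector bundles via the Tannakian formalism, and then to feed in the GAGA comparison of Kedlaya in the form of \cite[Prop.~5.3]{JohannesExtending} together with a Hartogs-type extension over the codimension-$2$ locus $V(p,[\varpi])$. Throughout I would use that $A$, being an integral perfectoid $k$-algebra, is a perfect $\mathbb{F}_{p}$-algebra, so that $W(A)$ is $p$-torsion free, $(p,[\varpi])$-adically complete, $[\varpi]$ is a non-zero-divisor, and $(p,[\varpi])$ is a regular sequence (indeed $W(A)/p\cong A$, in which $\varpi$ is regular); in particular $W(A)$ has depth $\ge 2$ along the ideal $(p,[\varpi])$ and the punctured spectrum $U:=\Spec(W(A))-V(p,[\varpi])$ is schematically dense.

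For (a) I would first treat $\G=\GL_{n}$, i.e.\ vector bundles. Schematically, $U=D(p)\cup D([\varpi])$ is a Zariski cover by the two affines $\Spec(W(A)[1/p])$ and $\Spec(W(A)[1/[\varpi]])$, so vector bundles on $U$ are finite projective modules over these two rings glued over their common localization; on the adic side, $\mathcal{Y}:=\Spa(W(A),W(A))-V(p,[\varpi])$ is covered by the two rational subsets $\{|p|\le|[\varpi]|\ne 0\}$ and $\{|[\varpi]|\le|p|\ne 0\}$, and vector bundles there glue in the same way. The comparison then amounts to identifying finite projective modules over each of the localizations $W(A)[1/p]$, $W(A)[1/[\varpi]]$ and $W(A)[1/(p[\varpi])]$ with vector bundles on the corresponding adic spaces --- the sheafiness/sousperfectoid statement packaged in \cite[Prop.~5.3]{JohannesExtending} --- together with compatibility with the gluing over the two charts. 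To pass from $\GL_{n}$ to $\G$, I would choose a closed immersion $\G\hookrightarrow\GL_{n}$ with quasi-affine fppf quotient $\GL_{n}/\G$ (possible since $\G$ is smooth affine over the Dedekind ring $\mathbb{Z}_{p}$; for reductive $\G$ the quotient is even affine), so that a $\G$-torsor is a $\GL_{n}$-torsor $\mathcal{E}$ together with a section of the quasi-affine relative scheme $\mathcal{E}\times^{\GL_{n}}(\GL_{n}/\G)$; embedding $\GL_{n}/\G$ into an affine space, the equivalence of such sections on the schematic and on the adic punctured spectrum reduces once more to the comparison of vector bundles and of their global sections already established. Alternatively, one may phrase everything Tannakianly with $\Rep_{\mathbb{Z}_{p}}(\G)$ as in \cite{Daniels1}.

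For (b), given $\G$-torsors $\mathcal{P},\mathcal{Q}$ over $\Spec(W(A))$, I must show that $\underline{\Isom}_{\G}(\mathcal{P},\mathcal{Q})(\Spec W(A))\to\underline{\Isom}_{\G}(\mathcal{P},\mathcal{Q})(U)$ is bijective. Embedding $\G\hookrightarrow\GL_{n}$ as above, $\mathcal{P},\mathcal{Q}$ give vector bundles $E_{P},E_{Q}$ of rank $n$ equipped with sections of the associated $\GL_{n}/\G$-bundles, and a morphism of $\G$-torsors over $U$ is an isomorphism $E_{P}|_{U}\simeq E_{Q}|_{U}$ respecting these reductions. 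Now $\underline{\Hom}_{\mathcal{O}}(E_{P},E_{Q})$ is a vector bundle on $\Spec(W(A))$, so by the depth $\ge 2$ Hartogs property any $\mathcal{O}_{U}$-linear map $E_{P}|_{U}\to E_{Q}|_{U}$ extends uniquely over $V(p,[\varpi])$; applying this to the map, to its inverse over $U$, and using schematic density of $U$ to see that the two composites are the identity everywhere, the extension is again an isomorphism. Finally, ``respects the reduction to $\G$'' is a closed (indeed locally closed, $\GL_{n}/\G$ being separated) condition that holds on the schematically dense $U$, hence everywhere; this yields full faithfulness. (The special case $E_{P}=E_{Q}=\mathcal{O}$ recovers $H^{0}(U,\mathcal{O}_{U})=W(A)$.)

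The step I expect to be the real obstacle is the analytic input behind (a): one genuinely needs that the rings $W(A)[1/p]$, $W(A)[1/[\varpi]]$ and $W(A)[1/(p[\varpi])]$, with their natural topologies, are sheafy and that finite projective modules over them coincide with vector bundles on the attached adic spaces --- the Kedlaya--Liu / Hansen--Kedlaya machinery invoked through \cite[Prop.~5.3]{JohannesExtending} --- together with the compatibility of this identification with the Beauville--Laszlo-type gluing across the two charts of the punctured spectrum. Once this is granted, the Tannakian reduction to $\GL_{n}$ in (a) and the Hartogs argument in (b) are comparatively formal.
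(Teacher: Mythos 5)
Your treatment of (b) is essentially the paper's: the key input is that $(p,[\varpi])$ is a regular sequence in $W(A)$, so that $H^{0}(U,\mathcal{E}|_{U})=H^{0}(\Spec(W(A)),\mathcal{E})$ for any vector bundle $\mathcal{E}$ on $\Spec(W(A))$ (you phrase it via depth/Hartogs, the paper via the Koszul complex implicit in the regular-sequence argument), and then one lifts sections of the affine finite-presentation scheme $\underline{\Isom}_{\G}(\mathcal{P},\mathcal{P}^{\prime})$ over the punctured spectrum to sections over the full spectrum. The paper embeds this scheme in an affine space and extends coordinatewise; you instead extend a hom of vector bundles and check the reduction to $\G$ is preserved by schematic density. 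These are the same argument in different clothing.

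For (a) you take a genuinely different route, and it is this step that does not quite work as written. You propose to cover $U$ by the two algebraic affines $D(p)=\Spec(W(A)[1/p])$ and $D([\varpi])=\Spec(W(A)[1/[\varpi]])$ and to cover the adic punctured spectrum by the two rational subsets $\{|p|\le|[\varpi]|\ne 0\}$ and $\{|[\varpi]|\le|p|\ne 0\}$, then to ``identify finite projective modules over $W(A)[1/p]$, $W(A)[1/[\varpi]]$, $W(A)[1/(p[\varpi])]$ with vector bundles on the corresponding adic spaces.'' But the rational subsets do \emph{not} have coordinate rings $W(A)[1/p]$, $W(A)[1/[\varpi]]$: they are the completed Tate algebras $W(A)\langle [\varpi]/p\rangle[1/p]$, $W(A)\langle p/[\varpi]\rangle[1/[\varpi]]$, and the overlap has yet another completion rather than $W(A)[1/(p[\varpi])]$. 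So there is no ring-for-ring correspondence between the two covers, and the chart-by-chart comparison you envisage is exactly the nontrivial content that would have to be proved, not something that follows from sheafiness plus gluing in each chart. (You flag this step yourself as ``the real obstacle,'' and you are right.) The paper sidesteps the issue entirely by citing Kedlaya's GAGA theorem for $\Ainf$-type rings, which provides the exact tensor equivalence between algebraic vector bundles on $\Spec(W(A))-V(p,[\varpi])$ and analytic vector bundles on $\Spa(W(A))-V(p,[\varpi])$ in one stroke; it then applies the Tannakian formalism on both sides (citing the torsor-descent results from the Berkeley lectures) rather than choosing an embedding $\G\hookrightarrow\GL_{n}$ with quasi-affine quotient, though your Tannakian variant would also be fine once the vector-bundle statement is in hand.
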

\begin{proof}
The statement (a) follows from a result of Kedlaya \cite[Thm. 3.8]{KedlayaAinf}, which tells us that there is an exact tensor equivalence between the category of algebraic vector bundles on $$\Spec(W(A))-V(p,[\varpi])$$ and analytic vector bundles on $\Spa(W(A))-V(p,[\varpi]).$ Now one may use the Tannakian formalism on both sides, to obtain the desired statement (see \cite[Thm. 19.5.1]{Berkeleylectures}, for the algebraic side and Thm. 19.5.2. for the analytic side).
\\
For statement (b), first observe that the restriction functor from vector bundles on $\Spec(W(A))$ towards vector bundles on $\Spec(W(A))-V(p,[\varpi])$ is fully faithful: By passing to inner-Hom's it is enough to see that
$$
H^{0}(\Spec(W(A))-V(p,[\varpi]),\mathcal{O})=W(A),
$$
which follows from the fact that $(p,[\varpi])$ form a regular sequence in $W(A);$ note that more generally this shows that for any vector bundle $\mathcal{E}$ on $\Spec(W(A))$ one has that
\begin{equation}\label{Ausdehnen von Schnitten ueber offene mit Komplement regulaere Sequenz}
H^{0}(\Spec(W(A)),\mathcal{E})=H^{0}(\Spec(W(A))-V(p,[\varpi]),\mathcal{E}\!\mid_{\Spec(W(A))-V(p,[\varpi])}).
\end{equation}
Now consider two $\G$-torsors $\mathcal{P},\mathcal{P}^{\prime}$ on $\Spec(W(A)).$ Then the functor of isomorphisms,
$$
Y=\underline{\Isom}_{\G}(\mathcal{P},\mathcal{P}^{\prime})
$$
is representable by an affine scheme over $\Spec(W(A)),$ which is of finite presentation. It is then enough to see that sections of $Y$ restricted to $\Spec(W(A))-V(p,[\varpi])$ extend uniquely to sections over $\Spec(W(A)).$ Taking a closed embedding $Y\hookrightarrow \mathbb{A}^{n}_{W(A)},$ one sees from (\ref{Ausdehnen von Schnitten ueber offene mit Komplement regulaere Sequenz}) that a $\Spec(W(A))-V(p,[\varpi])$-section of $Y$ extends uniquely to a $\Spec(W(A))$-section of $\mathbb{A}^{n}_{W(A)},$ which by the uniqueness has to factor over $Y.$\footnote{Namely, if $Y=V(f_{1},...,f_{k}),$ and if $s_{1},...,s_{n}\in W(A)$ is the extended section, then one has to see that $f_{j}(s_{1},...,s_{n})=0$ for all $1\leq j \leq k.$ But the zero section and the sections $f_{j}(s_{1},...s_{n})$ of $\mathbb{A}^{1}_{W(A)}\rightarrow \Spec(W(A))$ agree on $\Spec(W(A))-V(p,[\varpi]),$ so that by uniqueness again, one deduces that $f_{j}(s_{1},...,s_{n})=0.$}
\end{proof}
To conclude the verification of the injectivity of $\psi$, take two points $(\mathcal{M},\varphi_{\mathcal{M}},\rho)$ and $(\mathcal{M}^{\prime},\varphi_{\mathcal{M}^{\prime}},\rho^{\prime}),$ over some $S=\Spa(R,R^{+})\in \Perf_{k},$ where $(\mathcal{M}^{?},\varphi_{\mathcal{M}^{?}})$ are $\G$-BKFs over $R^{+ \sharp}$ of type $\mu$ and $\rho^{?}$ are crystalline $G$-quasi-isogenies, such that their images under $\psi$ agree. It follows in particular, that the $\G^{\text{adic}}$-torsors over $\Spa(W(R^{+}))-V(p,[\varpi]),$ which one constructs from glueing the shtuka over $\mathcal{Y}_{[0,\infty)}(S)$ with the trivial $\G^{\text{adic}}$-torsor over $\mathcal{Y}_{[0,\infty]}(S)$ constructed from $b$ along the trivializations $\iota$ resp. $\iota^{\prime}$ are isomorphic. By part (a) of the above lemma their algebraic counterparts are isomorphic and then by part (b) also $\mathcal{M}$ and $\mathcal{M}^{\prime}$ have to be isomorphic (which gives an isomorphism respecting the Frobenius-structure and the quasi-isogeny).\footnote{The fact that one carries along the datum of the quasi-isogeny saves one here from the objection that the restriction from Breuil-Kisin-Fargues modules towards shtuka is not fully faithfull.}
\subsubsection{Surjectivity of the morphism $\psi$}
The only interesting point is therefore the surjectivity, where the claim is basically that one may extend $\G$-torsors over $\Spec(W(R^{+}))-V(p,[\varpi])$ after passing to a covering by rational opens of $\Spa(R,R^{+})$ towards the whole spectrum.
\\
Start with an $S=\Spa(R,R^{+})$-valued point of $(\Sh_{K})^{\lozenge}.$ It is given by a triple $(S^{\sharp},((\mathcal{E},\varphi_{\mathcal{E}}),\iota))$ as before. The untilt $S^{\sharp}=\Spa(R^{\sharp},R^{+ \sharp})$ gives in particular rise to a degree one primitive element $\xi\in W(R^{+}).$
\\
Now one may proceed as before and use the trivialization near infinity, given by $\iota,$ to glue the shtuka $(\mathcal{E},\varphi_{\mathcal{E}})$ over $\mathcal{Y}_{[0,\infty)}(S)$ and the trivial torsor over $\mathcal{Y}_{[r,\infty]}(S),$ with Frobenius-equivariance given by $b_{0}=u_{0}\mu(p),$ to obtain a $\G^{\text{adic}}$-torsor over $\Spa(W(R^{+}))-V(p,[\varpi]),$ which corresponds then to a $\G$-torsor over $\Spec(W(R^{+}))-V(p,[\varpi]);$ I will denote it by $\mathcal{E}^{\text{approx}}.$ It is this $\G$-torsor, which one wants to extend.
\\
Let me first explain why for the proof of the surjectivity of the morphism $\psi,$ it is indeed enough to check that this $\G$-torsor extends to whole spectrum after passing to a rational covering of $\Spa(R,R^{+}).$ The observations are simply the following: that on the one hand the given Frobenius-structure on the $\G^{\text{adic}}$-shtuka $(\mathcal{E},\varphi_{\mathcal{E}})$ induces a Frobenius-structure
$$
\varphi_{\mathcal{E}^{\text{approx}}}\colon \varphi^{*}(\mathcal{E}^{\text{approx}})[1/\xi]\simeq \mathcal{E}^{\text{approx}}[1/\xi].
$$
Here I wrote $\varphi$ for the morphism induced on $\Spec(W(R^{+}))-V(p,[\varpi])$ via the Witt vector Frobenius $\varphi$ on $W(R^{+})$ and the notation $\mathcal{E}^{\text{approx}}[1/\xi]$ is a short hand notation for restricting the $\G$-torsor $\mathcal{E}^{\text{approx}}$ over $\Spec(W(R^{+}))-V(p,[\varpi])$ towards the open subscheme, which is the complement of $$V(\xi)\cap \Spec(W(R^{+}))-V(p,[\varpi]).$$ This behaviour was insured in the definition of a mixed-characteristic shtuka, because there one requires that the Frobenius-structure $\varphi_{\mathcal{E}}$ is meromorphic along the closed Cartier divisor $S^{\sharp}\hookrightarrow \mathcal{Y}(S)$ (c.f. \cite[Def. 23.1.1]{Berkeleylectures}). On the other hand, the trivialization
$$
\iota\colon \mathcal{E}\!\mid_{\mathcal{Y}_{[r,\infty)}(S)}\simeq \mathcal{E}_{b}\!\mid_{\mathcal{Y}_{[r,\infty)}(S)},
$$
for $r$ large enough, gives naturally rise to a morphism of locally ringed spaces 
$$
\mathcal{Y}_{[r,\infty)}(S)\rightarrow \mathcal{E}^{\text{approx}}\times_{\Spec(W(R^{+}))-V(p,[\varpi])}D(p),
$$
here $D(p)= \Spec(W(R^{+})[1/p])\subset \Spec(W(R^{+}))-V(p,[\varpi]).$
Since $\mathcal{E}^{\text{approx}}\rightarrow \Spec(W(R^{+}))-V(p,[\varpi])$ is a torsor under an affine group scheme, one deduces that this morphism of locally ringed spaces is equivalent to a morphism of rings of global sections. Using the Frobenius pullback trick as in \cite[Prop. 22.1.1.]{Berkeleylectures}, one can make sure that one can use the inclusions
$$
H^{0}(\mathcal{Y}_{[1/p^{p},1]},\mathcal{O})\subset \Bcris(R^{+}/p) \subset H^{0}(\mathcal{Y}_{[1/p^{p-1},1]},\mathcal{O}),
$$
to construct the desired section $\Spec(\Bcris(R^{+}/p))\rightarrow \mathcal{E}^{\text{approx}};$ using compatibility with Frobenius structures, this gives a crystalline $G$-quasi-isogeny.
\\
Now one turns to the problem of extending the $\G$-torsor $\mathcal{E}^{\text{approx}}.$ I will need some properties of henselian pairs, which I want to collect here: Let $A$ be a commutative ring and $I\subset A$ be an ideal. Recall (\cite[Tag 09XE]{stacks}) that the pair $(A,I)$ is called henselian, if $I$ is contained in the Jacobson radical of $A$ and whenever $f\in A[T]$ is a monic polynomial with factorization 
$$
\overline{f}=g_{0}\cdot h_{0},
$$
where $g_{0},h_{0}\in A/I[T]$ are monic polynomials generating the unit ideal in $A/I[T],$ then there exists a factorization $f=g\cdot h,$ with $g,h\in A[T]$ monic and $\overline{g}=g_{0}$ and $\overline{h}=h_{0}.$ See \cite[Tag 09XI]{stacks} for equivalent conditions. If the ring $A$ is $I$-adically complete, the pair $(A,I)$ is henselian (\cite[Tag 0ALJ]{stacks}). Henselian pairs are closed under filtered (!)\footnote{This is wrong without this condition; for example for co-products there is a counter-example due to Moret-Baily \cite[Tag 0FWU]{stacks}.} colimits (\cite[Tag 0FWT]{stacks}), direct limits (\cite[Tag 0EM6]{stacks}), under integral morphisms (e.g. surjections) in the sense that if $(A,I)$ is a henselian pair and $A\rightarrow A^{\prime}$ is integral, then $(A^{\prime},IA^{\prime})$ is henselian (\cite[Tag 09XK]{stacks}) and under nilpotent thickenings.
\begin{Lemma}\label{Lemma Bouthier Cesnavicius}
Let $x\in S=\Spa(R,R^{+})\in \Perf_{k}$ be a point, $\kappa(x)$ the completed residue field at $x$ and $\kappa(x)^{+}\subseteq \kappa(x)$ the corresponding valuation subring. Then one has that
\begin{enumerate}
\item[(a):] $$\underset{x\in U}\colim\text{ }H^{1}(\Spec(W(\mathcal{O}^{+}_{S}(U))),\G)\simeq H^{1}(\Spec(W(\kappa(x)^{+})),\G),$$
\item[(b):] $$\underset{x\in U}\colim\text{ }H^{1}(\Spec(W(\mathcal{O}^{+}_{S}(U)))-V(p,[\varpi]),\G)\simeq H^{1}(\Spec(W(\kappa(x)^{+})-V(p,[\varpi]),\G).$$
Here both colimits run over rational opens $U\subseteq S,$ which contain $x\in S$ and the isomorphism is induced via pullback towards $\Spec(W(\kappa(x)^{+}))$ resp. $\Spec(W(\kappa(x)^{+}))-V(p,[\varpi]).$
\end{enumerate}
\end{Lemma}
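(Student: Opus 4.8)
The plan is to express both sides as filtered colimits and then to reduce to an approximation statement for henselian pairs. First I would observe that the $(p,[\varpi])$-adic completion of the filtered colimit $A_{0}:=\colim_{x\in U}W(\mathcal{O}^{+}_{S}(U))$ (over rational opens $U\ni x$) is canonically identified with $W(\kappa(x)^{+})$: indeed $\kappa(x)^{+}$ is the $\varpi$-adic completion of the stalk $\colim_{x\in U}\mathcal{O}^{+}_{S}(U)$, each $W_{n}(-)$ commutes with filtered colimits of (perfect) rings, and $\varpi$-adic completion of perfect rings is compatible with $W(-)$. Next, since $\G$ is of finite presentation over $\mathbb{Z}_{p}$, every $\G$-torsor is an affine scheme of finite presentation over the base, so the standard limit formalism for (nonabelian) cohomology (cf. \cite{stacks}) shows that $H^{1}(\Spec(-),\G)$ commutes with filtered colimits of rings; the same holds for $H^{1}(\Spec(-)-V(p,[\varpi]),\G)$ once one remarks that the punctured spectrum $\Spec(W(A))-V(p,[\varpi])$ is quasi-compact and quasi-separated, being covered by the two affine opens $D(p)$ and $D([\varpi])$ (here $(p,[\varpi])$ is a regular sequence in $W(A)$ for $A$ perfect). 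Thus the left-hand sides of (a) and (b) become $H^{1}(\Spec A_{0},\G)$ and $H^{1}(\Spec A_{0}-V(p,[\varpi]),\G)$, and the lemma is reduced to comparing $A_{0}$ with its completion $\widehat{A_{0}}=W(\kappa(x)^{+})$.

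The key structural input is that $(A_{0},(p,[\varpi]))$ is a henselian pair: each $\mathcal{O}^{+}_{S}(U)$ is a perfect, $\varpi$-adically complete ring, so each $W(\mathcal{O}^{+}_{S}(U))$ is $(p,[\varpi])$-adically complete, hence henselian along $(p,[\varpi])$, and henselian pairs are stable under filtered colimits (\cite[Tag 0FWT]{stacks}). For part (a) I would then invoke the affine analogue of proper base change for henselian pairs applied to the smooth affine group scheme $\G$, namely that $H^{1}(\Spec A_{0},\G)\to H^{1}(\Spec\widehat{A_{0}},\G)$ is a bijection; this is exactly of the type established in \cite{BouthierCesnavicius}, smoothness of $\G$ entering to reduce, after passing to a cover trivialising the torsor, to lifting sections along the (pro-)nilpotent thickenings of the henselian pair.

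For part (b) one runs the analogous argument on the punctured spectrum $\Spec A_{0}-V(p,[\varpi])=D(p)\cup D([\varpi])$. Writing a $\G$-torsor there as glueing data over the three affine opens $\Spec A_{0}[1/p]$, $\Spec A_{0}[1/[\varpi]]$ and $\Spec A_{0}[1/(p[\varpi])]$, and using that localisations of a henselian pair stay henselian (so $A_{0}[1/[\varpi]]$ is $p$-henselian and $A_{0}[1/p]$ is $[\varpi]$-henselian), one reduces to the corresponding $A_{0}$-versus-$\widehat{A_{0}}$ comparison on each of these opens; the additional ingredient controlling how $\G$-torsors propagate under the relevant $p$-adic (resp. $[\varpi]$-adic) completions is the Bhatt-type extension result recalled in Remark \ref{Input Bhatt zum Ausdehnen}, once more combined with \cite{BouthierCesnavicius}. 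I expect this last step to be the main obstacle: because $V(p,[\varpi])$ is not a principal Cartier divisor one cannot simply quote a single henselian-pair theorem for the punctured spectrum, so the colimit reduction and the glueing-along-$D(p)\cup D([\varpi])$ argument have to be interleaved carefully with the approximation and extension inputs.
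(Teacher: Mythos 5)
Your argument for part (a) is sound and close in spirit to the paper's: both ultimately rely on the invariance of $\G$-torsors under henselian pairs from \cite{BouthierCesnavicius}, you just package it as $H^{1}(\Spec A_{0},\G)\simeq H^{1}(\Spec(A_{0}/(p,[\varpi])),\G)\simeq H^{1}(\Spec(\widehat{A_{0}}/(p,[\varpi])),\G)\simeq H^{1}(\Spec\widehat{A_{0}},\G)$, while the paper uses a commutative diagram reducing modulo $p$ to $\mathcal{O}^{+}_{S,x}$ and $\kappa(x)^{+}$ and applying henselianness along $\varpi$ there. Both organizations work once one checks the identification $\widehat{A_{0}}_{(p,[\varpi])}\simeq W(\kappa(x)^{+})$, which the paper does carefully in two stages (first $[\varpi]$-adic, then $p$-adic completion, checking regularity of $p$ at each step); you gloss over this, but the needed statement is true.

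Part (b) contains a genuine error. You assert that ``localisations of a henselian pair stay henselian (so $A_{0}[1/[\varpi]]$ is $p$-henselian and $A_{0}[1/p]$ is $[\varpi]$-henselian).'' This is false in the situation at hand. After inverting $[\varpi]$, $p$ is no longer in the Jacobson radical of $A_{0}[1/[\varpi]]$: for any $N\geq 1$, the element $[\varpi]^{N}+p$ is a primitive element of degree one in $W(\mathcal{O}^{+}_{S}(U))$, hence cuts out a nonempty closed subscheme (an untilt) inside $\Spec(A_{0})-V([\varpi])$, so $1+p\cdot[\varpi]^{-N}$ is not a unit in $A_{0}[1/[\varpi]]$. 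Symmetrically, $A_{0}[1/p]$ is not $[\varpi]$-henselian. Consequently your plan of glueing the comparison over the cover $D(p)\cup D([\varpi])$ using henselian invariance on each piece does not get off the ground, and you were right to flag this step as the main obstacle. The paper's proof avoids this entirely: it invokes the Gabber--Ramero-triple version of the invariance theorem, \cite[Thm.~2.3.3.(c)]{BouthierCesnavicius}, which is designed precisely to compare $H^{1}$ of a quasi-compact open $U\subseteq \Spec(A)$ containing $\Spec(A[1/t])$ with $H^{1}$ of its base change along $A\to\widehat{A}_{t}$, and applies this twice --- first with $t=[\varpi]$, taking $A\to A_{1}=\widehat{A}_{[\varpi]}$, then with $t=p$, taking $A_{1}\to\widehat{A_{1}}_{p}\simeq W(\kappa(x)^{+})$ --- never requiring any localization to be henselian. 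To repair your proof of (b) you would need to replace the glueing strategy with this two-step completion and the Gabber--Ramero machinery (or an argument along the lines of Remark~\ref{Input Bhatt zum Ausdehnen}, which uses a Beauville--Laszlo-type descent statement of Bhatt for the full $(p,[\varpi])$-adic completion rather than a cover by two affines).
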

\begin{proof}{\textit{( of the Lemma)}}
Recall that $\kappa(x)=\widehat{\Frac(R/\text{supp}(x))}_{|.|_{x}}$ and that $\kappa(x)^{+}$ is the valuation ring corresponding to the valuation $x.$ If $\mathcal{O}^{+}_{S,x}=\colim_{x\in U}\mathcal{O}^{+}_{S}(U)$ is the un-completed stalk of $\mathcal{O}^{+}_{S}$ at $x\in S,$ it is true that
\begin{equation}\label{Vervollstaendigung der Plus-Komponente}
\widehat{(\mathcal{O}^{+}_{S,x})}_{\varpi}\simeq \kappa(x)^{+}.
\end{equation}
Now consider $A=\colim_{x\in U}W(\mathcal{O}^{+}_{S}(U)).$  The first observation is that if $A_{1}=\widehat{A}_{[\varpi]}$ and $A_{2}=\widehat{A_{1}}_{p},$ then one has that
$$
A_{2}\simeq W(\kappa(x)^{+}).
$$
Namely, since $W(\kappa(x)^{+})$ is $(p,[\varpi])$-adically complete and separated (thus also $p$-adic and $[\varpi]$-adic), the natural map $A\rightarrow W(\kappa(x)^{+})$ induces a morphism as above. Now observe first that $p$ is regular in $A_{2}$ and in $W(\kappa(x)^{+}):$ for $W(\kappa(x)^{+})$ this is clear, since $\kappa(x)^{+}$ is perfect in characteristic $p,$ and for $A_{2}$ it suffices to see that $p$ is regular in $A_{1}$ (since completing along a regular element keeps the property that this element was regular). Observe now that $[\varpi]\in A$ is regular. Indeed, by exactness of filtered colimits, it is enough to check that $[\varpi]\in W(\mathcal{O}^{+}_{S}(U))$ is regular. Then one uses the following small statement: \begin{Lemma}
Let $A$ be a ring, $f,g\in A$ non-zero elements, such that $A$ is $g$-adic and $g\in A$ is regular. If $\overline{f}\neq 0\in A/g$ is regular, also $f\in A$ is regular.
\end{Lemma}
\begin{proof}
Let $x\in A,$ such that $f\cdot x=0.$ One has to see that $x=0.$ Modulo $g$ this reads as $\overline{f}\cdot \overline{x}=0$ in $A/g.$ By assumption that $\overline{f}\neq 0$ is regular, this implies that $x\in gA,$ i.e. $x=g\cdot x^{\prime},$ for some $x^{\prime}\in A.$ It follows that $g\cdot (f\cdot x^{\prime})=f\cdot x=0$ and, since $g$ is regular in $A,$ that $f\cdot x^{\prime}=0.$ As before, one gets $x^{\prime}\in gA.$ Continuing in this fashion, one sees $x$ is infinitely $g$ divisible, which implies that $x=0.$
\end{proof}
Then also $[\varpi]\in A_{1}$ is regular and it suffices to show that $p$ is regular in $A/[\varpi],$ by the previous lemma. Observe that $\varpi$ is regular in $A/pA=\colim_{U}\mathcal{O}^{+}_{S}(U),$ since this element was by assumption a perfectoid pseudo-uniformizer in $(R,R^{+}),$ thus also in all $(\mathcal{O}_{S}(U),\mathcal{O}^{+}_{S}(U)).$ Now the torsion exchange lemma tells one that $p$ is regular in $A/[\varpi]A,$ as desired.\footnote{This lemma says the following: If $A$ is some commutative ring, $f,g\in A$ are regular, then $(A/f)[g]\simeq (A/g)[f].$ This can be seen by considering the first homology of the Koszul-complex $K^{\bullet}(A;f,g)\simeq K^{\bullet}(A;g,f).$} 
\\
The fact that $p$ is a non-zero divisor in $A_{2}$ will now be used as follows: since $A_{2}$ is by construction $p$-adic, it suffices to show that the morphism $A_{2}\rightarrow W(\kappa(x)^{+})$ is an isomorphism modulo $p.$ Just to be on the same page, here is the statement one uses: 
\begin{Lemma}
Let $\psi\colon A\rightarrow B$ be a ring homomorphism and let $f\in A$ such that $A$ is $f$-adic and $f$-torsion free and $B$ is $\psi(f)$-adic and $\psi(f)$-torsion free. If $\psi$ is an isomorphism modulo $f,$ then $\psi$ is an isomorphism.
\end{Lemma}
\begin{proof}
The morphism $\psi$ is surjective by Nakayama's lemma. Consider $\ker(\psi);$ this ideal is derived $g$-complete. Then it suffices to see that $\ker(\psi)\otimes_{A}A/g=0.$ If one knows that tensoring with $A/g$ the exact sequence
$$
\xymatrix{
0 \ar[r] & \ker(\psi) \ar[r] & A \ar[r] & B \ar[r] & 0
}
$$
one gets the exact sequence
$$
\xymatrix{
0 \ar[r] & \ker(\psi) \otimes_{A}A/g \ar[r] & A \otimes_{A} A/g \ar[r] & B \otimes_{A} A/g \ar[r] & 0,
}
$$
it would follow that $\ker(\psi) \otimes_{A}A/g=0,$ as desired. The obstruction to this sequence being exact is $\text{Tor}^{1}_{A}(B,A/g).$ Tensoring the exact sequence
$$
\xymatrix{
0 \ar[r] & A \ar[r]^{\cdot f} & A \ar[r] & A/f \ar[r] & 0
}
$$
with $B$ over $A,$ one sees that $\text{Tor}^{1}_{A}(B,A/g)$ identifies with the $g$-torsion in $B.$ By assumption it therefore vanishes.
\end{proof}
One is therefore left with showing that
$$
A_{2}/pA_{2}\simeq W(\kappa(x)^{+})/pW(\kappa(x)^{+})
$$
is an isomorphism.
For this it then suffices to see that 
$$
\widehat{A}_{[\varpi]}/p\widehat{A}_{[\varpi]}\simeq \widehat{(A/pA)}_{[\varpi]}
$$
by the formula (\ref{Vervollstaendigung der Plus-Komponente}) above.
To see this, one uses that $p\in A$ is regular, to obtain the following triangle
$$
\xymatrix{
A \ar[r]^{\cdot p} & A \ar[r] & A/pA \ar[r]^{+1} &.
}
$$
Applying the derived $[\varpi]$-adic completion, the following triangle
$$
\xymatrix{
\widehat{A}^{\mathbb{L}}_{[\varpi]} \ar[r]^{\cdot p} & \widehat{A}^{\mathbb{L}}_{[\varpi]} \ar[r] & \widehat{(A/pA)}^{\mathbb{L}}_{[\varpi]} \ar[r] &
}
$$
follows.
Since $A$ is $[\varpi]$-torsion free, one obtains $\widehat{A}^{\mathbb{L}}_{[\varpi]}\simeq \widehat{A}_{[\varpi]};$ similiarly $A/pA$ is $\varpi$-torsion free, so that one gets $\widehat{(A/pA)}^{\mathbb{L}}_{[\varpi]}\simeq \widehat{(A/pA)}_{[\varpi]}.$ Furthermore, recall that $p$ is regular in $\widehat{A}_{[\varpi]}.$ Comparing with the short exact sequence
$$
\xymatrix{
0 \ar[r] & \widehat{A}_{[\varpi]} \ar[r]^{\cdot p} & \widehat{A}_{[\varpi]} \ar[r] & \widehat{A}_{[\varpi]}/p\widehat{A}_{[\varpi]} \ar[r] & 0,
}
$$
one deduces the claim.
\\
Now let me first quickly explain why (a) is true: since $\Spec(A)=\lim_{x\in U}\Spec(W(\mathcal{O}^{+}_{S}(U)))$ and then by the behaviour of $\G$-torsors on inverse limits of qcqs schemes along affine maps, one has that
$$
\underset{x\in U}\colim H^{1}(\Spec(W(\mathcal{O}^{+}_{S}(U))), \G_{\Spec(W(\mathcal{O}^{+}_{S}(U)))})\simeq H^{1}(\Spec(A),\G_{\Spec(A)}).
$$
In fact, recall here that one is working with $\G$ smooth, so that one may take the non-abelian cohomology in the étale topology and then one can cite \cite[Thm. 2.1]{MargauxLimit}.
Then consider the following commutative diagram
$$
\xymatrix{
A \ar[r] \ar[d] & W(\mathcal{O}^{+}_{S,x}) \ar[r] \ar[d] & W(\kappa(x)^{+}) \ar[d] \\
\mathcal{O}^{+}_{S,x} \ar[r]^{\id} & \mathcal{O}^{+}_{S,x} \ar[r] & \kappa(x)^{+}.
}
$$
Here the morphism $A\rightarrow \mathcal{O}^{+}_{S,x}$ is induced by the colimit of the reduction modulo $p$ maps $$W(\mathcal{O}^{+}_{S}(U))\rightarrow \mathcal{O}^{+}_{S}(U).$$
Note that all downward arrows are surjections with henselian kernel: for $W(\kappa(x)^{+})\rightarrow \kappa(x)^{+}$ and $W(\mathcal{O}^{+}_{S,x})\rightarrow \mathcal{O}^{+}_{S,x}$ this is just the fact that the relevant rings of Witt vectors are $p$-adic (one takes here just the ring of Witt vectors of perfect rings in char $p$) and for the last surjection, just use the same observation and the fact that henselian surjections are closed under filtered colimits. 
\\
Furthermore, note that $\mathcal{O}^{+}_{S,x}$ is a filtered colimit of the $\varpi$-henselian rings $\mathcal{O}^{+}_{S}(U).$ Using invariance of $\G$-torsors under henselian pairs, as supplied by Bouthier-Česnavičius \cite[Thm 2.1.6]{BouthierCesnavicius}, for both $(\mathcal{O}^{+}_{S,x},(\varpi))$ and $(\kappa(x)^{+},(\varpi)),$
one obtains that
$$
H^{1}(\Spec(\mathcal{O}^{+}_{S,x}),\G)\simeq H^{1}(\Spec(\kappa(x)^{+}),\G).
$$
The above commutative diagram induces a corresponding commutative diagram on isomorphism classes of $\G$-torsors:
$$
\xymatrix{
H^{1}(\Spec(A),\G) \ar[d]^{\simeq} \ar[r] & H^{1}(\Spec(W(\mathcal{O}^{+}_{S,x})),\G) \ar[d]^{\simeq} \ar[r] & H^{1}(\Spec(W(\kappa(x)^{+})),\G) \ar[d]^{\simeq} \\
H^{1}(\Spec(\mathcal{O}^{+}_{S,x}),\G) \ar[r]^{\id} & H^{1}(\Spec(\mathcal{O}^{+}_{S,x}),\G) \ar[r]^{\simeq} &  H^{1}(\Spec(\kappa(x)^{+}),\G).
}
$$
Here I am allowed to write the isomorphism symbols in the above commutative diagram, because one may use invariance under henselian pairs again. This terminates the verification of part (a).
\\
Now let me turn to part (b). Here the key input will be yet another result of Bouthier-Česnavičius, which one applies twice. 
\\
Recall that I still denote $A=\colim_{x\in U}W(\mathcal{O}^{+}_{S}(U)).$ I explained before that $A$ is $[\varpi]$-torsion free. Then $(A,[\varpi],I=(1))$ is a bounded Gabber-Ramero triple, c.f. \cite[section 2.1.9]{BouthierCesnavicius}. Recall that they call a Gabber-Ramero triple $(A,t,I)$ henselian, if the pair $(A,tI)$ is henselian. Since all the rings $W(\mathcal{O}^{+}_{S}(U))$ are also $[\varpi]$-adic and henselian pairs are closed under filtered colimits, one sees that $(A,[\varpi],I=(1))$ is a henselian Gabber-Ramero triple. Denote by $(A_{1},[\varpi],I=(1))$ with $A_{1}=(\widehat{A})_{([\varpi])}$ the bounded Gabber-Ramero triple corresponding to the $[\varpi]$-adic completion of $A$ (recall that $[\varpi]$ is still regular in $A_{1}$); this is again a henselian Gabber-Ramero triple and the morphism of Gabber-Ramero triples $(A,[\varpi],I=(1))\rightarrow (A_{1},[\varpi],I=(1))$ satisfies the requirements of \cite{BouthierCesnavicius} Theorem 2.3.3. (c) (note that by assumption $\G$ is smooth over $\mathbb{Z}_{p}$ and affine, all their requirements for the group are met). 
\\
Then one has the inclusions
$$
\Spec(A[1/[\varpi]])\subseteq \Spec(A)-V(p,[\varpi]) \subseteq \Spec(A),
$$
and $\Spec(A_{1})-V(p,[\varpi])\rightarrow \Spec(A)-V(p,[\varpi])$ is the pullback $$\Spec(A)-V(p,[\varpi])\times_{\Spec(A)}\Spec(A_{1})\rightarrow \Spec(A)-V(p,[\varpi]),$$ so that one can apply their statement to get an isomorphism
$$
H^{1}(\Spec(A)-V(p,[\varpi]),\G)\simeq H^{1}(\Spec(A_{1})-V(p,[\varpi]),\G).
$$
Next, consider the Gabber-Ramero triple $(A_{1},p,I=(1)).$ For the boundedness, recall that it was already verified that $p$ is regular in $A_{1}.$ Next, observe that $(A_{1},p)$ is henselian. For this, it is enough (because henselian pairs are closed under limits and nilpotent thickenings) to see that $A_{1}/[\varpi]=A/[\varpi]$ is henselian along $p,$ which is ok because $A$ is henselian along $p$ and henselian pairs are preserved under quotients. Now, since also
$$
\Spec(A_{1}[1/p])\subseteq \Spec(A_{1})-V(p,[\varpi])\subseteq \Spec(A_{1}),
$$
one can apply \cite[Theorem 2.3.3.(c)]{BouthierCesnavicius} again - but this time applied to the morphism of bounded and henselian Gabber-Ramero triples $(A_{1},p,I=(1))\rightarrow (\widehat{A_{1}}_{(p)}, p, I=(1))$, to conclude that
$$
H^{1}(\Spec(A_{1})-V(p,[\varpi]),\G)\simeq H^{1}(\Spec(\widehat{A_{1}}_{(p)})-V(p,[\varpi]),\G).
$$
To get the statement of the sub-Lemma, I am proving right now, on uses the previous discussion comparing completions, which implied that
$$
\widehat{A_{1}}_{(p)}\simeq W(\kappa(x)^{+}).
$$
\end{proof}
Having this lemma in the pocket, one may finish the proof the main statement, I am hunting right now: recall $\mathcal{E}^{\text{approx}}$ the $\G$-torsor over $\Spec(W(R^{+}))-V(p,[\varpi])$ that was constructed previously via glueing. Let $x\in \Spa(R,R^{+})$ be a point, then consider the morphism on 'punctured spectra'
$$
\dot{f}_{x}\colon \Spec(W(\kappa(x)^{+}))-V(p,[\varpi])\rightarrow \Spec(W(R^{+}))-V(p,[\varpi])
$$
and the pullback $\dot{f}_{x}^{*}(\mathcal{E}^{\text{approx}}).$ 
\\
Then one can find a unique, up to isomorphism, $\G$-torsor $\mathcal{M}^{\prime}_{x}$ which is defined on $\Spec(W(\kappa(x)^{+})),$  extending $\dot{f}_{x}^{*}(\mathcal{E}^{\text{approx}}):$ here one uses on the one hand a result of Kedlaya, \cite[Thm. 2.7]{KedlayaAinf} (c.f. also with \cite[Prop. 14.2.6.]{Berkeleylectures}, which is verbatim the statement I am using here), and on the other hand a Lemma of Anschütz, \cite[Prop. 8.5]{JohannesExtending}.
\\
 By part (a) of the above lemma, one finds a $\G$-torsor $\mathcal{M}_{U},$ which is defined over $\Spec(W(\mathcal{O}^{+}_{S}(U))),$ such that its pullback along $\Spec(W(\kappa(x)^{+}))\rightarrow \Spec(W(\mathcal{O}^{+}_{S}(U)))$ is isomorphic to $\mathcal{M}^{\prime}_{x}.$ Now a priori, the pullback $\mathcal{E}^{\prime}_{U}$ of the given $\G$-torsor $\mathcal{E}^{\text{approx}}$ over $\Spec(W(R^{+}))-V(p,[\varpi])$ towards $\Spec(W(\mathcal{O}^{+}_{S}(U)))-V(p,[\varpi])$ may not be isomorphic to the restriction of $\mathcal{M}_{U}$ towards the 'punctured spectrum' $\Spec(W(\mathcal{O}^{+}_{S}(U)))-V(p,[\varpi]).$ But by construction both are isomorphic when further pulled back to $\Spec(W(\kappa(x)^{+}))-V(p,[\varpi]),$ so that by part (b) of the above lemma, one finds a rational open $x\in V\subseteq U,$ such that they are both isomorphic over $\Spec(W(\mathcal{O}^{+}_{S}(V)))-V(p,[\varpi]).$
 \\
Therefore, consider the $\G$-torsor $\mathcal{M}_{V}=f_{U,V}^{*}(\mathcal{M}_{U}),$ where $f_{U,V}\colon \Spec(W(\mathcal{O}^{+}_{S}(V)))\rightarrow \Spec(W(\mathcal{O}^{+}_{S}(U)));$ it extends the $\G$-torsor $\dot{f}_{V}^{*}(\mathcal{E}^{\text{approx}}).$ It follows that the Frobenius-structure and quasi-isogeny on $$\dot{f}_{V}^{*}(\mathcal{E}^{\text{approx}})$$ induces the same-structure on $\mathcal{M}_{V},$ so that one gets a $\G$-Breuil-Kisin-Fargues module $(\mathcal{M}_{V},\varphi_{\mathcal{M}_{V}})$ of type $\mu$ over $\Spec(\mathcal{O}^{+}_{S^{\sharp}}(V^{\sharp}))$ with a $G$-quasi-isogeny $\rho_{V}.$ 
\\
Running over all points $x\in \Spa(R,R^{+}),$ one obtains the desired covering by rational opens of $S.$ It remains to see that the tuples $((\mathcal{M}_{V},\varphi_{\mathcal{M}_{V}}),\rho_{V})$ are isomorphic on overlaps and satisfy the cocycle condition. Since one already knows at this point that $\psi$ is injective, one can deduce that the tuples $((\mathcal{M}_{V},\varphi_{\mathcal{M}_{V}}),\rho_{V})$ are isomorphic on overlaps and they satisfy the cocycle condition because the restriction from $\G$-Breuil-Kisin-Fargues modules over some integral perfectoid ring $A$ towards $\G$-shtuka is faithful (but not fully faithful in general! c.f. \cite[Proposition 2.2.6]{RapoPappasShtuka}. \footnote{The worry with fullness in general is that when one restricts $\varphi$-modules from $\mathcal{Y}_{[\rho,\infty]}$ towards $\mathcal{Y}_{[\rho,\infty)}$ the target category does not care about the '+'-component of the affinoid Huber pair, while the source category certainly does.}).
\end{proof}
\begin{Remark}\label{Input Bhatt zum Ausdehnen}\footnote{This remark was born after a remark given to me by Johannes Anschütz after I told him about the previous lemma.}
Here I want to give a slightly different argument for the extension after passing to an analytic covering of $\Spa(R,R^{+}),$ which , instead of using the results of Bouthier-Česnavičius, makes use of results of Bhatt \cite{BhattTannaka} ; I will stick to the case of $\G=\GL_{n}$ for simplicity.
\\
One looks at the following functor parametrizing all possible extensions of the vector bundle $\mathcal{N}$ on $\Spec(W(R^{+}))-V(p,[\varpi]):$
$$
Extn(\mathcal{N})\colon \text{Affd perfectoids}/\Spa(R,R^{+})^{\op}\rightarrow \text{Set},
$$ 
sending $g\colon \Spa(S,S^{+})\rightarrow \Spa(R,R^{+})$ towards pairs, up to equivalence,
$$
\lbrace \mathcal{M} \text{ rank }n\text{ vector bundle on }\Spec(W(S^{+})), \rho\colon \mathcal{M}[\frac{1}{(p,[\varpi])}]\simeq \dot{g}^{*}(\mathcal{N}) \rbrace.
$$
Here I denote by 
$$
\dot{g}\colon \Spec(W(S^{+}))-V(p,[\varpi])\rightarrow \Spec(W(R^{+}))-V(p,[\varpi])
$$
the induced morphism on 'punctured spectra' (well, $V(p,[\varpi])$ is not really a point here anymore...) and by $\mathcal{M}[\frac{1}{(p,[\varpi])}]$ the restriction of $\mathcal{M}$ to this 'punctured spectrum'.
Let $x\in \Spa(R,R^{+}).$ Then one can use again the result of Kedlaya \cite[Thm. 2.7]{KedlayaAinf}, which implies that one has a point $(\mathcal{M}_{x},\rho_{x}=\text{can})\in Extn(\mathcal{N})(\Spa(\kappa(x),\kappa(x)^{+})).$ The key claim is now that
$$
Extn(\mathcal{N})(\Spa(\kappa(x),\kappa(x)^{+}))=\underset{x\in U}\colim \text{ }Extn(\mathcal{N})(\Spa(\mathcal{O}_{S}(U),\mathcal{O}^{+}_{S}(U))),
$$
where the colimit runs over all rational opens $U\subseteq S$ containing $x.$
This would be sufficient to construct the desired extension after passing to a rational covering.
As
$$
\kappa(x)^{+}=\widehat{(\underset{x\in U}\colim\mathcal{O}^{+}_{S}(U))}_{(\varpi)},
$$
the previous claim is really about commutation with completed colimits and to handle this I will use Bhatt's results on generalizations of Beauville-Laszlo glueing
: if $\pi\colon Y\rightarrow X$ is a morphism of schemes, $Z\subset X$ is a constructible closed, $U=X-Z,$ $V=Y-\pi^{-1}(Z),$ such that
$$
Z\times_{X}^{L}Y\simeq Z,
$$
then 
$$
\Vect(X)\simeq \Vect(Y)\times_{\Vect(V)} \Vect(U).
$$
This is \cite{BhattTannaka}, Prop. 5.6. - taken together with loc.cit. Lemma 5.12. I want to apply this statement with $X=\Spec(A),Z=V(p,[\varpi]),Y=\Spec(\widehat{A}_{(p,[\varpi])}),$ where $A=\colim_{x\in U}W(\mathcal{O}^{+}_{S}(U));$ recall that $ \widehat{A}_{(p,[\varpi])}\simeq W(\kappa(x)^{+}).$\footnote{Indeed, I checked above that $\widehat{(\widehat{A}_{[\varpi]})}_{p} \simeq W(\kappa(x)^{+}),$ but note that this also implies that $\widehat{A}_{(p,[\varpi])}\simeq W(\kappa(x)^{+}).$ Namely, since $(p,[\varpi])$ is a regular sequence, one obtains that
$$
\widehat{A}_{(p,[\varpi])}\simeq R\lim_{n}K^{\bullet}_{A}(p^{n},[\varpi]^{n})=\widehat{A}^{\mathbb{L}}_{(p,[\varpi])}.
$$
But then, $\widehat{A}^{\mathbb{L}}_{(p,[\varpi])}\simeq \widehat{(\widehat{A}^{\mathbb{L}}_{[\varpi]})}^{\mathbb{L}}_{p},$ but one knows already that both completions on the right agree with their classical counterpart; finishing the small argument.
} The task is therefore to see that
$$
A/(p,[\varpi])\otimes_{A}^{L}\widehat{A}_{(p,[\varpi])}\simeq \widehat{A}_{(p,[\varpi])}/(p,[\varpi]),
$$
since this is just $A/(p,[\varpi]).$ For this, recall that $(p,[\varpi])$ was a regular sequence in both $A$ and $\widehat{A}_{(p,[\varpi])}.$ It follows that the Koszul-complex $K^{\bullet}_{A}(p,[\varpi])$ is a free resolution of $A/(p,[\varpi])$ and that $K^{\bullet}_{\widehat{A}_{(p,[\varpi])}}(p,[\varpi])$ is a free resolution of $\widehat{A}_{(p,[\varpi])}/(p,[\varpi]).$ Then calculate that
\begin{align*}
A/(p,[\varpi])\otimes_{A}^{L}\widehat{A}_{(p,[\varpi])} & \simeq K^{\bullet}_{A}(p,[\varpi])\otimes_{A} \widehat{A}_{(p,[\varpi])} \\
& \simeq  K^{\bullet}_{\widehat{A}_{(p,[\varpi])}}(p,[\varpi]) \\
& \simeq \widehat{A}_{(p,[\varpi])}/(p,[\varpi]) \\
& \simeq A/(p,[\varpi]),
\end{align*}
as desired. The hypothesis in Bhatt's result are therefore satisfied and one has that
$$
\Vect(\Spec(A))\simeq \Vect(\Spec(W(\kappa(x)^{+})))\times_{\Vect(\Spec(W(\kappa(x)^{+})-V(p,[\varpi]))} \Vect(\Spec(A)-V(p,[\varpi])).
$$
Since $\Vect(.)$ commutes with filtered colimits, one may deduce that
$$
Extn(\mathcal{N})(\Spa(\kappa(x),\kappa(x)^{+}))=\underset{x\in U}\colim \text{ } Extn(\mathcal{N})(\Spa(\mathcal{O}_{S}(U),\mathcal{O}^{+}_{S}(U))),
$$
and this finishes the proof.
\end{Remark}

\subsection{Towards the representability of the generic fiber: some conjectures}
Keep the set-up from the previous sections: $(\G,\lbrace \mu \rbrace, [b])$ is an integral unramified local Shimura-datum over $\mathbb{Q}_{p}$ and one still assumes that all slopes of the isocrystal associated to $\Ad(b)$ are $>-1.$ Associated to this data one had the Bültel-Pappas moduli problem of deformations of adjoint nilpotent $\G$-$\mu$-displays $\mathcal{M}^{\BP}$. Still, I have to assume unfortunately that $p>2.$
\\
As said before, it would follow from the representability of the functor $\mathcal{M}^{\BP}$ that the functor $(\mathcal{M}^{\BP})_{\eta}$ defined on $\Affd_{\Spa(\breve{E},\mathcal{O}_{\breve{E}})}$ from above is representable by a rigid analytic space over $\Spa(\breve{E}).$ Using the previous result, Proposition \ref{Vergleich der Diamanten}, I want to explain how one could go about proving the representability of the generic fiber $\mathcal{M}^{\BP}_{\eta};$ at least inside the category of smooth adic spaces over $\Spa(\breve{E}).$\footnote{The reason one has to restrict for now to smooth adic spaces is that I will use the fully faithfullness of the diamond functor; in fact semi-normality would be enough here.}
\\
Therefore, let $\SmAffd_{\Spa(\breve{E})}$ be the category of smooth and complete affinoid adic spaces over $\Spa(\breve{E});$ one may equip this category either with the analytic or étale topology. Now one can restrict both sheaves $\mathcal{M}^{\BP}_{\eta}$ and $\Sh_{K}$ towards the category $\SmAffd_{\Spa(\breve{E})}$ and the previous results will directly imply the existence of a comparison morphism of sheaves.
\\
Namely, if $\Spa(R,R^{\circ})\in \SmAffd_{\Spa(\breve{E})},$ with associated diamond $\Spd(R,R^{\circ})$ over $\Spd(\breve{E}),$ and if $$\Spa(R,R^{\circ})\rightarrow \mathcal{M}^{\BP}_{\eta}$$ is a $\Spa(R,R^{\circ})$-valued point, then one can apply the diamond functor to obtain a morphism of pro-étale sheaves over $\Spd(\breve{E}),$
$$
\Spd(R,R^{\circ})\rightarrow (\mathcal{M}^{\BP}_{\eta})^{\lozenge}.
$$
Then use the comparison of the diamonds, i.e. Prop. \ref{Vergleich der Diamanten}, to obtain a morphism of pro-étale sheaves over $\Spd(\breve{E})$
$$
\Spd(R,R^{\circ})\rightarrow (\Sh_{K})^{\lozenge}.
$$
By fully faithfulness of the diamond functor, \cite[Prop. 10.2.3.]{Berkeleylectures}, from smooth adic spaces over $\Spa(\breve{E})$ towards diamonds over $\Spd(\breve{E})$ (one has to remember the structure morphism!), one now obtains a morphism of smooth adic spaces over $\Spa(\breve{E}),$
$$
\Spa(R,R^{\circ})\rightarrow \Sh_{K}.
$$
This association is functorial in $\Spa(R,R^{\circ}),$ so that in total one constructed a comparison morphism of sheaves
$$
\Upsilon^{\text{sm}}\colon (\mathcal{M}^{\BP})_{\eta}\!\mid_{\SmAffd_{\Spa(\breve{E})}}\rightarrow \Sh_{K}\!\mid_{\SmAffd_{\Spa(\breve{E})}}.
$$
Then there is no way around making the following 
\begin{Conjecture}
The above morphism of sheaves $\Upsilon^{\text{sm}}$ is an isomorphism.
\end{Conjecture}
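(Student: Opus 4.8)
\emph{Sketch of a possible approach.} The plan is to prove that the restriction of $\mathcal{M}^{\BP}_{\eta}$ to $\SmAffd_{\Spa(\breve{E})}$ is representable by a smooth adic space over $\Spa(\breve{E})$: granting this, Proposition \ref{Vergleich der Diamanten} together with the full faithfulness of the diamond functor on smooth adic spaces over $\Spa(\breve{E})$ (\cite[Prop. 10.2.3.]{Berkeleylectures}) forces that adic space to be $\Sh_{K}$ and $\Upsilon^{\text{sm}}$ to be an isomorphism. Since $\Sh_{K}\!\mid_{\SmAffd_{\Spa(\breve{E})}}$ and $\mathcal{M}^{\BP}_{\eta}\!\mid_{\SmAffd_{\Spa(\breve{E})}}$ are both analytic sheaves, it is enough to show that for each $T=\Spa(R,R^{\circ})\in \SmAffd_{\Spa(\breve{E})}$ the diamondification map
$$
\mathcal{M}^{\BP}_{\eta}(T)\longrightarrow (\mathcal{M}^{\BP}_{\eta})^{\lozenge}(T^{\lozenge})
$$
is a bijection; for then, composing with the isomorphism of Proposition \ref{Vergleich der Diamanten} and the bijection $\Sh_{K}(T)\cong(\Sh_{K})^{\lozenge}(T^{\lozenge})$ coming from full faithfulness, one finds that $\Upsilon^{\text{sm}}(T)$ is a bijection. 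Throughout one should fix a smooth formal model $\mathfrak{X}$ of $T$ over $\Spf(\mathcal{O}_{\breve{E}})$ and set $R_{0}=\Gamma(\mathfrak{X},\mathcal{O}_{\mathfrak{X}})$, a $p$-torsion free, $p$-adically complete, $p$-completely smooth -- hence quasi-syntomic -- $\mathcal{O}_{\breve{E}}$-algebra, so that a $T$-section of $\mathcal{M}^{\BP}_{\eta}$ is, up to analytic sheafification, a $\G$-$\mu$-display over $R_{0}$ which modulo $p$ is $G$-quasi-isogenous to $\mathcal{P}_{u_{0}}$.

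\emph{Injectivity of the comparison.} This should follow by the same sort of descent used throughout the article. Adjoining $p$-power roots of local coordinates on $\mathfrak{X}$ produces a $p$-completely faithfully flat cover $R_{0}\to S$ with $S$ quasi-regular semiperfectoid (and, if one wishes, perfectoid), so that -- using $\varpi$-complete arc- or quasi-syntomic descent for the $\Ainf(\cdot)$-, $\Acris(\cdot)$- and relevant $\Isom$-functors (Lemma \ref{Descent fuer Acris}, Lemma \ref{Descent fuer Isokristalle}, Corollary \ref{Korollar Vergleich Displays und BKF}) and the injectivity of $\psi$ already established in the proof of Proposition \ref{Vergleich der Diamanten} -- one reduces to the perfectoid situation and concludes that a $\G$-$\mu$-display with $G$-quasi-isogeny over $R_{0}$ is rigid: it is recovered from the local shtuka with leg that it produces. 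Hence no two non-isomorphic $T$-sections have the same image.

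\emph{Essential surjectivity -- the main obstacle.} The hard direction is to turn a $T$-point of $\Sh_{K}$, that is, a local $\G^{\text{adic}}$-shtuka over $\mathcal{Y}_{[0,\infty)}(\Spa(R,R^{\circ}))$ with a leg at $T$, singularities bounded by $\mu$ and a trivialisation near infinity of type $b=u_{0}\mu(p)$, into a $\G$-$\mu$-display over the formal model $\mathfrak{X}$. Over a perfectoid pro-étale cover of $T$ this is exactly what Sections \ref{Section crystalline equivalence}--\ref{Section Translation of the Quasi-isogeny} supply; the point is to descend the construction back to $R_{0}$. As $R_{0}$ is quasi-syntomic, quasi-syntomic-locally it is quasi-regular semiperfectoid, where one has the prismatic frame $\mathcal{F}_{\Prism}$ of Definition \ref{Prismatischer Frame}, its h-frame enhancement of Section \ref{hframe auf prismatischem Frame}, and a stack of $\G$-$\mu$-windows satisfying quasi-syntomic descent. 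What is missing is the dictionary identifying, over such rings, local $\G$-shtukas bounded by $\mu$ with absolute prismatic $\G$-$F$-crystals of type $\mu$ -- a $\G$-$\mu$-version of the classification of $p$-divisible groups by absolute filtered prismatic $F$-crystals of Anschütz--Le Bras (\cite[Thm. 4.6.10]{JohannesArthur}) -- together with the identification of these crystals with $\G$-$\mu$-windows for $\mathcal{F}_{\Prism}$; this is the conjecture on absolute prismatic crystals alluded to above. Granting it, one would transport the shtuka datum to a $\G$-$\mu$-window for $\mathcal{F}_{\Prism}$ quasi-syntomic-locally on $R_{0}$, note that the slope hypothesis on $b$ forces the adjoint nilpotency condition (exactly as in the footnote to Proposition \ref{Vergleich der Diamanten}), descend along the quasi-syntomic site to a $\G$-$\mu$-display over $R_{0}$, and glue over the analytic covering of $T$ -- the compatibilities on overlaps and the cocycle condition being automatic once injectivity is in hand.

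\emph{Where the difficulty lies.} The genuine obstacles are: (i) formulating and proving the prismatic-crystal equivalence over honestly non-perfectoid (quasi-regular semiperfectoid) bases, where the chain of comparisons $\mathcal{F}_{\infintesimal}\to\mathcal{F}_{\cris}\to\mathcal{W}$ used here for integral perfectoid rings is not available in the same form, and the $\G$-$\mu$ analogue of the Anschütz--Le Bras classification; and (ii) the standing hypothesis $p\neq 2$, which enters through Proposition \ref{Deszent von Acris nach Ainf} and would have to be removed independently. I expect (i) to be the crux.
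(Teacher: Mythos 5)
This statement is an open conjecture, and the paper gives no proof; it only sketches a reduction, splitting $\Upsilon^{\text{sm}}$ into injectivity and surjectivity and reducing the latter to a further conjecture on absolute (log-)prismatic $F$-crystals (Conjecture \ref{Vermutung prismatische Kristalle}). Your sketch tracks that reduction closely: the same use of Proposition \ref{Vergleich der Diamanten} together with full faithfulness of the diamond functor for the comparison with $\Sh_K$, and the same identification of the missing ingredient as a $\G$-$\mu$ analogue of the Anschütz--Le Bras classification over quasi-regular semiperfectoid bases, which is exactly what the paper's prismatic-crystal conjecture encodes. Where you go slightly further than the paper (which defers injectivity to ``the right construction of the period morphism'') is in proposing a concrete injectivity argument via quasi-syntomic descent from $R_0$ to a perfectoid cover and the already-established injectivity of $\psi$. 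This is plausible, but be careful: knowing that a $\G$-$\mu$-display over $R_0$ is determined by its pullback to a perfectoid cover plus descent data requires the same quasi-syntomic descent theory for $\G$-$\mu$-windows over honestly non-perfectoid quasi-regular semiperfectoid rings that blocks the surjectivity, so the two halves are less decoupled than the sketch makes it appear. Your diagnosis that (i) the prismatic equivalence over non-perfectoid bases is the crux, and (ii) $p\neq 2$ is a separate limitation inherited from Proposition \ref{Deszent von Acris nach Ainf}, coincides with the paper's own assessment.
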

To attack this conjecture, one can break it up into two pieces: the injectivity and surjectivity. The injectivity should follow from the 'right' construction of the period morphism in terms of $\G$-$\mu$-displays.
\subsection{A conjecture on prismatic crystals}
In this subsection I want to state a conjecture which paves the way to showing that the previously constructed morphism $\Upsilon^{\text{sm}}$ is actually an epimorphism.
\\
Saying that $\Upsilon^{\text{sm}}$ is surjective boils down to the following: let $\mathcal{V}$ be the 'universal crystalline'  pro-étale $\underline{G(\mathbb{Q}_{p})}$-torsor over the admissible locus $\mathcal{F}l^{\text{adm}}(G,\lbrace \mu \rbrace, [b]),$ which falls out of the definition of the admissible locus as those modification of $G$-bundles on the relative Fargues-Fontaine curve, which are fiberwise trivial (\cite[Def. 3.1]{ChenFarguesShen}); plus the translation between fiberwise trivial $G$-torsors on the relative Fargues-Fontaine curve and pro-étale $\underline{G(\mathbb{Q}_{p})}$-torsors on the perfectoid bases one is working over (this pro-étale $\underline{G(\mathbb{Q}_{p})}$-torsors is constructed for example in the proof of \cite[Thm. 3.3]{ChenFarguesShen}). Now to give oneself a morphism
$$
f\colon \Spa(R,R^{\circ})\rightarrow \Sh_{K},
$$ 
where $\Spa(R,R^{\circ})$ is a smooth affinoid rigid-analytic space over $\Spa(\breve{E}),$
is equivalent to give oneself a reduction of the $\underline{G(\mathbb{Q}_{p})}$-torsor $f^{*}(\mathcal{V})$ towards a $\underline{\G(\mathbb{Z}_{p})}$-torsor. 
 \\
The surjectivity of $\Upsilon^{\text{sm}}$ means that one can extend this data - after admissible blow up of a chosen formal model of $\Spa(R,R^{\circ})$ - towards a $\G$-$\mu$-display, together with a quasi-isogeny. 
\\
One observes that this looks like a relative version of the recent result of Bhatt-Scholze \cite{BhattScholzePrismCrys} classifying lattices in crystalline Galois-representations in terms of absolute prismatic $F$-crystals.
\\
In this direction, I will make the following conjecture in the case of the most basic local Shimura-datum $(\GL_{n},\lbrace \mu_{n,d} \rbrace, [b])$ and in the setting, where the smooth, affinoid test adic space $\Spa(R,R^{\circ})$ admits a smooth $p$-adic formal model. 
\\
I have to introduce a minimum of set-up. Let $\Spf(A)$ be a $p$-adically smooth affine formal scheme over $\Spf(\mathcal{O}_{\breve{E}}).$ Let $A_{\text{qsyn}}$ denote the quasi-syntomic site, which, as $A$ is in particular quasi-syntomic, admits as a basis quasi-regular semiperfectoid $A$-algebras; on this basis one has the prismatic structure sheaf $\Prism_{\bullet},$ and also the sheaves $\Prism_{\bullet}[1/p], \Prism_{\bullet}[1/I], \widehat{\Prism_{\bullet}[1/I]}_{p},\Prism_{\bullet}\lbrace I/p \rbrace,$ see \cite{BhattScholzePrismCrys}, Construction 6.2. In the following, I will use $\Vect(A_{\text{qsyn}},?),$ where $$?\in \lbrace \Prism_{\bullet},\Prism_{\bullet}[1/p], \Prism_{\bullet}[1/I], \widehat{\Prism_{\bullet}[1/I]}_{p},\Prism_{\bullet}\lbrace I/p \rbrace \rbrace$$ as in loc.cit. Notation 2.1. Then one defines
$$
\Vect^{\varphi}(A_{\text{qsyn}},\Prism_{\bullet}[1/(p,I)])=\Vect^{\varphi}(A_{\text{qsyn}},\Prism_{\bullet}[1/p])\times_{\Vect^{\varphi}(A_{\text{qsyn}},\Prism_{\bullet}[1/p,1/I])}\Vect^{\varphi}(A_{\text{qsyn}},\Prism_{\bullet}[1/I]),
$$
and an object in this category has as a crystalline specialization: just apply $.\otimes_{\Prism_{\bullet}[1/p]} \Prism_{\bullet}\lbrace I/p \rbrace[1/p]$; and an étale specialization: just apply $.\otimes_{\Prism_{\bullet}} \widehat{\Prism_{\bullet}[1/I]}_{p}.$ Recall that Bhatt-Scholze check that $\mathbb{Z}_{p}$-local systems on the generic fiber of $\Spf(A)$ correspond to $\Vect^{\varphi}(A_{\text{qsyn}}, \widehat{\Prism_{\bullet}[1/I]}_{p}),$ see loc.cit. Cor. 3.8. It therefore makes sense to require that the étale specialization of an object $(\mathcal{N},\varphi_{\mathcal{N}})\in \Vect^{\varphi}(A_{\text{qsyn}},\Prism_{\bullet}[1/(p,I)])$ coincides with a given $\mathbb{Z}_{p}$-local system $\mathcal{T}.$
\\
I will use Koshikawa's log-generalization of prismatic cohomology, as developed in \cite{koshikawaLogPrism}, which allows one to give a definition of the absolute log-prismatic site. Here and in the following I will use the terminology of Hartl \cite[Def. 1.1 and Prop. 1.2.]{HartlSemistable} for what it means to be a semi-stable $p$-adic formal scheme. Let $\Spf(A^{\prime})$ be a small $p$-adic semi-stable affine formal scheme over $\Spf(\mathcal{O}_{\breve{E}}).$ Then one equips $\Spf(A^{\prime})$  with the canonical log-structure (i.e. given by the monoid $M=A^{\prime} \cap (A^{\prime}[1/p])^{*}$) to obtain a $p$-adic log formal scheme as in \cite{koshikawaLogPrism}. I will denote the monoid giving this log-structure by $M_{\text{ss}}.$ The category $\Spf(A^{\prime})_{\Prism_{\log}}$ is the category opposite to the category of all bounded log-prisms $(R,I,M_{R})$ as in loc.cit. Def. 3.3. together with a morphism of $p$-adic formal schemes
$$
f\colon \Spf(R/I)\rightarrow \Spf(A^{\prime}),
$$
and an exact closed immersion of log-formal schemes
$$
(\Spf(R/I,f^{*}(M_{\text{ss}}))\rightarrow (\Spf(R),M_{R})^{a}.
$$
As in loc. cit., after Def. 4.1., one has the structure sheaf $\mathcal{O}_{\Prism_{\text{log}}}$ on the category $(\Spf(A^{\prime}))_{\Prism_{\text{log}}},$ which allows one to make sense of the category of log-prismatic crystals in vector bundles, i.e. of $\Vect((\Spf(A^{\prime}))_{\Prism_{\log}}, \mathcal{O}_{\Prism_{\log}}).$ Similiarly as before, one can define $$\Vect(\Spf(A^{\prime}))_{\Prism_{\log}}, \mathcal{O}_{\Prism_{log}}[1/(p,I_{\Prism_{\log}})]).$$
Let me give $A$ the trivial log-structure, denoted by $M_{\triv}.$
Assume one is given a morphism of affine log-formal schemes
$$
g\colon (\Spf(A^{\prime}),M_{\text{ss}})\rightarrow (\Spf(A),M_{\triv}).
$$
One can pull back the object $$\mathcal{N}\in \Vect(\Spf(A)_{\Prism},\mathcal{O}_{\Prism}[1/(p,I_{\Prism})])$$ towards an object $$g^{*}(\mathcal{N})\in \Vect(\Spf(A^{\prime})_{\Prism_{\log}},\mathcal{O}_{\Prism_{\log}}[1/(p,I_{\Prism_{\log}})]).$$ Then it makes sense to require the existence of an $\mathcal{M}\in \Vect((\Spf(A^{\prime}))_{\Prism_{\log}}, \mathcal{O}_{\Prism_{\log}}),$ which extends $g^{*}(\mathcal{N}).$ 
\\
Now one goes back to the set-up considered in the beginning of this section: let $\Spf(A)$ be an affine smooth $p$-adic formal scheme over $\Spf(\mathcal{O}_{\breve{E}})$  and one is given a morphism
$$
f\colon \Spf(A)^{\text{ad}}_{\eta}\rightarrow \Sh_{K}.
$$
Recall that one denoted by $\mathcal{V}$ the 'universal' crystalline $\mathbb{Q}_{p}$-local system on $\mathcal{F}l^{\text{adm}}(\GL_{n},\lbrace \mu_{n,d} \rbrace, [b]),$ and that it was explained that the datum of the morphism $f$ above is equivalent to the datum of a pair $(\mathcal{T},\rho),$ where $\mathcal{T}$ is a $\mathbb{Z}_{p}$-local system on $\Spf(A)^{\text{ad}}_{\eta}$ and $\rho$ is an isomorphism of $\mathbb{Q}_{p}$-local systems
$$
\rho\colon \mathcal{T}\otimes_{\underline{\mathbb{Z}_{p}}}\underline{\mathbb{Q}_{p}}\simeq f^{*}(\mathcal{V}).
$$
Using the main results of \cite{GuoPrismCrystal}, on can see that  there exists a finite locally free prismatic $F$-crystal on $(A_{\text{qsyn}},\Prism_{\bullet}[1/(p,I)]),$ whose étale specialization is $\mathcal{T}$ and whose crystalline specialization is $(D,\varphi_{D})\otimes_{W(k)[1/p],\varphi}\Prism_{\bullet}\lbrace I/p \rbrace[1/p];$ here $(D,\varphi_{D})$ is some isocrystal representing the class $[b]\in B(\GL_{n}, \mu_{n,d}).$ 
\begin{Conjecture}\label{Vermutung prismatische Kristalle}
Let $\Spf(A)$ be an affine smooth $p$-adic formal scheme over $\Spf(\mathcal{O}_{\breve{E}})$ 
Let $\mathcal{N}\in \Vect(A_{\text{qsyn}},\Prism_{\bullet}[1/(p,I)])$ be a finite locally free crystal. Then there exists a composition of an rig-étale morphism, followed by an admissible blow-up, of $p$-adic formal schemes over $\Spf(\mathcal{O}_{\breve{E}})$ $$g\colon \Spf(A^{\prime})\rightarrow \Spf(A),$$ where $\Spf(A^{\prime})$ is a semi-stable $p$-adic formal $\Spf(\mathcal{O}_{\breve{E}}),$ and a finite locally free log-prismatic for $\mathcal{M}\in \Vect(\Spf(A^{\prime})_{\Prism_{\log}},\mathcal{O}_{\Prism_{\log}}),$ which extends $\mathcal{N}.$
\end{Conjecture}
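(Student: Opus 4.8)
The conjecture is a semistable-reduction statement for rational prismatic $F$-crystals over a smooth base: one wants to promote a crystal defined only after inverting $(p,I)$ to an honest log-prismatic crystal, at the cost of a rig-\'etale localization followed by an admissible blow-up to a semistable model. The plan is to route everything through $p$-adic Hodge theory on the generic fibre. First I would attach to $\mathcal{N}$ (carrying along its Frobenius structure, which is implicit here since the specialization functors require it) its \'etale and crystalline specializations via the functors of \cite{BhattScholzePrismCrys} and \cite{GuoPrismCrystal}: the \'etale specialization is a $\mathbb{Q}_p$-local system $V$ on the rigid generic fibre $X=\Spf(A)^{\text{ad}}_{\eta}$, and at every classical point $x\in X$ the restriction $V_x$ is precisely the $\mathbb{Q}_p$-linear crystalline $\mathrm{Gal}(\overline{\kappa(x)}/\kappa(x))$-representation cut out by the pointwise specialization of $\mathcal{N}$, because $\Prism_{\bullet}\{I/p\}[1/p]$ computes a crystalline period ring pointwise, exactly as in the good-reduction comparison. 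In particular $V$ is pointwise crystalline, hence pointwise de Rham, hence de Rham on all of $X$ by the spreading-out theorem of Liu--Zhu.

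\textbf{Step 1: semistable reduction of the base.} Being de Rham, $V$ is potentially semistable, and I would use the semistable reduction theorem for rigid analytic spaces --- de Jong's alterations combined with functorial semistable reduction in the style of Temkin, or the results of Hartl used in \cite{HartlSemistable} --- to construct the desired composite $g\colon\Spf(A')\to\Spf(A)$, a rig-\'etale morphism followed by an admissible blow-up with $\Spf(A')$ small and semistable over $\Spf(\mathcal{O}_{\breve{E}})$, arranged so that the special fibre of $\Spf(A')$ is a strict normal crossings divisor along which $g_{\eta}^{\ast}V$ is semistable with unipotent monodromy. Since admissible blow-ups and rig-\'etale maps are generic-fibre-\'etale, $g_{\eta}^{\ast}V$ is merely an \'etale-local pullback of $V$; the blow-up is what manufactures the semistable-but-not-smooth model whose boundary divisor supplies the log structure $M_{\text{ss}}$, and hence the extra room in which the integral extension will live.

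\textbf{Step 2: the integral log-prismatic model.} After this base change I would pick a $\mathbb{Z}_p$-lattice $\mathcal{T}\subset g_{\eta}^{\ast}V$ (possible after a further rig-\'etale refinement, since $\Spf(A')^{\text{ad}}_{\eta}$ is quasi-compact) and feed the semistable local system $\mathcal{T}$, together with its monodromy data, into Koshikawa's log-prismatic formalism \cite{koshikawaLogPrism}: the relative log-Breuil--Kisin classification in the smooth $F$-crystal formulation (Koshikawa--Yao, Min--Wang, and the comparison in \cite{GuoPrismCrystal}) produces a finite locally free log-prismatic $F$-crystal $\mathcal{M}\in\Vect((\Spf(A'))_{\Prism_{\log}},\mathcal{O}_{\Prism_{\log}})$ whose \'etale specialization is $\mathcal{T}$. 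It then remains to identify $\mathcal{M}[1/(p,I_{\Prism_{\log}})]$ with $g^{\ast}\mathcal{N}$: both are rational log-prismatic $F$-crystals with the same \'etale specialization $g_{\eta}^{\ast}V$ and the same crystalline specialization (the pullback of the isocrystal underlying $\mathcal{N}$), so they agree on every bounded log-prism by the pointwise comparison established above, and the isomorphism globalizes by quasi-syntomic/log-prismatic descent using that the rationalization functor on the log-prismatic site is faithful.

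\textbf{Main obstacle.} I expect the hard part to be the marriage of Steps 1 and 2: one must achieve semistable reduction \emph{inside} the restricted class of morphisms (rig-\'etale, then admissible blow-up, with target a small semistable formal scheme) rather than via a general alteration, and one must know that the resulting integral log-prismatic $F$-crystal genuinely recovers $\mathcal{N}$ rationally --- in effect a family version of ``every semistable representation admits a log-Breuil--Kisin module'', proved compatibly with deforming the base. The smoothness of $\Spf(A)$ and the good-slope hypotheses on $[b]$ are exactly what guarantee the de Rham input and the boundedness of the Hodge--Tate weights needed for the lattice of Step 2 to assemble into a crystal; proving the lattice-crystal compatibility in the relative log setting, and controlling how the blow-up locus interacts with the prismatic Frobenius, is where the real work lies.
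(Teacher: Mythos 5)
The statement you were asked to prove is labeled a \emph{Conjecture} in the paper and the paper gives no proof of it; the author only remarks afterwards that ``in the perfectoid setting one could see the arguments in the proof of Prop.~\ref{Vergleich der Diamanten} as basically verifying the above conjecture'' and that in the $\GL_n$-case one could route through $p$-divisible groups and representability of raw Rapoport--Zink spaces, while the goal is to find a direct, group-theoretic argument. There is therefore no proof in the paper against which to compare your sketch.

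Evaluated on its own terms, your proposal identifies the right circle of ideas (pass to \'etale and crystalline specializations, establish de~Rham-ness via Liu--Zhu spreading out, improve the model, and invoke a log-prismatic classification) but has two genuine gaps beyond the one you flag. First, Step~1 does not currently exist as a theorem: de~Jong's alterations produce alterations (finite covers), not the specific class of morphisms required here (a rig-\'etale morphism followed by an admissible blow-up landing in a \emph{semistable} formal scheme over $\Spf(\mathcal{O}_{\breve{E}})$), and functorial semistable reduction for formal/rigid schemes over a $p$-adic DVR in arbitrary dimension is not established in the generality you invoke; Hartl's \cite{HartlSemistable} and Temkin-style results do not deliver this form. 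Second, and more seriously, Step~2 is circular: the assertion that a semistable $\mathbb{Z}_p$-lattice in a de~Rham family gives rise to a finite locally free log-prismatic $F$-crystal over a relative semistable base, and that this crystal after inverting $(p,I)$ recovers the given rational crystal, \emph{is} the content of the conjecture. The ``relative log-Breuil--Kisin classification'' you cite is not available in this generality; asserting it here is not a reduction but a restatement. A correct proof would have to supply exactly the lattice-to-log-crystal extension step, and this is where the paper itself stops and calls it a conjecture.
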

\begin{Remark}
Let me admit right away that a conjecture in this direction for the local Shimura-datum $$(\GL_{n},\lbrace \mu_{n,d} \rbrace, [b])$$ is probably not so interesting, since in this case one can translate everything back to $p$-divisible groups and use the representability of the corresponding raw Rapoport-Zink space; needless to say the idea is here to find a direct argument, which does not make use of the representability and which also generalizes to a situation where one considers $\G$-torsors - in fact, I am rather optimistic that once the above linear case is understood, one can adapt to the more general situation.
\end{Remark}
Finally let me point out, that in the perfectoid setting one could see the arguments in the proof of Prop. \ref{Vergleich der Diamanten} as basically verifying the above conjecture.
\subsection{Formal models of the tubes}\label{subsection Formal models of the tubes}
As a not too suprising application of the previous results, I want to explain how to check the 'local' representability statement conjectured by Pappas-Rapoport \cite[Conjecture 3.3.4]{RapoPappasShtuka} in the easier non-ramified case. I will use the notation from loc.cit. section 3.3.1. 
\\
Let $(\G,\lbrace \mu \rbrace, [b])$ be an integral non-ramified local Shimura-datum over $\mathbb{Z}_{p}$ and $p\geq 3.$ In this case, one may take for the local model simply the Flag-variety $\G/P_{\mu}$ and then consider the Scholze $v$-sheaf $\mathcal{M}^{\text{int}}_{\text{Scholze}}$ as in \cite[Def. 25.1.1.]{Berkeleylectures}. Using the main results of the PhD-thesis of Gleason \cite{GleasonPhD}, for a point $x\in X_{\mu}(b)(k),$ one may consider the formal completion
$$
\widehat{\mathcal{M}^{\text{int}}_{\text{Scholze}}}_{/_{x}},
$$
(c.f. \cite{RapoPappasShtuka} section (3.3.4.)). This should be a qcqs small $v$-sheaf over $\Spd(\mathcal{O}_{\breve{E}}),$ but I did not find this exact statement in the literature.
\begin{proposition}\label{Formale Modelle fuer die Tubes}
The conjecture \cite[Conjecture 3.3.4]{RapoPappasShtuka} is satisfied.
\end{proposition}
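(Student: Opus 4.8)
The plan is to compare the Bültel--Pappas $v$-sheaf with Scholze's one and then feed the comparison into Gleason's theory of kimberlites. First I would construct a morphism of $v$-sheaves over $\Spd(\mathcal{O}_{\breve E})$
$$
\mathcal{M}^{\BP}_{v}\longrightarrow \mathcal{M}^{\text{int}}_{\text{Scholze}}.
$$
Given an $S=\Spa(R,R^{+})$-valued point of $\mathcal{M}^{\BP}_{v}$ --- that is, after untilting and the analytic descent already used in the proof of Proposition \ref{Vergleich der Diamanten}, an adjoint nilpotent $\G$-$\mu$-display over $R^{\sharp,+}$ together with a quasi-isogeny to $\mathcal{P}_{u_{0}}$ modulo $p$ --- Corollary \ref{Korollar Vergleich Displays und BKF} (via the descent result Proposition \ref{Deszent von Acris nach Ainf}) produces a $\G$-Breuil--Kisin--Fargues module of type $\mu$ over $\Spec(\Ainf(R^{\sharp,+}))$; restricting it to $\mathcal{Y}_{[0,\infty)}(S)$ and using Lemma \ref{Uebersetzung der Qis allgemeiner Fall} to turn the quasi-isogeny into a trivialisation near the leg, one gets a local mixed-characteristic $\G$-shtuka with one leg, bounded by $\mu$ (i.e.\ satisfying the Flag-variety local model condition) and with the prescribed behaviour at $\infty$ --- exactly an $S$-point of $\mathcal{M}^{\text{int}}_{\text{Scholze}}$.

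The key point is that this morphism is a bijection on $\Spa(C,\mathcal{O}_{C})$-points. By Corollary \ref{Scholze-Weinstein G-displays}, adjoint nilpotent $\G$-$\mu^{\sigma}$-displays over $\mathcal{O}_{C}$ are the tuples $(\mathcal{E}_{1},\mathcal{E},\iota,\mathcal{T})$ consisting of a $\mu$-bounded meromorphic modification of the trivial $G$-bundle by $\mathcal{E}$ along $\infty$ together with the integral structure $\mathcal{T}$, subject to $\Ad(\mathcal{E})$ having all Harder--Narasimhan slopes $<1$. On the other side, an $\mathcal{O}_{C}$-point of $\mathcal{M}^{\text{int}}_{\text{Scholze}}$ with its trivialisation compatible with $b$ is, by Fargues' theorem, the same datum \emph{without} the slope constraint; but the running assumption that $\Ad(b)$ has all slopes $>-1$ and the minusculeness of $\mu$ force that constraint to hold automatically (an $\Ad\circ\mu$-bounded, hence weight $\{-1,0,1\}$, modification of $\Ad(\mathcal{E}_{b})$ cannot create a slope $\geq 1$). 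Hence the two sets of $\mathcal{O}_{C}$-points agree, and the same FF-curve dictionary, together with \cite{BhattScholzeWitt} and \cite{GleasonPhD}, identifies the reduced special fibres with (the perfection of) $X_{\mu}(b)$.

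Now fix $x\in X_{\mu}(b)(k)$ and pass to formal completions. On the Bültel--Pappas side the crystalline deformation theory of adjoint nilpotent $\G$-$\mu$-displays of \cite[\S3.5]{BueltelPappas} (Grothendieck--Messing style, using crucially the adjoint nilpotency) pro-represents the deformation functor of the display attached to $x$ by $\Spf R_{x}$ with $R_{x}\cong \mathcal{O}_{\breve E}[[t_{1},\dots,t_{N}]]$, $N=\dim(\G/P_{\mu})$ --- a regular complete noetherian local ring --- and $\widehat{\mathcal{M}^{\BP}_{v}}_{/x}$ is representable by $\Spf R_{x}$. The comparison morphism restricts to a map of $v$-sheaves $\Spf R_{x}\cong\widehat{\mathcal{M}^{\BP}_{v}}_{/x}\to\widehat{\mathcal{M}^{\text{int}}_{\text{Scholze}}}_{/x}$ which, by the previous paragraph, is bijective on $\Spa(C,\mathcal{O}_{C})$-points, is compatible with the specialisation maps to $\Spec k$, and --- by Proposition \ref{Vergleich der Diamanten} restricted to the tube $\mathrm{sp}^{-1}(x)$ --- induces an isomorphism on generic fibres. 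Feeding this into Gleason's representability criterion from \cite{GleasonPhD} (the prekimberlite $\widehat{\mathcal{M}^{\text{int}}_{\text{Scholze}}}_{/x}$ receiving a formal model that is topologically flat, bijective on geometric points and an isomorphism on generic and reduced special fibres) one concludes $\widehat{\mathcal{M}^{\text{int}}_{\text{Scholze}}}_{/x}\cong \Spf R_{x}$, which is the assertion of \cite[Conjecture 3.3.4]{RapoPappasShtuka}.

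I expect the main obstacle to be this last step: promoting the bijection on geometric points (plus agreement of generic and special fibres) to an honest isomorphism of $v$-sheaves, i.e.\ to representability of $\widehat{\mathcal{M}^{\text{int}}_{\text{Scholze}}}_{/x}$. This is exactly where the kimberlite formalism of \cite{GleasonPhD} and the remark of George Pappas are needed, and verifying its hypotheses (richness/topological flatness of the prekimberlite, and that $R_{x}$ genuinely provides a formal model) is not formal. A secondary point requiring care is that \cite{BueltelPappas} establishes the formal-scheme structure of the \emph{global} moduli problem only in the Hodge-type case, so one must run the deformation theory at a single point in the full adjoint-nilpotent generality; this, however, is what the local crystalline deformation theory delivers and should not require the Hodge-type hypothesis. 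As a sanity check one may also note that, although Scholze's product-of-points trick is obstructed globally by the adjoint-nilpotency condition, over the formal completion at $x$ the nilpotency degree of the relevant $\varphi$-linear operator is uniformly bounded by its value at the common special point $x$, so that trick does apply to $\widehat{\mathcal{M}^{\text{int}}_{\text{Scholze}}}_{/x}$ and gives an alternative route to surjectivity.
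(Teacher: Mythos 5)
Your construction of the comparison morphism $\mathcal{M}^{\BP}_v\to\mathcal{M}^{\text{int}}_{\text{Scholze}}$, the identification of the two sets of geometric points via Corollary \ref{Scholze-Weinstein G-displays}, and the pro-representability of the deformation functor by a power-series ring over $\mathcal{O}_{\breve E}$ (from \cite[\S 3.5]{BueltelPappas}) all match the paper. The divergence is in the final step, and it is worth flagging because your route is both harder to carry out and unnecessary. You propose to deduce representability of $\widehat{\mathcal{M}^{\text{int}}_{\text{Scholze}}}_{/x}$ from Gleason's kimberlite formalism, essentially by verifying that $\Spf R_x$ is a formal model satisfying topological flatness and compatibility on generic and reduced special fibres --- and you yourself identify verifying these hypotheses as the main obstacle. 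The paper sidesteps this entirely: since $(\Spf(A_{\text{univ}}))_v\to\Spf(\mathcal{O}_{\breve E})_v$ is qcqs and $\widehat{\mathcal{M}^{\text{int}}_{\text{Scholze}}}_{/x}\to\Spf(\mathcal{O}_{\breve E})_v$ is quasi-separated (Gleason's thesis enters only here, via the quasi-separatedness of the Scholze $v$-sheaf, \cite[Prop.~2.25]{GleasonPhD}), the induced map $(\Spf(A_{\text{univ}}))_v\to\widehat{\mathcal{M}^{\text{int}}_{\text{Scholze}}}_{/x}$ is qcqs, and a qcqs morphism of small $v$-sheaves that is a bijection on all $\Spa(C,\mathcal{O}_C)$-points is automatically an isomorphism. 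No kimberlite machinery, no separate comparison of special fibres, no product-of-points trick. One smaller inaccuracy in your write-up: the reason the slope constraint on $\Ad(\mathcal{E})$ is automatic for a Scholze-point is not that a $\mu$-bounded modification cannot create a slope $\geq 1$; it is simply that the quasi-isogeny pins down the isomorphism class $\mathcal{E}\cong\mathcal{E}_b$, and the constraint then follows from the standing slope hypothesis on $\Ad(b)$.
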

\begin{proof}
Since the perfect affine Deligne-Lustzig variety $X_{\mu}(b)$ is isomorphic to the perfection of the reduced locus of the Bültel-Pappas moduli problem, one can consider $x\in X_{\mu}(b)$ as the datum of a pair $(\mathcal{P}_{x},\rho)\in \mathcal{M}^{\text{BP}}(k).$ 
\\
Now consider the following deformation problem: Let $\text{Art}_{k}$ be the category of augmented local Artin $\mathcal{O}_{\breve{E}}$-algebras (i.e. local Artin-algebras $(A,\mathfrak{m}),$ with an isomorphism $A/\mathfrak{m}\simeq k$). Let
$$
\text{Def}_{x}\colon \text{Art}_{k}\rightarrow \Set
$$
be the functor classifying deformations of $\mathcal{P}_{x}.$ By \cite[section 3.5.9.]{BueltelPappas} this functor is pro-representable by a complete noetherian local and regular ring (in fact, a power series ring over $\mathcal{O}_{\breve{E}}$) $A_{\text{univ}}.$ Since 
$$
W_{\mathbb{Q}}(A_{\text{univ}}/\mathfrak{m}_{\text{univ}}^{n})=W_{\mathbb{Q}}(k),
$$  
one may lift the quasi-isogeny $\rho,$ to get a compatible system of objects $(\mathcal{P}_{n},\rho_{n})\in \mathcal{M}^{\text{BP}}(A_{\text{univ}}/\mathfrak{m}_{\text{univ}}^{n}),$ i.e. one obtains a morphism
$$
\Spf(A_{\text{univ}})\rightarrow \mathcal{M}^{\text{BP}}.
$$
Passing to associated $v$-sheaves on $\text{Perf}_{\overline{k}_{E}},$ and using the extension of the lifting result for crystalline quasi-isogenies as provided by Lemma \ref{Uebersetzung der Qis allgemeiner Fall}, one constructs easily a morphism of $v$-sheaves over $\text{Perf}_{\overline{k}_{E}},$
$$
(\mathcal{M}^{\text{BP}})_{v}\rightarrow \mathcal{M}^{\text{int}}_{\text{Scholze}}.
$$
The composition 
$$
\xymatrix{
(\Spf(A_{\text{univ}}))_{v} \ar[r] & (\mathcal{M}^{\text{BP}})_{v} \ar[r] & \mathcal{M}^{\text{int}}_{\text{Scholze}}
}
$$
factors over $\widehat{\mathcal{M}^{\text{int}}_{\text{Scholze}}}_{/_{x}}\rightarrow \mathcal{M}^{\text{int}}_{\text{Scholze}}.$ The induced morphism
$$
(\Spf(A_{\text{univ}}))_{v}  \rightarrow \widehat{\mathcal{M}^{\text{int}}_{\text{Scholze}}}_{/_{x}}
$$
is qcqs: $(\Spf(A_{\text{univ}}))_{v}\rightarrow \Spf(\mathcal{O}_{\breve{E}})_{v}$ is qcqs and $\widehat{\mathcal{M}^{\text{int}}_{\text{Scholze}}}_{/_{x}}\rightarrow \Spf(\mathcal{O}_{\breve{E}})_{v}$ is qs (as a composite of an open immersion and since the Scholze $v$-sheaf is quasi-separated over $\Spf(\mathcal{O}_{\breve{E}})_{v}$ by a result of Gleason \cite[Prop. 2.25]{GleasonPhD}), so that by e.g. \cite[Prop. 1.8. (iii)]{SGA6} this induced morphism is in fact qcqs. It therefore suffices to check it is an isomorphism on geometric points. This then follows from the results in section \ref{Section Connection to local mixed-characteristic shtukas} in characteristic $0$ and is direct in characteristic $p.$
\end{proof}

\newpage
\bibliography{/Users/sebastianbartling/Documents/Bibtex/mybib}{}
\bibliographystyle{alpha}

\end{document}